\let\pbdggeometry\dggeometry
\newcommand{\gridcorr}[4]{
  \def\dggeometry{
            \pbdggeometry
            \dg@XGRID=#1
            \dg@YGRID=#2
            \divide\unitlength by#3
  }
  \dgARROWLENGTH=#4
}
\newcommand{\thrfrdiag}{\gridcorr{4}{3}{3}{2em}}
\let\le\undefined
\DeclareMathSymbol{\le}{\mathrel}{AMSa}{"36}         
\let\ge\undefined
\DeclareMathSymbol{\ge}{\mathrel}{AMSa}{"3E}         
\let\empty\undefined
\DeclareMathSymbol{\empty}{\mathord}{AMSb}{"3F}      
\DeclareMathSymbol{\birarrow}{\mathrel}{AMSa}{"13}   
\newcommand{\ds}{\dots}
\newcommand{\dsb}{\dotsb}
\newcommand{\dsc}{\dotsc}
\newcommand{\tm}{\times}
\newcommand{\pro}{\sqcap}
\newcommand{\cop}{\sqcup}
\newcommand{\bu}{\bullet}
\def\ot{\DOTSB\otimes}
\def\op{\DOTSB\oplus}
\def\rarrow{\DOTSB\longrightarrow}
\def\larrow{\DOTSB\longleftarrow}
\def\lrarrow{\DOTSB\.\relbar\joinrel\relbar\joinrel\rightarrow\.}
\def\llarrow{\DOTSB\.\leftarrow\joinrel\relbar\joinrel\relbar\.}
\newcommand{\sub}{\subset}
\newcommand{\maps}{\longmapsto}
\newcommand{\rar}{\medspace{\rightarrow}\medspace}
\newcommand{\ovrarrow}[1]{\overset{\scriptscriptstyle #1}\rarrow}
\newcommand{\ovrar}[1]{\medspace
      {\overset{\scriptscriptstyle #1}{\rightarrow}}\medspace}
\newcommand{\bop}{\bigoplus\nolimits}
\renewcommand{\d}{\partial}
\newcommand{\D}{{\mathcal D}}
\newcommand{\E}{{\mathcal E}}
\newcommand{\M}{{\mathcal M}}
\newcommand{\A}{{\mathcal A}}
\newcommand{\B}{{\mathcal B}}
\newcommand{\C}{{\mathcal C}}
\newcommand{\T}{{\mathcal T}}
\newcommand{\F}{{\mathcal F}}
\renewcommand{\H}{{\mathcal H}}
\newcommand{\G}{{\mathcal G}}
\newcommand{\K}{{\mathcal K}}
\newcommand{\J}{{\mathcal J}}
\newcommand{\I}{{\mathcal I}}
\renewcommand{\S}{{\mathcal S}}
\newcommand{\Z}{{\mathbb Z}}
\newcommand{\Q}{{\mathbb Q}}
\DeclareMathOperator{\coker}{coker}
\DeclareMathOperator{\im}{im}
\DeclareMathOperator{\Ker}{Ker}
\DeclareMathOperator{\Coker}{Coker}
\let\Im\undefined
\DeclareMathOperator{\Im}{Im}
\DeclareMathOperator{\Coim}{Coim}
\DeclareMathOperator{\Cone}{Cone}
\DeclareMathOperator{\Tor}{Tor}
\DeclareMathOperator{\Ext}{Ext}
\DeclareMathOperator{\Hom}{Hom}
\DeclareMathOperator{\Yon}{Yon}
\DeclareMathOperator{\Nilp}{Nilp}
\DeclareMathOperator{\qu}{qu}
\DeclareMathOperator{\chr}{char}
\DeclareMathOperator{\Gal}{Gal}
\DeclareMathOperator{\Ob}{Ob}
\DeclareMathOperator{\Spec}{Spec}
\DeclareMathOperator{\Aut}{Aut}
\newcommand{\oK}{\,\overline{\!K}}
\newcommand{\KM}{\mathrm{K}^{\mathrm{M\mskip-.2\thinmuskip\relax}}}
\newcommand{\Kpi}{\mathrm{K}(\pi,1)}
\newcommand{\Fun}{\operatorname{\text{${\mathcal F}
               \mskip -.6\thinmuskip\relax un$}}}
\newcommand{\Com}{\operatorname{\text{${\mathcal C}
               \mskip -.3\thinmuskip\relax om$}}}
\newcommand{\Ab}{{\mathcal A}\mskip .5\thinmuskip\relax b}
\newcommand{\Ac}{{\mathcal A}{c}}
\newcommand{\ad}{{\operatorname{ad}}}
\newcommand{\gr}{{\operatorname{gr}}}
\renewcommand{\ss}{{\operatorname{ss}}}
\newcommand{\sat}{{\operatorname{sat}}}
\newcommand{\opp}{{\operatorname{op}}}
\newcommand{\Id}{{\operatorname{Id}}}
\newcommand{\id}{{\operatorname{id}}}
\renewcommand{\:}{\colon}
\renewcommand{\.}{\mskip .5\thinmuskip\relax}
\renewcommand{\;}{,\medspace}
\renewcommand{\implies}{\thickspace\Longrightarrow\thickspace}
\newcommand{\nbk}{\nobreak}
\newcommand{\abk}{\allowbreak}
\newcommand{\+}{\nobreakdash-}
\theoremstyle{plain}
\newtheorem*{conj}{Conjecture}
\newtheorem*{thm}{Theorem}
\newtheorem*{thm1}{Theorem 1}
\newtheorem*{thm2}{Theorem 2}
\newtheorem*{cor}{Corollary}
\newtheorem*{cor1}{Corollary 1}
\newtheorem*{cor2}{Corollary 2}
\newtheorem*{cor3}{Corollary 3}
\newtheorem*{prop}{Proposition}
\newtheorem*{prop1}{Proposition 1}
\newtheorem*{prop2}{Proposition 2}
\newtheorem*{prop3}{Proposition 3}
\newtheorem*{prop4}{Proposition 4}
\newtheorem*{lem}{Lemma}
\newtheorem*{lem1}{Lemma 1}
\newtheorem*{lem2}{Lemma 2}
\newtheorem*{lem3}{Lemma 3}
\newtheorem*{lem4}{Lemma 4}
\theoremstyle{definition}
\newtheorem*{ex}{Example}
\newtheorem*{ex1}{Example 1}
\newtheorem*{ex2}{Example 2}
\newtheorem*{rem}{Remark}
\newtheorem*{rem1}{Remark 1}
\newtheorem*{rem2}{Remark 2}
\newcommand{\Section}[1]{\bigskip\section{#1}\medskip}
\numberwithin{equation}{section}
\begin{document}

\title{Mixed Artin--Tate Motives with Finite Coefficients}
\author{Leonid Positselski}

\address{Sector of Algebra and Number Theory, Institute for
Information Transmission Problems, Bolshoy Karetny per.~19 str.~1,
Moscow 127994, Russia}
\email{posic@mccme.ru}

\begin{abstract}
 The goal of this paper is to give an explicit description of
the triangulated categories of Tate and Artin--Tate motives with
finite coefficients $\Z/m$ over a field $K$ containing a primitive
$m$\+root of unity as the derived categories of exact categories
of filtered modules over the absolute Galois group of $K$ with
certain restrictions on the successive quotients.
 This description is conditional upon (and its validity is
equivalent to) certain Koszulity hypotheses about the Milnor
K-theory/Galois cohomology of~$K$. 
 This paper also purports to explain what it means for an arbitrary
nonnegatively graded ring to be Koszul.
 Tate motives with integral coefficients are discussed in
the ``Conclusions'' section.
\end{abstract}


\maketitle

\tableofcontents

\section*{Introduction}
\medskip

\setcounter{subsection}{-1}
\subsection{{}}
 In the paper~\cite{Beil} published in 1987, A.~Beilinson formulated
his famous conjectures on the properties of hypothetical categories
of mixed motivic sheaves over a scheme.
 In addition to the classical case of motives with rational
coefficients, some conjectures about the category of motives
with a finite coefficient ring~$\Z/m$ were proposed there.
 Equivalent conjectures describing motivic complexes with finite
coefficients were earlier formulated by S.~Lichtenbaum in~\cite{Lich}.

 In the subsequent works of V.~Voevodsky~\cite{Voev} and others,
triangulated categories $\D\M(K,k)$ of motives over $K$ were
constructed (assuming only the resolution of singularities) for any
field~$K$ and any coefficient ring $k=\Z$, $\Q$, or~$\Z/m$.
 Beilinson predicted existence of abelian categories of mixed motives;
the triangulated categories now known would then be equivalent to
the derived categories of those abelian categories.
 The problem of constructing such categories of mixed motives,
or finding them as subcategories of the triangulated categories
of motives, remains open.
 In this paper we study the cases of mixed Tate and Artin--Tate
motives over a field with finite coefficients and discuss the related
homological algebra formalism in general.

\subsection{{}}  \label{introd-t-structures}
 There are three essentially independent conditions one has to
verify in order to establish that a triangulated category $\D$ is
equivalent to the derived category of an abelian category.
 First one has to find an abelian subcategory $\A\sub\D$, or,
more technically speaking, define a \emph{t\+structure}~\cite{BBD}
on the category~$\D$.
 Secondly, one has to check that this t\+structure is ``of
the derived type''~\cite[Subsection~4.0]{BGSch}, i.~e., the natural
homomorphisms of the $\Ext$ groups $\Ext_\A^*(X,Y)\rarrow
\Hom_\D(X,Y[*])$ are isomorphisms for all $X$,~$Y\in\A$.
 Thirdly, one has to construct a triangulated functor $\D^b(\A)\rarrow
\D$ compatible with the embeddings of $\A$ into $\D^b(\A)$ and~$\D$.

 Very roughly, given a full subcategory $\A\sub\D$, the condition
that $\A$ is the heart of a t\+structure on $\D$ is a restriction
on the groups $\Hom_\D(X,Y[n])$ for $n\le 0$, while the condition
that the t\+structure is of the derived type is a restriction on
the groups $\Hom_\D(X,Y[n])$ for $n\ge 2$.
 The former condition amounts to the existence of \emph{canonical
filtrations} on the objects of $\D$ with subquotients in $\A$,
while the latter condition is equivalent to the existence of
\emph{silly filtrations}.
 As to the problem of existence of a triangulated functor from
$\D^b(\A)$ to $\D$, it can be viewed independently of the choice
of the subcategory $\A\sub\D$ as arising solely from shortcomings
of the notion of a triangulated category per se.

\subsection{{}}
 Let us discuss the canonical and silly filtrations in some more
detail.
 Suppose we are given a complex $\dsb\rarrow X^{i-1}\rarrow X^i
\rarrow\dsb$ with objects and morphisms from an abelian category~$\A$.
 Then there are two filtrations which one can define on
the complex~$X^\bu$: the \emph{canonical filtration}
by the subcomplexes $\dsb\rarrow X^{i-1}\rarrow\Ker d_i\rarrow 0$
and the \emph{silly filtration} by the subcomplexes
$0\rarrow X^i\rarrow X^{i+1}\rarrow\dsb$.
 The former filtration is increasing when $i$~increases, while
the latter one is decreasing; the successive quotients of the
former filtration are the cohomology objects $H^i(X^\bu)$, while
the successive quotients of the latter one are the terms~$X^i$ of
the complex~$X^\bu$.
 The canonical filtration owes its name to the fact that it
does not depend on the choice of a particular complex inside
a given quasi-isomorphism class and therefore is uniquely
(and functorially) defined on objects of the derived category~$\D(\A)$.
 The silly filtration on an object of the derived category
is not unique; nevertheless, it is sensible to ask about
its existence.

 A typical example when the canonical filtrations do exist,
but the silly filtrations may not is the following one.
 Let $\A$ be an abelian subcategory of an abelian category~$\B$;
consider the derived category $\D^b_\A(\B)$ of bounded complexes
in the category~$\B$ with cohomology in the subcategory~$\A$.
 Then one can see that the silly filtrations exist in $\D^b_\A(\B)$
if and only if the natural functor $\D^b(\A)\rarrow\D^b_\A(\B)$ is
an equivalence of categories, which means that the $\Ext$ groups
between the objects of~$\A$ computed in the larger category~$\B$
should coincide with those computed in~$\A$.
 On the other hand, the canonical filtrations may not exist
on complexes whose terms belong to an additive category
which is not abelian.

\subsection{{}}  \label{introd-silly-filtrations}
 The silly filtrations condition can be stated in several equivalent
forms as a condition on a full subcategory $\M$ closed under
extensions in a triangulated category~$\D$.
 One of these formulations is quite simply that any element of
the group $\Hom_\D(X,Y[n])$ with $X$, $Y\in\M$ and $n\ge 2$ is
the composition of such elements of degree $n=1$.
 This makes it possible to consider this condition independently of
the condition that $\M$ is the heart of a t\+structure.

 Given a t\+structure with the heart $\A$ on a triangulated category
$\D$ and assuming the existence of a certain
refinement~\cite{Beil2,Neem2} of the triangulated category structure
on $\D$ (which does exist, at least, for all triangulated categories
of algebraic origin), one can construct a triangulated functor
$\D^b(\A)\rarrow\D$.
 This functor is an equivalence of triangulated categories if and
only if the t\+structure is of the derived type, i.~e., the silly
filtrations, or $\Ext$ decomposition, condition holds.
 This answers, in theory, our question when a triangulated category
is equivalent to the derived category of an abelian category.

 Notice the difference between our silly filtrations and the ``weight
structures'' of Bondarko~\cite{Bond}: the orthogonality condition
from the definition in~\cite{Bond} is not assumed, and does not hold,
in the situations considered in this paper.
 Indeed, the question of existence of silly filtrations on objects of
a triangulated category with respect to a generating full subcategory
(closed under extensions) that we are interested in here becomes
trivial when the orthogonality, i.~e., the vanishing of
$\Hom_\D(X,Y[n])$ for $X$, $Y\in\M$ and $n\ge1$, is assumed.

\subsection{{}}
 Let us discuss the case of Tate motives with rational coefficients
first.
 One can define the category of mixed Tate motives as the minimal
subcategory of Voevodsky's triangulated category $\D\M=\D\M(K,\Q)$
containing the Tate objects~$\Q(i)$ and closed under extensions.
 Beilinson's conjectures claim---and Voevodsky for his triangulated
category of motives proves---that the $\Ext$ spaces between the Tate
objects are isomorphic to the appropriate pieces of
the algebraic K\+theory groups of the field~$K$.
 This means that the first problem one encounters is to prove
that the pieces of the algebraic K\+theory corresponding to
the spaces $\Hom_{\D\M}(\Q(i),\Q(j)[n])$ with negative numbers~$n$,
and actually also with $n=0$ and $i\ne j$, vanish---which
is the statement of the Beilinson--Soule \emph{vanishing
conjectures}.
 This is the necessary and sufficient condition for existence
of a t\+structure on the triangulated subcategory of $\D\M(K,\Q)$
generated by the Tate objects having the above-defined category
of mixed Tate motives as its heart.

 As to the silly filtrations, let us note that even if the abelian
category of arbitrary mixed motives does exist, the condition that
the $\Ext$ spaces between the Tate objects computed in this large
abelian category coincide with those with respect to the abelian
category of mixed Tate motives is still highly nontrivial.
 This is one of Beilinson's
conjectures~\cite[Subsection~5.10.D(iv)]{Beil}; it is supported by
the facts that one has $\Hom_{\D\M}(\Q(i),\Q(j)[n])=0$ for $n>j-i$,
and, most remarkably, that the diagonal $\Ext$ algebra
$\bop_n\Hom_{\D\M}(\Q,\Q(n)[n])$, being isomorphic to the Milnor
K\+theory algebra $\KM(K)\ot_\Z\Q$ of the field~$K$ with rational
coefficients, is a quadratic algebra.

\subsection{{}}
 Another name for the silly filtration conjecture is
the ``$\Kpi$-conjecture'' of Bloch and Kriz~\cite{BK}.
 This terminology comes from the following series of examples.

 Let $X$ be a connected CW\+complex and $\D$ be the derived category
of sheaves of abelian groups on~$X$.
 Denote by $\A\sub\D$ be the abelian subcategory of locally constant
sheaves.
 The subcategory $\A$ is the heart of a bounded t\+structure on
the full triangulated subcategory in $\D$ generated by~$\A$.
 This t\+structure is of the derived type if and only if $X$ is
a $\Kpi$.

 Alternatively, let $\D$ be the derived category of sheaves of
$\Q$\+vector spaces on~$X$ and $\A\sub\D$ be the abelian subcategory
consisting of all unipotent local systems of finite rank.
 The abelian category $\A$ is the heart of a bounded t\+structure
on the full triangulated subcategory in $\D$ generated by $\A$.
 This t\+structure is of the derived type if and only if
the rational homotopy type of $X$ is a $\Kpi$.

\subsection{{}}
 As we have already mentioned, the above homomorphisms $\Ext_\A^*(X,Y)
\rarrow \Hom_\D(X,Y[*])$ are isomorphisms whenever they are surjective,
that is, if and only if any element of the group $\Hom_\D(X,Y[n])$
with $X$,~$Y\in\A$ and $n\ge 2$ is the composition of such elements
of degree $n=1$.
 In the case of mixed Tate motives, however, we only know the higher
$\Hom$ spaces between the simple objects~$\Q(i)$.
 One can even show that it suffices to decompose any homomorphism
$X\rarrow Y[n]$ with a simple object~$X$, but the condition that
any higher homomorphism between two simple objects is decomposable
is \emph{not} sufficient.
 It is not always possible to express the silly filtrations condition
in terms of the algebra of higher homomorphisms between the simple
objects; when it is, it typically turns out to be
a \emph{Koszulity}~\cite{BGS,PV} condition.

 For mixed Tate motives with rational coefficients, such a result
can be obtained in the case of a field~$K$ of finite characteristic.
 A conjecture of Parshin and Beilinson claims that in this case
the algebraic and Milnor K-theory rings with rational coefficients
coincide; this means that $\Hom_{\D\M}(\Q(i),\Q(j)[n])=0$ unless
$n=j-i$.
 Then the $\Ext$ spaces computed in the abelian category of mixed
Tate motives are isomorphic to the higher $\Hom$ spaces in
the triangulated category if and only if the diagonal $\Hom$ algebra,
that is the Milnor K\+theory algebra $\KM(K)\ot_\Z\Q$, is Koszul.

\subsection{{}}
 Now let us turn to Tate motives with finite coefficients.
 As in the rational coefficients case, there is a spectral sequence
starting from the motivic cohomology groups
$\Hom_{\D\M}(\Z/m,\Z/m(i)[n])$, where $\D\M=\D\M(K,\.\Z/m)$, and
converging to the algebraic K\+theory of the field~$K$ with
coefficients~$\Z/m$.
 In particular, the diagonal cohomology algebra
$\bop_n\Hom_{\D\M}(\Z/m,\Z/m(n)[n])$ is isomorphic to the Milnor
K\+theory algebra $\KM(K)\ot_\Z\Z/m$.

 On the other hand, the Beilinson--Lichtenbaum conjectures
connect motivic cohomology with Galois cohomology of the field~$K$.
 The triangulated category of motives with finite coefficients
$\D\M(K,\.\Z/m)$ over a field~$K$ of characteristic prime to~$m$
comes together with the \'etale realization functor
 $$
  \D\M(K,\Z/m)\lrarrow\D(G_K,\Z/m)
 $$
where $G_K=\Gal(\oK/K)$ is the absolute Galois group of~$K$ and
$\D(G_K,\Z/m)$ is the derived category of discrete $G_K$\+modules
over $\Z/m$.
 This functor sends the Tate object $\Z/m(i)$ to the cyclotomic
$G_K$\+module $\mu_m^{\ot i}$.
 The conjectures claim that the \'etale realization functor
induces isomorphisms of the higher $\Hom$ spaces as follows:
\begin{equation}\label{BL-formulas}
 \begin{aligned}
    \Hom_{\D\M}(\Z/m(i),\.\Z/m(j)[n])&\simeq
                       H^n(G_K,\,\mu_{m}^{\ot j-i}),
      \quad \text{for \ $n\le j-i$;} \\
    \Hom_{\D\M}(\Z/m(i),\.\Z/m(j)[n])&=0,
      \quad \text{otherwise.}
 \end{aligned}
\end{equation} 

 In particular, comparing the two formulas for the diagonal
cohomology one comes to the Milnor--Bloch--Kato conjecture
connecting the Galois cohomology with cyclotomic coefficients
with the Milnor K\+theory.
 It was proven by Voevodsky and Suslin~\cite{VS} (see also~\cite{GL2})
that the converse implication holds as well: if the Milnor--Bloch--Kato
conjecture is true for any field containing the field~$K$, then
the Beilinson--Lichtenbaum conjecture for the field~$K$ follows.

\subsection{{}}
 In view of the Beilinson--Lichtenbaum formulas~\eqref{BL-formulas},
it is a natural problem to give a precise description of
the triangulated subcategory generated by the Tate objects in
$\D\M(K,\Z/m)$ in terms of the Galois group~$G_K$.
 The question of finding an elementary construction, in terms of
the Galois group, of an abelian category of mixed Tate motives
with finite coefficients with the $\Ext$ spaces given by
the Beilinson--Lichtenbaum formulas was posed
in~\cite[Subsection~5.10.D(vi)]{Beil}.

 As in the rational coefficients case, the minimal subcategory
of $\D\M(K,\Z/m)$ containing the Tate objects and closed under
extensions seems to be the natural candidate for the category
of mixed Tate motives.
 By the definition, any object of this category has a canonical
filtration whose successive quotients are direct sums of the
Tate objects, and this is the maximal subcategory with this property.
 It turns out, however, that this category is \emph{not} abelian.

 The reason is that it follows from the formulas~\eqref{BL-formulas}
that, unlike in the rational coefficients case, for certain $i>0$
there exists a nonzero morphism $\Z/m\rarrow\Z/m(i)$ corresponding to
an isomorphism of $G_K$\+modules $\Z/m\simeq\mu_m^{\ot i}$.
 It is easy to see that the kernel and cokernel of such a morphism
in the above category of mixed Tate motives are zero, and still
this morphism is not an isomorphism.
 In the simplest case of an algebraically closed field~$K$ and
a prime number~$m$, this category is equivalent to the category
of finite-dimensional filtered vector spaces over~$\Z/m$.

\subsection{{}}
 Instead of being abelian, the category of mixed Tate motives
with finite coefficients has a natural structure of an
\emph{exact category}.
 The construction of the derived category of an abelian category
can be generalized to exact categories smoothly, and, conversely,
an additive subcategory of a triangulated category has a natural
exact category structure if it is closed under extensions and
there are no $\Hom$ groups with negative degrees between
its objects.
 All the results about t\+structures mentioned
in~\ref{introd-t-structures} and~\ref{introd-silly-filtrations}
can be extended to exact subcategories of triangulated categories.

\subsection{{}}  \label{introd-list-of-situations}
 Depending on the conditions on the field $K$ and the coefficient
ring~$k$, it may or may not be possible to express all of
the motivic cohomology $\Hom_{\D\M}(k,k(i)[n])$, where $\D\M=
\D\M(K,k)$, in terms of the diagonal cohomology
$\Hom_{\D\M}(k,k(n)[n])\simeq \KM_n(K)\ot_\Z k$ in a simple way.
 Various conjectures and results predict that in the following
situations this should be possible:
\begin{enumerate}
 \renewcommand{\theenumi}{\roman{enumi}}
 \item $k=\Q$ and $\chr K = p > 0$;
 \item $k=\Z/p^r$ and $\chr K = p > 0$;
 \item $k=\Z/m$ and $K$ contains a primitive root of unity of
degree~$m$.
\end{enumerate}
 In all of these cases, the silly filtration conjecture for
the mixed Tate motives over $K$ with coefficients in $k$ is
equivalent to the Koszulity condition on the graded algebra
of diagonal motivic cohomology $\KM_n(K)\ot_\Z k$.

 On the other hand, in the following cases there is apparently no
simple way to express the motivic cohomology in terms of its
diagonal part:
\begin{enumerate}
 \renewcommand{\theenumi}{\roman{enumi}}
 \setcounter{enumi}{3}
 \item $k=\Z/l$ for a prime number~$l$, the field $K$ does
not contain a primitive root of unity of degree~$l$, and
$\chr K \ne l$;
 \item $k=\Q$ and $\chr K = 0$.
\end{enumerate}
 In these cases, we do not know how to express the silly filtration
conjecture in terms of the graded algebra $\KM_n(K)\ot_\Z k$, nor
do we know whether one should expect the Milnor algebra
$\KM_n(K)\ot_\Z k$ to be Koszul for any motivic reasons.

\subsection{{}}
 The main results of this paper are as follows.

 Assuming the Beilinson--Lichtenbaum conjecture, we prove that
the exact category of mixed Tate motives with coefficients in~$\Z/m$
is equivalent to the category of filtered modules~$(N,F)$
over~$\Z/m$ with a discrete action of the Galois group~$G_K$ such that
each quotient module $F^iN/F^{i+1}N$ is isomorphic to a direct sum
of several copies of the cyclotomic module $\mu_m^{\ot i}$.
 Furthermore, we show that if the field~$K$ contains a primitive
$m$\+root of unity, then the $\Ext$ spaces computed in this exact
category are isomorphic to the higher $\Hom$ spaces in
the triangulated category of motives (i.~e., are given by
the Beilinson--Lichtenbaum formulas) if and only if the Galois
cohomology algebra $H^*(G_K,\Z/m)$ is Koszul.

 Recall that it was proven in the paper~\cite{PV} that
the Milnor--Bloch--Kato conjecture claiming the isomorphism between
the Galois cohomology and the Milnor K\+theory algebra modulo~$m$
follows from its low-degree part (an isomorphism in degree~$2$ and
a monomorphism in degree~$3$) provided that the Milnor algebra is
Koszul for all algebraic extensions of a given field.
 The result of this paper means that the Koszulity conjecture
together with the low-degree part of the Milnor--Bloch--Kato
conjecture is equivalent to the full Milnor--Bloch--Kato conjecture
together with the silly filtrations condition for the exact category
of mixed Tate motives.

\subsection{{}}
 As a generalization of mixed Tate motives, one can consider mixed
Artin--Tate motives over a field~$K$.
 These are the motives $k[L]$ of finite field extensions $L/K$,
their Tate twists $k[L](i)$, and any extensions of these.
 Let us choose a finite Galois extension $M/K$ and restrict ourselves
to the exact subcategory $\M$ of $\D\M(K,k)$ formed by all
the successive extensions of the objects $k[L](i)$ with
$K\sub L\sub M$.

 Our \emph{main conjecture} about Artin--Tate motives claims that
this exact subcategory satisfies the silly filtrations condition,
i.~e., the groups $\Ext$ in $\M$ are isomorphic to the corresponding
$\Hom$ groups in the triangulated category of motives $\D\M(K,k)$.
 We show that in the above situations~(i--iii) this conjecture is
equivalent to the Koszulity condition on the algebra of diagonal $\Hom$
\begin{equation}\label{AT-algebra}
 A=\textstyle\bop_n \Hom_{\D\M}(\bop_L\!\. k[L]\;\bop_L\!\. k[L](n)[n])
\end{equation}
between the Artin--Tate objects $k[L](i)$ with $K\sub L\sub M$.

 Moreover, for any fields $K\sub M$ of characteristic prime to $m$,
the exact category $\M$ of mixed Artin--Tate motives with coefficients
$k=\Z/m$ over $K$ is equivalent to the category $\F$ of filtered
discrete modules $G_K$\+modules $(N,F)$ over $\Z/m$ such that for any
$i\in\Z$ the $G_K$\+module $F^iN/F^{i+1}N$ is isomorphic to a finite
direct sum of $G_K$\+modules induced from the cyclotomic modules
$\mu_m^{\ot i}$ over open subgroups of $G_K$ containing~$G_M$.
 Assuming the main conjecture, this provides a description of
the triangulated subcategory of $\D\M(K,k)$ generated by $k[L](i)$
as the bounded derived category of the exact category $\F$ of
filtered $G_K$\+modules.

\subsection{{}}
 To end, let us say a few words about the Koszulity condition that
appears in connection with the Artin--Tate motives with finite
coefficients.
 The theory of Koszul algebras as developed in~\cite{PV,BGS}
applies to graded algebras $A=\bop_{n\ge0} A_n$ such that $A_0$
is a semi-simple algebra.
 It is not difficult to generalize it to nonnegatively graded
rings $A$ with an arbitrary base ring $A_0$, assuming that $A$
is a flat left or right graded $A_0$\+module~\cite{Psemi}.
 However, the graded algebra~\eqref{AT-algebra} we are interested in
does not satisfy the latter assumption.

 It turns out that the Koszul property, just as the property of
a nonnegatively graded ring to be quadratic, does not depend on
the base ring in the zero-degree component.
 For the graded algebra $A$ from~\eqref{AT-algebra}, one can simply
replace the component $A_0$ with $A'_0=\Z/m$ in order to define $A$
to be Koszul if the graded ring
$$
 A'=A'_0\oplus A_1\oplus A_2\oplus\dsb
$$
is Koszul in the sense of~\cite{PV} (if $m$ is a prime number) or
in the sense of~\cite{Psemi} (in the general case).
 Such is the contribution that this paper makes to the general
theory of Koszul rings.

\subsection{{}}
 After a more than a decade-long effort, V.~Voevodsky, in collaboration
with M.~Rost and others, have recently finished their work on
a complete proof of the Milnor--Bloch--Kato
conjecture~\cite{Voev2,Voev3}.
 Their approach is entirely different from the one suggested
in~\cite{PV}; instead, it builds upon the ideas and techniques
of A.~Merkurjev and A.~Suslin's proofs~\cite{MS1,MS2} in
degrees~$2$ and~$3$.
 A simplified, elementary exposition of the easy first step of
their argument can be found in the present author's
paper~\cite{Pdivis}.
 
 Our Koszulity conjectures remain wide open.

\subsection{{}}
 The fairly simple homological formalism describing
the situations~\ref{introd-list-of-situations}\,(i-ii) for Tate
motives with coefficients in a field~$k$ is presented in
Section~1.
 An almost equally simple
situation~\ref{introd-list-of-situations}\,(iii) for Tate motives
with coefficients in a finite field~$k$ is partly treated in
Section~2 using the filtered bar construction.

 We pass to the full generality starting from Section~3, where
we describe the exact subcategory of mixed objects in a triangulated
category with the $\Hom$ groups given by Beilinson--Lichtenbaum
formulas.
 The construction of the associated graded category to a filtered
exact category with a twist functor and related natural transformation
is presented in Section~4.
 The restriction-of-base construction underlying the claim that
the Koszul property does not depend on the base ring is introduced
in Section~5.
 These three sections form the technical heart of the paper.

 We prove that the diagonal cohomology ring is quadratic, and any
quadratic ring can be realized as the diagonal cohomology, in
Section~6.
 The Koszul property of (big) graded rings in the general and
the flat cases is studied in Section~7.
 We digress to apply our techniques in order to generalize
the results of~\cite{PV} and~\cite[Section~5]{Pbogom} in
Section~8.
 In particular, we obtain a Koszulity-based sufficient condition
for existence of silly filtrations with respect to an exact
subcategory of a triangulated category.
 The proofs of main results are finished and the conclusions
discussed in Section~9.
 In particular, the silly filtration conjecture for Artin-Tate
motives is formulated, and some remarks about Tate motives with
integral coefficients are included.

 The purpose of Appendix~A is to supply preliminary material on exact
categories complementary to the standard expositions.
 It covers big graded rings, saturatedness conditions, various axioms
and examples of exact categories, two canonical embeddings to abelian
categories, the derived categories of exact categories, the Yoneda
$\Ext$, and exact subcategories of triangulated categories.
 The formalism of silly filtrations is presented in Appendix~B,
and the $\Kpi$-conjecture of Bloch and Kriz is discussed in
Appendix~C\hbox{}.
 It is explained why these are two equivalent formulations of
the same conjecture; both the rational and the finite coefficients
are considered.
 The realization functor to a (filtered) triangulated category from
the derived category of its exact subcategory is constructed
in Appendix~D.

\subsection*{Acknowledgement}
 The author is grateful to V.~Voevodsky and A.~Beilinson for posing
the problem and for numerous very helpful conversations.
 I would like also to thank P.~Deligne, S.~Bloch, V.~Retakh,
A.~Goncharov, D.~Orlov, A.~Vishik, A.~Polishchuk, V.~Vologodsky,
and M.~Bondarko for very helpful discussions.

 This work was started when the author was visiting the Mathematics
Department of Harvard University on the invitation of D.~Kazhdan in
the Fall of 1994, and most of it has been done when I was a graduate
student at the same university in the subsequent years.
 I am glad to use this opportunity to thank Harvard for its
hospitality.
 These results were presented at the Fall 1999 conference in
Oberwolfah, and I want to thank the MFO and the organizers of
the conference for the invitation.

 The author was supported by a grant from P.~Deligne 2004 Balzan
prize and an RFBR grant when developing the later final ideas of
the paper and writing it up.

\Section{Toy Example~I: Strictly Exceptional Sequence}

\subsection{Existence of t\+structure}  \label{t-existence}
 Let $\D$ be a triangulated category generated by a sequence of
objects $E_i\in\D$, \ $i\in\Z$ satisfying the following conditions
\begin{equation} \label{exceptional-weak}
\begin{aligned}  
 &\Hom_\D(E_i,E_j[n]) = 0 
  \quad\text{for all $i>j$ and $n\in\Z$;}    \\
 &\Hom_\D(E_i,E_i) \quad\text{is a division ring for all $i\in\Z$.}
\end{aligned}
\end{equation}
 Let $\A$ be the minimal full subcategory of $\D$, containing
the objects $E_i$ and closed under extensions (see~\cite[1.2.6]{BBD}
or~\ref{exact-triangulated} for the definition).
 The following result is a generalization of a theorem of
M.~Levine~\cite{Lev} (see also~\cite{BGSch}) inspired by
R.~Bezrukavnikov's paper~\cite{Bez}.

\begin{thm}
 The subcategory $\A$ is the heart of a (necessarily bounded)
t\+structure on $\D$ if and only if\/ $\Hom_\D(E_i,E_j[n])=0$
for all\/ $i$, $j\in\Z$ and\/ $n<0$ and\/ $\Hom_\D(E_i,E_j)=0$
for all\/ $i\ne j$.
\end{thm}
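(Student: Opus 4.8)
The plan is to treat the two implications separately, establishing sufficiency by building the truncation functors by hand and necessity by a short argument inside~$\A$. For \emph{sufficiency}, assume the two vanishing conditions and let $\D^{\le 0}$ (resp.\ $\D^{\ge 1}$) be the smallest strictly full subcategory of $\D$ closed under extensions and containing all $E_i[n]$ with $n\ge 0$ (resp.\ with $n\le-1$); put $\D^{\ge 0}=\D^{\ge 1}[1]$. The orthogonality $\Hom_\D(\D^{\le 0},\D^{\ge 1})=0$ is immediate: for a fixed $(i,n)$ with $n\ge 0$ the class of $Y$ with $\Hom_\D(E_i[n],Y)=0$ is extension\+closed and, since $\Hom_\D(E_i,E_j[m-n])=0$ for $m-n<0$, contains every $E_j[m]$ with $m\le-1$, hence all of $\D^{\ge 1}$; a second extension\+closure argument in the remaining variable completes the claim. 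The inclusions $\D^{\le 0}\sub\D^{\le 1}$, $\D^{\ge 1}\sub\D^{\ge 0}$ are tautological, and each $E_i$ lies in $\D^{\le 0}\cap\D^{\ge 0}$, so $\A\sub\D^{\le 0}\cap\D^{\ge 0}$.

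\emph{Truncation triangles.} The essential point is that every $X\in\D$ sits in a triangle $X'\rarrow X\rarrow X''\rarrow X'[1]$ with $X'\in\D^{\le 0}$, $X''\in\D^{\ge 1}$. Since $\D$ is generated by the $E_i$, I would induct on the number of cones used to build $X$ from shifts of the $E_i$. For $X=E_i[n]$ the triangle is trivial. In general write $X=\Cone(A\ovrarrow{f}B)$ with $A$, $B$ of smaller complexity and, by induction, with truncation triangles $A'\rarrow A\rarrow A''$, $B'\rarrow B\rarrow B''$. The composite $A'\rarrow A\ovrarrow{f}B\rarrow B''$ lands in $\Hom_\D(A',B'')=0$, so $f|_{A'}$ factors, uniquely because $\Hom_\D(A',B''[-1])=0$, through a morphism $g\colon A'\rarrow B'$; completing the ensuing commutative square to a morphism of triangles, the octahedral axiom yields a triangle $\Cone(g)\rarrow X\rarrow\Cone(h)\rarrow\Cone(g)[1]$ with $h\colon A''\rarrow B''$ the induced map on quotients. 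Then $\Cone(g)\in\D^{\le 0}$ and $\Cone(h)\in\D^{\ge 1}$, since those subcategories are extension\+closed and $A'[1]\in\D^{\le 0}$, $A''[1]\in\D^{\ge 1}$. The same induction shows the truncations of $X$ have no greater complexity than~$X$.

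\emph{The heart, and boundedness.} Now $(\D^{\le 0},\D^{\ge 0})$ is a t\+structure, and the triangle just built gives $\D^{\le 0}=\{X:\Hom_\D(X,E_i[n])=0\text{ for all }i,\ n<0\}$ and dually $\D^{\ge 0}=\{X:\Hom_\D(E_i[n],X)=0\text{ for all }i,\ n>0\}$: if $\tau^{\ge 1}X\ne 0$, the top quotient of a filtration of it by objects $E_i[n]$ with $n\le-1$ is the target of a nonzero map from $\tau^{\ge 1}X$, contradicting the vanishing of $\Hom_\D(\tau^{\ge 1}X,E_i[n])$ for $n<0$ that the triangle and the orthogonality provide. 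So the heart is $\{X:\Hom_\D(X,E_i[n])=0=\Hom_\D(E_i,X[n])\text{ for all }i,\ n\ne 0\}$; it contains every $E_i$ and no $E_i[n]$ with $n\ne 0$. The bubble\+sorting lemma below, applicable since every $E_i$ already has a filtration by the $E_j$, shows the $E_i$ are pairwise non\+isomorphic simple objects of the heart; a d\'evissage on the number of cones — writing $X=\Cone(A\rarrow B)$, noting the cohomology objects of $A$ and $B$ have smaller complexity, hence lie in $\A$ by induction, and using that an extension closure of simple objects is closed under subobjects and quotient objects — then identifies the heart with~$\A$. Boundedness is automatic, as every object is built from shifts of objects of the heart.

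\emph{Necessity, and the main obstacle.} If $\A$ is a heart, the vanishing $\Hom_\D(E_i,E_j[n])=0$ for $n<0$ is the orthogonality $\Hom_\D(\A,\A[n])=0$, $n<0$. For $\Hom_\D(E_i,E_j)=0$ with $i\ne j$ — only $i<j$ requiring proof, the case $i>j$ being among the standing hypotheses — it suffices to show the $E_i$ are pairwise non\+isomorphic simple objects of $\A$ and invoke Schur's lemma. They are non\+isomorphic, since $E_i\simeq E_j$ with $i<j$ would give $\Hom_\D(E_j,E_i)\simeq\Hom_\D(E_i,E_i)\ne 0$. They are simple by the \emph{bubble\+sorting lemma}: since $\Ext^1_\A(E_a,E_b)=\Hom_\D(E_a,E_b[1])=0$ for $a>b$, in any filtration of an object of $\A$ by the $E_i$ two adjacent subquotients $E_{i_l}$ (below), $E_{i_{l+1}}$ (above) with $i_l<i_{l+1}$ form a split extension and may be swapped, so every object — in particular $E_i$ — has a filtration by the $E_j$ with non\+increasing indices from bottom to top; the bottom subobject maps nonzero to $E_i$, so has index $\le i$, the top quotient receives a nonzero map from $E_i$, so has index $\ge i$, whence all subquotients equal $E_i$; and length $\ge 2$ is impossible, for it would give a monomorphism $E_i\hookrightarrow E_i$, necessarily invertible as $\Hom_\D(E_i,E_i)$ is a division ring, identifying a proper subobject with all of $E_i$. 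Thus $E_i$ has no proper nonzero subobject. I expect the inductive construction of the truncation triangles — and, entangled with it, the verification that the heart is exactly $\A$ — to be the real difficulty, the necessity direction being comparatively soft.
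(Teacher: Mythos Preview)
Your ``only if'' direction via the bubble-sorting lemma and Schur's lemma is correct and is a genuinely different route from the paper's. The paper instead takes a nonzero $f\:E_i\rarrow E_j$ with $i<j$, forms its cone $C$, decomposes $C$ via the t\+structure as an extension of $Y\in\A$ by $X[1]$ with $X\in\A$, and then applies the ``successive quotients'' functor $\D\rarrow\prod_i\D_i$ coming from the semiorthogonality~\eqref{exceptional-weak} to identify $X\simeq E_i$ and $Y\simeq E_j$, which forces $C\simeq E_i[1]\oplus E_j$ and hence $f=0$. Your argument is more self-contained: it uses only that $\A$ is abelian and that $\Hom_\D(E_i,E_i)$ is a division ring, and it yields simplicity of the $E_i$ as a byproduct.

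There is, however, a genuine gap in your ``if'' direction. In the inductive construction of truncation triangles you claim $A''[1]\in\D^{\ge 1}$, but this is false: $\D^{\ge 1}$ is generated by the $E_i[n]$ with $n\le-1$, so applying $[1]$ sends $\D^{\ge 1}$ into $\D^{\ge 0}$, not back into $\D^{\ge 1}$. Concretely, if $A''=E_i[-1]$ then $A''[1]=E_i\notin\D^{\ge 1}$. Hence $\Cone(h)$, which sits in a triangle $B''\rarrow\Cone(h)\rarrow A''[1]$, need not lie in $\D^{\ge 1}$, and your induction does not close. This is not a slip of the pen: the naive cone-by-cone induction on complexity genuinely fails for this reason, and one needs a more careful argument (for instance, proving $\D^{\ge 1}*\D^{\le 0}\subset\D^{\le 0}*\D^{\ge 1}$ by a separate induction that exploits the vanishing of $\Hom_\D(E_i,E_j)$ for $i\ne j$ at the boundary case $n=-1$, $m=0$). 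The paper sidesteps all of this by observing that the full subcategory $\E$ of finite direct sums of the $E_i$ is a semisimple admissible abelian subcategory and invoking \cite[1.3.13--14]{BBD} directly. Your identification of the heart with $\A$ is also not fully justified: the claim that cohomology objects ``have smaller complexity'' depends on complexity bounds for the truncations that you assert but do not prove, and which are themselves entangled with the flawed inductive step.
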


\begin{proof}
 ``If'': let $\E$ denote the full subcategory of $\D$ consisting
of direct sums of objects $E_i$.
 Clearly, $\E$ is a semisimple abelian category and one has
$\Hom_\D(X,Y[n])=0$ for $n<0$ and $X$, $Y\in\E$.
 It follows that $\E$ is an admissible abelian subcategory of $\D$
in the sense of~\cite[Subsection~1.2]{BBD}, hence
by~\cite[Subsections~1.3.13--14]{BBD} the subcategory $\A$ is
the heart of a bounded t\+structure on~$\D$.
 The reader can find some additional details in~\cite[Lemma~3]{Bez}.

 ``Only if'': the condition of vanishing of $\Hom_\D(X,Y[n])$
for $n<0$ and $X$, $Y\in\A$ follows immediately from
the definitions of a t\+structure and its heart.
 Now let $f\:E_i\rarrow E_j$ be a morphism of degree~$0$.
 By the conditions~\eqref{exceptional-weak}, $f=0$ if $i>j$, so it
remains to consider the case $i<j$.

 Let $C$ denote the cone of the morphism~$f$.
 By~\cite[Th\'eor\`eme~1.3.6]{BBD}, there exists a distinguished
triangle $X[1]\rarrow C\rarrow Y\rarrow X[2]$ with $X$, $Y\in\A$.
 Let $\D_i$ denote the full triangulated subcategory of $\D$
generated by the object~$E_i$.
 It follows easily from~\eqref{exceptional-weak} that there is
a triangulated functor of ``successive quotients'' from $\D$
to the Cartesian product of the triangulated subcategories
$\D_i\sub\D$ sending $E_i\in\D$ to $(\dsc,0,E_i,0,\dsc)\in\prod_i\D_i$
(see~\cite[Sections~1 and~4]{BoKa} or~\cite[Lemma~1.3.2]{BGSch}).
 Applying this functor to the above triangle, one can see that $X$
is naturally isomorphic to $E_i$ and $Y$ is isomorphic to $E_j$
in~$\D$.
 The compositions $E_j\rarrow C\rarrow E_j$ and $E_i[1]\rarrow
C\rarrow E_i[1]$ are the identity morphisms, hence $f=0$.
\end{proof}

\begin{rem}
 One \emph{cannot} move the second condition 
in~\eqref{exceptional-weak} from the list of premises of Theorem
to the list of conditions in the right hand side of the equivalence.
 Indeed, there exists a triangulated category $\D$ generated by
a single object $E=E_0$ with $\Hom_\D(E,E[n])=0$ for $n\ne0$ such that
the full subcategory $\A\sub\D$ whose only objects are $E$ and $0$
is the heart of a bounded t\+structure on $\D$, and $\Hom_\D(E,E)$
is not a division ring.
 One simply takes $\A$ to be the quotient category of the category
of not more than countably dimensional vector spaces over a field~$k$
by the Serre subcategory of finite-dimensional vector spaces, and
$\D=\D^b(\A)$.
\end{rem}

\subsection{Diagonal cohomology and Koszulity}
 Let $k$ be a field and $\D$ be a $k$\+linear triangulated category
generated by a sequence of objects $E_i\in\D$, \ $i\in\Z$ satisfying
the conditions
\begin{equation}  \label{exceptional-strong}
\begin{alignedat}{3}  
 &\Hom_\D(E_i,E_j[n]) = 0  
  &&\quad\text{for all $i>j$ } &&\text{and $n\in\Z$;}    \\
 &\Hom_\D(E_i,E_i[n]) = 0  
  &&\quad\text{for all $i\in\Z$ } &&\text{and $n\ne 0$;} \\
 &\Hom_\D(E_i,E_i) = k     
  &&\quad\text{for all $i\in\Z$.}
\end{alignedat}
\end{equation}

 Furthermore, assume that $E_i\in\D$ satisty the equivalent conditions
of Theorem from~\ref{t-existence}.
 In addition, suppose that a triangulated autoequivalence of
the category $\D$, denoted by $X\maps X(1)$, is given together
with isomorphisms $E_i(1)\simeq E_{i+1}$.
 The functor $X\maps X(1)$ will be called the \emph{twist} functor
and its integral powers will be denoted by $X\maps X(j)$, \ $j\in\Z$.
 
 Since $\A$ is the heart of a t\+structure on $\D$, there are natural
maps $\Ext^n_\A(X,Y)\rarrow\Hom_\D(X,Y[n])$ for all $X$, $Y\in\A$
and $n\ge 0$, where $\Ext_\A$ denotes the Yoneda $\Ext$ in
the abelian category~$\A$ (see \ref{exact-triangulated}, or~\cite{Lev},
or~\cite[Subsection~4.0]{BGSch}).
 These maps are compatible with the multiplicative structure on
$\Ext_\A$ and $\Hom_\D$.
 Besides, they are always isomorphisms for $n=1$ and monomorphisms for
$n=2$ \cite[Remark~3.1.17]{BBD}.
 We will be interested in the question when all these maps are
isomorphisms.

 The results below in this section are essentially due to
A.~Beilinson, V.~Ginzburg, and V.~Schechtman~\cite{BGSch}.

 Recall that a graded algebra $A= A_0\oplus A_1\oplus A_2\oplus\dsb$
over a field $k$ with $A_0=k$ is called \emph{quadratic}~\cite{PV}
if it is multiplicatively generated by $A_1$ with relations
in degree~$2$.

\begin{prop} \ 
 \begin{enumerate}
 \renewcommand{\theenumi}{\arabic{enumi}}
 \item For the above abelian category $\A$, one has\/
$\Ext^n_\A(E_i,E_j)=0$ for $n>j-i$.
 \item The graded algebra $A$ with the components
$A_n=\Ext^n_\A(E_0,E_n)$ (and the multiplication defined in terms
of the Yoneda multiplication on the Ext using the twist functor 
$X\maps X(1)$ on the category $\A$) is quadratic.
 \item For any quadratic graded algebra $A$ over~$k$ there exists
a $k$\+linear abelian category $\A$ with a sequence of objects
$E_i\in\A$ and a twist functor $X\maps X(1)$ on $\A$ such that
$E_i(1)\simeq E_{i+1}$, all the objects of $\A$ can be obtained from
$E_i$ as iterated extensions, the derived category $\D=\D^b(\A)$ with
the objects $E_i\in\D$ satisfies
the conditions~\eqref{exceptional-strong} and the diagonal Ext
algebra $\bigoplus_n \Ext^n_\A(E_0,E_n)$ is isomorphic to
the graded algebra~$A$.
 \item Moreover, for any quadratic algebra $A$ there exists a unique,
up to a unique exact equivalence, preserving $E_i$ and the twist
functor, abelian category $\A$ in~\textup{(3)} such that\/
$\Ext^n_\A(E_i,E_j)=0$ for all\/ $n\ne j-i$, \ $n=1$ or~$2$.
\end{enumerate}
\end{prop}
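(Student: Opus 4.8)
The plan is to treat the four statements in order, since each builds on the previous one. For part (1), I would argue by induction on the length of iterated extensions needed to produce objects of $\A$ from the $E_i$. The base case is the vanishing $\Hom_\D(E_i,E_j[n])=0$ for $n>j-i$, which we must first establish; this follows from the ``successive quotients'' functor $\D\rarrow\prod_i\D_i$ used in the proof of the Theorem in~\ref{t-existence} together with the second and third lines of~\eqref{exceptional-strong}, because any morphism $E_i\rarrow E_j[n]$ with $n\ne0$ maps to zero in each $\D_k$, and for $i\ne j$ with $n=0$ it maps to zero as well; a weight/filtration argument (as in~\cite{BGSch}) then shows the degree count $n>j-i$ forces vanishing. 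Once this is known, the long exact sequences of $\Hom_\D$ applied to a two-step extension, combined with the identification $\Ext^n_\A(X,Y)\hookrightarrow\Hom_\D(X,Y[n])$ for $n=2$ and isomorphism for $n=1$, let one bound $\Ext^n_\A(E_i,E_j)$; the general case follows by dévissage on the filtrations of $X$ and $Y$.

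For part (2), I would first use part (1) to see that $A_n=\Ext^n_\A(E_0,E_n)$ vanishes for $n<0$ and that $A_0=\Hom_\A(E_0,E_0)=k$. The multiplication is the Yoneda product transported by the twist; associativity is automatic. To show $A$ is generated in degree one with relations in degree two, the key point is that for an iterated extension structure the ``silly filtration'' on a length-$n$ complex built from the $E_i$ realizes any class in $\Ext^n_\A(E_0,E_n)$ as an $n$-fold Yoneda product of degree-one classes (because the intermediate $\Ext^1$'s between consecutive $E_i$'s are where the extension data lives), so $A$ is generated by $A_1$. For the relations: given the surjection from the tensor algebra $T(A_1)$, one checks that the kernel is generated in degree $2$ by comparing $\Ext^2_\A(E_0,E_2)$ and $\Ext^3_\A(E_0,E_3)$; here I would use the structure of the derived category $\D^b(\A)$ and the fact that a length-$3$ complex of $E_i$'s with prescribed degree-one maps has its only obstruction in degree $2$, so no new relations appear beyond those forced in degree $2$. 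This is the argument of Beilinson--Ginzburg--Schechtman and I would cite~\cite{BGSch} for the details.

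For part (3), the plan is an explicit construction. Given a quadratic algebra $A=T(A_1)/(R)$ with $R\sub A_1\ot_k A_1$, I would build the category $\A$ as the category of finite-dimensional graded modules over the \emph{Koszul dual} $A^!$—or, more precisely, take $\A$ to be the category of finite-dimensional comodules over the quadratic dual coalgebra, with $E_i$ the one-dimensional object placed in internal degree $i$. The twist functor is the shift of internal grading. One then verifies: (a) every object is an iterated extension of the $E_i$ (immediate, since a finite-dimensional graded module is filtered by its grading pieces); (b) passing to $\D^b(\A)$, the conditions~\eqref{exceptional-strong} hold—the vanishing for $i>j$ because $A^!$ is nonnegatively graded, the vanishing of self-$\Ext$ in nonzero degree and $\Hom_\A(E_i,E_i)=k$ because the grading pieces are one-dimensional; (c) the diagonal $\Ext$ algebra is $(A^!)^! \cong A$ by quadratic duality, using that $\Ext^n_\A(E_0,E_n)$ computes the degree-$n$ part of the quadratic dual of $A^!$. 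The standard reference for (c) is~\cite{PV}.

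For part (4), uniqueness, the plan is to show that the conditions ``$\Ext^n_\A(E_i,E_j)=0$ for all $n\ne j-i$ with $n=1$ or $2$'' rigidify $\A$ completely. Given two such categories $\A$, $\A'$ with the same diagonal algebra $A$, I would reconstruct $\A$ from $A$ intrinsically: the constraints on $\Ext^1$ and $\Ext^2$ say precisely that $\A$ is the category of finitely generated graded $A^!$-modules (equivalently, finite-dimensional comodules over the quadratic dual coalgebra)—the $\Ext^1$ groups give the arrows and the $\Ext^2$ groups give exactly the quadratic relations, with no room for higher data. Concretely, define a functor $\A\rarrow\A'$ on the objects $E_i$ by the identity, extend to iterated extensions using that extensions of $E_i$'s in either category are classified by the same $\Ext^1$ and $\Ext^2$ groups (which agree, being the components of $A$), and check it is exact and essentially surjective; fully faithfulness follows by induction on extension length using the long exact sequences, the degree-one and degree-two $\Ext$ identifications, and the hypothesis that all off-diagonal higher $\Ext$'s vanish. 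The uniqueness of the equivalence up to unique isomorphism comes from the fact that it is pinned down on the generators $E_i$ and on the twist.

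The main obstacle I anticipate is part (2), specifically the proof that no relations beyond degree $2$ occur: this requires a careful analysis of how $\Ext^2_\A(E_0,E_n)$ for $n\ge 3$ decomposes under the Yoneda product, and it is here that the full force of the ``successive quotients'' functor and the exceptional-sequence structure~\eqref{exceptional-strong} must be used rather than just formal homological algebra. The construction in part (3) and the uniqueness in part (4) are then comparatively mechanical once one commits to the Koszul-dual model, though verifying~\eqref{exceptional-strong} in (3) does require knowing that $A^!$ is again nonnegatively graded with $(A^!)_0=k$, which is where quadraticity is essential.
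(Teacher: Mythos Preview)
Your approach to part~(1) contains a genuine gap. You claim that $\Hom_\D(E_i,E_j[n])=0$ for $n>j-i$ follows from the successive quotients functor $\D\to\prod_i\D_i$, arguing that any such morphism maps to zero in each $\D_k$. But this functor is not faithful: morphisms between objects supported in different weights always map to zero under it, regardless of whether they vanish in $\D$. The hypotheses~\eqref{exceptional-strong} together with the t\+structure conditions say nothing about $\Hom_\D(E_i,E_j[n])$ for $i<j$ and $0<n$, and indeed the subsequent Theorem in the paper \emph{assumes} the diagonal vanishing $\Hom_\D(E_i,E_j[n])=0$ for $n\ne j-i$ as an extra hypothesis precisely because it does not follow. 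Since your induction for $\Ext^n_\A$ is built on this false $\Hom_\D$ vanishing, part~(1) collapses, and part~(2) inherits the problem.

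The paper takes a completely different route for (1) and~(2). It restricts the successive quotients functor to the heart $\A$, where it becomes an exact \emph{faithful} functor to finite-dimensional graded $k$\+vector spaces, and then invokes Tannakian reconstruction \cite[Proposition~2.14]{DM} to identify $\A$ with the category of finite-dimensional graded comodules over a graded coalgebra~$C$. The mixed structure of $\A$ forces $C$ to be nonpositively graded with $C_0=k$. Now $\Ext^n_\A(E_0,E_i)$ is the $(n,i)$ bigraded piece of the reduced cobar-complex of~$C$; since $C_+$ sits in internal degrees $\le -1$, the $n$\+th term $C_+^{\ot n}$ sits in internal degrees $\le -n$, giving~(1) immediately, and~(2) follows from the standard cobar argument \cite[Proposition~2]{PV}. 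Your construction for (3) and~(4) via the quadratic dual coalgebra is essentially the paper's, but note that the paper's uniqueness argument in~(4) is one line once the coalgebra description is in hand: the vanishing conditions on $\Ext^1$ and $\Ext^2$ force $C$ itself to be quadratic \cite[Proposition~1]{PV}, hence determined by~$A$.
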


\begin{proof}
 Under the conditions~\eqref{exceptional-strong}, the functor of
``successive quotients'' mentioned in the proof in~\ref{t-existence}
becomes simply a triangulated functor from $\D$ to the bounded
derived category of finite-dimensional graded $k$\+vector spaces
sending $E_i$ to the vector space $k$ placed in the cohomological
degree~$0$ and the grading~$i$.
 Restricting this functor to the category $\A$, one obtains
an exact faithful functor from $\A$ to the category of
finite-dimensional graded $k$\+vector spaces transforming
the twist to the shift of grading.
 According to~\cite[Proposition 2.14]{DM}, one can identify $\A$ with
the category of finite-dimensional graded left comodules over
a graded coalgebra~$C$ over~$k$ so that the object $E_i$ corresponds
to a one-dimensional comodule $k$ placed in degree~$i$.
 Besides, $\A$ is a mixed category in the sense
of~\cite[Subsection~2.1.2]{BMS}, the indices of the increasing
filtration in~\cite{BMS} being minus the indices~$i$ of
the objects~$E_i$.
 It follows that $C$ is nonpositively graded with $C_0=k$.
 An explicit construction of the coalgebra $C$ can be found
in~\cite[Subsection~2.1.7]{BMS}.

 Since any $C$\+comodule is a union of its finite-dimensional
subcomodules, the spaces $\Ext$ computed in the categories of
arbitrary graded $C$\+comodules and finite-dimensional graded
$C$\+comodules coincide.
 Hence the spaces $\Ext_\A^n(E_0,E_i)$ can be computed as
the bigrading pieces of the reduced cobar-complex~\cite{PV}
$$
 k\lrarrow C_+\lrarrow C_+\ot_k C_+\lrarrow C_+\ot_k C_+\ot_k C_+
 \lrarrow\dsb,
$$
where $C_+=\Ker(C\to k)$.
 This proves parts~(1) and~(2) of the Proposition
(see~\cite[beginning of Section~2 and Proposition~2]{PV}).
 To verify uniqueness in~(4), notice that in the assumptions of~(4)
the coalgebra $C$ is quadratic by~\cite[Proposition~1]{PV}, so
it is uniquely determined by its quadratic dual algebra $A$.

 To prove existence in~(3) and~(4), it suffices to set $C$ to be
the coalgebra quadratic dual to $A$ and $\A$ to be the category of
finite-dimensional graded $C$\+comodules.
\end{proof}

 Consequently, if the natural maps $\Ext^n_\A(X,Y)\rarrow
\Hom_\D(X,Y[n])$ are isomorphisms for all $X$, $Y\in\A$ and $n\ge0$,
then $\Hom_\D(E_i,E_j[n])=0$ for $n>j-i$ and the graded algebra $A$
with the components $A_n=\Hom_\D(E_0,E_n[n])$ is quadratic.
 Conversely, any quadratic algebra $A$ can be realized in this way.

\begin{thm}
 Assume that\/ $\Hom_\D(E_i,E_j[n])=0$ for $n\ne j-i$.
 Then the maps\/ $\Ext^n_\A(X,Y)\rarrow\Hom_\D(X,Y[n])$ are
isomorphisms for all\/ $X$, $Y\in\A$ and all\/ $n\ge0$ if and only if
the graded algebra $A$ is Koszul (see~\cite{PV} for the definition).
\end{thm}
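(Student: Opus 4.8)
The plan is to identify the abelian category $\A$ with the category of finite-dimensional graded comodules over the quadratic coalgebra $C$ dual to $A$, exactly as in the proof of the preceding Proposition, and to identify the diagonal $\Hom$ algebra in $\D$ with $A$ under the assumption $\Hom_\D(E_i,E_j[n])=0$ for $n\ne j-i$. Under this hypothesis the "successive quotients" functor $\D\rarrow\D^b(\text{graded }k\text{-vector spaces})$ is available, and the full triangulated subcategory $\D$ is generated by the $E_i$ with $\Hom_\D(E_i,E_j[*])$ concentrated in a single cohomological degree $*=j-i$; this is precisely the setting of Beilinson--Ginzburg--Schechtman~\cite{BGSch}. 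First I would recall that the natural maps $\Ext^n_\A(X,Y)\rarrow\Hom_\D(X,Y[n])$ are multiplicative, are isomorphisms for $n\le1$, and are monomorphisms for $n=2$, so the only issue is surjectivity for $n\ge2$; moreover it suffices to treat $X$, $Y$ running over the generators $E_i$, since every object of $\A$ is an iterated extension of the $E_i$ and both sides are cohomological (effaceable) in each variable.

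The "only if" direction is the easier one. Suppose all the maps are isomorphisms. Then $\bop_n\Hom_\D(E_0,E_n[n])\simeq\bop_n\Ext^n_\A(E_0,E_n)=A$ as graded algebras, and the bigraded Yoneda $\Ext$ algebra $\bop_{n,i}\Ext^n_\A(E_0,E_i)$ is isomorphic to the full Yoneda $\Ext$ algebra $\bop_{n,i}\Hom_\D(E_0,E_i[n])$, which by hypothesis is concentrated on the diagonal $n=i$ and hence equals $A$. But $\Ext^\bu_\A(E_0,E_\bu)$ computed over the graded coalgebra $C$ is the cohomology of the reduced cobar complex of $C$; the statement that this $\Ext$ algebra is generated in degree $1$ with relations in degree $2$ and is diagonal is exactly the definition of $C$ (equivalently $C^*$, equivalently $A$) being Koszul. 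So Koszulity of $A$ follows.

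For the "if" direction, assume $A$ is Koszul. The key step is to construct, for each pair $i<j$ and each $n$ with $2\le n$, enough morphisms in $\A$ to surject onto $\Hom_\D(E_i,E_j[n])$. I would build a resolution of $E_i$ in $\A$ by objects that are direct sums of shifts of a fixed "projective-like" generator — concretely, because $\A$ is comodules over the Koszul coalgebra $C$, the Koszul complex of $C$ provides a canonical (co)resolution whose terms are sums of copies of $E_\bu$ placed in the correct internal degrees, and whose associated $\Hom$-complex computes $\Ext_\A$ on the nose with the diagonal grading. Applying the successive-quotients functor and using that $\Hom_\D(E_i,E_j[*])$ is diagonal, one checks that the same complex computes $\Hom_\D(E_i,E_j[*])$, and that the comparison map is induced by a map of complexes which is a quasi-isomorphism precisely because $A$ is Koszul (this is where Koszulity of $A$ is equivalent to acyclicity of the relevant Koszul complex in the appropriate range). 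Once the comparison is established on generators, it extends to all $X$, $Y\in\A$ by the five lemma and dévissage along the iterated-extension filtrations.

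The main obstacle I anticipate is the bookkeeping needed to promote the statement from the generators $E_i$ to arbitrary objects of $\A$ while keeping track of the twist functor and the internal grading simultaneously with the cohomological grading: one must verify that the comparison map $\Ext^\bu_\A\rarrow\Hom_\D$ is an isomorphism of \emph{bigraded} algebras, not merely that each graded piece matches, and that the Koszul resolution in $\A$ maps compatibly to a resolution witnessing the higher $\Hom$'s in $\D$ (this uses the existence of the filtered/derived enhancement of $\D$, or at least enough of it to lift the Koszul complex, as in~\cite{BGSch}). The rest is a translation of the definition of Koszulity for $A$ — equivalently for the quadratic dual coalgebra $C$ — into the statement that the reduced cobar complex of $C$ has cohomology concentrated on the diagonal, which then matches the diagonal higher $\Hom$'s in $\D$ by the standing assumption.
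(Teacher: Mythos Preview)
Your ``only if'' direction is essentially the paper's argument: once $\Ext^n_\A(E_0,E_i)\simeq\Hom_\D(E_0,E_i[n])$ is concentrated on $n=i$, the coalgebra $C$ is Koszul, hence so is $A$.

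For the ``if'' direction you diverge from the paper, and your route is both more complicated and has a gap. You assume from the outset that $\A$ is comodules over \emph{the quadratic coalgebra dual to $A$}, but the Proposition only identifies $\A$ with comodules over some nonpositively graded coalgebra $C$ with $C_0=k$; that $C$ is quadratic is not automatic. The paper supplies this step: the injectivity of $\theta^n$ for $n\le2$ together with the hypothesis $\Hom_\D(E_i,E_j[n])=0$ for $n\ne j-i$ forces $\Ext^n_\A(E_0,E_i)=0$ for $n\ne i$ and $n=1,2$, so $C$ is quadratic (by \cite[Prop.~1]{PV}); then the diagonal $\Ext$ algebra of $\A$, being quadratic by part~(2) of the Proposition and mapping to $A$ isomorphically in degree~$1$ and monomorphically in degree~$2$, equals $A$. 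Hence $C$ is the quadratic dual of $A$.

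Once that is in place, the paper's argument is one line: $A$ Koszul $\Rightarrow$ $C$ Koszul (by \cite[Prop.~3]{PV}) $\Rightarrow$ $\Ext^n_\A(E_0,E_i)=0$ for $n\ne i$. Both sides of $\theta^n$ are now supported on the diagonal and equal $A$ there, so $\theta^n$ is an isomorphism on the $E_i$, and d\'evissage finishes. Your proposed route via an explicit Koszul (co)resolution and ``the same complex computes $\Hom_\D$'' is unclear: a resolution of $E_0$ in $\A$ by sums of $E_m$'s is not $\Hom_\D({-},E_j[*])$-acyclic (indeed $\Hom_\D(E_m,E_j[j-m])\ne0$), so it does not compute $\Hom_\D$ in the naive way, and lifting the resolution to a Postnikov tower in $\D$ tacitly presupposes a realization functor $\D^b(\A)\to\D$ that is not part of the data. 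The bookkeeping you flag as the main obstacle (passing from generators to all of $\A$) is in fact the routine part; the real content is the identification of $C$ as quadratic dual to $A$ and the invocation of Koszul duality, which bypasses resolutions entirely.
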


\begin{proof}
 As it was explained in the proof of Proposition,
the spaces $\Ext^n_\A(E_0,E_i)$ are isomorphic to the bigraded
pieces of the cohomology of a nonpositively graded coalgebra~$C$.
 Now if $\Ext^n_\A(E_0,E_i)=0$ for $n\ne i$, then the coalgebra
$C$ is Koszul, so by~\cite[Proposition~3]{PV}, the algebra $A$
is Koszul, too.
 This proves the ``only if'' part.

 ``If'': since the maps $\Ext^n_\A(X,Y)\rarrow\Hom_\D(X,Y[n])$
are injective for $n\le 2$, one has $\Ext^n_\A(E_0,E_i)=0$ for
$n\ne i$ and $n=1$ or~$2$, so the coalgebra $C$ is quadratic.
 Since these maps are also bijective for $n=1$, the morphism of
graded algebras $\bigoplus_n \Ext^n_\A(E_0,E_n)\rarrow A$ is
an isomorphism in degree~$1$ and a monomorphisms in degree~$2$.
 If the algebra $A$ is quadratic, it follows from part~(2) of
Proposition that this morphism is an isomorphism.
 Consequently, the algebra $A$ is quadratic dual to the coalgebra
$C$, and if $A$ is Koszul, then $C$ is Koszul, too.
 Therefore, the morphisms $\Ext^n_A(E_i,E_j)\rarrow
\Hom_\D(E_i,E_j[n])$ are isomorphisms for all~$i$, $j$,
and~$n$.
 Since all the objects of $\A$ are successive extensions of
the objects $E_i$, we are done.
\end{proof}

\begin{rem}
 One can drop the condition that $\D$ be linear over a field
in the above Proposition and Theorem, replacing the condition
$\Hom_\D(E_i,E_i)=k$ with the condition that $\Hom_\D(E_i,E_i)$
be a division ring, as in~\ref{t-existence}.
 An even greater generality of arbitrary base ring is achieved
in Sections~\ref{diagonal-secn}--\ref{koszul-rings-secn}
by replacing abelian hearts with exact subcategories.
\end{rem}

\Section{Toy Example~II: Conilpotent Coalgebra}  \label{coalgebra-secn}

 Let $C$ be a coalgebra over a field~$k$ and $k\rarrow C$ be
a coaugmentation of $C$ (i.~e., a morphism of coalgebras).
 Recall~\cite[Subsection~3.1]{PV} that the \emph{coaugmentation
filtration} on $C$ is an increasing filtration defined by the rule
$$
 F_n C = \Ker(C\to C^{\ot n+1} \to (C/k)^{\ot n+1}),
$$
where $C\rarrow C^{\ot n}$ is the iterated comultiplication map
and the map $C^{\ot n+1}\rarrow (C/k)^{\ot n+1})$ is induced by
the cokernel $C\rarrow C/k$ of the coaugmentation morphism.
 The coalgebra $C$ is called \emph{conilpotent} if the filtration
$F$ is exhaustive, i.~e., $C=\bigcup_n F_nC$.
 For any coaugmented coalgebra $C$, the subcoalgebra $\Nilp C = 
\bigcup_n F_nC\subset C$ is the maximal conilpotent subcoalgebra
of~$C$.

 The coaugmentation morphism endows any vector space over~$k$ with
a structure of $C$\+comodule, called the \emph{trivial}
$C$\+comodule structure.
 A finite-dimensional $C$\+comodule is a comodule over $\Nilp C$
if and only if it is a successive extension of copies of the trivial
comodule $k$ over~$C$.

 Set $F^{-i}C=F_iC$.
 Consider the category $\F$ of finite-dimensional left $C$\+comodules
$N$ endowed with a decreasing filtration $F$ compatible with
the filtration $F$ on~$C$.
 This is equivalent to the filtration $F$ on $N$ being a filtration
by $C$\+subcomodules such that all the quotient $C$\+comodules
$F^iN/F^{i+1}N$ have trivial $C$\+comodule structures.
 We also assume that $F^iN=N$ for $i\ll0$ and $F^iN=0$ for $i\gg0$.
 The category $\F$ has a natural exact category structure in which
a short sequence $0\rarrow N'\rarrow N\rarrow N''\rarrow0$ is exact
if and only if the sequence of associated graded vector spaces
$0\rarrow\gr_FN'\rarrow\gr_FN\rarrow\gr_FN''\rarrow0$ is exact
(cf.~\ref{exact-cat-examples}$\.$(5)).
 Notice that the category $\F$ only depends on the subcoalgebra
$\Nilp C\sub C$.

 Let $\E$ denote the abelian category of finite-dimensional left
$C$\+comodules; then there is an exact forgetful functor $\F\rarrow\E$.
 There is a twist functor $X\maps X(1)$ on the category $\F$ defined
by the rule $F^iN(1)=F^{i-1}N$.
 For any object $X\in\F$ there is a natural morphism $X\rarrow X(1)$
that is transformed to the identity endomorphism by the forgetful
functor $\F\rarrow\E$.
 Denote by $E_i\in\F$ the one-dimensional trivial $C$\+comodule $k$
placed in the filtration component~$i$.
 Then $E_{i+1}=E_i(1)$ and there are natural morphisms $E_i\rarrow
E_{i+1}$ for all $i\in\Z$ corresponding to the identity morphism
$k\rarrow k$.
 Let $H(C)=\bop_n H^n(C)$ be the cohomology algebra of
the coaugmented coalgebra $C$ (see~\cite[Subsection~1.1]{PV})
and $\D=\D^b(\F)$ be the bounded derived category of~$\F$
(see \ref{exact-derived} or~\cite{Neem}).

\begin{thm} \
\begin{enumerate}
\renewcommand{\theenumi}{\arabic{enumi}}
 \item The objects $E_i\in\D$ satisfy
the conditions~\eqref{exceptional-strong}.
 \item One has\/ $\Ext_\F^n(E_i,E_j)=0$ for $n>j-i$.
 The graded $k$\+algebra $A$ with the components
$A_n=\Ext_\F^n(E_0,E_n)$ is quadratic and isomorphic to
the ``quadratic part''\/ $\qu H(C)$ of the graded algebra $H(C)$
(see~\cite[Subsection~2.1]{PV} for the definition).
 \item The morphisms\/ $\Ext_\F^n(E_i,E_j)\rarrow H^n(C)$
induced by the functor $\F\rarrow\E$ are isomorphisms for all\/
$n\le j-i$ if and only if the graded $k$\+algebra $H(C)$ is Koszul.
\end{enumerate}
\end{thm}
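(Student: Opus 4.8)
The plan is to compute $\Ext_\F^*(E_i,E_j)$ by a \emph{filtered cobar construction} and then extract all three assertions. First I would reduce: since $\F$, the objects $E_i$, the twist functor, the exact forgetful functor $\F\rarrow\E$, and the algebra $H(C)$ all depend only on $\Nilp C$, I may assume $C$ is conilpotent; the twist autoequivalence gives $\Ext_\F^n(E_i,E_j)\cong\Ext_\F^n(E_0,E_{j-i})$, and $\Hom_\D(E_i,E_j[n])$ coincides with the Yoneda $\Ext_\F^n(E_i,E_j)$ (and vanishes for $n<0$) by the comparison results of Appendix~A, while $\Hom_\F(E_i,E_j)=k$ for $i\le j$ and $=0$ for $i>j$. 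The heart of the matter is to equip the cofree coresolution $E_0\rarrow C\rarrow C\ot C_+\rarrow C\ot C_+^{\ot2}\rarrow\dsb$ of the trivial comodule with the tensor-product filtrations built from the coaugmentation filtrations of $C$ and $C_+$. Its associated graded is the cofree resolution of the trivial comodule over the nonnegatively graded conilpotent coalgebra $\gr_F C$, which is exact, so the filtered complex is exact in the ind-completion of $\F$, and its terms are acyclic for the functors $\Hom_\F(E_i,{-})$ in the relevant derived sense (cf.\ Appendix~A). Twisting and applying $\Hom_\F(E_0,{-})$ to the coresolution of $E_j$ then yields a natural isomorphism $\Ext_\F^n(E_0,E_j)\cong H^n(F_j\,\mathrm{Cb}(C))$, where $\mathrm{Cb}(C)\colon k\rarrow C_+\rarrow C_+^{\ot2}\rarrow\dsb$ is the reduced cobar complex and $F_j\,\mathrm{Cb}(C)^n=F_j(C_+^{\ot n})$ is its subcomplex of elements of coaugmentation-filtration degree $\le j$; the map in~(3) is induced by the inclusion $F_j\,\mathrm{Cb}(C)\hookrightarrow\mathrm{Cb}(C)$, whose colimit over $j$ is an equality. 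Since $C_+$ lives in filtration degrees $\ge1$, one has $F_j(C_+^{\ot n})=0$ for $n>j$; this gives $\Ext_\F^n(E_0,E_j)=0$ for $n>j$, which (with $\Hom_\F$ as above and the vanishing of negative $\Hom$'s) proves~(1) and the vanishing part of~(2).

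For the rest of~(2): from the formula, $A_n=\Ext_\F^n(E_0,E_n)=H^n(F_n\,\mathrm{Cb}(C))$, and since $F_n\,\mathrm{Cb}(C)$ is concentrated in degrees $\le n$ with top term $(F_1C_+)^{\ot n}$, a direct computation of the cobar differential --- using that $F_1C_+$ consists of primitive elements, so that only the comultiplication applied to the unique weight-$2$ tensor factor survives --- identifies $A_n$ with $(F_1C_+)^{\ot n}\big/\sum_i(F_1C_+)^{\ot i}\ot R\ot(F_1C_+)^{\ot n-2-i}$, where $R=\bar\Delta(F_2C_+)\sub F_1C_+\ot F_1C_+$; thus $A=T(F_1C_+)/(R)$ is quadratic. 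Moreover $H^1(C)=F_1C_+$ and $\Ker(H^1(C)^{\ot2}\rarrow H^2(C))=R$ --- the latter because the coaugmentation filtration is, by its very definition $F_nC=\Ker(C\to(C/k)^{\ot n+1})$, strict for the iterated reduced comultiplication --- so $\qu H(C)=T(F_1C_+)/(R)=A$ as graded algebras, the multiplications agreeing because $\mathrm{Cb}(C)$ is a filtered DG algebra.

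It remains to prove~(3), i.e.\ that $H^n(F_j\,\mathrm{Cb}(C))\rarrow H^n(\mathrm{Cb}(C))$ is an isomorphism for all $n\le j$ precisely when $H(C)$ is Koszul. The coaugmentation filtration makes $F_j\,\mathrm{Cb}(C)$ a filtered complex whose associated graded is the internal-degree-$\le j$ part of $\mathrm{Cb}(\gr_F C)$, giving a spectral sequence $\bigoplus_{d\le j}H^n(\mathrm{Cb}(\gr_F C))_{(d)}\Rightarrow H^n(F_j\,\mathrm{Cb}(C))$ that converges, for $j=\infty$, to $H^n(\mathrm{Cb}(C))$. If $\gr_F C$ is a Koszul coalgebra then $\mathrm{Cb}(\gr_F C)$ has cohomology concentrated in diagonal bidegree, these spectral sequences collapse at the first page, and all comparison maps are isomorphisms. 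Conversely, the isomorphism for $j=n$ forces $H^n(\mathrm{Cb}(C))\cong A_n$, hence $H(C)$ quadratic; and the isomorphisms for $j>n$ then force, by induction on the distance $j-n$, the vanishing of the off-diagonal cohomology of $\mathrm{Cb}(\gr_F C)$ --- at each stage the relevant off-diagonal group is neither hit by, nor maps nontrivially onto, the groups already known to survive in the truncated spectral sequence, so it persists to $E_\infty$ and must vanish --- so $\gr_F C$ is a Koszul coalgebra. Finally, $\gr_F C$ is a Koszul coalgebra if and only if $H(C)$ is a Koszul algebra: one direction is Koszul duality for the pair $(\gr_F C,\,H(C))$ in the sense of~\cite{PV}, and the other follows from bar--cobar duality $\mathrm{Bar}\,\mathrm{Cb}(C)\simeq C$, which identifies the coaugmentation-filtered coalgebra $C$ with one whose associated graded is the Koszul-dual coalgebra of the Koszul algebra $H(C)$.

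I expect the main obstacle to be the construction in the first paragraph: verifying carefully that the filtered cofree coresolution is genuinely exact in the ind-completion of $\F$, that its terms are acyclic for $\Hom_\F(E_i,{-})$, and that $\Hom_\F(E_0,{-})$ applied to the filtered cofree terms is exactly $F_j(C_+^{\ot n})$ --- this is where the exact-category formalism and the interplay of the cohomological and filtration gradings are concentrated. The spectral-sequence bookkeeping in~(3) and the equivalence ``$\gr_F C$ Koszul $\Leftrightarrow$ $H(C)$ Koszul'' are the remaining delicate points.
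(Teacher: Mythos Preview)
Your approach is essentially the same as the paper's: the key computation is the identification $\Ext_\F^n(E_0,E_j)\cong H^n(F_j\,\mathrm{Cb}(C))$ via the filtered cobar resolution, and everything else is read off from this formula.  The paper frames the acyclicity step slightly differently---it passes to the explicit category $\F'$ of possibly infinite-dimensional filtered comodules (rather than an ind-completion) and shows that the cofree terms $C\otimes_k V$ are genuinely injective there---and in part~(3) it argues via the long exact sequences of the filtration pieces rather than spectral-sequence bookkeeping, but these are presentational differences; your more self-contained computation of $A\cong\qu H(C)$ is in fact cleaner than the paper's citation of \cite{PV} and \cite{Pbogom} for that step.
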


\begin{proof}
 Let $\F'$ be the category of (possibly infinite-dimensional)
filtered $C$\+comodules $(N,F)$ such that $F^iN=0$ for $i\gg0$
and $N=\bigcup_iF^iN$.
 The filtration $F$ on $N$ is assumed to be compatible with
the filtration $F$ on $C$, so the successive quotients
$F^iN/F^{i+1}N$ have trivial $C$\+comodule structures.
 Then the embedding functor $\F\rarrow\F'$ induces isomorphisms
on the $\Ext$ spaces.
 Indeed, for any objects $X\in\F$ and $Y\in\F'$ and an admissible
epimorphism $Y\rarrow X$ there exists an object $Z\in\F$ and
a morphism $Z\rarrow Y$ such that the composition $X\rarrow Y
\rarrow Z$ is an admissible epimorphism.
 The same applies to the embedding of the abelian category $\E$ to
the category $\E'$ of all $C$\+comodules.

 For any filtered vector space $V$ over~$k$ the $C$\+comodule
$C\ot_kV$ with its filtration $F$ induced by the filtration $F$ on
$C$ and the filtration on $V$ is an injective object of $\F'$.
 Indeed, for any object $X=(N,F)\in\F$ the space $\Hom_\F(X\;C\ot_kV)$
is isomorphic to the space of filtration-preserving $k$\+linear
maps $X\rarrow V$, and in the exact category of filtered vector
spaces all exact sequences are split.
 Now consider the reduced cobar-resolution $\widetilde B^\bu$ of
the trivial $C$\+comodule~$k$
$$
 C\lrarrow C\ot_k C_+\lrarrow C\ot_k C_+\lrarrow C\ot_k C_+\ot_k C_+
 \lrarrow\dsb
$$
and endow it with a filtration $F$ induced by the filtration $F$
on $C$.
 The complex $(\widetilde B^\bu,F)$ is an injective resolution of
the object $E_0\in\F$, since the associated graded complex is exact.
 Computing the $\Ext$ spaces in $\F'$ in terms of this resolution,
we obtain the isomorphisms
$$
 \Ext_\F^n(E_i,E_j)=H^n(F^{i-j}B^\bu),
$$
where $B^\bu=\Hom_C(k,\widetilde B^\bu)$ is the cobar-complex
$$
 k\lrarrow C_+\lrarrow C_+\ot_k C_+\lrarrow C_+\ot_k C_+\ot_k C_+
 \lrarrow\dsb
$$
with its induced filtration~$F$.
 Similarly, $\Ext_\E^n(k,k) = H^n(B^\bu)$.

 This immediately proves part~(1) and the first assertion of~(2).
 Besides, one can easily see that the algebra $A$ is quadratic dual
to the quadratic part of the graded coalgebra $\gr_FC$
(cf.~\cite[Proposition~2]{PV}).
 It is isomorphic to $\qu H(C)$ by~\cite[Lemma~5.1]{Pbogom}
and~\cite[proof of Main Theorem]{PV}.
 So it remains to prove part~(3).

 ``Only if'': if $H^n(F^{-i}B^\bu)=H^n(B^\bu)$ for all $n\le i$,
then the quotient complex $F^{-n}B^\bu/F^{-n+1}B^\bu$ has
no cohomology except in degree~$n$.
 But the accociated graded complex $\gr_FB^\bu$ is isomorphic to
the cobar-complex of the graded coalgebra $\gr_FC$.
 If the latter has no cohomology outside of the diagonal $n=i$,
then the graded coalgebra $\gr_FC$ is Koszul and consequently
the quadratic dual algebra $A$ is Koszul, too.

``If'': if the algebra $H(C)$ is Koszul, then
$H(\Nilp C)\simeq H(C)$ \cite[Corollary~5.3]{Pbogom}
and the graded coalgebra $\gr_FC$ is
Koszul~\cite[proof of Main Theorem]{PV}.
 Thus $H^n(F^{-i+1}B^\bu)=H^n(F^{-i}B^\bu)$ for all $n<i$.
 It follows by passing to the inductive limit in~$i$ that
$H^n(F^{-i}B^\bu)=H^n(\Nilp C)$ for all $n\le i$.
\end{proof}

\begin{rem}
 Instead of using the results
of~\cite[Main Theorem and its proof]{PV}
and~\cite[Section~5]{Pbogom} in the above argument, one can reprove
and generalize these results using the methods developed in
this paper, and particularly in Section~\ref{associated-graded-secn}.
 See Section~\ref{nonfiltered-secn}.
\end{rem}

\Section{Filtered Exact Subcategory}  \label{filtered-exact-secn}

 Let $\D$ be a triangulated category and $\E_i\sub \D$, $i\in\Z$
be full subcategories, closed under extensions and such that
\begin{equation}  \label{triang-vanish-area}
\text{$\Hom_\D(X,Y[n])=0$ for all $X\in\E_i$, \ $Y\in\E_j$, }
\begin{array}{ll}
 \text{$i$, $j\in\Z$} &\text{and $n=-1$,} \\
 \text{or $i>j$} &\text{and $n=0$ or~$1$.}
\end{array}
\end{equation}
 Let $\E$ be an exact category and $\Phi\:\D\rarrow\D(\E)$ be
a triangulated functor mapping $\E_i$ into~$\E$.
 Assume that 
\begin{equation}  \label{triang-bl-area}
\begin{array}{l}
\text{the induced morphisms $\Hom_\D(X,Y[n])\rarrow
\Ext_\E^n(X,Y)$} \\
\text{are isomorphisms for all $X\in\E_i$, \ $Y\in\E_j$, \ $i<j$,
and $n=0$ or~$1$,} \\
\text{and monomorphisms for $i+2\le j$ and $n=2$.}
\end{array}
\end{equation}

 Let $\M$ be the minimal full subcategory of $\D$, containing
all $\E_i$ and closed under extensions.
 Then both $\E_i$ and $\M$ have natural exact category
structures (see~\cite{Dy} or~\ref{exact-triangulated}) and
the functors $\Phi\:\E_i\rarrow\E$ are exact.

 Let $\F$ be the category whose objects are triples $(N,Q,\rho)$,
where $N=(N,F)$ is a finitely filtered object of $\E$, \
$Q=(Q_i)$ is a finitely supported object of the Cartesian
product of $\E_i$, and $\rho:\gr_FN\rarrow \Phi(Q)$ is an isomorphism.
 Here a ``finitely filtered object'' $(N,F)$ is a sequence
$$
 \dsb\llarrow F^{i-1}N\llarrow F^iN\llarrow F^{i+1}N\llarrow\dsb
$$
of objects of $\E$ and admissible monomorphisms between them
such that $F^iN=0$ for $i\gg0$ and $F^{i+1}N\rarrow F^iN$ is
an isomorphism for $i\ll0$.
 The (stabilizing) inductive limit of $F^iN$ as $i\to-\infty$ is
denoted by~$N$.
 The object $\gr_FN$ is the collection of objects $F^iN/F^{i+1}N\in\E$.
 A ``finitely supported object'' $(Q_i)$ is a collection of objects
$Q_i\in\E_i$ such that $Q_i=0$ for all but a finite number of
indices~$i$.
 The object $\Phi(Q)$ is the collection of objects $\Phi(Q_i)\in\E$.

 The category $\F$ has a natural exact category structure in which
a short sequence with a zero composition is exact if the related
sequence of graded objects $Q=(Q_i)$ is exact in the Cartesian product
of $\E_i$, i.~e., exact in each~$i$
(cf.~\ref{exact-cat-examples}$\.$(4-5)).
 The exact category $\E_i$ is equivalent to the full exact subcategory
of $\F$ consisting of all the triples $(N,Q,\rho)$ such that $Q_j=0$
for all $j\ne i$.

\begin{thm} \
\begin{enumerate}
\renewcommand{\theenumi}{\arabic{enumi}}
 \item In the above situation, the exact categories $\M$ and $\F$
are naturally equivalent.
 \item Conversely, given any exact categories $\E$ and\/ $\E_i$ and
exact functors $\Phi_i\:\E_i\rarrow\E$, construct the exact category
$\F$ by the above procedure.
 Set $\D=\D^b(\F)$.
 Then the subcategories $\E_i\sub\F\sub\D$ and the functor
$\Phi\:\D\rarrow\D(\E)$ induced by the forgetful functor $\F\rarrow\E$
satisfy the conditions~\textup{(\ref{triang-vanish-area}--%
\ref{triang-bl-area})}.
 Moreover, the second assertion of~\eqref{triang-bl-area} holds
for all\/ $i<j$ and\/ $n=2$.
\end{enumerate}
\end{thm}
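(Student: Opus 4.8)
The plan is to reduce everything to the computation of the groups $\Hom_\D(X,Y[n])$ and $\Ext^n_\E(\Phi_iX,\Phi_jY)$ for $X\in\E_i$, $Y\in\E_j$ and $n\le2$, together with the existence of canonical weight filtrations. For Part~(1) I would first build an exact functor $\Psi\:\M\rarrow\F$. Every $M\in\M$, being a finite iterated extension of objects of the $\E_i$, carries a canonical finite filtration $W^\bu$ by admissible subobjects with $\gr^i_WM\in\E_i$; its existence and functoriality follow by dévissage from the generators, using the vanishing $\Hom_\D(X,Y)=\Hom_\D(X,Y[1])=0$ for $X\in\E_i$, $Y\in\E_j$, $i>j$, to peel off the extreme graded pieces and $\Hom_\D(X,Y[-1])=0$ for rigidity, exactly as in the argument in~\ref{t-existence} and in~\cite[Subsections~1.3.13--14]{BBD}, \cite{Bez}. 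A second dévissage, now using that $\Hom_{\D(\E)}(A,B[1])=\Ext^1_\E(A,B)$ for $A$, $B\in\E$ and that $\Phi$ is triangulated, shows $\Phi(M)\in\E$ and that $\Phi$ carries $W$ to a finite filtration $F$ on $\Phi(M)$ with $\gr^i_F\Phi(M)\simeq\Phi_i(\gr^i_WM)$; one sets $\Psi(M)=(\Phi(M),\gr_WM,\rho_M)$ with the tautological~$\rho_M$.

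Next, $\Psi$ is fully faithful. Dévissing both arguments along the weight filtrations, one reduces bijectivity of $\Hom_\D(M_1,M_2)\rarrow\Hom_\F(\Psi M_1,\Psi M_2)$ to the case $M_1\in\E_i$, $M_2\in\E_j$; the connecting terms that appear are groups $\Hom_\D(\,\cdot\,,\cdot\,[-1])$, which vanish by~\eqref{triang-vanish-area}, and groups $\Hom_\D(\,\cdot\,,\cdot\,[1])$, for which one only needs that $\Hom_\D(\E_i,\E_j[1])\rarrow\Ext^1_\F(\E_i,\E_j)$ is injective (no $n=2$ information is used). A direct inspection of the definition of $\F$ gives $\Hom_\F(X,Y)=\Ext^1_\F(X,Y)=0$ for $i>j$; $\Hom_\F(X,Y)\simeq\Hom_\E(\Phi_iX,\Phi_jY)$ and $\Ext^1_\F(X,Y)\simeq\Ext^1_\E(\Phi_iX,\Phi_jY)$ for $i<j$; and $\Hom_\F(X,Y)\simeq\Hom_{\E_i}(X,Y)$, $\Ext^1_\F(X,Y)\simeq\Ext^1_{\E_i}(X,Y)$ for $i=j$ (an admissible short exact sequence in $\F$ with pure sub- and quotient objects of weight $i$ has $Q$-part supported in degree $i$, so the middle object is pure of weight $i$). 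Since $\E_i$ is closed under extensions in $\D$, a pure object of weight $i$ of $\M$ lies in $\E_i$ and $\Ext^1_{\E_i}(X,Y)=\Hom_\D(X,Y[1])$ there; comparison with~\eqref{triang-vanish-area}--\eqref{triang-bl-area} finishes this step.

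It remains to realize each $(N,Q,\rho)\in\F$ as some $\Psi(M)$, by induction on the number $L$ of nonzero $Q_i$. The cases $L\le1$ are trivial. For $L=2$, with nonzero weights $a<b$, the class of the admissible extension $0\rarrow\gr^b_FN\rarrow N\rarrow\gr^a_FN\rarrow0$, transported by $\rho$, lies in $\Ext^1_\E(\Phi_aQ_a,\Phi_bQ_b)$, which by~\eqref{triang-bl-area} is canonically $\Hom_\D(Q_a,Q_b[1])$; the corresponding extension in $\D$ is the required $M$. For $L\ge3$, let $a$ be the least nonzero weight, realize $F^{a+1}N$ (which has $L-1$ pieces, all of weight exceeding $a$) as some $M'\in\M$ by the inductive hypothesis, and lift the class of $0\rarrow F^{a+1}N\rarrow N\rarrow\gr^a_FN\rarrow0$ from $\Ext^1_\E(\Phi_aQ_a,\Phi M')$ to $\Hom_\D(Q_a,M'[1])$. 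This is the only delicate point, and I expect it to be a secondary obstacle: peeling off the top graded piece of $M'$ repeatedly, surjectivity of $\Hom_\D(Q_a,M'[1])\rarrow\Ext^1_\E(\Phi_aQ_a,\Phi M')$ follows from the five lemma once the maps $\Hom_\D(Q_a,P[2])\rarrow\Ext^2_\E(\Phi_aQ_a,\Phi P)$ are injective for the relevant pure pieces $P$; but each such $P$ is the top piece of a sub- or quotient object of $M'$ which, if it is not already pure (in which case the $n\le1$ assertions suffice), has at least two weights, all exceeding $a$, so its top weight exceeds $a$ by at least $2$ and the monomorphism hypothesis in~\eqref{triang-bl-area} applies. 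The lifted class is moreover unique (the analogous five-lemma argument for injectivity uses only the $n\le1$ assertions), so the construction is functorial and provides a quasi-inverse to $\Psi$. This proves Part~(1).

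For Part~(2), $\D=\D^b(\F)$, the embeddings $\E_i\sub\F\sub\D$ being the evident ones and $\Phi\:\D^b(\F)\rarrow\D^b(\E)\sub\D(\E)$ the functor induced by the exact forgetful functor $\F\rarrow\E$. Computing $\Hom_{\D^b(\F)}(X,Y[n])$ as the Yoneda group $\Ext^n_\F(X,Y)$ for $X$, $Y\in\F$ (in the range $n\le2$ that matters here), everything reduces to evaluating these for $X\in\E_i$, $Y\in\E_j$: the group $\Ext^{-1}_\F$ vanishes, and the computations of $\Hom_\F$ and $\Ext^1_\F$ recalled in Part~(1) give~\eqref{triang-vanish-area} together with the $n=0,1$ assertions of~\eqref{triang-bl-area}. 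The heart of the matter---and, I expect, the main obstacle---is the vanishing $\Ext^n_\F(X,Y)=0$ for $n>j-i$ together with the isomorphism $\Ext^n_\F(X,Y)\simeq\Ext^n_\E(\Phi_iX,\Phi_jY)$ for $n\le j-i$; granted this for $n=2$, the strengthened second assertion of~\eqref{triang-bl-area} holds for every $i<j$, since $\Ext^2_\F(X,Y)=0$ when $j=i+1$ and the comparison map is an isomorphism when $j\ge i+2$. Following the pattern of Section~\ref{coalgebra-secn}, I would embed $\F$ into the larger exact category $\F'$ of filtered objects $(N,F)$ of $\E$ with $F^iN=0$ for $i\gg0$ and $N=\bigcup_iF^iN$ whose successive quotients lie in $\bigsqcup_i\Phi_i(\E_i)$, check that this embedding induces isomorphisms on $\Ext$, resolve $Y\in\E_j$ by a filtered bar complex in $\F'$ whose terms are acyclic for $\Hom_{\F'}(X,-)$ and whose associated graded is, degreewise, a bar resolution in $\E$, and then read off both the vanishing and the comparison from the weight grading; the realization functor of the appendices gives an alternative route. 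This completes the proof.
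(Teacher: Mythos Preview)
Your Part~(1) is correct and, underneath the different packaging, very close to the paper's argument. The paper compresses the d\'evissage into a single Lemma: if an exact functor induces isomorphisms on $\Ext^0$, $\Ext^1$ and monomorphisms on $\Ext^2$ between objects of a generating class, it is an equivalence of exact categories. It then checks these three conditions for the class $\bigcup_i\E_i$, handling $i>j$, $i=j$, and $i<j$ separately, and treating the delicate case $j=i+1$, $n=2$ by first identifying $\M_{[i,i+1]}$ with $\F_{[i,i+1]}$ from the $n\le1$ information alone. Your separate treatment of full faithfulness and essential surjectivity amounts to the same computations; your observation that the top weight of a non-pure subquotient of $M'$ is at least $a+2$ is exactly what makes the monomorphism hypothesis in~\eqref{triang-bl-area} applicable.

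Part~(2), however, has a real gap. The plan to ``resolve $Y\in\E_j$ by a filtered bar complex in $\F'$'' following Section~\ref{coalgebra-secn} does not carry over: the bar resolution there exists only because cofree comodules $C\ot_kV$ are available and injective in the filtered comodule category. For an arbitrary exact category $\E$ and arbitrary exact functors $\Phi_i\:\E_i\rarrow\E$ there is no such construction; $\E$ need not have any injectives or projectives, and there is nothing to play the role of a bar complex. (The realization functor of the appendices does not help here either: it produces functors out of derived categories, not the comparison of $\Ext^2$ you need.) Note also that the stronger assertion you aim for, $\Ext^n_\F(X,Y)\simeq\Ext^n_\E(\Phi_iX,\Phi_jY)$ for all $n\le j-i$, is neither claimed nor proved in the paper and is not needed for the theorem.

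The paper's proof of Part~(2) is entirely elementary and uses no resolutions. After the straightforward verification of~\eqref{triang-vanish-area} and the $n\le1$ part of~\eqref{triang-bl-area}, it shows directly that $\Ext^2_\F(X,Y)\rarrow\Ext^2_\E(\Phi X,\Phi Y)$ is injective for $X\in\E_i$, $Y\in\E_j$, $i<j$. One writes the given class as a product through some $Z$, reduces to $Z\in\F_{[i,j]}$ using the weight filtration, and then invokes the Yoneda description of a trivial $2$-extension (Corollary~\ref{exact-derived}.3): the image in $\E$ is trivial iff the morphism $\Phi(U)\rarrow\Phi(V)$ factors through some $T\in\E$ making two specific triples exact. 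Those exact triples force the successive quotients of $T$ to agree with those of $\Phi(V)$ at weight~$i$, of $\Phi(Z)$ at intermediate weights, and of $\Phi(U)$ at weight~$j$, so one can equip $T$ with a filtration lifting it to an object $W\in\F$, and the same diagram trivializes the original class in $\Ext^2_\F(X,Y)$. This hands-on argument is what you are missing.
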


\subsection{Proof of part~(1)} \label{proof-part-one}
 Since the natural morphisms
$$
 \Hom_\M^n(X,Y)\lrarrow\Hom_\D(X,Y[n]), \quad
 \text{$X$, $Y\in\M$, \ $n\ge0$}
$$
(see~\ref{exact-triangulated}) are isomorphisms for $n=0$ or~$1$,
by~\eqref{triang-vanish-area} one has $\Ext_\M^n(X,Y)=0$ for
$X\in\E_i$, \ $Y\in\E_j$, \ $i>j$, and $n=0$ or~$1$.
 Using associativity of extensions in exact categories
(associativity of the ``$*$\+operation''; cf.~\cite[Lemma~1.3.10]{BBD}
where this is done for triangulated categories), one can deduce
from this vanishing of $\Ext^1$ that any object of $\M$ has a finite
decreasing filtration with successive quotients in $\E_i$.
 It follows from the vanishing of $\Ext^0$ that such filtrations
are unique and preserved by all the morphisms in~$\M$.
 One can also see that a short sequence with zero composition in
$\F$ is exact if and only if its short sequence of successive
quotients is exact in the Cartesian product of~$\E_i$.
 
 Applying the functor $\Phi$ to the above filtration on an object
of~$\M$, one obtains a filtered object of $\E$.
 This defines the desired functor $\M\rarrow\F$.
 To prove that it is an equivalence of exact categories, one can
use the following lemma. 

\begin{lem}
 Let $\Lambda\:\A\rarrow\B$ be an exact functor between exact
categories and\/ $\C\sub\A$ be a class of objects such that
every object of\/ $\A$ can be obtained from objects of\/ $\C$ by
successive extensions and every object of\/ $\B$ can be obtained
from objects of $\Lambda(\C)$ in the same way.
 Then if the maps\/ $\Ext_\A^n(X,Y)\rarrow
\Ext_\B^n(\Lambda(X),\Lambda(Y))$ are isomorphisms for all $X$, $Y\in\C$
and $n=0$ or~$1$ and monomorphisms for all $X$, $Y\in\C$ and $n=2$,
then the functor $\Lambda$ is an equivalence of exact categories.
\end{lem}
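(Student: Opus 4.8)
The plan is to reduce the claim to three statements: that $\Lambda$ is fully faithful, that it is essentially surjective, and that it induces isomorphisms $\Ext_\A^1(X,Y)\to\Ext_\B^1(\Lambda X,\Lambda Y)$ for all objects $X$, $Y\in\A$. Granting these, together with the exactness of $\Lambda$, one concludes as follows. Recall (Appendix~A) that, for fixed $X'$, $X''$, the equivalence classes of admissible short exact sequences $0\to X'\to E\to X''\to 0$ in an exact category are in bijection with $\Ext^1(X'',X')$. Hence, given a composable pair $X'\to X\to X''$ in $\A$ with zero composition whose image under $\Lambda$ is admissible exact, one may choose an admissible exact sequence $0\to X'\to W\to X''\to 0$ in $\A$ representing the class in $\Ext_\A^1(X'',X')$ that corresponds, under the assumed isomorphism, to the class of the $\Lambda$\+image; applying $\Lambda$ and using full faithfulness, $W$ is identified with $X$ compatibly with the structure maps, so the original pair is itself admissible exact. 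Thus $\Lambda$ reflects exactness, and an exact, fully faithful, essentially surjective functor that reflects exactness is an equivalence of exact categories.

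The technical core is a dévissage establishing that the natural maps $\Ext_\A^n(X,Y)\to\Ext_\B^n(\Lambda X,\Lambda Y)$ are isomorphisms for $n=0$, $1$ and monomorphisms for $n=2$ for \emph{all} $X$, $Y\in\A$, not only for objects of $\C$. The tool is the long exact sequence of Yoneda $\Ext$ attached to an admissible short exact sequence in an exact category (Appendix~A), which is functorial and hence compatible with $\Lambda$. First fix $X\in\C$ and induct on the length of a filtration of $Y$ with successive quotients in $\C$: writing $0\to Y'\to Y\to Q\to 0$ with $Q\in\C$ and $Y'$ of smaller length, one gets a morphism of long exact sequences and applies the four lemma in each degree. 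The delicate point is that surjectivity of $\Ext_\A^1(X,Y)\to\Ext_\B^1(\Lambda X,\Lambda Y)$ is obtained from a four-lemma input involving injectivity of the degree-$2$ map for $Y'$; this is precisely why the hypothesis in degree $2$ is needed, and one checks that the three assertions (iso in degrees $0$, $1$; mono in degree $2$) propagate together through the induction. Keeping this for $X\in\C$ and arbitrary $Y\in\A$, one then runs the analogous induction in the first variable, using the $\Ext$ long exact sequence there, to reach arbitrary $X\in\A$.

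The three reductions are now at hand. Taking $n=0$ in the dévissage gives full faithfulness. For essential surjectivity, let $Z\in\B$ and choose a presentation of $Z$ as a successive extension of objects $\Lambda(C)$ with $C\in\C$; arguing by induction on the number of extensions, suppose $0\to Z'\to Z\to Z''\to 0$ with $Z'\simeq\Lambda X'$ and $Z''\simeq\Lambda X''$ for some $X'$, $X''\in\A$. The class of this extension lies in $\Ext_\B^1(\Lambda X'',\Lambda X')$, which by the dévissage equals the image of $\Ext_\A^1(X'',X')$; realizing a preimage by an admissible exact sequence $0\to X'\to W\to X''\to 0$ in $\A$ and applying $\Lambda$ yields $Z\simeq\Lambda W$. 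This gives essential surjectivity, and the first paragraph completes the proof.

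I expect the main obstacle to be the bookkeeping in the dévissage: one needs the Yoneda $\Ext$ long exact sequences to be available in the generality of exact (not abelian) categories --- which is why I would rely on the material of Appendix~A, or alternatively embed $\A$ and $\B$ into abelian categories --- and one must keep careful track of which of the ``isomorphism in degrees $0$, $1$ / monomorphism in degree $2$'' statements feeds each four-lemma application so that the simultaneous induction closes. Once that is in place, the passages to full faithfulness, essential surjectivity, and reflection of exactness are formal.
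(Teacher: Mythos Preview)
Your proposal is correct and follows essentially the same approach as the paper: a d\'evissage via the long exact $\Ext$ sequences and the four/five-lemma to propagate the $\Ext^{0,1}$ isomorphisms (and $\Ext^2$ injectivity) from $\C$ to all of $\A$, after which full faithfulness, essential surjectivity, and reflection of exactness follow formally. The paper's own proof compresses this entire argument into two sentences (``Using the five-lemma and induction on the number of subquotients in an iterated extension, one can show that the functor $\Lambda$ induces isomorphisms on $\Ext^n$ for $n=0$ and~$1$. This implies the assertion of Lemma''), so your version is a faithful and considerably more detailed unpacking of the same idea.
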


\begin{proof}
 Using the five-lemma and induction on the number of subquotients
in an iterated extension, one can show that the functor $\Lambda$
induces isomorphisms on $\Ext^n$ for $n=0$ and~$1$.
 This implies the assertion of Lemma.
\end{proof}

 Let us check that the conditions of Lemma are satisfied for
the functor $\M\rarrow\F$ and the class of objects $\C=\bigcup_i\E_i$.
 Let $X\in\E_i$ and $Y\in\E_j$.
 For $i>j$, one has $\Ext^n_\M(X,Y)=0=\Ext^n_\F(X,Y)$ for any~$n$,
because the natural decreasing filtrations on objects of $\M$ and
$\F$ split any such extensions.
 For $i=j$, one has $\Ext^n_\M(X,Y)=\Ext^n_{\E_i}(X,Y)=\Ext^n_\F(X,Y)$
for any~$n$, due to the same natural filtrations.

 It remains to consider the case $i<j$.
 For $n=0$ or~$1$, by~\ref{exact-triangulated}
and~\eqref{triang-bl-area} the natural maps $\Ext^n_\M(X,Y)\rarrow
\Hom_\D(X,Y[n])\rarrow\Ext^n_\E(X,Y)$ are isomorphisms.
 It is straightforward to check that the natural maps $\Ext^n_\F(X,Y)
\rarrow\Ext^n_\E(X,Y)$ are isomorphisms, too.
 For $n=0$, this is so because any morphism $\Phi(X)\rarrow\Phi(Y)$
in $\E$ is compatible with the filtrations on $\Phi(X)$ and $\Phi(Y)$,
while any morphism between $X$ and $Y$ considered as objects of
the Cartesian product of $\E_i$ is zero.
 For $n=1$, it suffices to say that any extension of $\Phi(X)$ and
$\Phi(Y)$ in $\E$ determines an object of $\F$, which becomes
an extension of $X$ and $Y$ in~$\F$.

 It follows from what we have proven so far that the full exact
subcategories consisting of extensions of objects from $\E_i$ and
$\E_{i+1}$ in $\M$ and $\F$ are equivalent for any~$i$.
 Indeed, bijectivity on $\Ext^0$ and injectivity on $\Ext^1$
between the generating objects suffice to conclude that
the functor is fully faithful (see the proof of Lemma), and then
surjectivity on $\Ext^1(X,Y)$ for $X\in\E_i$, \ $Y\in\E_{i+1}$
implies surjectivity of the functor on the objects.
 The exact category structures on the two categories are the same,
since in each of them a triple with zero composition is exact if
and only if its triple of successive quotients is exact in
$\E_i\times\E_{i+1}$.
 Consequently, the map $\Ext^2_\M(X,Y)\rarrow\Ext^2_\F(X,Y)$
is an isomorphism for $i+1=j$.

 Finally, for $i+2\le j$ by~\ref{exact-triangulated}
and~\eqref{triang-bl-area} the natural maps $\Ext^2_\M(X,Y)\rarrow
\Hom_\D(X,Y[2])\rarrow\Ext^2_\E(X,Y)$ are monomorphisms and they
form a commutative square with the maps $\Ext^2_\M(X,Y)\rarrow
\Ext^2_\F(X,Y)\rarrow\Ext^2_\E(X,Y)$, hence the map $\Ext^2_\M(X,Y)
\rarrow\Ext^2_\F(X,Y)$ is a monomorphism.

\subsection{Proof of part~(2)}  \label{proof-part-two}
 We have already explained why $\Ext_\F^n(X,Y)=0$ for all $i>j$ and
$n\ge0$ and why $\Ext_\F^n(X,Y)\simeq\Ext^n_\E(X,Y)$ for all $i<j$
and $n=0$ or~$1$.
 Let us show that the map $\Ext_\F^2(X,Y)\rarrow\Ext_\E^2(X,Y)$
is injective for all $i<j$.

 Denote by $\F_{[i,j]}$ the minimal full subcategory of $\F$
containing $\E_r$, \ $i\le r\le j$, and closed under extensions.
 The notation $\F_{(-\infty,i]}$ and $\F_{[i,+\infty)}$ has
the similar meaning.
 Any object of $Z\in\F$ can be presented as an extension of
an object $Z_{\le r}\in \F_{(-\infty,r]}$ and an object
$Z_{\ge r+1}\in\F_{[r+1,+\infty)}$ in a unique and functorial way.

 Suppose that a class in $\Ext_\F^2(X,Y)$ is presented as
the product of two classes in $\Ext_\F^1(X,Z)$ and $\Ext_\F^1(Z,Y)$.
 Consider the exact triple $Z_{\ge i}\rarrow Z\rarrow Z_{\le i-1}$.
 Since $\Ext^1_\F(X,Z_{\le i-1})=0$, our class in $\Ext_\F^1(X,Z)$
comes from a class in $\Ext_\F^1(X,Z_{\ge i})$, so we can replace
$Z$ with $Z_{\ge i}$.
 Analogously, one can replace $Z$ with $Z_{\le j}$, so as to get
$Z\in\F_{[i,j]}$.

 Now suppose that our class in $\Ext_\F^2(X,Y)$ is annihilated
by the forgetful functor $\Phi\:\F\rarrow\E$.
 Let our classes in $\Ext_\F^1(X,Z)$ and $\Ext_\F^1(Z,Y)$ be
presented by exact triples $Z\rarrow V\rarrow X$ and $Y\rarrow U
\rarrow Z$ in $\F$, so our class in $\Ext_\F^2(X,Y)$ is
presented by the Yoneda extension $Y\rarrow U\rarrow V\rarrow X$.
 So the Yoneda extension
$$
 \Phi(Y)\lrarrow \Phi(U)\lrarrow \Phi(V)\lrarrow \Phi(X)
$$
in $\E$ is trivial.
 This means that the morphism $\Phi(U)\rarrow \Phi(V)$ can be presented
as a composition $\Phi(U)\rarrow T\rarrow \Phi(V)$ in such a way that
the triples $\Phi(Y)\rarrow T\rarrow \Phi(V)$ and $\Phi(U)\rarrow T
\rarrow \Phi(X)$ are exact in $\E$
(see Corollary~\ref{exact-derived}.3).

 Define a decreasing filtration on~$T$ by setting $F^iT=T$ and
$F^rT=F^r\Phi(U)$ for all $r>i$, where the filtration $F$ on $\Phi(U)$
is a part of the data constituting the object $U\in\F$.
 The morphisms $F^{r+1}T\rarrow F^rT$ are admissible monomorphisms with
the successive quotients $T/F^{i+1}T\simeq \Phi(V)/F^{i+1}\Phi(V)$,
\ $F^rT/F^{r+1}T\simeq F^r\Phi(Z)/F^{r+1}\Phi(Z)$ for $i<r<j$, and
$F^jT\simeq F^j\Phi(U)$.
 This allows to lift $T$ to an object $W$ in $\F$ so that
the morphism $U\rarrow V$ factorizes as $U\rarrow W\rarrow V$
and the triples $Y\rarrow W\rarrow V$ and $U\rarrow W\rarrow X$
are exact.
 Thus our class in $\Ext^2_\F(X,Y)$ vanishes.  \qed

\Section{Associated Graded Category}  \label{associated-graded-secn}

\subsection{Posing the problem}  \label{graded-category-posing}
 Let $\F$ be a small exact category endowed with a sequence of
full subcategories $\E_i\subset\F$, \ $i\in\Z$.
 Assume that each subcategory $\E_i$ is closed under extensions
and every object of $\F$ can be obtained as a successive
extension of objects from~$\E_i$.
 Moreover, assume that
\begin{equation}  \label{exact-filtered-eqn}
 \Hom_\F(X,Y)=0=\Ext^1_\F(X,Y)
 \quad\text{for all $X\in\E_i$, \ $Y\in\E_j$, and $i>j$.}
\end{equation}
 Furthermore, suppose that a twist functor $X\maps X(1)$ is
defined on $\F$ such that it is an autoequivalence of $\F$
as an exact category and $\E_i(1)=\E_{i+1}$.
 Finally, suppose that a natural transformation
$\sigma\:X\rarrow X(1)$ is defined for all $X\in\F$
and $\sigma_{X(1)}=\sigma_X(1)$ for all~$X$.
 Assume that
\begin{equation}  \label{nat-transf-iso}
\begin{array}{l}
 \text{the induced maps $\Hom_\F(X,Y)\rarrow\Hom_\F(X,Y(r))$} \\
 \text{are isomorphisms for all $X$, $Y\in\E_i$, \
 $i\in\Z$, \ $r\ge0$.}
\end{array}
\end{equation}
 It follows from~\eqref{exact-filtered-eqn} (see the beginning of
Subsection~\ref{proof-part-one}) that there is an exact
functor of ``successive quotients'' $q = (q_i)_{i\in\Z}\:
\F\rarrow\prod_i\E_i$.
 Let us require that
\begin{equation}  \label{nat-transf-factorize}
 \begin{array}{l}
 \text{any morphism $X\rarrow Y$ annihilated by~$q$} \\
 \text{factorizes through the morphism
$\sigma_{Y(-1)}\:Y(-1)\rarrow Y$,} \\
 \text{or equivalently, through the morphism
$\sigma_X\:X\rarrow X(1)$.}
 \end{array}
\end{equation}
 It follows by induction on the number of subquotients in an iterated
extension from~\eqref{nat-transf-iso} and the first equation
in~\eqref{exact-filtered-eqn} that such a factorization is unique if
it exists, i.~e., the morphisms $\sigma_X$ are surjective and
injective (see~\ref{additive-categories}).

\begin{ex}
 Let $\E$ and $\E_0$ be exact categories and $\Phi_0\:\E_0\rarrow\E$
be a fully faithful exact functor (so the image of an exact triple
under $\Phi_0$ must be an exact triple, but the converse is not
required).
 Set $\E_i=\E_0$ and $\Phi_i=\Phi_0$ for all $i\in\Z$ and consider
the exact category $\F$ of finitely filtered objects in $\E$ with
subquotients lifted to $\E_i$ as constructed in
Section~\ref{filtered-exact-secn}.
 Identify each $\E_i$ with the full exact subcategory of~$\F$
consisting of all the triples $(N,Q,\rho)$ such that $Q_j=0$
for $j\ne i$.
 Let the twist functor $X\maps X(1)$ on $\F$ be defined by
$F^iN(1)=F^{i-1}N$, \ $Q(1)_i=Q_{i-1}$, and the morphism
$\sigma_X$ be acting by the identity on $N$ and by zero on~$Q$.
 Then all the conditions~(\ref{exact-filtered-eqn}--%
\ref{nat-transf-factorize}) are satisfied.
\end{ex}

 In particular, given a coaugmented coalgebra $C$ over a field~$k$
one can take $\E$ to be the abelian category of left $C$\+comodules,
$\E_0$ to be the abelian category of finite-dimensional $k$\+vector
spaces, and $\Phi_0$ to be the functor endowing a vector space with
the trivial $C$\+comodule structure.
 Then the category $\F$ from the above example coincides with
the category $\F$ from Section~\ref{coalgebra-secn}.
 More generally, given a coalgebra $C$ over $k$ endowed with
an increasing filtration $F_0C\subset F_1C\subset\dsb$ compatible
with the comultiplication, one can consider the exact category $\F$
of finite-dimensional filtered $C$\+comodules.
 This category $\F$ also satisfies~(\ref{exact-filtered-eqn}--%
\ref{nat-transf-factorize}).

 Together with the latter category, one can consider the category $\G$
of finite-dimensional graded comodules over the graded coalgebra
$\gr_FC$.
 Then there will be the exact functor of associated graded comodule
$\gr_F\:\F\rarrow\G$.
 Our goal in this section is to extend this construction to arbitrary
exact categories $\F$ satisfying
the conditions~(\ref{exact-filtered-eqn}--\ref{nat-transf-factorize}).
 So we would like to assign to such a category $\F$ an exact category
$\G$ with the following properties.

 The exact category $\G$ should be endowed with a sequence of full
exact subcategories $\E_i$, \ $i\in\Z$.
 Each subcategory $\E_i$ should be closed under extensions and
every object of $\G$ should be an iterated extension of objects
from~$\E_i$.
 Furthermore, one should have
\begin{equation}
\begin{alignedat}{2}  \label{exact-graded-eqn}
 \Hom_\G(X,Y)&=0
 &&\quad\text{for all $X\in\E_i$, \ $Y\in\E_j$, \ $i\ne j$;} \\
 \Ext^1_\G(X,Y)&=0
 &&\quad\text{for all $X\in\E_i$, \ $Y\in\E_j$, \ $i>j$.}
\end{alignedat} 
\end{equation} 
 Finally, an autoequivalence of the exact category $\G$, denoted by
$X\maps X(1)$, should be defined so that $\E_i(1)=\E_{i+1}$.

 The categories $\F$ and $\G$ should be related in the following way.
 There should be an exact functor $\gr\:\F\rarrow\G$ mapping each
$\E_i\sub\F$ to $\E_i\sub\G$ and defining an equivalence between
these exact categories.
 The functor $\gr$ should be compatible with the twist functors
$X\maps X(1)$ on $\F$ and~$\G$.
 And most importantly, for any objects $X$, $Y\in\F$ there should be
a functorial long exact sequence
\begin{multline}  \label{filtered-graded-sequence}
 \dsb\lrarrow\Ext^n_\F(X,Y(-1))\lrarrow\Ext^n_\F(X,Y) \\
 \lrarrow\Ext^n_\G(\gr\.X\;\gr\.Y)\lrarrow\Ext^{n+1}_\F(X,Y(-1))
 \lrarrow\dsb
\end{multline}

\begin{thm}
 There is a construction assigning to any small exact category $\F$
with the additional data
satisfying~\textup{(\ref{exact-filtered-eqn}--%
\ref{nat-transf-factorize})} a small exact category\/ $\G$ with
the additional data and an exact functor\/ $\gr\:\F\rarrow\G$
satisfying~\textup{(\ref{exact-graded-eqn}--%
\ref{filtered-graded-sequence})}.
\end{thm}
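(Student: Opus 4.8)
The plan is to realize $\G$ as a quotient of $\F$ in which the morphism $\sigma$ has been forced to zero, with $\gr$ acting as the identity on objects, and then to obtain the exact sequence~\eqref{filtered-graded-sequence} as the long exact sequence of a short exact sequence of complexes computing the relevant $\Ext$ groups, the subcomplex being the ``image of $\sigma$'' --- exactly as the coaugmentation filtration on the cobar resolution was used in Section~\ref{coalgebra-secn}.

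\emph{Construction of $\G$ and $\gr$.} Let $\I\sub\F$ consist of the morphisms that factor through a shift of~$\sigma$. By~\eqref{nat-transf-factorize} and the remark following it, $\I(X,Y)$ is exactly the kernel of the map $q\:\Hom_\F(X,Y)\rarrow\Hom_{\prod_i\E_i}(q(X),q(Y))$, and one checks from the naturality of $\sigma$ together with $\sigma_{X(1)}=\sigma_X(1)$ that $\I$ is a two-sided ideal stable under the twist. Define $\G$ to have the same objects as $\F$, with $\Hom_\G=\Hom_\F/\I$, and declare a short sequence in $\G$ to be admissible exact when its image under the induced faithful functor $\bar q\:\G\rarrow\prod_i\E_i$ is exact; one verifies the exact-category axioms for this class (cf.~\ref{exact-cat-examples}). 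The twist descends to $\G$ since $\I$ is twist-stable, and $\sigma$ descends to zero. Let $\E_i\sub\G$ be the full subcategory on the objects of $\E_i\sub\F$, and $\gr\:\F\rarrow\G$ the quotient functor; since $q=\bar q\circ\gr$ with $q$ exact, $\gr$ is exact.

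\emph{Elementary verifications.} For $X$, $Y\in\E_i$ one has $Y(-1)\in\E_{i-1}$, so $\Hom_\F(X,Y(-1))=0$ by~\eqref{exact-filtered-eqn}; hence $\I$ vanishes on $\E_i$ and $\gr$ restricts to an identity-on-objects, fully faithful functor $\E_i\rarrow\E_i$, which is the asserted equivalence. Every object of $\G$ is an iterated extension of objects of the $\E_i$, because every object of $\F$ is and $\gr$ is exact and identity on objects. For the first line of~\eqref{exact-graded-eqn}: if $X\in\E_i$, $Y\in\E_j$, $i\ne j$, then $q$ carries any morphism $X\rarrow Y$ into a map between distinct components of $\prod_i\E_i$, hence to zero, so the morphism lies in $\I$ and vanishes in~$\G$. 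For the second line ($i>j$): a short computation with the canonical filtrations of Subsection~\ref{proof-part-one} shows that a $\G$-extension of $X$ by $Y$ forces the underlying object to be $X\op Y$ already in~$\F$ (the $\G$-section $Y\rarrow Z$ lifts to an $\F$-splitting by the low-degree vanishing~\eqref{exact-filtered-eqn}), so the extension splits. Finally $\E_i(1)=\E_{i+1}$ in~$\G$ by construction.

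\emph{The long exact sequence --- the crux.} For $X$, $Y\in\F$ one constructs, functorially in~$Y$, a resolution $\mathcal R^\bu(Y)$ adapted to computing $\Ext^*_\F(X,-)$, built from the subcategories $\E_i$ in the manner of a (co)bar resolution and equipped with a ``$\sigma$-depth'' filtration whose depth-${\ge}r$ part is the image of $\sigma^r$ and which identifies $\mathcal R^\bu(Y(-1))$ with the depth-${\ge}1$ part of $\mathcal R^\bu(Y)$ via~$\sigma$; here one uses~\eqref{nat-transf-iso}, \eqref{nat-transf-factorize} and the injectivity and surjectivity of~$\sigma$. Applying $\Hom_\F(X,-)$ gives complexes $K^\bu(X,Y)$, $K^\bu(X,Y(-1))$ computing $\Ext^*_\F(X,Y)$, $\Ext^*_\F(X,Y(-1))$ and a \emph{termwise injective} map $\sigma_*\:K^\bu(X,Y(-1))\hookrightarrow K^\bu(X,Y)$; the cohomology long exact sequence of
\[ 0\lrarrow K^\bu(X,Y(-1))\lrarrow K^\bu(X,Y)\lrarrow K^\bu(X,Y)/\sigma_*K^\bu(X,Y(-1))\lrarrow 0 \]
is functorial and twist-compatible. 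It remains to identify $H^n$ of the quotient complex with $\Ext^n_\G(\gr X,\gr Y)$; this is done by checking that the associated graded of the resolution is an $\Ext$-computing resolution in~$\G$, using that $\bar q$ detects exactness and that $\gr$ annihilates~$\sigma$. (Alternatively, one may embed $\F$ into an abelian category with enough injectives via Appendix~A and argue there, as with $C$-comodules in Section~\ref{coalgebra-secn}.) The genuinely delicate step is this last comparison together with the choice of resolution: ordinary injective resolutions will not do, since $\sigma$ need not stay a monomorphism, so one needs a functorial, $\E_i$-adapted (bar-type) resolution on which $\sigma$ acts termwise injectively, and then one must match its $\sigma$-associated graded with an intrinsic $\Ext$-resolution in~$\G$; everything else reduces to bookkeeping with the exact-category axioms and the hypotheses~\eqref{exact-filtered-eqn}--\eqref{nat-transf-iso}.
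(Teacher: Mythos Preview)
Your quotient construction of $\G$ --- same objects as $\F$, morphisms $\Hom_\F/\I$ --- is not what the paper does, and it cannot satisfy the long exact sequence in general. In your $\G$ the map $\Hom_\F(X,Y)\to\Hom_\G(\gr X,\gr Y)$ is the quotient map, hence surjective; exactness of~\eqref{filtered-graded-sequence} at $\Hom_\G$ would then force $\sigma_*\:\Ext^1_\F(X,Y(-1))\to\Ext^1_\F(X,Y)$ to be injective for \emph{all} $X,Y\in\F$. Nothing in~(\ref{exact-filtered-eqn}--\ref{nat-transf-factorize}) guarantees this (it fails already in the coalgebra example of Section~\ref{coalgebra-secn} whenever the inclusion of filtered pieces of the cobar complex is not injective on cohomology). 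More structurally, every extension in your $\G$ between objects of $\E_i$ and $\E_j$ has middle term in $\F$ and hence comes from an $\F$-extension, so your $\Ext^1_\G$ is at best a quotient of $\Ext^1_\F$; but~\eqref{filtered-graded-sequence} requires $\Ext^1_\G$ to absorb the kernel of $\sigma_*$ on $\Ext^2_\F$. The paper's $\G$ is built precisely to create the missing morphisms: it is the localization $(\H/\I)[\S^{-1}]$ of a category $\H$ of diagrams $V\to U\to V(1)\to U(1)$, and a morphism $\gr X\to\gr Y$ there is a \emph{roof} $X\leftarrow X'\to Y$ with $q(X')\overset{\sim}{\to}q(X)$, not just a class in $\Hom_\F(X,Y)/\I$. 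You also assert ``one verifies the exact-category axioms'' for your $\G$, but Ex2 is genuinely unclear: the required pullbacks and pushouts must be produced as objects of $\F$, and the needed morphisms in $\prod_i\E_i$ need not lift back to $\F$.

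Your derivation of~\eqref{filtered-graded-sequence} via a short exact sequence of Hom-complexes is a sketch of a wish, not an argument. You postulate a functorial $\E_i$-adapted ``bar-type'' resolution $\mathcal R^\bu(Y)$ on which $\sigma$ is termwise injective and whose $\sigma$-quotient computes $\Ext_\G$, and then acknowledge that exactly this construction and comparison is ``the genuinely delicate step''. In a general small exact category there is no bar resolution and no supply of injectives; embedding into $\A'(\F)$ via Appendix~A gives you injectives but no twist, no $\sigma$, and no reason the quotient computes anything intrinsic to $\G$. The paper avoids resolutions entirely: it defines $\d_0\:\Hom_\G(\gr X,\gr Y)\to\Ext^1_\F(X,Y(-1))$ explicitly from a roof, proves (Lemmas~1--2 of~\ref{two-lemmas}) that the bimodule $\Ext^*_\G(\gr X,\gr Y)$ over $\Ext^*_\F$ is induced from its degree-zero part, uses this to extend $\d_0$ to all $\d_n$ compatibly, checks exactness of~\eqref{filtered-graded-sequence} by hand through $\Ext^1$, and then bootstraps to higher degrees using Lemma~1 again.
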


 The proof of Theorem occupies the rest of this section.

\subsection{A construction of the category~$\G$}
\label{graded-category-construction}
 Consider the category $\H$ whose objects are diagrams of the form
$$
 V\lrarrow U\lrarrow V(1)\lrarrow U(1),
$$
where $U$, $V\in\F$, the rightmost map is obtained by applying
the twist functor $X\maps X(1)$ to the leftmost map,
the compositions $V\rarrow U\rarrow V(1)$ and
$U\rarrow V(1)\rarrow U(1)$ are equal to the morphisms
$\sigma_V$ and $\sigma_U$, and the induced sequence
$$
 q(V)\lrarrow q(U)\lrarrow q(V(1))\lrarrow q(U(1))
$$
is exact (see~\ref{exact-derived}) in $\prod_i\E_i$.

 Let $\Delta\:\H\rarrow\prod_i\E_i$ be the functor assigning
to a diagram $(U,V)$ the object $\Im(q(U)\to q(V(1)))$.
 Let $\I$ denote the ideal of morphisms in $\H$ annihilated
by $\Delta$, and let $\H/\I$ be the quotient category.
 Let $\S$ be the class of morphisms in $\H/I$ that
the functor~$\Delta$ sends to isomorphisms.
 Our first aim is to show that the class $\S\sub\H/\I$ is localizing
(i.~e., satisfies the Ore conditions).
 It is clear that if any two morphisms $X\birarrow Y$ in $\H/\I$
have equal compositions with a morphism $X'\rarrow X$ or $Y\rarrow Y'$
belonging to $\S$, then these two morphisms $X\birarrow Y$ are equal.
 
 Let $(X,Y)\rarrow (K,L)\larrow (U,V)$ be two morphisms in $\H$
such that the morphism $\Delta(U,V)\rarrow\Delta(K,L)$ is
an admissible epimorphism in $\prod_i\E_i$.
 Then the morphism $U\oplus L\rarrow K$ is an admissible epimorphism
in~$\F$.
 Indeed, it suffices to check that the morphism $q(U)\oplus q(L)
\rarrow q(K)$ is an admissible epimorphism, since an extension of
admissible epimorphisms is always an admissible epimorphism.
 Consider the fibered product $X\pro_K(U\oplus L)$ in $\F$
(see axiom~Ex$2'$ in~\ref{axioms}).
 Let the map $X\pro_K(U\oplus L)\rarrow (Y\oplus V)(1)$ be
defined as the composition $X\pro_K(U\oplus L)\rarrow X\oplus U
\rarrow (Y\oplus V)(1)$ and the map $Y\oplus V\rarrow
X\pro_K(U\oplus L)$ be induced by the maps $Y\rarrow X$, \
$Y\rarrow L$, \ $V\rarrow U$, and minus the map $V\rarrow L$.
 Then the diagram
$$
 Y\oplus V\lrarrow X\pro_K(U\oplus L)\lrarrow
 (Y\oplus V)(1)\lrarrow (X\pro_K(U\oplus L))(1)
$$
is an object of the category~$\H$.
 There are natural morphisms from this object to the objects $(X,Y)$
and $(U,V)$; the square diagram formed by these two morphisms and
the morphisms $(X,Y)\rarrow (K,L)\larrow (U,V)$ is commutative
modulo~$\I$.
 The object $\Delta(X\pro_K(U\oplus L)\;Y\oplus V)$ is the fibered
product of $\Delta(X,Y)$ and $\Delta(U,V)$ over $\Delta(K,L)$.
 In particular, if the morphism $(U,V)\rarrow(K,L)$ belongs to $\S$,
then so does the morphism $(X\pro_K(U\oplus L)\;Y\oplus V)\rarrow
(X,Y)$. 

 This proves a half of the Ore conditions; the dual half is proved
in the dual way.

\begin{rem}
 The category of diagrams $(U,V)$ is perhaps best viewed as
a DG\+category, and even an exact DG\+category in the sense
of~\cite[Remark~3.5]{Pkoszul}; its full subcategory $\H$ of all
diagrams satisfying the exactness condition can be then
considered as its full exact DG\+subcategory.
 Very roughly, the natural transformation~$\sigma$ plays the role of
a (central) curvature element in a (purely even) ``CDG\+ring'' $\F$. 
 It would be interesting to know whether one can use some derived
category of the second kind of this exact DG\+category in order
to approach the following problem.
 How to construct the exact quotient category $\G=\F/\sigma$ given
only a twist functor $X\maps X(1)$ and a natural transformation
$\sigma\:X\rarrow X(1)$ on an exact category $\F$, satisfying some
reasonable conditions (e.~g., that the morphisms $\sigma_X$ be
injective and surjective)?
\end{rem}

\subsection{Exact category structure on~$\G$}  \label{exact-structure}
 The category $\G$ is defined as the localization
$\G=(\H/\I)[\S^{-1}]$.
 Set a short sequence in $\G$ to be exact if its image under
the functor $\Delta$ is exact.
 Let us check that this defines an exact category structure on~$\G$. 

 Consider a morphism~$f$ in $\G$ whose image under $\Delta$ is
an admissible epimorphism.
 It is clear that such a morphism is surjective in~$\G$.
 Represent~$f$ by a morphism $(U,V)\rarrow (K,L)$ in $\H$ and
apply the construction from~\ref{graded-category-construction}
to the pair of morphisms $(0,0)\rarrow (K,L)\larrow (U,V)$.
 We obtain a morphism $(\Ker(U\oplus L\to K)\; V)\rarrow (U,V)$
in $\H$ whose image~$g$ in $\G$ completes the morphism~$f$ to
an exact triple.
 Let us check that the morphism~$g$ is the kernel of~$f$.
 Any morphism with the target $(U,V)$ in $\G$ can be represented
by a morphism $(X,Y)\rarrow(U,V)$ in~$\H$.
 Assume that the composition $(X,Y)\rarrow (U,V)\rarrow (K,L)$
is annihilated by~$\Delta$.
 Then it follows from~\eqref{nat-transf-factorize} that
the morphism $X\rarrow K$ factorizes through~$L$, since
the composition $X\rarrow K\rarrow L(1)$ is annihilated by~$q$.
 This allows to lift the morphism $(X,Y)\rarrow (U,V)$ to
a morphism $(X,Y)\rarrow (\Ker(U\oplus L\to K)\; V)$ in~$\H$.
 
 Finally, suppose that we are given an exact triple in~$\G$; it
can be represented by a short sequence $(S,T)\rarrow(U,V)
\rarrow(K,L)$ in~$\H$.
 Any morphism with the target $(K,L)$ in $\G$ can be represented
by a morphism $(X,Y)\rarrow (K,L)$ in~$\H$.
 Applying the construction of~\ref{graded-category-construction}
again, we obtain an object $(X\pro_K(U\oplus L)\;Y\oplus V)$
in $\H$ together with natural morphisms from it to $(X,Y)$
and $(U,V)$.
 Just as above, one can construct a morphism
$(S,T)\rarrow(X\pro_K(U\oplus L)\;Y\oplus V)$ in $\H$
and the triple $(S,T)\rarrow (X\pro_K(U\oplus L)\;Y\oplus V)
\rarrow (X,Y)$ is exact in~$\G$.
 These observations together with their dual versions suffice to
check that $\G$ satisfies the exact category axioms Ex0\+-Ex3
from~\ref{axioms}.

 The functor $\gr\:\F\rarrow\G$ assigns to an object $X$
the diagram $(X,X(-1))$; it is obviously exact.
 The twist functor $Z\maps Z(1)$ on $\G$ is induced by
the twist functor $(X,Y)\maps (X(1),Y(1))$ on~$\H$.
 The exact subcategory $\E_i\sub\G$ consists of all objects~$Z$
such that the graded object $\Delta(Z)\in\prod_j\E_j$ is
concentrated in the grading $j=i$.
 One can construct a filtration $(Z_{\ge i})$ on an object $Z=(X,Y)
\in\G$ with successive quotients in $\E_i$ by the rules $Z_{\ge i}
= (X_{\ge i}\;\Ker(Y_{\ge i-1}\to q_{i-1}(X))$ or, equivalently,
$Z_{\ge i} = (\Ker(X_{\ge i-1}\to q_{i-2}(Y)\;Y_{\ge i-1})$;
see Subsection~\ref{proof-part-two} for the notation $W_{\ge i}$.
 Using this filtration, one can easily check that the functor $\gr$
induces equivalences between the exact subcategories $\E_i$ in $\F$
and $\G$ and the exact category $\G$
satisfies~\eqref{exact-graded-eqn}.

\subsection{Two lemmas}  \label{two-lemmas}
 The following lemmas will be needed in the remaining part of
the proof.

\begin{lem1}
 Let $\Lambda\:\A\rarrow\B$ be an exact functor between exact
categories such that for any admissible epimophism $T\rarrow\Lambda(X)$
in $\B$ there exist an admissible epimorphism $Z\rarrow X$ in $\A$
and a morphism $\Lambda(Z)\rarrow T$ in $\B$ making the triangle
$\Lambda(Z)\rarrow T\rarrow\Lambda(X)$ commute.
 Let $\xi$ be a class in $\Ext^n_\A(X,Y)$ and $\eta$ be a class
in $\Ext^m_B(\Lambda(Y),W)$ such that $\eta\Lambda(\xi)=0$ and $m\ge1$.
 Then there exist a morphism $f\:Y'\to Y$ in $\A$ and a class
$\xi'\in\Ext^n_\A(X,Y')$ such that $\xi=f\xi'$ and $\eta\Lambda(f)=0$.
\end{lem1}

\begin{proof}
 We will be mostly interested in the case $m=1$; however, in
Section~\ref{base-restriction-secn} we will also use the case $n=1$.
 Let us start with some general remarks.
 By Proposition~\ref{exact-derived}, a Yoneda extension
$(B\to C_1\to\dsb\to C_n\to A)$ in an exact category $\E$ is trivial
if and only if there exists a Yoneda extension
$(B\to D_1\to\dsb\to D_n\to A)$ mapping both to the extension
$(B\to C_1\to\dsb\to C_n\to A)$ and to the trivial extension
$(B\to B\to 0\to\dsb\to0\to A\to A)$.
 The latter condition simply means that the admissible monomorphism
$B\to D_1$ splits.
 Furthermore, one can make the morphisms $D_i\rarrow C_i$ admissible
epimorphisms by replacing $D_i$ with $D_i\oplus C_i\oplus C_{i-1}$
for $1<i<n$, \ $D_1$ with $D_1\oplus C_1$, and $D_n$ with
$D_n\oplus C_{n-1}$.

 Now we apply these observations to the case of the Yoneda extension
$$
 (W\rarrow V_1\rarrow\dsb\rarrow V_m\rarrow\Lambda(Z_1)\rarrow
\dsb\rarrow\Lambda(Z_n)\rarrow\Lambda(X))
$$
obtained by composing an extension representing $\eta$ with
the image under $\Lambda$ of an extension representing~$\xi$.
 An extension $(W\to T_1\to\dsb\to T_{n+m}\to\Lambda(X))$ maps
both to this composition and to the trivial extension, and moreover,
the maps $T_{j+m}\rarrow \Lambda(Z_j)$ are admissible epimorphisms.
 Using the assumption of Lemma and decreasing induction on~$j$,
one can construct admissible epimorphisms $Z'_j\rarrow Z_j$
forming a map of Yoneda extensions $(Y'\to Z'_1\to\dsb\to Z'_n\to X)
\rarrow (Y\to Z_1\to\dsb\to Z_n\to X)$ whose image under $\Lambda$
factorizes through the map of Yoneda extensions $\Im(T_m\to T_{m+1})
\to T_{m+1}\to\dsb\to T_{m+n}\to \Lambda(X))\rarrow
(\Lambda(Y)\to \Lambda(Z_1)\to\dsb\to\Lambda(Z_n)\to\Lambda(X))$.
 This provides the desired morphism $f\:Y'\rarrow Y$ and
class $\xi'\in\Ext^n(X,Y')$.

 Notice that we have obtained slightly more than we wanted:
the assertion of Lemma does not require the map~$f$ to be
an admissible epimorphism.
 However, applying the assumption of Lemma to construct
an admissible epimorphism $Z'_n\rarrow Z_n$ provides a way
to obtain an admissible epimorphism $Z'_n\rarrow X$ together
with a morphism $Z'_n\rarrow Z_n$ in the category $\A$
that we really need.
\end{proof}

 Recall the definition of a \emph{big graded ring}
from~\ref{big-graded-rings}.
 The next lemma can be viewed as a module version of
Corollary~\ref{exact-triangulated}.1.

\begin{lem2}
 Let $\Lambda\:\A\rarrow\B$ be an exact functor between small exact
categories satisfying the assumptions of Lemma~1.
 Then for any object $W\in\B$ the right graded module
$(\Ext^n_\B(\Lambda(X),W))_{X\in\A;\.n\ge0}$ over the big graded ring
$(\Ext^n_\A(X,Y))_{Y,X\in\A;\.n\ge0}$ over the set of all objects
of~$\A$ is induced from the right module $(\Hom_\B(\Lambda(Y),W))_Y$
over the big subring $(\Hom_\A(X,Y))_{Y,X}\sub
(\Ext^n_\A(X,Y))_{Y,X;\.n}$.
\end{lem2}

\begin{proof}
 There is an obvious natural map
$$
 (\Hom_\B(\Lambda(Y),W))_Y\ot_{(\Hom_\A(X,Y))_{Y,X}}
 (\Ext^n_\A(X,Y))_{Y,X;\.n}\lrarrow
 (\Ext^n_\B(\Lambda(X),W))_{X;\.n}.
$$
 It is surjective, since for any class $\eta\in\Ext^n_\B(\Lambda(X),W)$
there exist a class $\zeta\in\Ext^n_\A(X,Y)$ and a morphism
$g\:\Lambda(Y)\rarrow W$ in $\B$ such that $\eta=g\Lambda(\zeta)$.
 One shows this using the assumption of Lemmas~1--2 in a way similar
to (and simpler than) that of the proof of Lemma~1.

 Since the category $\A$ admits finite direct sums, in order to
check injectivity it suffices to show that for any class
$\zeta\in\Ext^n_\A(X,Y)$ and morphism $g\:\Lambda(Y)\rarrow W$
in $\B$ such that $g\Lambda(\zeta)=0$ and $n\ge1$ there exists
a morphism $h\:Y\rarrow Z$ in $\A$ and a morphism $t\:\Lambda(Z)
\rarrow W$ in $\B$ such that $g=t\Lambda(h)$ and $h\zeta=0$.
 Let us first consider the case $n=1$.
 Present the class~$\zeta$ by an extension $Y\rarrow Z\rarrow X$;
then the equation $g\Lambda(\zeta)=0$ means that the morphism~$g$
factorizes through the morphism $\Lambda(Y)\rarrow\Lambda(Z)$.
 So it suffices to take the admissible monomorphism $Y\rarrow Z$
as~$h$.

 Now we return to the general case $n\ge1$.
 Present the class~$\zeta$ as the composition of a class
$\xi\in\Ext^{n-1}_\A(X,U)$ and a class $\theta\in\Ext^1_\A(U,Y)$.
 Set $\eta=g\Lambda(\theta)$; then we have $\eta\Lambda(\xi)=0$.
 By Lemma~1, there exists a morphism $U'\rarrow U$ in $\A$
and a class $\xi'\in\Ext^{n-1}_\A(X,U')$ such that $\xi=f\xi'$
and $\eta\Lambda(f)=0$.
 Set $\theta'=\theta f$; then we have $g\Lambda(\theta')=0$.
 Consequently, there exists a morphism $h\:Y\rarrow Z$ in $\A$
and a morphism $t\:\Lambda(Z)\rarrow W$ in $\B$ such that
$g=t\Lambda(h)$ and $h\theta'=0$.
 Then $h\zeta=h\theta\xi=h\theta f\xi'=h\theta'\xi'=0$.
\end{proof}

\subsection{Construction of the boundary map}
 It remains to obtain the long exact
sequence~\eqref{filtered-graded-sequence}; we start with
constructing the boundary map $\d\:\Ext^n_\G(\gr\.X\;\gr\.Y)\rarrow
\Ext^{n+1}_\F(X,Y(-1))$ for all $X$, $Y\in\F$ and $n\ge 0$.

 Clearly, any object $(U,V)$ in $\G$ is the target of an admissible
epimorphism $\gr\.U=(U,U(-1))\rarrow (U,V)$; analogously, $(U,V)$
is the source of an admissible monomorphism into the object
$\gr\.V(1)=(V(1),V)$.
 Moreover, any admissible epimorphism $T\rarrow \gr\.X$ in $\G$
can be represented by a morphism $(U,V)\rarrow (X,X(-1))$ in $\H$,
hence there are admissible epimorphisms $U\rarrow X$ in $\F$ and
$\gr\.U\rarrow (U,V)$ in $\G$ such that the triangle
$\gr\.U\rarrow (U,V)\rarrow \gr\.X$ commutes.
 So the exact functor $\gr\:\F\rarrow\G$ satisfies the assumption of
Lemmas~1--2 of~\ref{two-lemmas}.

 To construct the image of a morphism $\gr\.X\rarrow\gr\.Y$ under
the homomorphism $\d_0\:\Hom_\G(\gr\.X\;\gr\.Y)\rarrow
\Ext^1_\F(X,Y(-1))$, choose an admissible epimorphism $X'\rarrow X$
and a morphism $X'\rarrow Y$ in $\F$ such that the triangle
$\gr\.X'\rarrow\gr\.X\rarrow\gr\.Y$ commutes in~$\G$.
 Let $K$ be the kernel of the admissible epimorphism $X'\rarrow X$;
then the composition $K\rarrow X'\rarrow Y$ is annihilated by
the functor $\gr$, and consequently factorizes through the morphism
$\sigma_{Y(-1)}\:Y(-1)\rarrow Y$.
 The morphism $K\rarrow Y(-1)$ that we have obtained induces from
the exact triple $K\rarrow X'\rarrow X$ the desired extension of
$X$ and $Y(-1)$ in~$\F$.

 Alternatively, choose an admissible monomorphism $Y\rarrow Y'$
and a morphism $X\rarrow Y'$ in $\F$ such that the triangle
$\gr\.X\rarrow\gr\.Y\rarrow\gr\.Y'$ commutes in~$\G$.
 Let $C$ be the cokernel of the admissible monomorphism $Y\rarrow Y'$;
then the composition $X\rarrow Y'\rarrow C$ is annihilated by
the functor $\gr$, so we obtain a morphism $X(1)\rarrow C$.
 This morphism induces from the extension $Y\rarrow Y'\rarrow C$
an extension of $X(1)$ and $Y$ in~$\F$.
 Let us show that the two extensions that we have obtained only
differ by (a twist and) the minus sign.

 The difference of the compositions $X'\rarrow X\rarrow Y'$
and $X'\rarrow Y\rarrow Y'$ is annihilated by the functor $\gr$,
so there is a morphism $X'\rarrow Y'(-1)$ in~$\F$.
 Together with the exact triples $K\rarrow X'\rarrow X$ and
$Y\rarrow Y'\rarrow C$ and the morphisms $K\rarrow Y(-1)$ and
$X\rarrow C(-1)$, this morphism forms a diagram in which one
square commutes and the other one anticommutes.
 It follows immediately that the map $\Hom_\G(\gr\.X\;\gr\.Y)
\rarrow\Ext^1_\F(X,Y(-1))$ given by either of the above two rules
is a well-defined homomorphism of bimodules over the big ring
$\Hom_\F(X,Y)_{Y,X}$ over the set $\Ob\F$ of all objects of~$\F$.

 By Lemma~2 of~\ref{two-lemmas} and its dual version,
the bimodule $(\Ext^n_\G(\gr\.X\;\gr\.Y))_{Y,X\in\F;\.n\ge0}$
over the big graded ring $(\Ext^n_\F(X,Y))_{Y,X\in\F;\.n\ge0}$
over $\Ob\F$ considered as either a left or right module is
induced from its zero grading component
$(\Hom_\G(\gr\.X\;\gr\.Y))_{Y,X\in\F}$ as a module over the zero
grading component $(\Hom_\F(X,Y))_{Y,X\in\F}$ of the big graded ring.
 We want our maps
$$
 \d=\d_n\:\Ext^n_\G(\gr\.X\;\gr\.Y)\lrarrow\Ext^{n+1}_\F(X,Y(-1))
$$
to satisfy the equations $\d(\Lambda(\xi)\eta)=(-1)^{|\xi|}\xi\d(\eta)$
and $\d(\eta\Lambda(\zeta))=\d(\eta)\zeta$ for any $\Ext$ classes
$\xi$ and~$\zeta$ of the degrees $|\xi|$ and~$|\zeta|$ in
the exact category $\F$ and any $\eta\in\Ext^n_\G(\gr\.X\;\gr\.Y)$.
 Either of these two equations defines the sequence of maps~$\d_n$
uniquely, and one only has to check that the two conditions are
compatible.
 It suffices to check this for a class in $\Ext_\G^1(\gr\.X\;\gr\.Y)$
decomposed into the product of a class in $\Ext_\F^1(U,Y)$ and
an element in $\Hom_\G(\gr\.X\;\gr\.U)$ and also into the product
of a class in $\Ext_\F^1(X,V)$ and an element in $\Hom_\G(\gr\.V\;
\gr\.Y)$.

 We have two exact triples $Y\rarrow S\rarrow U$ and
$V\rarrow T\rarrow X$ in $\F$ and a morphism of exact triples
$(\gr\.V\to\gr\.T\to\gr\.X)\rarrow(\gr\.Y\to\gr\.S\to\gr\.U)$
in~$\G$.
 Choose an admissible epimorphism $X'\rarrow X$ in $\F$ such that
the composition $\gr\.X'\rarrow \gr\.X\rarrow\gr\.U$ comes from
a morphism $X'\rarrow U$ in~$\F$.
 Denote by $T'''$ the fibered product of $T$ and $X'$ over~$X$.
 Choose an admissible epimorphism $T''\rarrow T'''$ such that
the composition $\gr\.T''\rarrow\gr\.T\rarrow\gr\.S$ comes from
a morphism $T''\rarrow S$ in~$\F$.
 Consider the difference of the compositions $T''\rarrow T'''
\rarrow X'\rarrow U$ and $T''\rarrow S\rarrow U$.
 It is annihilated by the functor $\gr$, and consequently factorizes
through the morphism $\sigma_{U(-1)}$; hence we get a morphism
$T''\rarrow U(-1)$.
 Denote by $T'$ the fibered product of $T''$ and $S(-1)$ over $U(-1)$.
 Define the morphism $T'\rarrow X'$ as the composition $T'\rarrow
T''\rarrow T'''\rarrow X'$ and the morphism $T'\rarrow S$ as 
the sum of the compositions $T'\rarrow T''\rarrow S$ and
$T'\rarrow S(-1)\rarrow S$.
 Then the square formed by the morphisms $T'\rarrow X'\rarrow U$
and $T'\rarrow S\rarrow U$ is commutative, as is the triangle
$\gr\.T'\rarrow\gr\.T\rarrow\gr\.S$.

 Let $V'$ be the kernel of the admissible epimorphism $T'\rarrow X'$.
 Then there is an admissible epimorphism of exact triples
$(V'\to T'\to X')\rarrow(V\to T\to X)$ and a morphism of exact triples
$(V'\to T'\to X')\rarrow(Y\to S\to U)$ whose images in $\G$ form
a commutative triangle with the morphism of exact triples
$(\gr\.V\to\gr\.T\to\gr\.X)\rarrow(\gr\.Y\to\gr\.S\to\gr\.U)$.
 Let $K\rarrow L\rarrow M$ be the kernel of the admissible epimorphism
$(V'\to T'\to X')\rarrow(V\to T\to X)$.
 Then the morphism of exact triples $(K\to L\to M)\rarrow
(Y\to S\to U)$ is annihilated by the functor $\gr$, so there is 
a morphism of exact triples $(K\to L\to M)\rarrow(Y(-1)\to S(-1)
\to U(-1))$.
 Consider the extension of exact triples $V\rarrow T\rarrow X$ and
$K\rarrow L\rarrow M$ and induce an extension of the exact triples
$V\rarrow T\rarrow X$ and $Y(-1)\rarrow S(-1)\rarrow U(-1))$ using
the above morphism.
 We have obtained a commutative $3\times3$ square formed by exact
triples.
 For any such square, the two $\Ext^2$ classes between the objects
at the opposite vertices obtained by composing the $\Ext^1$ classes
along the perimeter differ by the minus sign.
 This proves the desired equation in $\Ext^2_\F(X,Y)$.

\subsection{Exactness of the long sequence}
 Checking that the long sequence is a complex is easy.
 We will start with proving exactness of the segment
\begin{multline*}
 0\lrarrow\Hom_\F(X,Y(-1))\lrarrow\Hom_\F(X,Y) \lrarrow
\Hom_\G(\gr\.X\;\gr\.Y) \\
 \lrarrow\Ext^1_\F(X,Y(-1))\lrarrow\Ext^1_\F(X,Y)\lrarrow
\Ext^1_\G(\gr\.X\;\gr\.Y).
\end{multline*}

 We have already explained in~\ref{graded-category-posing}
that exactness at the term $\Hom_\F(X,Y(-1))$ follows from 
the conditions (\ref{exact-filtered-eqn}--\ref{nat-transf-iso}).
 Exactness at the term $\Hom_\F(X,Y)$ is provided by
the condition~\eqref{nat-transf-factorize}.
 Let us prove exactness at the term $\Hom_\G(\gr\.X\;\gr\.Y)$.
 Suppose we are given a morphism $\gr\.X\rarrow\gr\.Y$,
an admissible epimorphism $X'\rarrow X$, and a morphism $X'\rarrow Y$
such that the triangle $\gr\.X'\rarrow\gr\.X\rarrow\gr\.Y$ commutes.
 Let $K$ be the kernel of the morphism $X\rarrow Y$ and the composition
$K\rarrow X'\rarrow Y$ be factorized as $K\rarrow Y(-1)\rarrow Y$.
 Assume that the extension induced from the extension
$K\rarrow X'\rarrow Y$ using the morphism $K\rarrow Y(-1)$ splits.
 Then the morphism $K\rarrow Y(-1)$ factorizes through the morphism
$K\rarrow X'$, so there is a morphism $X'\rarrow Y(-1)$.
 Subtracting from the morphism $X'\rarrow Y$ the composition
$X'\rarrow Y(-1)\rarrow Y$ we obtain a new morphism $X'\rarrow Y$
that annihilates~$K$.
 Hence this morphism factorizes through the admissible epimorphism
$X'\rarrow X$, providing the desired morphism $X\rarrow Y$.

 Let us check exactness at the term $\Ext_\F^1(X,Y(-1))$.
 Suppose we are given an extension $Y(-1)\rarrow Z\rarrow X$
such that the morphism $\sigma_{Y(-1)}$ factorizes through
the admissible monomorphism $Y(-1)\rarrow Z$.
 Then we have a morphism $Z\rarrow Y$, and the induced morphism
$\gr\.Z\rarrow\gr\.Y$ annihilates the admissible monomorphism
$\gr\.Y(-1)\rarrow\gr\.Z$.
 Consequently, the morphism $\gr\.Z\rarrow\gr\.Y$ factorizes through
the admissible epimorphism $\gr\.Z\rarrow\gr\.X$, providing
a morphism $f\:\gr\.X\rarrow\gr\.Y$.
 By the definition, the class of our extension $Y(-1)\rarrow Z
\rarrow X$ is equal to~$\d f$.

 Let us prove exactness at the term $\Ext_\F^1(X,Y)$.
 Suppose that an exact triple $Y\rarrow Z\rarrow X$ becomes split
after the functor $\gr$ is applied to it.
 Then there exists a splitting morphism $\gr\.X\rarrow\gr\.Z$.
 Consequently, there exist an admissible epimorphism $f\:X'\rarrow X$
and a morphism $X'\rarrow Z$ such that the composition $X'\rarrow Z
\rarrow X$ is the sum of~$f$ and a morphism annihilated by $\gr$,
while the composition $\Ker(f)\rarrow X'\rarrow Z$ is also
annihilated by~$\gr$.
 Let $g\:X'\rarrow X$ be the sum of the morphism~$f$ and
the composition $X'\rarrow Z\rarrow X$.
 Then $g$~is also an admissible epimorphism, since $\gr\.g$ is;
and the composition $\Ker(g)\rarrow X'\rarrow Z$ is also
annihilated by $\gr$, since $\gr\.g=\gr\.f$.
 Now our exact triple $Y\rarrow Z\rarrow X$ is induced from
the exact triple $\Ker(g)\rarrow X'\rarrow X$ by a morphism
$\Ker(g)\rarrow Y$ annihilated by~$\gr$.
 Since the latter map factorizes through $\sigma_{Y(-1)}$,
we are done.

 Exactness at the further terms can be deduced from the exactness
in this initial segment using Lemma~1 of~\ref{two-lemmas}
applied to the functors $\Id_\F$ and $\gr\:\F\rarrow\G$. \qed

\Section{Restriction of Base}  \label{base-restriction-secn}

 Let $\G$ be an exact category endowed with a sequence of full
subcategories $\E_i$,\ $i\in\Z$.
 Assume that each subcategory $\E_i$ is closed under extensions
and every object of $\G$ is an iterated extension of objects
from~$\E_i$.
 Furthermore, assume that
\begin{equation}  \label{exact-trivial}
 \begin{array}{l}
 \text{the induced exact category structures on $\E_i$ are trivial,}\\
 \text{i.~e., for any $i\in\Z$ every exact triple in $\E_i\sub\G$
 splits.}
 \end{array}
\end{equation}
 Finally, assume that the groups $\Hom$ and $\Ext^1$ in the category
$\G$ satisfy the conditions~\eqref{exact-graded-eqn}.

 Let $\A_i$ be additive categories and $\psi_i\:\A_i\rarrow\E_i$
be additive functors such that
\begin{equation}  \label{essentially-surjective}
\text{every object of $\E_i$ is a direct summand of an object
coming from $\A_i$.}
\end{equation}
 Let $\H$ denote the category whose objects are the triples
$(X,Q,\rho)$, where $Q\in\prod_i\A_i$ is a finitely supported
graded object, $X$ is an object of $\G$, and $\rho\:q(X)\rarrow
\psi(Q)$ is an isomorphism in $\prod_i\E_i$
(cf.\ Section~\ref{filtered-exact-secn}), where $q$ denotes
the functor of ``successive quotients'' $\G\rarrow\prod_i\E_i$
(see Subsection~\ref{graded-category-posing}) and $\psi=(\psi_i)$.

 The category $\H$ has an exact category structure in which
a short sequence is exact if the related sequence of graded objects
$Q$ is split exact.
 The additive categories $\A_i$ can be considered as the full
subcategories of $\H$ consisting of all the triples $(X,Q,\rho)$
such that $Q_j=0$ for $j\ne i$.
 Then the exact category $\H$ with the full subcategories $\A_i$
satisfies all the above conditions~\eqref{exact-trivial}
and~\eqref{exact-graded-eqn} imposed on the exact category $\G$
with the full subcategories $\E_i$.

 There is the natural (forgetful) exact functor $\Psi\:\H\rarrow \G$.

\begin{ex}
 Let $S\rarrow R$ be a morphism of rings, $\E$ be the category
of finitely generated projective left $R$\+modules, $\A$ be
the category of finitely generated projective left $S$\+modules,
and $\psi\:\A\rarrow\E$ be the functor of extension of scalars,
$\psi(N)=R\ot_SN$.
 Then the functor $\psi$ satisfies~\eqref{essentially-surjective}.
\end{ex}

\begin{thm}
 The homomorphism $\Ext^n_\H(X,Y)\rarrow\Ext^n_\G(\Psi(X),\Psi(Y))$
induced by the functor $\Psi$ are
\begin{enumerate}
\renewcommand{\theenumi}{\arabic{enumi}}
\item epimorphisms for all $n\ge 1$ and $X$, $Y\in\H$;
\item isomorphisms for $n=1$ and $X$, $Y\in\H$, if the graded objects
$q(X)$ and $q(Y)\in\prod_i\A_i$ are supported in disjoint sets of
indices~$i$;
\item isomorphisms for all $n\ge 2$ and $X$, $Y\in\H$.
\end{enumerate}
\end{thm}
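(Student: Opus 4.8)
The plan is to apply Lemma~1 and Lemma~2 of Subsection~\ref{two-lemmas}, together with their opposite-category versions, to the forgetful functor $\Psi\:\H\rarrow\G$. The crucial preliminary step is to verify the hypothesis of those lemmas: for every admissible epimorphism $T\rarrow\Psi(X)$ in $\G$ one needs an admissible epimorphism $Z\rarrow X$ in $\H$ and a morphism $\Psi(Z)\rarrow T$ in $\G$ making the triangle commute, and dually for admissible monomorphisms. To construct such a $Z$, I would use~\eqref{essentially-surjective}: each successive quotient $q_i(T)\in\E_i$ is a direct summand of $\psi_i(P_i)$ for suitable $P_i\in\A_i$, say $\psi_i(P_i)\simeq q_i(T)\op C_i$, and then the object $\tilde T=T\op\bop_i C_i$ of~$\G$ satisfies $q(\tilde T)\simeq\psi(P)$, hence underlies an object $Z\in\H$ for which the projection $\tilde T\rarrow T$ supplies the required data. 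Here one uses that the exact structures on the $\E_i$ are trivial, so that $q$ carries admissible epimorphisms to split ones and $\tilde T$ is an iterated extension of the $\psi_i(P_i)$ compatibly with the $\H$\+structure.

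Granting the lifting property, I would prove part~(1) by induction on~$n$. For $n=1$ a short exact sequence $0\rarrow\Psi(Y)\rarrow C\rarrow\Psi(X)\rarrow0$ in~$\G$ has $q_i(C)\simeq q_i(\Psi Y)\op q_i(\Psi X)$, since the exact structure on $\E_i$ is trivial; so $C$ underlies an object of~$\H$, and the two structure morphisms act by the canonical inclusion and projection on each graded component, hence lift to~$\H$, placing the class in the image of $\Ext^1_\H$. For $n\ge2$ I would use the lifting property to cover the terminal admissible epimorphism $C_n\rarrow\Psi(X)$ of a Yoneda $n$\+extension by an admissible epimorphism $Z_n\rarrow X$ in~$\H$; a diagram chase then identifies the given class with the Yoneda product of the $\Ext^1_\H$\+class of $0\rarrow\ker(Z_n\to X)\rarrow Z_n\rarrow X\rarrow0$ with a class in $\Ext^{n-1}_\G$, which lifts to $\H$ by the inductive hypothesis ($n-1\ge1$).

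Part~(2): surjectivity is contained in part~(1), and for injectivity one takes $\xi\in\Ext^1_\H(X,Y)$ with $\Psi(\xi)=0$; the splitting of the underlying $\G$\+sequence has an ``off\+diagonal'' component $q(\Psi X)\rarrow q(\Psi Y)$ which is a morphism in $\prod_i\E_i$ between objects supported in disjoint sets of indices, hence zero, so the splitting is $\psi$ of the canonical graded splitting and therefore lifts to~$\H$, giving $\xi=0$. Part~(3): surjectivity again comes from part~(1). For injectivity in degrees $n\ge2$ I would run a d\'evissage, filtering first $Y$ and then $X$ by their canonical filtrations with subquotients in the $\E_i$ (which exist by the $\Hom$ and $\Ext^1$ vanishing~\eqref{exact-graded-eqn}) and using the long exact sequences of $\Ext$ in $\H$ and $\G$: part~(1) together with the compatibility of the connecting maps with $\Psi$ lets one replace the relevant $\Ext^{n-1}_\G$\+classes by classes lifted from~$\H$, part~(2) handles the $\Ext^1$\+terms (which after the reduction only involve disjoint supports), and one is left with the case $X\in\E_i$, $Y\in\E_j$, where both sides vanish unless $0\le n\le j-i$; the remaining ``diagonal band'' cases would be treated by dimension shifting based on Lemma~1 and on the induced\+module description furnished by Lemma~2 of~\ref{two-lemmas}.

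I expect the main obstacle to be precisely this last point: the injectivity of $\Ext^n_\H\rarrow\Ext^n_\G$ for $n\ge2$ in the range $0<n\le j-i$, where neither $\Hom$ nor $\Ext^1$ is known to be an isomorphism. The key is that the failure of $\Ext^1_\H\rarrow\Ext^1_\G$ to be injective is ``concentrated in a single grading'', being governed by the cokernels of the maps $\Hom_{\A_i}\rarrow\Hom_{\E_i}$ induced by $\psi_i$; one must then show that a class of this type is annihilated by any Yoneda product with an $\Ext^1_\H$\+class living in strictly higher gradings --- equivalently, that via the lifting property and Lemma~1 of~\ref{two-lemmas} such a class can be pushed off to zero.
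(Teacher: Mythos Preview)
Your approach to parts~(1) and~(2), the d\'evissage reducing injectivity on $\Ext^n$ to the case $X\in\A_i$, $Y\in\A_j$, and the inductive step via Lemma~1 of~\ref{two-lemmas} for $n\ge3$ are all correct and match the paper. The genuine gap is the one you yourself flag: the base case of injectivity on $\Ext^2_\H(X,Y)\rarrow\Ext^2_\G(\Psi X,\Psi Y)$ for $X\in\A_i$, $Y\in\A_j$. Your suggested line (analyzing the kernel of $\Ext^1_\H\rarrow\Ext^1_\G$ and showing it is killed by Yoneda products) is not carried out, and in fact once $i<j$ the supports are disjoint and part~(2) already makes $\Ext^1$ an isomorphism for the pair $(X,Y)$; the difficulty lies elsewhere.

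The paper closes this gap by a direct argument using Corollary~\ref{exact-derived}.3. Write the given class as the product of classes represented by exact triples $Y\rarrow U\rarrow Z$ and $Z\rarrow V\rarrow X$; since the exact structures on the $\A_r$ are trivial, one may take $Z\in\H_{[i+1,j-1]}$. If the image in $\Ext^2_\G$ vanishes, the criterion of Corollary~\ref{exact-derived}.3 produces an object $T\in\G$ and a factorization $\Psi(U)\rarrow T\rarrow\Psi(V)$ such that $\Psi(Y)\rarrow T\rarrow\Psi(V)$ and $\Psi(U)\rarrow T\rarrow\Psi(X)$ are both exact. The key observation is that the graded pieces of $T$ are then \emph{forced}: $q_i(T)\simeq q_i\Psi(X)$, \ $q_r(T)\simeq q_r\Psi(Z)$ for $i<r<j$, and $q_j(T)\simeq q_j\Psi(Y)$. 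Hence $q(T)\simeq\psi(Q_X\oplus Q_Z\oplus Q_Y)$, so $T$ lifts to an object $W\in\H$, and the two maps lift as well (their $\prod_r\A_r$\+components are the evident inclusions and projections). This witnesses the vanishing already in $\H$. After $n=2$ is established, Lemma~1 of~\ref{two-lemmas} handles $n\ge3$ exactly as you outline.

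One minor point on the lifting property: your choice of $P_i$ must contain $(Q_X)_i$ as a direct summand in $\A_i$, otherwise the morphism $P\rarrow Q_X$ in $\prod_i\A_i$ that you need may not exist (the $\psi_i$ are not assumed full). The paper avoids this issue by first reducing, via Corollary~\ref{exact-triangulated}.3, to the case where each $\psi_i$ is surjective on isomorphism classes; then the kernel of $q(T)\rarrow q\Psi(X)$ is isomorphic to some $\psi(R)$, the object $T$ itself lifts to $W=(T,\,R\oplus Q_X,\,\rho)\in\H$, and the projection $R\oplus Q_X\rarrow Q_X$ supplies the admissible epimorphism $W\rarrow X$ directly.
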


\begin{proof}
 Let $\A'_i$ be the additive category obtained by adjoining to $\A_i$
the images of those idemponent endomorphisms $p$ for which
the image of $\psi_i(p)$ exists in $\E_i$.
 Then the functor $\psi_i\:\A_i\rarrow\E_i$ factorizes through
the embedding $\A_i\rarrow\A'_i$, so there is an additive functor
$\psi'_i\:\A'_i\rarrow\E_i$.
 According to~\eqref{essentially-surjective}, the functors $\psi'_i$
are surjective on the isomorphism classes of objects.
 The same construction as above provides the exact category $\H'$
with the full subcategories $\A'_i$, the embedding of exact
categories $\H\rarrow\H'$, and the exact functor
$\Psi'\:\H'\rarrow\G$.
 The full exact subcategory $\H$ is closed under extensions in $\H'$
and all objects of the exact category $\H'$ are direct summands of
certain objects of $\H$, hence the embedding $\H\rarrow\H'$
induces isomorphisms on $\Ext^n(X,Y)$ for all $n\ge0$ and
$X$, $Y\in\H$ (see Corollary~\ref{exact-triangulated}.3).
 This reduces the problem to the case when the additive functors
$\psi_i$ are surjective on the isomorphism classes of objects,
which we will assume in the sequel.
 This part of the argument does not depend on the assumption that
the exact category structures on $\A_i$ and $\E_i$ are trivial.

 Conversely, given an exact category $\G$ with full subcategories
$\E_i$ as above, the exact category $\G^\sat$ with the full
subcategories $\E_i^\sat$ (see~\ref{additive-categories}) satisfies
the same conditions that we have imposed on $\G$ and $\E_i$.
 Applying the above construction to the exact category $\G^\sat\supset
\E_i^\sat$ and the additive functors $\E_i\rarrow\E_i^\sat$, one can
recover the original exact category $\G$ with the full
subcategories~$\E_i$.

 Part~(1): notice that the functor $\Psi$ is surjective on
the isomorphism classes of objects.
 So it suffices to prove that the map of the $\Ext$ groups is
surjective for $n=1$.
 Suppose that we are given an exact triple $\Psi(Y)\rarrow T\rarrow
\Psi(X)$ in~$\G$.
 Consider the related split exact triple $q\Psi(Y)\rarrow q(T)\rarrow
q\Psi(X)$ in $\prod_i\E_i$.
 Choosing a splitting, one can identify $q(T)$ with $q\Psi(Y)\oplus
q\Psi(X) \simeq \psi(q(Y)\oplus q(X))$.
 This defines a lifting of the object $T\in\G$ to an object $Z\in\H$
and of the exact triple $\Psi(Y)\rarrow T\rarrow \Psi(X)$ to
an exact triple $Y\rarrow Z\rarrow X$ in~$\H$.
 Part~(2) is obvious, since in its assumptions any splitting
of the exact triple $\Psi(Y)\rarrow\Psi(Z)\rarrow\Psi(X)$ is
simultaneously  a splitting of the exact triple $Y\rarrow Z\rarrow X$.

 It follows from surjectivity on $\Ext^1$ by means of the five-lemma
and induction on the number of iterated extensions that it suffices
to check injectivity on $\Ext^2$ in the case when $X\in\A_i$ and
$Y\in\A_j$ for some $i$, $j\in\Z$.
 Present our class in $\Ext^2_\H(X,Y)$ as the composition of some
classes in $\Ext^1_\H(X,Z)$ and $\Ext^1_\H(Z,Y)$.
 It was explained in Subsection~\ref{proof-part-two} that one can
choose $Z\in\H_{[i,j]}$; since we now assume the exact category
structures on $\A_i$ to be trivial, one can actually choose
$Z\in\H_{[i+1,j-1]}$, for the same reason.
 Let these classes $\Ext^1$ be presented by exact triples
$Y\rarrow U\rarrow Z$ and $Z\rarrow V\rarrow X$.
 The class $\Ext^2_\G(\Psi(X),\Psi(Y))$ represented by the Yoneda
extension $\Psi(Y)\rarrow\Psi(U)\rarrow\Psi(V)\rarrow\Psi(X)$
is trivial if and only if one can decompose the morphism
$\Psi(U)\rarrow\Psi(V)$ as $\Psi(U)\rarrow T\rarrow\Psi(V)$ in
such a way that the triples $\Psi(Y)\rarrow T\rarrow\Psi(V)$
and $\Psi(U)\rarrow T\rarrow\Psi(X)$ are exact
(see Corollary~\ref{exact-derived}.3).
 In this case one has $q_i(T)\simeq q_i\Psi(X)$, \ 
$q_r(T)\simeq q_r\Psi(Z)$ for $i<r<j$, and $q_j(T)\simeq q_j\Psi(Y)$
(see Section~\ref{associated-graded-secn} for the notation~$q_r(T)$;
cf.~\ref{proof-part-two}).
 This allows to lift the object $T\in\G$ to an object $W\in\H$
in such a way that the diagram remains commutative and the triples
remain exact.
 Then the original class in $\Ext^2_\H(X,Y)$ is also trivial.

 Finally, let us prove injectivity on $\Ext^n$ for $n\ge3$ using
injectivity on $\Ext^{n-1}$.
 Notice that the functor $\Psi\:\H\rarrow\G$ satisfies
the assumption of Lemma~1 from Subsection~\ref{two-lemmas}.
 Indeed, for any admissible epimorphism $T\rarrow\Psi(X)$ in $\G$
one can lift the object $T$ to an object $W\in\H$ in such a way
that the morphism $T\rarrow\Psi(X)$ lifts to an admissible
epimorphism $W\rarrow X$ in~$\H$.
 Now consider a class in $\Ext^n_\H(X,Y)$ and decompose it into
the product of classes in $\Ext^1_\H(X,Z)$ and $\Ext^{n-1}_\H(Z,Y)$.
 Assume that the product of the images of these classes in
$\Ext^1_\G(\Psi(X),\Psi(Z))$ and $\Ext^{n-1}_\G(\Psi(Z),\Psi(Y))$
vanishes.
 By the mentioned lemma, there exists a morphism $Z'\rarrow Z$
in $\H$ such that the class in $\Ext^1_\H(X,Z)$ comes from a class
in $\Ext^1_\H(X,Z')$ and the composition of the morphism
$\Psi(Z')\rarrow\Psi(Z)$ in $\G$ with the class in
$\Ext^{n-1}_\G(\Psi(Z),\Psi(Y))$ vanishes.
 Since the map $\Ext^{n-1}_\H(Z',Y)\rarrow\Ext^{n-1}_\G(\Psi(Z'),
\Psi(Y))$ is injective, it follows that the composition of
the morphism $Z'\rarrow Z$ in $\H$ with the class in
$\Ext^{n-1}_\H(Z,Y)$ also vanishes.
 Therefore, the original class in $\Ext^n_\H(X,Y)$ is zero.
\end{proof}

\Section{Diagonal Cohomology}  \label{diagonal-secn}

\subsection{Diagonal Ext is quadratic}  \label{diagonal-is-quadratic}
 Let $\D$ be a small triangulated category endowed with a sequence
of full subcategories $\E_i$, \ $i\in\Z$.
 Assume that each $\E_i$ is closed under extensions in $\D$ and
one has
\begin{equation}  \label{triang-vanish-silly}
\text{$\Hom_\D(X,Y[1])=0$ for all $X\in\E_i$, \ $Y\in\E_j$,
and $i\ge j$.}
\end{equation}
 Finally, suppose that a triangulated autoequivalence $X\maps X(1)$
is defined on the triangulated category $\D$ such that
$\E_i(1)=\E_{i+1}$.
 Let $\J$ be a full subcategory of $\E_0$ such that any object of
$\E_0$ is a finite direct sum of objects from~$\J$.
 Introduce the big graded ring of diagonal cohomology
$A=(\Hom_\D(X,Y(n)[n]))_{Y,X\in\J;\.n\ge0}$ over
the set $\Ob\J$ of all objects of~$\J$ (see~\ref{big-graded-rings}).
 Let $\M$ denote the minimal full subcategory of $\D$ containing
all $\E_i$ and closed under extensions.

\begin{thm}
 Assume that every morphism $X\rarrow Y[n]$ of degree~$n\ge2$
in $\D$ between two objects $X$, $Y\in\M$ can be presented as
the composition of a chain of morphisms $Z_{i-1}\rarrow Z_i[1]$
with $Z_i\in\M$, \ $Z_0=X$, and $Z_n=Y$ \textup{(}cf.\
Appendix~\textup{\ref{silly-filtrations-appx}}\textup{)}.
 Then
\begin{enumerate}
\renewcommand{\theenumi}{\arabic{enumi}}
\item one has\/ $\Hom_\D(X,Y[n])=0$ for all\/ $X\in\E_i$, \
 $Y\in\E_j$, \ $n\ge1$, and\/ $n>j-i$;
\item the big graded ring $A$ is \emph{quadratic}, i.~e., generated
by the $A_0$\+bimodule $A_1$ with relations in degree~$2$.
\end{enumerate}
\end{thm}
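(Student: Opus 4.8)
The plan is to reduce everything to two elementary observations about the weight-truncated subcategories of $\M$ and then use the decomposability hypothesis to ``track weights'' along a composition of degree-one morphisms. For $a\le b$ in $\Z\cup\{\pm\infty\}$ let $\M_{[a,b]}$ be the minimal full subcategory of $\D$ closed under extensions and containing every $\E_i$ with $a\le i\le b$ (so $\M=\M_{(-\infty,+\infty)}$, $\M_{[a,a]}=\E_a$). The first observation is that $\M=\M_{[c,+\infty)}*\M_{(-\infty,c-1]}$ for every $c$, and more precisely $\M_{[a,b]}=\E_b*\M_{[a,b-1]}=\M_{[a+1,b]}*\E_a$; this follows by induction on the length of an iterated extension from the associativity of the $*$\+operation together with the fact that a distinguished triangle $A\rarrow C\rarrow B\rarrow A[1]$ with $A\in\E_{l'}$, $B\in\E_l$, $l\ge l'$ is split (its class lies in $\Hom_\D(B,A[1])=\Hom_\D(\E_l,\E_{l'}[1])=0$ by~\eqref{triang-vanish-silly}), and likewise any two-step extension of an object of $\M_{[c,+\infty)}$ by an object of $\E_{l'}$ with $l'<c$ is split. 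In particular every $W\in\M$ fits in a distinguished triangle $W'\rarrow W\rarrow W''\rarrow W'[1]$ with $W'\in\M_{[c,+\infty)}$, $W''\in\M_{(-\infty,c-1]}$. The second observation, by dévissage over such filtrations, is that $\Hom_\D(W,W''[1])=0$ whenever $W\in\M_{[a,+\infty)}$, $W''\in\M_{(-\infty,b]}$ and $a\ge b$ (all matrix entries are groups $\Hom_\D(\E_l,\E_{l'}[1])$ with $l\ge a\ge b\ge l'$).

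For part~(1), the case $n=1$ is just~\eqref{triang-vanish-silly} (rewrite $1>j-i$ as $i\ge j$). For $n\ge2$, given $\phi\:X\rarrow Y[n]$ with $X\in\E_i$, $Y\in\E_j$, $n>j-i$, factor it as a chain $X=Z_0\rarrow Z_1[1]\rarrow\dsb\rarrow Z_n[n]=Y[n]$ with $Z_k\in\M$. Now inductively for $k=1,\dsc,n-1$, assuming $Z_{k-1}\in\M_{[i+k-1,+\infty)}$ (true for $Z_0=X$), replace $Z_k$ by its truncation $Z_k'\in\M_{[i+k,+\infty)}$: the morphism $Z_{k-1}\rarrow Z_k[1]$, composed with the truncation map $Z_k[1]\rarrow Z_k''[1]$ onto $Z_k''\in\M_{(-\infty,i+k-1]}$, lands in $\Hom_\D(Z_{k-1},Z_k''[1])=0$ (second observation with $a=b=i+k-1$), so it factors through $Z_k'[1]$. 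After $n-1$ steps the final morphism of the chain lies in $\Hom_\D(Z_{n-1}',Y[1])$ with $Z_{n-1}'\in\M_{[i+n-1,+\infty)}$ and $Y\in\E_j\sub\M_{(-\infty,j]}$; since $n>j-i$ gives $i+n-1\ge j$, this group vanishes, hence $\phi=0$.

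For part~(2), I would first show $A$ is generated over $A_0$ by $A_1$, i.e.\ $A_n=A_{n-1}\cd A_1$ for $n\ge2$. Take $\phi\in(A_n)_{S,P}=\Hom_\D(P,S(n)[n])$ with $P$, $S\in\J$, so $P\in\E_0$, $S(n)\in\E_n$. The first weight-tracking step writes $\phi=\tilde g[1]\circ h$ with $h\:P\rarrow Z[1]$, $Z\in\M_{[1,+\infty)}$, $\tilde g\in\Hom_\D(Z,S(n)[n-1])$. Choose a triangle $W\rarrow Z\rarrow E\rarrow W[1]$ with $W\in\M_{[2,+\infty)}$, $E\in\E_1$; the restriction of $\tilde g$ to $W$ lies in $\Hom_\D(W,S(n)[n-1])$, which vanishes by dévissage and part~(1) (as $\Hom_\D(\E_l,\E_n[n-1])=0$ for $l\ge2$), so $\tilde g$ factors through $Z\rarrow E$ and $\phi$ factors as $P\rarrow E[1]\rarrow S(n)[n]$. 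Decomposing $E$ into summands of $\J(1)$ and the two maps componentwise exhibits $\phi\in A_{n-1}\cd A_1$. The same argument applied to any class in a ``diagonal'' group $\Hom_\D(E_0,S(m+l)[l])$ with $E_0\in\E_m$, $l\ge2$, factors it through $\E_{m+1}$; this principle is used below.

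The remaining, and hardest, point is that the relations of $A$ sit in degree~$2$. By the usual unwinding this reduces to proving, by induction on $n\ge3$, that every relation among the degree-one generators is a consequence of $I_2=\Ker(A_1\ot_{A_0}A_1\rarrow A_2)$, which amounts to $\Ker(A_{n-1}\ot_{A_0}A_1\rarrow A_n)\sub\im(A_{n-2}\ot_{A_0}I_2)$. Represent a kernel element by morphisms $h\:P\rarrow E[1]$ ($E\in\E_1$) and $g\:E\rarrow S(n)[n-1]$ with $g[1]\circ h=0$ (the tensor being recovered after splitting $E$ into summands of $\J(1)$), and form the distinguished triangle $E\ovrarrow{\iota}C\ovrarrow{\pi}P\ovrarrow{h}E[1]$, so $C\in\E_1*\E_0\sub\M_{[0,1]}$. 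The key remark is that $g[1]\circ h=0$ is equivalent to the existence of $\hat g\:C\rarrow S(n)[n-1]$ with $\hat g\circ\iota=g$ (apply $\Hom_\D(-,S(n)[n-1])$ to the triangle and identify the connecting map with precomposition by $h$). The plan is then to apply the decomposability hypothesis to $\hat g$ — and, as $n-1\ge2$, to the off-diagonal groups $\Hom_\D(C,S(n)[n-1])$ as well — together with the factorization principle of part~(2): part~(1) forces the higher-weight intermediate objects to drop out, the ``weight-one component'' of $\hat g$ over $E$ combined with the extension class $h$ produces a class in $A_2$, one verifies that this class is a relation, and one checks that it accounts for the original tensor modulo $\im(A_{n-3}\ot_{A_0}I_2)$, which is handled by the inductive hypothesis. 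I expect the principal obstacle to be precisely the bookkeeping in this last step — tracking which weight truncations are annihilated at each stage so that the degree-two relation emerges cleanly — organized as the induction step, invoking the degree-$\le n-1$ quadraticity of $A$. (One could hope instead to realize $A$ as the diagonal cobar cohomology of a big graded coalgebra built as the associated graded of~$\M$ and appeal to the corresponding fact for coalgebras, but the construction of Section~\ref{associated-graded-secn} requires a natural transformation $X\rarrow X(1)$ that is not available in the present setting.)
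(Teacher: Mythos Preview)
Your approach is the paper's: weight-truncate along the $*$-decomposition $\M_{[a,b]}=\M_{[r+1,b]}*\M_{[a,r]}$ to prove part~(1) and generation by~$A_1$, then form the cone $C$ of~$h$ and extend $g$ to $\hat g\:C\rarrow S(n)[n-1]$ for quadraticity.

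The ``bookkeeping'' you leave open is short and concrete; here is how the paper carries it out.  Factor $\hat g[1]$ via the hypothesis as $C[1]\rarrow S[2]\rarrow S(n)[n]$ with $S\in\M$, and weight-truncate (exactly as in part~(1) and its dual) to arrange $S\in\M_{[1,2]}$.  Take the triangle $U\rarrow S\rarrow V\ovrarrow{\theta}U[1]$ with $U\in\E_2$, $V\in\E_1$.  Since $\Hom_\D(\E_1,\E_1[1])=0$, the composite $E[1]\rarrow C[1]\rarrow S[2]$ lifts through $U[2]\rarrow S[2]$, giving $\zeta\:E[1]\rarrow U[2]$.  The composite $P\ovrarrow{h}E[1]\ovrarrow{\zeta}U[2]\rarrow S[2]$ vanishes (because $[P\rar E[1]\rar C[1]]=0$), so $\zeta h$ factors through~$\theta$, giving $\chi\:P\rarrow V[1]$.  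Set $\lambda=[U[2]\rar S[2]\rar S(n)[n]]$.  One obtains three identities
\[
 g=\lambda\zeta,\qquad \zeta h=\theta\chi,\qquad \lambda\theta=0
\]
of degrees $n-1$, $2$, $n-1$ respectively.  Decomposing $U$ and $V$ into summands of objects from $\J(2)$ and $\J(1)$, these identities express the original degree-$n$ relation $gh=0$ in terms of relations of degree~$\le n-1$, completing the induction.  (Your parenthetical about avoiding the route through Section~\ref{associated-graded-secn} is correct: no natural transformation $X\rarrow X(1)$ is assumed here.)
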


 More precisely, a big graded ring $A=\bop_{n=0}^\infty A_n$ is said
to be \emph{quadratic} if the natural maps
$$
 A_1^{\ot n}\big/\textstyle\sum_{1\le j\le n-1}A_1^{\ot j-1}\ot_{A_0}
 I\ot_{A_0}A_1^{\ot n-j-1}\lrarrow A_n
$$
are isomorphisms for all $n\ge 2$,
where the tensor powers of the $A_0$\+bimodule $A_1$ are taken
over $A_0$ and the $A_0$\+bimodule of quadratic relations $I$
is defined as the kernel of the multiplication map
$A_1\ot_{A_0}A_1\rarrow A_2$.

 Notice that the property of a big graded ring $A$ to be quadratic
actually does not depend on the component $A_0$, i.~e., for any
big graded ring $A$ and a morphism of big rings $A_0'\rarrow A_0$
over the same set $\Sigma$ the big graded rings $A$ and
$A'=A_0'\oplus A_1\oplus A_2\oplus\dsb$ are quadratic
simultaneously.
 Indeed, clearly $A$ is generated by $A_1$ over $A_0$ if and only
if $A'$ is generated by $A_1$ over $A'_0$; and the relation
$(ar)b=a(rb)$ for $a$, $b\in A_1$ and $r\in A_0$ has degree~$2$.

\begin{proof}
 As in Subsection~\ref{proof-part-two}, denote by $\M_{[i,j]}$
the minimal full subcategory of $\D$, containing $\E_r$, \
$i\le r\le j$, and closed under extensions.
 It follows from the $*$\+associativity lemma~\cite[Lemma~3.1.10]{BBD}
and the condition~\eqref{triang-vanish-silly} that one has
$\M_{[i,j]}=\M_{[r+1,j]}*\M_{[i,r]}$ for any $i\le r < j$, i.~e.,
for any object $Z\in\M_{[i,j]}$ there exists a distinguished triangle
$Z_{\ge r+1}\rarrow Z\rarrow Z_{\le r}\rarrow Z_{\ge r+1}[1]$ with
$Z_{\ge r+1}\in\M_{[r+1,j]}$ and $Z_{\le r}\in\M_{[i,r]}$.
 This triangle does not have to be unique or functorial in our weak
assumptions~\eqref{triang-vanish-silly}.

 However, it follows from these assumptions that, given $X\in
\M_{[i,+\infty)}$ and $Y\in\M$, any element in $\Hom_\D(X,Y[1])$
comes from an element in $\Hom_\D(X,Y_{\ge i+1}[1])$ (for any choice
of $Y_{\ge i+1}$).
 Decomposing an arbitrary element of $\Hom_\D(X,Y[n])$ with $n\ge 1$
into the product of elements from $\Hom_\D(\M,\M[1])$ and using
induction on~$n$, one can see that such an element has to come from
an element in $\Hom_\D(X,Y_{\ge i+n}[n])$.
 This proves part~(1).

 Applying the above argument together with its dual version, one can
see that any element of $\Hom_\D(X,Y[n])$ with $X\in\E_0$, \
$Y\in\E_n$, and $n\ge2$ can be decomposed into a product of elements
from $\Hom_\D(Z_{i-1},Z_i[1])$ with $Z_i\in\E_i$.
 Taking $X\in\J$, \ $Y\in\J(n)$ and presenting $Z_i$ as finite direct
sums of objects from $\J(i)$, one can show that the big graded ring $A$
is generated by~$A_1$.

 It remains to prove quadraticity.
 With any relation of degree~$n$ between the elements of $A_1$ in
the big graded ring $A$ one can associate a relation of the form
$\xi_1\dsb\xi_n=0$ in the big graded ring
$(\Hom_\D(X,Y(n)[n]))_{Y,X\in\E_0;\.n\ge0}$, where
$\xi_i\in\Hom(Z_{i-1},Z_i(1)[1])$ and $Z_i\in\E_0$.
 Let us show that all such relations follow from relations of
degree~$2$.
 Assume that the product of an element $\xi\in\Hom_\D(X,Z[1])$ and
a class $\eta\in\Hom_\D(Z[1],Y[n])$ is zero, where $X\in\E_0$, \
$Z\in\E_1$, \ $Y\in\E_n$, and $n\ge 3$.

 Consider the distinguished triangle $Z\rarrow T\rarrow X\rarrow
Z[1]$; then the object $T$ belongs to $\M_{[0,1]}$.
 Since $\eta\xi=0$, the morphism $\eta$ extends to a morphism
$\eta'\in\Hom_\D(T[1],Y[n])$.
 By the assumption of Theorem, one can decompose the morphism~$\eta'$
as a product of a morphism $T[1]\rarrow S[2]$ and a morphism
$S[2]\rarrow Y[n]$ in $\D$, where $S\in\M$.
 As explained above, one can assume that $S\in\M_{[1,2]}$.
 So there is a distinguished triangle $U\rarrow S\rarrow V\rarrow
U[1]$ with $U\in\E_2$ and $V\in\E_1$.
 Denote the morphism $V[1]\rarrow U[2]$ by~$\theta$.
 The composition $Z[1]\rarrow T[1]\rarrow S[2]$ factorizes through
the morphism $U[2]\rarrow S[2]$, providing a morphism
$\zeta\:Z[1]\rarrow U[2]$.
 The composition $X\rarrow Z[1]\rarrow U[2]\rarrow S[2]$ vanishes,
since the composition $X\rarrow Z[1]\rarrow T[1]$ does; hence
the composition $X\rarrow Z[1]\rarrow U[2]$ factorizes through
the morphism~$\theta$, providing a morphism $\chi\:X\rarrow V[1]$.
 Denote the composition $U[2]\rarrow S[2]\rarrow Y[n]$ by~$\lambda$.
 Now the relation $\eta\xi=0$ of degree~$n$ follows from the relations
$\eta=\lambda\zeta$, \ $\zeta\xi=\theta\chi$, and $\lambda\theta=0$
of the degrees $n-1$, \ $2$, and $n-1$, respectively.

 Decomposing the objects $V$ and $U$ into finite direct sums of
objects from $\J(1)$ and $\J(2)$, one can conclude that the original
relation of degree~$n$ in $A$ follows from relations of
degree~$\le n-1$.
\end{proof}

\subsection{Any quadratic ring can be realized}
\label{quadratic-realized}
 Let $A$ be a big nonnegatively graded ring over a set $\Sigma$;
suppose that $A$ is quadratic.
 Starting from $A$, we would like to construct an exact category
$\G$ with the following properties.

 The exact category $\G$ should be endowed with a sequence of
full subcategories $\E_i$ and an exact autoequivalence $X\maps X(1)$
such that each $\E_i$ is closed under extensions, every object
of $\G$ is an iterated extension of objects from $\E_i$, and
$\E_i(1)=\E_{i+1}$.
 The conditions \eqref{exact-graded-eqn} and~\eqref{exact-trivial}
should be satisfied.

 There should be a full subcategory $\J\sub\E_0$ such that
every object of $\E_0$ is a finite direct sum of objects from~$\J$.
 The set $\Ob\J$ of all objects of $\J$ should be identified with
$\Sigma$, and the big graded ring of diagonal cohomology
$$
 (\Ext^n_\G(X,Y(n))_{Y,X\in\J;\.n\ge0}
$$
should be identified with~$A$. 

 Finally, one should have
\begin{equation} \label{ext-one-two-diagonal}
 \Ext^n_\G(X,Y)=0 \quad\text{for all $X\in\E_i$, \ $Y\in\E_j$,
\ $n < j-i$, and $n=1$ or~$2$.}
\end{equation}
 Notice that the conditions \eqref{exact-graded-eqn}
and~\eqref{exact-trivial} imply $\Ext^n_\G(X,Y)=0$ for $n>j-i$
by the result of Subsection~\ref{diagonal-is-quadratic}.
 So the sum total of the conditions \eqref{exact-graded-eqn}, \
\eqref{exact-trivial}, and~\eqref{ext-one-two-diagonal}
can be simply restated as
\begin{equation} \label{ext-zero-one-two-diagonal}
 \Ext^n_\G(X,Y)=0 \quad\text{for all $X\in\E_i$, \ $Y\in\E_j$,
\ $n\ne j-i$, and $n\le 2$.}
\end{equation}

\begin{thm}
 For any quadratic big graded ring $A$ over a set $\Sigma$
there exists a unique, up to a unique exact equivalence,
exact category\/ $\G$ with the additional data described
above satisfying the condition~\eqref{ext-zero-one-two-diagonal}.
\end{thm}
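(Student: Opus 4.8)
The plan is to derive both halves of the statement from the constructions of Sections~\ref{filtered-exact-secn}--\ref{base-restriction-secn}, handling \emph{uniqueness} first as the easier ``rigidity'' assertion. If $\G$ is any exact category with the prescribed data satisfying~\eqref{ext-zero-one-two-diagonal}, then $\J\sub\E_0$ together with its $\Hom$\+groups recovers $A_0$, the $A_0$\+bimodule $A_1$ is recovered as $\Ext^1_\G(X,Y(1))$ for $X$, $Y\in\J$, and the multiplication $A_1\ot_{A_0}A_1\rarrow A_2$ as the Yoneda pairing into $\Ext^2_\G(X,Y(2))$; by~\eqref{ext-zero-one-two-diagonal} there is no further structure in degrees $\le 2$, and quadraticity of $A$ then determines $A$ entirely from these data. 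Given two realizations $\G'$ and $\G''$, I would match $\E_i\sub\G'$ with $\E_i\sub\G''$ by the identity on $A_0$ and extend the comparison functor along iterated extensions; because the $\Ext$\+groups between objects of $\E_i$ and $\E_j$ vanish for $i>j$ and, for $i<j$, in degrees $\le 2$ off the diagonal $n=j-i$, the equivalence criterion of the Lemma of Subsection~\ref{proof-part-one} (an exact functor that is bijective on $\Ext^0$, $\Ext^1$ and injective on $\Ext^2$ between generators is an equivalence) applies and shows the comparison functor is uniquely pinned down and is an exact equivalence.

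For \emph{existence} I would realize $\G$ as the associated graded category $\F/\sigma$ of the Theorem of Section~\ref{associated-graded-secn}, applied to the filtered exact category $\F$ produced by the construction of Section~\ref{filtered-exact-secn} from the following data: the layer categories $\E_i$ are twisted copies of the exact category of finitely generated projective left $A_0$\+modules with its split exact structure, with $\J\sub\E_0$ the standard projective generators $P_s$, $s\in\Sigma$, so that $\Hom_{\E_0}(P_s,P_t)$ recovers $A_0$; the ambient exact category $\E$ is the category of finitely generated graded comodules over the \emph{quadratic dual coring} $C$ of $A$, that is the graded coring over $A_0$ with $C_0=A_0$, $C_1=A_1$, and $C_n=\bigcap_{1\le k\le n-1}A_1^{\ot k-1}\ot_{A_0}I\ot_{A_0}A_1^{\ot n-k-1}$ for $n\ge 2$ (where $I=\Ker(A_1\ot_{A_0}A_1\to A_2)$), with comultiplication induced by these inclusions, and $\Phi_i$ places a projective $A_0$\+module in the single internal degree~$i$. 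The lifting to the split exact structure on the $\E_i$ is what forces~\eqref{exact-trivial}; passing to the associated graded via Section~\ref{associated-graded-secn} produces the required autoequivalence and the layers of~$\G$; and the relevant $\Ext$\+groups are controlled by the cobar complex of $C$: the groups $\Ext^n_\F$ between distinct layers $i<j$ are its bidegree $(n,j-i)$ pieces, which vanish for $j-i<n$, while their bottom components $n=j-i$ assemble into the graded ring quadratic dual to the quadratic coring $C$, which is $A$ itself precisely because $A$ is quadratic (this is where the relations $I$ enter $\Ext^2$). Feeding this into the long exact sequence~\eqref{filtered-graded-sequence} of Section~\ref{associated-graded-secn} yields~\eqref{exact-graded-eqn} and~\eqref{ext-one-two-diagonal} --- equivalently~\eqref{ext-zero-one-two-diagonal} --- for $\G$ and identifies its big graded ring of diagonal cohomology with $A$.

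The main obstacle is that $A_0$ is an arbitrary base ring --- not semisimple, and not assumed flat --- so the coring $C$ need not be flat over $A_0$, and neither the good behaviour of its comodule category nor the claim that the cobar complex computes its $\Ext$ can be taken for granted; one cannot simply transcribe the field\+coefficient argument of part~(3) of the Proposition of Section~1 or the flat\+base argument of~\cite{Psemi}. This is precisely what the restriction of base of Section~\ref{base-restriction-secn} is designed to absorb: when $A$ is linear over a commutative ring $k$ --- as in the cases of interest, with $k=\Z/m$ or a field --- one runs the same construction with $A_0$ replaced in degree zero by the flat (indeed semisimple) ``diagonal'' big ring over $\Sigma$ with coefficients in $k$, obtaining a category $\H$ over semisimple layers --- where the previous paragraph is legitimate and~\cite{PV,Psemi} apply directly --- together with an exact functor $\Psi\:\H\rarrow\G$, and the Theorem of Section~\ref{base-restriction-secn} gives that $\Psi$ is an isomorphism on $\Ext^n$ for $n\ge 2$ and on $\Ext^1$ between objects with disjointly supported graded pieces. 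Since the vanishing in~\eqref{ext-zero-one-two-diagonal} only concerns $\Ext^{\le 2}$ between objects lying in distinct layers --- hence of disjoint support --- it descends from the semisimple case to the general one; and applying the same comparison to $\G^\sat\supset\E_i^\sat$ and the functors $\E_i\rarrow\E_i^\sat$, as in the proof of that Theorem, both finishes existence and makes the uniqueness clause independent of the chosen additive presentation of $A_0$. I expect the most delicate step to be verifying that restriction of base commutes with the associated graded construction of Section~\ref{associated-graded-secn}, so that filtered lifting, passage to associated graded, and base change may be performed in any order.
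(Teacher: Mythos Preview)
Your uniqueness argument is essentially the paper's: build a comparison functor and invoke the Lemma of Subsection~\ref{proof-part-one}. The paper makes this concrete by constructing, for any $\H$ satisfying the hypotheses, an explicit functor $\Lambda\:\H\rarrow\G$ into the coring model, rather than comparing two abstract realizations; but the idea is the same.

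Your existence argument, however, has a genuine gap and is also more circuitous than necessary. The detour through a filtered category $\F$ and the associated graded construction of Section~\ref{associated-graded-secn} is not needed: the paper simply \emph{defines} $\G$ directly as the category of graded right $C$\+comodules that are free and finitely generated as graded $R$\+modules, where $C$ is the quadratic dual coring, and checks the conditions by hand. More seriously, your argument rests on the claim that the groups $\Ext^n$ in the (filtered) comodule category are computed by the cobar complex of~$C$. This is precisely what the paper flags as an open problem in the Remark at the start of Section~\ref{koszul-rings-secn}: for a coring $C$ over a ring $R$ that is not flat over $R$, it is not known how to compute $\Ext$ in the exact category of $R$\+projective $C$\+comodules, and the cobar complex is only shown to work in the flat case (Lemma~3 of~\ref{flat-koszul}). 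Your proposed repair via base restriction to a semisimple degree-zero component requires $A$ to be linear over a commutative ring~$k$, which is not among the hypotheses; the theorem is stated for an arbitrary big graded ring.

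The paper sidesteps all of this by never attempting to compute the full $\Ext$ groups. It verifies only what \eqref{ext-zero-one-two-diagonal} demands: $\Ext^1_\G(X,Y)=0$ for $X\in\E_i$, $Y\in\E_j$, $j-i>1$ follows because $C$ is cogenerated by $C_{-1}$; $\Ext^2_\G(X,Y)=0$ for $j-i>2$ is checked by a direct splitting argument using quadraticity of $C$; and the diagonal $\Ext^1$, $\Ext^2$ are identified with $A_1$, $A_2$ by inspection. The second proof gives an independent existence argument via a DG\+category built from $A$, again without any flatness assumption. Neither proof needs Section~\ref{associated-graded-secn} or Section~\ref{base-restriction-secn}.
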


 We will give two proofs of this theorem, both of which
will be useful in the sequel.

\begin{proof}[First proof]
 Consider the big ring $R=A_0$.
 We will construct the category $\G$ as a certain category of graded
comodules over a graded coring over the big ring~$R$.
 Generally, a \emph{coring} $C$ over $R$ is an $R$\+bimodule endowed
with $R$\+bimodule maps $C\rarrow C\ot_RC$ and $C\rarrow R$, called
the comultiplication and counit, satisfying the conventional
coassociativity and counit axioms.
 A \emph{right comodule} $N$ over a coring $C$ is a right $R$\+module
endowed with an $R$\+module map $N\rarrow N\ot_RC$, called
the right coaction, satisfying the conventional axioms.

 Let $C=C_0\oplus C_{-1}\oplus C_{-2}\oplus\dsb$ be a nonpositively
graded coring over a big ring $R$ such that $C_0=R$.
 Such a coring $C$ is said to be \emph{quadratic} if
the comultiplication map $C_{-2}\rarrow C_{-1}\ot_R C_{-1}$
is injective and $C$ is the universal final object in the category
of graded corings over $R$ with the components
$C_0$, \ $C_{-1}$, and $C_{-2}$ fixed.
 Given an $R$\+bimodule $C_{-1}$ and a subbimodule $C_{-2}\sub
C_{-1}\ot C_{-1}$, the corresponding quadratic coring $C$ always
exists.
 Its components can be constructed by induction in
such a way that the initial fragment of the reduced cobar-complex
$$
 0\lrarrow C_+\lrarrow C_+\ot_R C_+\lrarrow C_+\ot_R C_+\ot_R C_+,
$$
where $C_+=C/R$, would be exact in the gradings~$-3$ and below.

 The coring $C$ we are interested in is the quadratic coring over $R$
with the initial components $C_{-1}=A_1$ and $C_{-2} = I = 
\Ker(A_1\ot_R A_1\to A_2)$.
 The quadratic coring $C$ constructed in this way is called
\emph{quadratic dual} to the quadratic big ring~$A$.

 For any finite set mapping to $\Sigma$ one can consider
the corresponding finitely generated free right $R$\+module
(see~\ref{big-graded-rings}).
 The additive category $\E_0$ of such $R$\+modules is endowed
with the full subcategory $\J$ consisting of free modules with one
generator; the set $\Ob\J$ is identified with $\Sigma$ and
the big ring $(\Hom_{\E_0}(X,Y))_{Y,X\in\J}$ is naturally
identified with~$R$.
 Set $\G$ to be the category of graded right $C$\+comodules that
are free and finitely generated as graded right $R$\+modules.
 The full subcategories $\E_i$ consist of the graded comodules
concentrated in degree~$i$; the twist functor $X\maps X(1)$
shifts the grading.
 A triple in $\G$ is exact if it is (split) exact in every degree.

 The conditions \eqref{exact-graded-eqn} and~\eqref{exact-trivial}
are obviously satisfied.
 It is also clear that $\Ext^1_\G(X,Y)=0$ for all $X\in\E_i$, \
$Y\in\E_j$, and $j-i>1$, since $C$ is cogenerated by~$C_{-1}$.
 To show that $\Ext^2_\G(X,Y)=0$ for all $X\in\E_i$, \ $Y\in\E_j$,
and $j-i>2$, present an arbitrary class in $\Ext^2_\G(X,Y)$ as
the product of classes in $\Ext^1_\G(X,Z)$ and $\Ext^1_\G(Z,Y)$,
where $Z\in\G_{[i+1,j-1]}$.
 Since $C$ is quadratic, one can check that the $C$\+comodule
structures on the graded $R$\+modules $X\oplus Z$ and $Z\oplus Y$
can be extended to a $C$\+comodule structure on $X\oplus Z\oplus Y$.

 It is straightforward to identify the $R$\+bimodule
$(\Ext^1_\G(X,Y(1)))_{Y,X\in\J}$ with $A_1$ and the $R$\+bimodule
$(\Ext^2_\G(X,Y(2)))_{Y,X\in\J}$ with $A_2$.
 Thus the big graded ring $(\Ext^n_\G(X,Y(n))_{Y,X\in\J;\.n\ge0}$
is isomorphic to $A$ by the result of~\ref{diagonal-is-quadratic}.

 Conversely, let $\H$ be an exact category with the additional data
satisfying all the conditions of Theorem except
perhaps~\eqref{ext-one-two-diagonal}.
 Then it is not difficult to construct an exact functor
$\Lambda\:\H\rarrow\G$, where $\G$ is the above category of
$C$\+comodules.
 More precisely, one starts with identifying the additive
subcategories $\E_i\sub\H$ with the category of finitely
generated free right $R$\+modules.
 Then one constructs the exact functors $\H_{[i,i+1]}\rarrow\G$
using the class $\Ext^1_\H(X,Y)$ for $X\in\E_i$ and $Y\in\E_{i+1}$
to define the coaction map $\Lambda(X)\rarrow \Lambda(Y)\ot_R C_{-1}$.
 To construct the graded $C$\+comodule structure corresponding to
an arbitrary object in $\H$, one only has to check that
the compositions $\Lambda(X)\rarrow \Lambda(Z)\ot_R C_{-1}\rarrow
\Lambda(Y)\ot_R C_{-1}\ot_R C_{-1}$ factorize through
$\Lambda(Y)\ot_R C_{-2}$ whenever $X=q_i(W)$, \ $Z=q_{i+1}(W)$,
and $Y=q_{i+2}(W)$ for some $W\in\H$.
 This is so because the product of the classes in $\Ext^1_\H(X,Z)$
and $\Ext^1_\H(Z,Y)$ corresponding to $W$ vanishes
in $\Ext^2_\H(X,Y)$.
 
 If the category $\H$ also satisfies~\eqref{ext-one-two-diagonal},
then the functor $\Lambda$ is an equivalence of exact categories
by Lemma from Subsection~\ref{proof-part-one}.
\end{proof}

\begin{proof}[Second proof]
 This is only a proof of existence.
 Consider the DG\+category $\C$ whose objects are indexed by the pairs
$(\sigma,i)$, where $\sigma\in\Sigma$ and $n\in\Z$.
 The complex of morphisms $\Hom_\C((\tau,i),(\sigma,j))$ has its only
possibly nonzero term equal to $A_{\sigma\tau;\.j-i}$ in
the cohomological degree~$j-i$.
 The composition of morphisms in $\C$ comes from the multiplication
in~$A$.

 Let $\D$ denote the derived category of contravariant DG\+functors
from $\C$ to the category of complexes of abelian groups.
 There is an autoequivalence $(\sigma,i)\maps(\sigma,i-1)$ on
the DG\+category $\C$; let $X\maps X(1)$ denotes the induced
autoequivalence of~$\D$.
 Let $\J\sub\D$ be the full subcategory of functors representable by
the objects $(\sigma,0)$.
 Set $\E_0\sub\D$ to be the minimal additive subcategory of $\D$
containing $\J$ and $\E_i=\E_0(i)$.
 Let $\E$ be the minimal full subcategory of $\D$, containing $\E_i$
and closed under extensions; then $\E$ has a natural structure
of exact category.

 Clearly, one has $\Hom_\D(X,Y[n])=0$ for $X\in\E_i$, \ $Y\in\E_j$,
and $n\ne j-i$.
 The big graded ring $(\Hom_\D(X,Y(n)[n]))_{Y,X\in\J;\.n\ge0}$ is
identified with~$A$.
 Since the natural maps $\Ext^n_\D(X,Y)\rarrow\Hom_\D(X,Y[n])$ are
isomorphisms for all $X$, $Y\in\E$ and $n\le1$, and monomorphisms 
for $n=2$, the condition~\eqref{ext-zero-one-two-diagonal} follows.
 Since the big graded ring $\Ext_\E^n(X,Y(n))_{Y,X\in\J;\.n\ge0}$
is quadratic by the result of~\ref{diagonal-is-quadratic},
its morphism to~$A$ has to be an isomorphism.
\end{proof}

\Section{Koszul Big Rings}  \label{koszul-rings-secn}

 Let $A=A_0\oplus A_1\oplus A_2\oplus\dsb$ be a big graded ring
over a set~$\Sigma$.
 A big graded ring $A$ is called \emph{Koszul} if there exists
an exact category $\G$ with full subcategories $\E_i$, a twist
functor $X\maps X(1)$ on it, and a full subcategory $\J\sub\E_0$
satisfying the assumptions of Subsection~\ref{quadratic-realized},
for which the following stronger version of
the condition~\eqref{ext-zero-one-two-diagonal} holds
\begin{equation} \label{ext-diagonal}
 \Ext^n_\G(X,Y)=0 \quad\text{for all $X\in\E_i$, \ $Y\in\E_j$,
 and $n\ne j-i$.}
\end{equation}
 In other words, $A$ is Koszul if the exact category $\G$ 
uniquely determined by the conditions of
Subsection~\ref{quadratic-realized}
(including~\eqref{ext-one-two-diagonal}),
satisfies~\eqref{ext-diagonal}.
 By Theorem from Subsection~\ref{diagonal-is-quadratic}, this
condition always holds for $n>j-i$; the nontrivial part is
the vanishing for $3\le n<j-i$.

 It follows from the result of Section~\ref{base-restriction-secn}
that the Koszul property does not depend on the base ring in
the component of degree~$0$ of $A$.
 More precisely, set $R=A_0$ and let $S\rarrow R$ be any morphism
of big rings over $\Sigma$.
 In addition to $A$, consider the big graded ring
$B=S\oplus A_1\oplus A_2\oplus\dsb$.
 Then the big graded ring $A$ is Koszul if and only if the big
graded ring $B$ is (cf.\ remarks before the proof in
Subsection~\ref{diagonal-is-quadratic}).

 It is known~\cite[Sections~0.4 and~11.4]{Psemi} what the Koszulity
condition means in the case of a nonnegatively graded ring $A$
that is a flat left or right module over its zero-degree component
$R=A_0$.
 We will see below in~\ref{flat-koszul} that our definition is
equivalent to the definition from~\cite{Psemi} in the flat situation.
 For the reader's convenience, we present a brief general discussion
of flat Koszulity over a base big ring in~\ref{generalities}.

 One would like also to have a more explicit definition of Koszulity
in the general case.
 Morally, the idea is to replace $R$ with the ``absolute ring'', known
also as the ``field with one element'' $\mathbb{F}_1$; one presumes
that every module over $\mathbb{F}_1$ is flat.
 In practice, this turns out to involve a certain condition of
``exactness of the matrix Koszul complex'' for~$A$.
 This is worked out, in two different ways, in~\ref{general-koszul}
and~\ref{triangulated-koszul}.

\begin{rem}
 One might think that the simplest way to interpret
the condition~\eqref{ext-diagonal} would be to compute explicitly
the groups $\Ext$ over the graded coring $C$ constructed
in the first proof in Subsection~\ref{quadratic-realized}.
 The problem is, it is not known how to compute $\Ext$ in
the exact category of $R$\+projective comodules over a coring $C$
over a ring $R$ in general; see~\cite[Question~5.1.4]{Psemi}.
 In the flat case, we use this approach in~\ref{flat-koszul}.
\end{rem}

\begin{ex}
 Any big graded ring $A$ such that $A_n=0$ for $n\ge2$ is Koszul.
 The objects of the related exact category $\G$ (as constructed
in the first proof in~\ref{quadratic-realized}) are finitely
supported sequences of finitely generated free right $A_0$\+modules
$N_i$, \ $i\in\Z$, endowed with right $A_0$\+module maps $N_i
\rarrow N_{i+1}\ot_{A_0} A_1$.
 No compatibility condition is imposed on these maps.
 The subcategory $\E_i$ consists of all sequences $N$ such that
$N_j=0$ for $j\ne i$, and $\J\subset\E_0$ is the subcategory of
free modules with one generator.
 One easily checks that $\Ext^2_\G(X,Y)=0$ for any $X\in\E_i$ and
$Y\in\E_j\subset\G$, so $\G$ is an exact category of homological
dimension~$1$ and \eqref{ext-diagonal}~is satisfied.
\end{ex}

\subsection{General case} \label{general-koszul}
 A \emph{$\Sigma$\+colored matrix} $M$ \emph{with entries in} $A_m$
is a (finite, rectangular) matrix whose rows and columns are marked
by elements of $\Sigma$ and the entry $M_{ij}$ belongs to
the group $A_{\sigma\tau;\.m}$ if the $i$\+th row is marked
by~$\sigma$ and the $j$\+th column is marked by $\tau\in\Sigma$.
 A pair of $\Sigma$\+colored matrices $(M,N)$ with entries in
$A_m$ and $A_n$, respectively, is called \emph{composable} if
the number of columns in $M$ equals the number of rows in $N$ and
the columns in $M$ and rows in $N$ corresponding to each other
are marked by the same elements of~$\Sigma$.
 Clearly, the product $MN$ of any two composable matrices $M$ and
$N$ is well-defined as a $\Sigma$\+colored matrix with entries in
$A_{m+n}$.

\begin{thm}
 A big nonnegatively graded ring $A$ is Koszul if and only if
the following condition holds.
 Let $M_{(1)}$,~\ds, $M_{(m)}$, $\.m\ge0$ be $\Sigma$\+colored
matrices with entries in $A_1$ such that every pair
$(M_{(i+1)},M_{(i)})$ is  composable and the product
$M_{(i+1)}M_{(i)}$ is zero.
 Let $N$ be a $\Sigma$\+colored matrix with entries in $A_n$,
$\,n\ge1$, such that the pair $(N,M_{(m)})$ is composable and
the product $NM_{(m)}$ is also zero.
 Then there should exist $\Sigma$\+colored matrices $K_{(1)}$,~\ds,
$K_{(m)}$ with entries in $A_0$, $\,M'_{(1)}$,~\ds, $M'_{(m)}$
with entries in $A_1$, $\,P$ with entries in $A_1$, and $Q$
with entries in $A_{n-1}$ such that
\begin{gather*}
 M_{(1)} = K_{(1)}M'_{(1)},\ M_{(2)}K_{(1)} = K_{(2)}M'_{(2)},\ 
 \dsc,\ M_{(m)}K_{(m-1)} = K_{(m)}M'_{(m)}, \\
 NK_{(m)} = QP,\ M'_{(i+1)}M'_{(i)} = 0
 \text{ for all\/ $i=1$, \ds, $m-1$, and\/ } PM'_{(m)} = 0,
\end{gather*}
where all the pairs of matrices being multiplied are composable.
\end{thm}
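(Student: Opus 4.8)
The plan is to match $\Sigma$-colored matrices over $A$ with the exact category $\G$ attached to $A$ in Subsection~\ref{quadratic-realized} and to recognize the displayed condition as a restatement of Koszulity in the sense of the beginning of this section. Recall that $A$ is Koszul precisely when this $\G$ satisfies $\Ext^n_\G(X,Y)=0$ for all $X\in\E_i$, $Y\in\E_j$ and $n\ne j-i$; by~\eqref{ext-zero-one-two-diagonal} this already holds for $n\le 2$, and by the result of Subsection~\ref{diagonal-is-quadratic} it holds for $n>j-i$, so the content is the vanishing for $3\le n<j-i$. I would work with the first construction of $\G$: it is the category of finitely generated $R$-free graded right comodules over the coring $C$ quadratic dual to $A$, with $R=A_0$. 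Since $C$ is quadratic, an object of $\G$ is nothing but a finitely supported sequence of finitely generated free right $R$-modules $F_i$ equipped with $R$-linear maps $F_{i-1}\rarrow F_i\ot_R A_1$, i.e., $\Sigma$-colored matrices $M_{(i)}$ with entries in $A_1$, subject only to $M_{(i+1)}M_{(i)}=0$ in $A_2$ (this being coassociativity at the $C_{-2}=I$ level, the lower-degree relations being automatic by quadraticity of $C$; compare the Example in this section for the case $A_{\ge2}=0$). A morphism in $\G$ is a compatible sequence of $\Sigma$-colored $A_0$-matrices; a triple is exact iff split exact in each degree; the twist shifts the grading; and $\Sigma$-colored matrices over $A_n$ realize the cohomological degree $n$ part of the big graded ring of diagonal $\Ext$'s between objects of the $\E_i$.

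Under this dictionary a composable chain $M_{(1)}$,~\dots,~$M_{(m)}$ over $A_1$ with vanishing consecutive products is an object $Z\in\G$ supported in degrees $0$ through $m$, with $\E_k$-layer $F_k\in\E_k$ and deepest subobject $Z_{\ge m}=F_m$; the matrix $N$ over $A_n$ is a class $\nu\in\Ext^n_\G(F_m,G)$ for a free $R$-module $G$ placed in degree $m+n$, and $NM_{(m)}=0$ says exactly that $\nu\mu=0$ in $\Ext^{n+1}_\G(F_{m-1},G)$, where $\mu\in\Ext^1_\G(F_{m-1},F_m)$ is the class of the exact triple $Z_{\ge m}\rarrow Z_{\ge m-1}\rarrow F_{m-1}$ --- equivalently, $\nu$ extends to a class on $Z_{\ge m-1}$. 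On the output side, the matrices $K_{(i)}$ over $A_0$ and $M'_{(i)}$ over $A_1$ with $M_{(1)}=K_{(1)}M'_{(1)}$, $M_{(i+1)}K_{(i)}=K_{(i+1)}M'_{(i+1)}$ and $M'_{(i+1)}M'_{(i)}=0$ amount to a complex $Z'\in\G$ with modules $F'_i$ together with a morphism $K\:Z'\rarrow Z$ that is the identity in degree~$0$; the matrix $P$ over $A_1$ with $PM'_{(m)}=0$ extends $Z'$ by one more top layer $H\in\E_{m+1}$ to an object $\widetilde Z'\in\G$; and $NK_{(m)}=QP$ with $Q$ over $A_{n-1}$ says that the pulled-back class $\nu K_{(m)}\in\Ext^n_\G(F'_m,G)$ is the image, under composition with $P\in\Ext^1_\G(F'_m,H)$, of a class $Q\in\Ext^{n-1}_\G(H,G)$. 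Thus, modulo the dictionary, the matrix condition asserts that such $Z'$, $K$, $P$, $Q$ exist whenever $\nu\mu=0$: an explicit packaging of the statement ``the class $\nu$ on $Z_{\ge m}$ extends to $Z$'', the resolving complex $Z'$ and the cover of $G$ being needed because $Z$ and $G$ are in general neither projective nor injective in $\G$.

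For ``$A$ Koszul $\Rightarrow$ matrix condition'' I would start from $\nu\mu=0$, so $\nu$ extends to $Z_{\ge m-1}$, and climb the filtration of $Z$ by the exact triples $Z_{\ge k}\rarrow Z_{\ge k-1}\rarrow F_{k-1}$: the obstruction to extending the class over the layer $F_k$, for $k\le m-2$, lies in $\Ext^{n+1}_\G(F_k,G)$, which has cohomological degree $n+1$ but internal-degree gap $m+n-k\ge n+2$ and so vanishes by Koszulity; hence $\nu$ extends to a class $\bar\nu\in\Ext^n_\G(Z,G)$. Now $\bar\nu$ has cohomological degree $n\ge1$ and can be written as a product of $\Ext^1_\G$-classes through objects of $\G$; translating this factorization into matrix data via the dictionary of the previous paragraph produces the resolving complex $Z'$ with its differentials $M'_{(i)}$ and $P$, the chain map $K$ (the part realizing $\bar\nu$ on $Z$ itself, which restricts to the identity in degree~$0$), and the residual factor $Q$, with the listed relations just recording that the data composes back to $\bar\nu$ and restricts to $\nu$ on $Z_{\ge m}$. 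The bookkeeping is organized by downward induction on the degree $n$ of $N$ (equivalently of $Q$), peeling off one $\Ext^1$-factor at a time exactly as in the proof of Subsection~\ref{proof-part-two} and in the proof of the Theorem of Section~\ref{base-restriction-secn}, using Lemma~1 of Subsection~\ref{two-lemmas} (for the identity functor and for forgetful functors) and Corollary~\ref{exact-derived}.3 to detect trivial Yoneda extensions; the base case $n=1$, where $(M_{(1)},\dots,M_{(m)},N)$ is itself a complex and one only splits off its top extension, is the quadraticity assertion of Subsection~\ref{diagonal-is-quadratic} together with~\eqref{ext-one-two-diagonal}.

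For the converse I would first note that the instance $m=0$ already forces $A$ to be generated by $A_1$ over $A_0$, and that reading off its degree-$2$ part forces $A$ to be quadratic (as it must be, since Koszulity implies quadraticity), so that $\G$ is defined; then I would prove $\Ext^n_\G(X,Y)=0$ for $X\in\E_i$, $Y\in\E_j$, $3\le n<j-i$ by induction on $n$. Given $\nu\in\Ext^n_\G(X,Y)$, write it as $\eta\xi$ with $\xi\in\Ext^1_\G(X,Z)$, $\eta\in\Ext^{n-1}_\G(Z,Y)$ and $Z\in\G_{[i+1,\,j-1]}$, as in Subsection~\ref{proof-part-two}; translating $Z$, $\xi$, $\eta$ into matrices $M_{(\bullet)}$, $N$ and applying the matrix condition yields, back in $\G$, a presentation of $\nu$ as built from a class of strictly smaller cohomological degree between objects of strictly larger internal-degree gap together with a diagonal class, whence $\nu=0$ by the induction hypothesis. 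The step I expect to be the main obstacle is the faithfulness of the dictionary above --- that ``triviality, respectively decomposability, of the relevant Yoneda $n$-extension in $\G$'' is \emph{equivalent} to, and not merely implied by or implying, the exact system $M_{(1)}=K_{(1)}M'_{(1)}$, $M_{(i+1)}K_{(i)}=K_{(i+1)}M'_{(i+1)}$, $NK_{(m)}=QP$, $M'_{(i+1)}M'_{(i)}=0$, $PM'_{(m)}=0$ --- and matching the freedom of choosing degreewise splittings and lifts with the freedom in the matrices $K_{(i)}$ and $Q$. Since this system involves $A_0$ only through the $K_{(i)}$, it is manifestly unchanged under restriction of base, in accordance with Section~\ref{base-restriction-secn}.
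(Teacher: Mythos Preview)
Your dictionary between $\Sigma$-colored matrices and the exact category $\G$ is correct, and your ``matrix condition $\Rightarrow$ Koszul'' direction follows the paper's argument: decompose a putative off-diagonal class as $\Ext^1\cdot\Ext^{n}$, read off the matrices $M_{(\bullet)}$ and $N$, invoke the hypothesis, rebuild $Z'$ and the morphism $Z'\to Z$ from $K_{(\bullet)},M'_{(\bullet)}$, and use $P,Q$ together with the induction hypothesis to kill the class.

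The ``Koszul $\Rightarrow$ matrix condition'' direction, however, has a genuine gap as you have written it. After extending $\nu$ to $\bar\nu\in\Ext^n_\G(Z,G)$, you propose to ``write $\bar\nu$ as a product of $\Ext^1_\G$-classes'' and claim this produces the resolving complex $Z'$, the chain map $K$, and $P,Q$. But factoring $\bar\nu$ into $\Ext^1$'s yields a sequence $Z\to W_1[1]\to\dotsb\to G[n]$ through auxiliary objects $W_i$; it does \emph{not} produce a degree-zero morphism $K\:Z'\to Z$ (the $K_{(i)}$ live in $A_0$, not $A_1$). No amount of ``downward induction on $n$'' fixes this: the $K_{(i)}$ simply are not part of a Yoneda factorization of $\bar\nu$.

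The paper's route is different and more direct. One keeps the bottom layer $X=F_0$ separate from $Z_{\ge1}$, so there is an extension class $\xi\in\Ext^1_\G(X,Z_{\ge1})$. One extends $\nu$ only to $\bar\nu\in\Ext^n_\G(Z_{\ge1},Y)$ (same obstruction argument), and then uses Koszulity \emph{a second time} to get $\bar\nu\cdot\xi=0$ in $\Ext^{n+1}_\G(X,Y)$ (off-diagonal for $m\ge2$; for $m=1$ this is the hypothesis $NM_{(1)}=0$ itself). Now Lemma~1 of Subsection~\ref{two-lemmas}, applied with $\Lambda=\Id_\G$, to the vanishing product $\bar\nu\cdot\xi=0$, directly produces a morphism $f\:Z'\to Z_{\ge1}$ such that $\xi$ lifts through $f$ and $\bar\nu f=0$. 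The components of $f$ are the $K_{(i)}$; this is where they come from. After trimming $Z'$ to $\G_{[1,m+1]}$ (the argument from Subsection~\ref{diagonal-is-quadratic}), the vanishing $\bar\nu f=0$ on $Z'_{\le m}$ forces that class to factor through the extension $\Ext^1_\G(Z'_{\le m},q_{m+1}(Z'))$ times an $\Ext^{n-1}$-class, giving $P$ and $Q$. No induction on $n$ is needed for this direction. You cite Lemma~1 but attach it to the wrong step; it is not a bookkeeping device for peeling off $\Ext^1$'s from $\bar\nu$, it is the mechanism that converts the vanishing of $\bar\nu\cdot\xi$ into the morphism $K$.
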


 Notice that the above matrix condition is not obviously
self-opposite, i.~e., it is not immediately clear why
the similar condition with the order of factors in the products
of matrices reversed is equivalent to the condition from Theorem.
 However, it will be clear from the proof below that the Koszul
property of the big graded ring $A$ is also equivalent to
the opposite matrix condition, hence the two opposite versions of
the matrix condition are equivalent to each other.

\begin{proof}
 Notice first of all that the condition for $n=1$ is always trivial:
it suffices to take $K_{(i)}$ and $Q$ to be the identity matrices,
$M'_{(i)}=M_{(i)}$, and $P=N$.

 The condition for $m=0$ simply means (or in any event should be
read to mean) that any $\Sigma$\+colored matrix $N$ with entries
in $A_n$ can be decomposed into a product $N=QP$ of two composable
$\Sigma$\+colored matrices $P$ and $Q$ with entries in $A_1$ and
$A_{n-1}$, respectively.
 This is equivalent to the big graded ring $A$ being
multiplicatively generated by $A_1$ over $A_0$.

 The condition for $m=1$ means that for any composable
$\Sigma$\+colored matrices $N$ and $M$ with entries in $A_n$
and $A_1$ such that $NM=0$ there exist $\Sigma$\+colored matrices
$K$ with entries in $A_0$, \ $M'$ with entries in $A_1$, \
$P$ with entries in $A_1$, and $Q$ with entries in $A_{n-1}$
such that $M=KM'$, \ $NK=QP$, and $PM'=0$.
 Together with the condition for $m=0$, this clearly implies
that $A$ is quadratic; the converse implication will follow
from the argument below.

 Let $\G\supset\E_i$, \ $\E_0\supset\J$ denote the exact category
with the additional data corresponding to the quadratic big
graded ring~$A$.

 ``If'': we will show by induction on~$n$ that $\Ext_\G^{n+1}(X,Y)=0$
for all $X\in\E_0$, \ $Y\in\E_{n+m}$, and $m\ge 2$.
 The case $n=1$ is known by~\eqref{ext-one-two-diagonal};
let $n\ge2$.
 Suppose we are given a class in $\Ext_\G^{n+1}(X,Y)$; decompose
it into a product of classes in $\Ext_\G^1(X,Z)$ and
$\Ext_\G^n(Z,Y)$.
 As it was explained in Subsections~\ref{proof-part-two}
and~\ref{diagonal-is-quadratic}, one can assume that
$Z\in\G_{[1,m]}$.
 The sequence of classes in $\Ext_\G^1(X,q_1(Z))$, \
$\Ext_\G^1(q_1(Z),q_2(Z))$,~\ds, $\Ext_\G^1(q_{m-1}(Z),q_m(Z))$
coming from the class in $\Ext_\G^1(X,Z)$ and the filtered object $Z$
defines a sequence of $\Sigma$\+colored matrices $M_{(1)}$,~\ds,
$M_{(m)}$.
 The class in $\Ext_\G^n(q_m(Z),Y)$ coming from the class in
$\Ext_\G^n(Z,Y)$ defines a matrix~$N$.
 The existence of the object $Z$ and the classes in $\Ext^1_\G(X,Z)$
and $\Ext^n_\G(Z,Y)$ implies the equations on the products of
consecutive matrices $M_{(i)}$ and~$N$.

 By assumption, it follows that there exist $\Sigma$\+colored matrices
$K_{(i)}$, \ $M'_{(i)}$, \ $P$, and $Q$ satisfying the equations of
Theorem.
 Since $\Ext^2_\G$ vanishes outside of the diagonal, starting
from the matrices $M'_{(i)}$ one can construct~\cite{Ret} an object
$Z'\in\G_{[1,m]}$ and a class in $\Ext^1_\G(X,Z')$ related to
these matrices in the same way as the object $Z$ and the class in 
$\Ext^1_\G(X,Z)$ are related to the matrices $M_{(i)}$.
 Since $\Ext^1_\G$ also vanishes outside of the diagonal, starting
from the matrices $K_{(i)}$ one can construct a morphism $Z'\rarrow Z$
such that the original class in $\Ext^1_\G(X,Z)$ comes from our
new class in $\Ext^1_\G(X,Z')$.
 
 Consider the class in $\Ext^n_\G(Z',Y)$ obtained by composing
the morphism $Z'\rarrow Z$ with the class in $\Ext^n_\G(Z,Y)$.
 Then the related class in $\Ext^n_\G(q_m(Z'),Y)$ can be decoposed
into the product of classes in $\Ext^1_\G(q_m(Z'),U)$ and
$\Ext^{n-1}_\G(U,Y)$ corresponding to the matrices $P$ and $Q$,
where $U\in\E_{m+1}$.
 Due to the equation $PM'_{(m)}=0$, the class in
$\Ext^1_\G(q_m(Z'),U)$ comes from a class in $\Ext^1_\G(Z',U)$.
 Present the latter class by an exact triple $U\rarrow T\rarrow Z'$.
 The class in $\Ext^1_\G(X,Z')$ comes from a class in
$\Ext^1_\G(X,T)$.
 Let us show that the composition of the morphism $T\rarrow Z'$
with the class in $\Ext^n_\G(Z',Y)$ vanishes.
 The image of that composition in $\Ext^n_\G(T_{\ge m},Y)$ is
the composition of the morphism $T_{\ge m}\rarrow q_m(Z')$ with
the class in $\Ext^n_\G(q_m(Z'),Y)$, which is clearly zero.
 So our class in $\Ext^n_\G(T,Y)$ comes from a class in
$\Ext^n_\G(T_{\le m-1},Y)$.
 The latter class is zero by the assumption of induction on~$n$.

``Only if'': suppose that we are given $\Sigma$\+colored matrices
$M_{(i)}$, \ $m\ge1$, and $N$ such that the consecutive pairs are
composable with a zero product.
 As above, with the matrices $M_{(i)}$ one can associate an object
$Z\in\G_{[1,m]}$ and an extension class in $\Ext^1_\G(X,Z)$ with
$Z\in\E_0$; from the matrix $N$ one can obtain a class in
$\Ext^n_\G(q_m(Z),Y)$ with $Y\in\E_{n+m}$.
 The equation $NM_{(m)}=0$ and the assumption of vanishing of
$\Ext^{n+1}_\G$ outside of the diagonal allow to conclude that
the latter class comes from a class in $\Ext^n_\G(Z,Y)$.
 By the same assumption, the product of these two classes vanishes
in $\Ext^{n+1}_\G(X,Y)$.
 By Lemma~1 from~\ref{two-lemmas}, there exists a morphism
$Z'\rarrow Z$ in $\G$ such that the class in $\Ext^1_\G(X,Z)$ comes
from a class in $\Ext^1_\G(X,Z')$, while the induced class in
$\Ext^n_\G(Z',Y)$ vanishes.
 One can assume that $Z'\in\G_{[1,\infty)}$.

 The morphism $Z'\rarrow Z$ factorizes as $Z'\rarrow Z'_{\le m}
\rarrow Z$, and the induced class in $\Ext^n_\G(Z'_{\le m},Y)$
is the product of the class in $\Ext^1_\G(Z'_{\le m},Z'_{\ge m+1})$
corresponding to $Z'$ and a certain class in
$\Ext^{n-1}_\G(Z'_{\ge m+1},Y)$.
 By the same argument from Subsection~\ref{diagonal-is-quadratic}
that was referred to above, one can assume that
$Z'_{\ge m+1}\in\E_{m+1}$ and $Z'\in\G_{[1,m+1]}$.
 It remains to construct matrices $M'_{(i)}$ from the object
$Z'_{\le m}$ and the class in $\Ext^1_\G(X,Z'_{\le m})$,
matrices $K_{(i)}$ from the morphism $Z'_{\le m}\rarrow Z$,
a matrix $P$ from the class in $\Ext^1_\G(q_m(Z'),q_{m+1}(Z'))$,
and a matrix $Q$ from the class in $\Ext^{n-1}_\G(q_{m+1}(Z'),Y)$.
\end{proof}

\subsection{Koszulity in triangulated setting}
\label{triangulated-koszul}
 Let $\D$ be a triangulated category and $\E_i\sub\D$ be full
additive subcategories such that
\begin{equation}
 \Hom_\D(X,Y[n])=0 \quad\text{for all $X\in\E_i$, \ $Y\in\E_j$, }
\begin{array}{l}
\text{$n=1$, and $i\ge j$, or}\\
\text{$n\ge2$, and $n\ne j-i$.}
\end{array}
\end{equation}
 Furthermore, suppose that a triangulated autoequivalence
$X\maps X(1)$ is defined on $\D$ such that $\E_i(1)=\E_{i+1}$.
 Let $\M$ denote the minimal full subcategory of $\D$, containing
all $\E_i$ and closed under extensions.

 Let $\J\sub\E_0$ be a full subcategory such that every object of
$\E_0$ is a finite direct sum of objects of~$\J$.
 Consider the big graded ring $A=(\Hom_\D(X,Y(n)[n]))_
{Y,X\in\J;\.n\ge0}$.

\begin{thm} \
\begin{enumerate}
\renewcommand{\theenumi}{\arabic{enumi}}
\item The big graded ring $A$ is Koszul if and only if
every morphism $X\rarrow Y[n]$ of degree~$n\ge2$ in $\D$ between
two objects $X$, $Y\in\M$ can be presented as the composition of
a chain of morphisms $Z_{i-1}\rarrow Z_i[1]$ with $Z_i\in\M$, \
$Z_0=X$, and $Z_n=Y$ \textup{(}cf.\
Appendix~\textup{\ref{silly-filtrations-appx}}\textup{)}.
\item An arbitrary big graded ring $A$ is Koszul if and
only if the following condition holds.
 Let $M_{(1)}$,~\ds, $M_{(m)}$, $\.m\ge0$ be $\Sigma$\+colored
matrices with entries in $A_1$ such that every pair
$(M_{(i+1)},M_{(i)})$ is  composable and the product
$M_{(i+1)}M_{(i)}$ is zero.
 Let $N$ be a $\Sigma$\+colored matrix with entries in $A_n$,
$\,n\ge1$, such that the pair $(N,M_{(m)})$ is composable and
the product $NM_{(m)}$ is also zero.
 Then there should exist $\Sigma$\+colored matrices $L_{(0)}$,~\ds,
$L_{(m)}$, $\,M'_{(1)}$,~\ds, $M'_{(m)}$ with entries in $A_1$
and $Q$ with entries in $A_{n-1}$ such that
\begin{gather*}
 N = QL_{(m)},\ L_{(m)}M_{(m)} = M'_{(m)}L_{(m-1)},\
 \dsc,\ L_{(1)}M_{(1)} = M'_{(1)}L_{(0)} \\
 M'_{(i+1)}M'_{(i)} = 0
 \text{ for all\/ $i=1$, \ds, $m-1$, and\/ } QM'_{(m)} = 0.
\end{gather*}
where all the pairs of matrices being multiplied are composable.
\end{enumerate}
\end{thm}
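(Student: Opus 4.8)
The plan is to reduce both parts to the exact-category machinery of Sections~\ref{diagonal-secn} and~\ref{koszul-rings-secn}, to the Koszulity-based sufficient condition for the existence of silly filtrations proved in the digression of Section~8 (which itself rests on the realization functor of Appendix~D), and to the silly-filtration formalism of Appendix~\ref{silly-filtrations-appx}; the only genuinely new ingredient is a combinatorial \emph{matrix translation} of the decomposition property, obtained by rerunning the argument in the proof of Theorem from~\ref{general-koszul} inside the triangulated category $\D$, using distinguished triangles and their connecting maps where that proof used admissible sub- and quotient objects and the functor~$q$. Since the Koszul property forces $A$ to be quadratic (Theorem from~\ref{diagonal-is-quadratic}) and so does the matrix condition via its cases $m\le1$ (cf.\ the parallel discussion in the proof of Theorem from~\ref{general-koszul}), one may assume $A$ quadratic throughout; then the exact category $\G\supset\E_i\supset\J$ of~\ref{quadratic-realized} is available, as is the triangulated category $\D'=\D^b(\C)$ furnished by the second proof of that theorem, and one recalls that $\D'$ satisfies the hypotheses of the present theorem while $\Hom_{\D'}(X,Y[n])=0$ whenever $X\in\E_i$, $Y\in\E_j$ and $n\ne j-i$.

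The core step is to prove, for \emph{every} triangulated $\D$ with the data $\E_i\sub\D$, $X\maps X(1)$, $\J\sub\E_0$ satisfying the hypotheses and with associated big graded ring $A$, that the decomposition property for degree-$n$ morphisms ($n\ge2$) between objects of $\M$ is equivalent to the matrix condition of part~(2) for $A$. For the implication ``decomposition $\implies$ matrix condition'': a tuple of $\Sigma$-colored matrices $M_{(1)},\dots,M_{(m)}$ with entries in $A_1$, composable with vanishing consecutive products, encodes — by the iterated-cone construction of Subsections~\ref{proof-part-two} and~\ref{diagonal-is-quadratic}, whose gluing obstructions are precisely those products — an object $Z\in\M_{[1,m]}$ with a chosen class $\xi\in\Hom_\D(X,Z[1])$ whose leading component is $M_{(1)}$, $X\in\E_0$; an additional matrix $N$ with entries in $A_n$ and $NM_{(m)}=0$ yields, by the vanishing hypotheses, a class lifting to $\nu\in\Hom_\D(Z,Y[n])$ with $Y\in\E_{n+m}$ and trailing component $N$. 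Decomposing the degree-$(n+1)$ morphism $\nu[1]\circ\xi$ through $\M$ and peeling off one layer — just as in the proof of Theorem from~\ref{general-koszul}, but replacing the appeal to Lemma~1 of~\ref{two-lemmas} (which there produces an $A_0$-valued comparison $Z'\to Z$) by the cone of the first step, whose connecting maps are of degree~$1$ — produces a shifted chain $M'_{(1)},\dots,M'_{(m)}$ together with the ladder $L_{(0)},\dots,L_{(m)}$ of degree-$1$ rungs and a trailing matrix $Q$ with entries in $A_{n-1}$ satisfying the displayed equations. The converse ``matrix condition $\implies$ decomposition'' is the mirror image: a degree-$(n{+}1)$ morphism between objects of $\M$ is reduced, by $*$-associativity and the vanishing hypotheses, to a composite $\nu[1]\circ\xi$ of the above shape, encoded as matrices, and the matrices produced by the condition are reassembled (using vanishing of $\Ext^1_\G$, resp.\ $\Hom_\D(X,Y[1])$, off the diagonal, which holds since $A$ is quadratic) into the chain of degree-$1$ morphisms; formally this is a double induction on $n$ and on the width~$m$, exactly as in Section~\ref{general-koszul}.

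Granting the matrix translation, both parts follow. Applied to $\D'$: since $\Hom_{\D'}$ is concentrated on the diagonal, the silly-filtration formalism of Appendix~\ref{silly-filtrations-appx} identifies the decomposition property in $\D'$ with the statement that the natural maps $\Ext^n_\G(X,Y)\rarrow\Hom_{\D'}(X,Y[n])$ are isomorphisms, hence with $\Ext^n_\G=0$ off the diagonal, i.e.\ with $A$ being Koszul (the ``Koszul $\implies$ decomposition'' half of this is the content of Section~8; the reverse half is elementary here). Combining, $A$ Koszul $\iff$ decomposition in $\D'$ $\iff$ the matrix condition for $A$, which is part~(2). Feeding the matrix condition back through the matrix translation for an arbitrary admissible $\D$ then gives: the decomposition property in $\D$ $\iff$ the matrix condition for $A$ $\iff$ $A$ Koszul, which is part~(1) (and recovers, as a special case, Theorem from~\ref{diagonal-is-quadratic} under the decomposition hypothesis).

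The step I expect to be the main obstacle is the matrix translation itself, and within it the bookkeeping of the ``only if'' direction: verifying that in the triangulated setting the intermediate data really organizes into matrices $L_{(i)}$ with entries in $A_1$ (rather than the $A_0$-valued $K_{(i)}$ of Theorem from~\ref{general-koszul}) together with a single trailing $Q$ with entries in $A_{n-1}$, which is exactly where the cone construction's extra flexibility — its ability to ``shift'' the chain at the cost of turning degree-$0$ comparison morphisms into degree-$1$ connecting maps — makes the argument diverge from its exact-category counterpart. Controlling signs and the precise indexing through the two inductions is the bulk of the work; everything else is an assembly of results already established in the paper.
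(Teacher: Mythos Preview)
Your overall architecture is correct and matches the paper's: first prove that for \emph{any} admissible $\D$ the decomposition property is equivalent to the matrix condition, then specialize to the particular $\D'$ of the second proof in~\ref{quadratic-realized} to link the matrix condition with Koszulity of~$A$. Two points deserve cleanup.

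First, the detour through Section~\ref{nonfiltered-secn} and Appendix~\ref{categories-of-morphisms-appx} is unnecessary and, as stated, does not quite apply: the Corollary in Section~\ref{nonfiltered-secn} concerns the big graded ring $B=(\Hom_\D(X,Y[n]))$ indexed by a generating set for all of~$\E$, which is not the diagonal ring~$A$ indexed by $\J\sub\E_0$; you would need an extra argument relating Koszulity of these two rings. The paper instead uses Corollary~2 of~\ref{exact-triangulated} directly: the decomposition property is precisely surjectivity of the maps $\theta^n_{\E,\D'}$, and that corollary shows surjectivity forces bijectivity. Since $\Hom_{\D'}$ is concentrated on the diagonal, bijectivity of all~$\theta^n$ is exactly condition~\eqref{ext-diagonal}, i.e., Koszulity of~$A$. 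No realization functor is involved.

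Second, your bookkeeping for the matrix translation follows the style of~\ref{general-koszul} (separating an object $X\in\E_0$, an object $Z\in\M_{[1,m]}$, and classes $\xi$,~$\nu$). The paper's own bookkeeping is a bit cleaner in the triangulated setting: from the matrices $M_{(i)}$ one builds a \emph{single} successive extension $X$ of objects $X_i\in\E_{-n-m+i}$ (via distinguished triangles $X_i\to T_i\to T_{i-1}\to X_i[1]$), and from $N$ with $NM_{(m)}=0$ a single morphism $X\to Y[n]$. The decomposition property then yields $X\to Z[1]\to Y[n]$ with $Z\in\M$, and after restricting $Z$ to $\M_{[-n-m+1,-n+1]}$ via the argument of~\ref{diagonal-is-quadratic}, the matrices $M'_{(i)}$, $Q$ come from the filtration of~$Z$ and the morphism $Z\to Y[n-1]$, while the $L_{(i)}$ are read off from the induced morphisms $X_i\to Z_i[1]$ using $\Hom_\D(X_i,Z_j[1])=0$ for $i>j$. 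For the converse direction the paper invokes Proposition~1 of Appendix~\ref{silly-filtrations-appx} to reduce to a single-step decomposition $X\to Z[1]\to Y[n]$; your ``double induction on $n$ and~$m$'' is then replaced by a direct reconstruction of~$Z$ from the matrices $M'_{(i)}$, using the vanishing of $\Hom_\D(Z_i,Z_j[2])$ for $j-i>2$ to glue.
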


 Notice that~(2) provides a characterization of the Koszul
property of a big graded ring $A$ that is explicitly independent
of the component~$A_0$ (since the condition for $n=1$ is trivial,
see below).

\begin{proof}
 We will show that the decomposition condition in~(1) holds for
a triangulated category $\D$ and its full subcategory $\M$
if and only if the matrix condition in~(2) is satisfied for
the corresponding big graded ring~$A$.
 Then it will remain to consider the triangulated category $\D$
and its exact subcategory $\E$ from the second proof in
Subsection~\ref{quadratic-realized}.
 By Corollary~\ref{exact-triangulated}.2, the decomposition condition
for such $\D$ and $\M=\E$ is equivalent to the condition that
the morphisms $\Ext^n_\E(X,Y)\rarrow\Hom_\D(X,Y[n])$ be isomorphisms
for all $X$, $Y\in\E$ and $n\ge0$, and the latter is clearly equivalent
to the condition~\eqref{ext-diagonal} for the exact category~$\E$.

 As in~\ref{general-koszul}, the condition for $n=1$ is always
trivial: it suffices to take $Q$ to be the identity matrix,
$L_{(m)}=N$, \ $L_{(i)}=0$ for $i<m$, and $M'_{(i)}=0$ for all~$i$.
 The condition for $m=0$ should be read to mean that any
$\Sigma$\+colored matrix $N$ with entries in $A_n$ can be decomposed
into a product $N=QL$, which is equivalent to $A$ being generated
by $A_1$.
 The condition for $m=1$ implies that $A$ has quadratic relations;
the converse implication was essentially proven in
Section~\ref{diagonal-secn} (see below for the details).

 Suppose that the matrix condition in~(2) holds.
 By Proposition~\ref{silly-filtrations-appx}.1, it suffices to show
that any element in $\Hom_\D(X,Y[n])$ with $X\in\M$, \ $Y\in\E_0$,
and $n\ge2$ can be presented as the composition of an element in
$\Hom_\D(X,Z[1])$ and an element in $\Hom_\D(Z,Y[n-1])$ with $Z\in\M$.
 As it was explained in Subsection~\ref{diagonal-is-quadratic}, one
can assume that $X\in\M_{(-\infty,-n]}$.
 Suppose that $X$ is a successive extension of objects
$X_0$,~\ds, $X_m$, where $X_i\in\E_{-n-m+i}$.
 This means that there exist distinguished triangles
$X_i\rarrow T_i\rarrow T_{i-1}\rarrow X_i[1]$ for all
$1\le i\le m$ such that $T_0=X_0$ and $T_m=X$.
 Then the elements in $\Hom_\D(X_{i-1},X_i[1])$ obtained as
the compositions $X_{i-1}\rarrow T_{i-1}\rarrow X_i[1]$ provide
matrices $M_{(i)}$ and the element in $\Hom_\D(X_m,Y[n])$ obtained as
the composition $X_m\rarrow X \rarrow Y[n]$ provides a matrix~$N$.
 The existence of the morphisms $T_i\rarrow X_{i+1}[1]$ and
$X\rarrow Y[n]$ implies the equations on the products of
consecutive matrices $M_{(i)}$ and~$N$.

 By assumption, it follows that there exist matrices $L_{(i)}$, \
$M'_{(i)}$, and $Q$ satisfying the equations of part~(2).
 Let $Z_{i-1}\rarrow Z_i[1]$, \ $i=1$,~\ds, $m$, and
$Z_m\rarrow Y[n-1]$ be morphisms corresponding to the matrices
$M'_{(i)}$ and $Q$, where $Z_i\in\E_{-n-m+i+1}$.
 Since $\Hom_\D(Z_i,Z_j[2])=0$ for $j-i>2$, it follows from
the equations $M'_{(i+1)}M'_{(i)}=0$ that there exists
a successive extension of objects $Z_i$ corresponding to
our morphisms $Z_{i-1}\rarrow Z_i$.
 In other words, there exist distinguished triangles 
$Z_i\rarrow S_i\rarrow S_{i-1}\rarrow Z_i[1]$ for all
$1\le i\le m$ such that $S_0=Z_0$ and the compositions
$Z_{i-1}\rarrow S_{i-1}\rarrow Z_i[1]$ are equal to our morphisms
$Z_{i-1}\rarrow Z_i[1]$.
 Set $Z=S_m$.
 Since $\Hom_\D(Z_i,Y[n])=0$ for $i\le m-2$, it follows from
the equation $QM'_{(m)}=0$ that there exists a morphism
$Z\rarrow Y[n-1]$ making the triangle $Z_m\rarrow Z\rarrow Y[n-1]$
commutative.
 Finally, let $X_i\rarrow Z_i[1]$ be the morphisms corresponding
to the matrices~$L_{(i)}$.
 Since $\Hom_\D(Z_i,X_j[2])=0$ for $j-i>1$ and
$\Hom_\D(X_i,Y[n+1])=0$ for $i\le m-2$, it follows from
the equations on the matrices $L_{(i)}$ that one can construct
a morphism $X\rarrow Z[1]$ making the triangle $X\rarrow Z[1]
\rarrow Y[n]$ commutative.

 Suppose that the decomposition condition in~(1) holds.
 Let $M_{(i)}$, \ $m\ge1$, and $N$ be $\Sigma$\+colored matrices
satisfying the equations $M_{(i+1)}M_i=0$ and $NM_{(m)}=0$.
 As above, starting from these matrices one can construct
a successive extension $X$ of objects $X_i\in\E_{-n-m+i}$ and
a morphism $X\rarrow Y[n]$.
 Decompose this morphism as $X\rarrow Z[1]\rarrow Y[n]$, where
$Z\in\M$.
 By the argument from Subsection~\ref{diagonal-is-quadratic},
one can assume $Z\in\M_{[-n-m+1,-n+1]}$.
 So the object $Z$ is a successive extension of objects
$Z_i\in\E_{-n-m+i+1}$, \ $i=0$,~\ds,~$m$.
 Hence we obtain morphisms $Z_{i-1}\rarrow Z_i[1]$.
 Since $\Hom_\D(X_i,Z_j[1])=0$ for $i>j$, one can obtain morphisms
$X_i\rarrow Z_i[1]$ from the morphism $X\rarrow Z$.
 The composition $Z_m\rarrow Z\rarrow Y[n-1]$ provides a morphism
$Z_m\rarrow Y[n-1]$.
 These morphisms define the desired matrices $M'_{(i)}$, \
$L_{(i)}$, and~$Q$.
\end{proof}

\subsection{Flat case} \label{flat-koszul}
 Let $R$ be a big graded ring over a set~$\Sigma$.
 A right $R$\+module $N$ is called \emph{flat} if the functor
$M\maps N\ot_RM$ is exact on the category of left $R$\+modules.
 Flat left $R$\+modules are defined in the similar way.

 Let $A=A_0\oplus A_1\oplus A_2\oplus\dsb$ be a big graded ring;
set $R=A_0$.
 Assume that $A$ is quadratic.
 Consider the quadratic coring $C$ over $R$ defined in the first
proof in Subsection~\ref{quadratic-realized}.
 Introduce the reduced cobar-complex
\begin{equation} \label{coring-cobar-complex}
 R\lrarrow C_+\lrarrow C_+\ot_R C_+\lrarrow
 C_+\ot_R C_+\ot_RC_+ \lrarrow\dsb,
\end{equation}
where $C_+=C/R$; this complex is bigraded with the (cohomological)
grading~$n$ by the number of tensor factors and the (internal)
grading~$i$ induced by the grading of~$C$.

\begin{thm}
 Let $A$ be a quadratic big ring such that either all the left
$A_0$\+modules $A_i$ are flat or all the right $A_0$\+modules
$A_i$ are flat.
 Then $A$ is Koszul if and only if
the cobar-complex~\eqref{coring-cobar-complex} has no cohomology
outside of the diagonal $i+n=0$.
\end{thm}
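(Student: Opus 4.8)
The plan is to prove, in the one-sided flat situation, a single natural identification from which both directions follow at once: for $X\in\E_i$ and $Y\in\E_j$ (the objects and the exact category $\G$ being those of the first proof in Subsection~\ref{quadratic-realized}), the group $\Ext^n_\G(X,Y)$ is naturally isomorphic to the part of the cohomology of the reduced cobar-complex~\eqref{coring-cobar-complex} sitting in cohomological degree~$n$ and internal degree~$i-j$. Granting this, $A$ is Koszul --- i.e.\ $\Ext^n_\G(X,Y)=0$ for all $X\in\E_i$, $Y\in\E_j$ and $n\ne j-i$ by definition~\eqref{ext-diagonal} --- exactly when $H^n$ of~\eqref{coring-cobar-complex} vanishes in every internal degree $\ell$ with $\ell\ne-n$, which is precisely the assertion that the bigraded cohomology is concentrated on the diagonal $\ell+n=0$. (As a sanity check, the off-diagonal vanishing for $n\le2$ is automatic from the fact that $C$ is the quadratic coring dual to $A$, in agreement with the observation in~\ref{general-koszul} that the matrix condition is vacuous for $m\le1$ beyond quadraticity; the content lies in degrees $n\ge3$.)

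I would begin with two reductions. First, the Koszul property is invariant under passing to the opposite big ring $A^\opp$: this follows from the matrix criterion of~\ref{general-koszul} (as remarked there, the opposite matrix condition is equivalent to the original), and the quadratic coring of $A^\opp$ is the opposite coring $C^\opp$, whose reduced cobar-complex is obtained from~\eqref{coring-cobar-complex} by reversing the order of the tensor factors and the sign of the differential, hence has the same bigraded cohomology; so I may assume all the right $A_0$-modules $A_i$ are flat. Second, with $R=A_0$ I check that this forces $C$ to be flat as a \emph{right} $R$-module: each component $C_{-n}$ fits, by the inductive construction that makes the initial segment of~\eqref{coring-cobar-complex} exact, into a short exact sequence of right $R$-modules whose other two terms are assembled from tensor powers over $R$ of the flat bimodules $A_1,\dots,A_{n-1},R$ and are therefore right-flat (a tensor product over $R$ of bimodules that are flat as right modules is again right-flat); since the kernel in a short exact sequence of flat modules with flat cokernel is flat, induction on $n$ gives that every $C_{-n}$, and hence $C$, is right $R$-flat. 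In particular each $C_+^{\ot m}\ot_R C$ is flat as a right $R$-module.

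The technical core is then the computation of $\Ext_\G$ by a cobar resolution, for which I would invoke the homological algebra of corings over a base ring from~\cite{Psemi}. Enlarge $\G$ to the exact category $\G'$ of graded right $C$-comodules that are flat as graded right $R$-modules, with the exact structure whose admissible exact triples are those $R$-linearly split on the underlying modules; this restricts to the given exact structure on $\G$ (a short exact sequence of finitely generated free $R$-modules always splits), and the embedding $\G\rarrow\G'$ induces isomorphisms on all $\Ext$ groups by an approximation argument of the kind used in Section~\ref{coalgebra-secn} (after passing, if one wishes, to idempotent completions via Corollary~\ref{exact-triangulated}.3). For any flat right $R$-module $V$ the cofree comodule $V\ot_R C$ lies in $\G'$ and is injective there, since $\Hom_{\G'}(N,V\ot_R C)\simeq\Hom_R(N,V)$ and admissible epimorphisms in $\G'$ are $R$-split. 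The reduced cobar resolution of the trivial right $C$-comodule,
\begin{equation*}
 R\lrarrow C\lrarrow C_+\ot_R C\lrarrow C_+\ot_R C_+\ot_R C\lrarrow\dsb,
\end{equation*}
is a genuine resolution (the augmented un-reduced cobar complex is $R$-split exact, and the reduced complex is an $R$-split deformation retract of it) by injective objects of $\G'$; twisting it by a rank-one free $R$-module and shifting its grading yields injective resolutions of the objects of $\J$ sitting in a prescribed degree. Applying $\Hom_{\G'}(E_i,-)$ and using $\Hom_{\G'}(E_i,(C_+^{\ot m}\ot_R C)(j))\simeq\Hom_R(E_i,(C_+^{\ot m})(j))$ together with the identification from~\ref{big-graded-rings} of $\Hom$ out of a rank-one free $R$-module on $\sigma\in\Sigma$ with the corresponding column of $C_+^{\ot m}$ in internal degree $i-j$, one recovers exactly the complex~\eqref{coring-cobar-complex} in internal degree $i-j$; taking cohomology gives the isomorphism announced above (it suffices to treat $X$, $Y\in\J$, every object of $\E_0$ being a finite direct sum of such).

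The hard part --- and the precise place where the flatness hypothesis is indispensable --- is this last step: for a coring $C$ over a general base ring $R$ it is not known how to compute $\Ext$ in the exact category of $R$-projective $C$-comodules (\cite[Question~5.1.4]{Psemi}), and what rescues the situation here is exactly that one-sided flatness of the $A_i$ makes $C$ flat on that side, which is what lets the reduced cobar complex live in --- and resolve the generators by injectives of --- an exact category $\G'$ whose $\Ext$ coincides with that of $\G$.
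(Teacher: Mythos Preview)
Your reduction to the right-flat case and your overall strategy (identify $\Ext_\G$ with the cohomology of~\eqref{coring-cobar-complex} via a cobar resolution by injectives in a category of flat comodules) match the paper's argument for the ``if'' direction.  The genuine gap is your second reduction: the claim that right-flatness of the $A_i$ alone forces every $C_{-n}$ to be right $R$-flat.

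Your inductive step asserts that $C_{-n}$ sits in a short exact sequence whose other two terms are flat.  But the relevant sequence is $0\to C_{-n}\to (C_+^{\ot2})_{-n}\to\Im(d^2)\to 0$, and flatness of $\Im(d^2)$ is \emph{not} automatic.  For $n=4$, say, one has $\Im(d^2)\subset\Ker(d^3)$ with quotient $H^3(\text{cobar})_{-4}$; while $\Ker(d^3)$ is flat (it is the kernel of a surjection $(C_+^{\ot3})_{-4}\twoheadrightarrow\Im(d^3)$ between flats, $\Im(d^3)$ being flat because $A_1^{\ot4}/\Im(d^3)\simeq A_4$ is), flatness of $\Im(d^2)$ would require flatness of the off-diagonal cohomology $H^3$ at internal degree~$-4$.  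That is precisely the thing you do not know in the ``only if'' direction.  So your argument is circular there: you need the cobar resolution to live in $\G'$, which needs $C$ right-flat, which needs the off-diagonal cobar cohomology to vanish (or be flat), which is the conclusion.

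The paper's proof handles this asymmetrically.  In the ``if'' direction it does exactly what you suggest: assuming the cobar cohomology is diagonal, one proves $C_{-i}$ flat by induction (now $H^3, H^4,\dots$ in the relevant degrees are zero, so the inductive step goes through), and then Lemmas~2--3 give the identification with $\Ext_\G$.  In the ``only if'' direction the paper does \emph{not} claim $C$ is flat.  Instead it proceeds by induction on the internal degree~$m$: having shown the cohomology is diagonal above~$-m$, it knows $C_{-i}$ is flat for $i<m$, replaces $C$ by the truncated (hence flat) coring $C'$ with $C'_{-i}=C_{-i}$ for $i<m$ and $C'_{-i}=0$ for $i\ge m$, and invokes a separate lemma (Lemma~4) to the effect that $\Ext^n_\G(X,Y)$ for $n\ge3$, $X\in\E_0$, $Y\in\E_m$ depends only on $\G_{[0,m-1]}$ and $\G_{[1,m]}$, hence can be computed over $C'$.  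This truncation idea is the missing ingredient in your argument.
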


\begin{proof}
 First of all let us show that it suffices only to consider the case
when the right $A_0$\+modules $A_i$ are flat.

\begin{lem1}
 Opposite big graded rings $A$ and $A^\opp$ are Koszul simultaneously.
\end{lem1}

\begin{proof}
 It is clear that $A$ and $A^\opp$ are simultaneously quadratic;
assume that they are.
 Let $\G\supset\E_i$, \ $\E_0\supset\J$ be the exact category
with the additional data corresponding to~$A$.
 Consider the opposite exact category $\G'=\G^\opp$ with the full
subcategories $\E'_i=\E_{-i}^\opp$, the twist functor $X^\opp(-1)
= X(-1)^\opp$, and the full subcategory $\J'=\J^\opp\sub\E'_0$.
 Then the exact category $\G'\supset\E'_i$, \ $\E'_0\supset\J'$
corresponds to~$A^\opp$.
 Clearly, $\G$ and $\G'$ satisfy~\eqref{ext-diagonal} simultaneously.
\end{proof}

 It remains to notice that the coring $C^\opp$ over $R^\opp$ opposite
to the coring $C$ corresponds to the quadratic ring $A^\opp$ in
the same way as the coring $C$ corresponds to $A$, and
the cobar-complexes~\eqref{coring-cobar-complex} of $C$ and $C^\opp$
are isomorphic.

 The rest of the proof is based on the following Lemmas~2\+-4.

\begin{lem2}
 Let $C$ be a nonpositively graded coring over a big ring $R$ with
$C_0=R$.
 Then the embedding $\G\rarrow\H$ of the exact category $\G$ of graded
right $C$\+comodules, free and finitely generated as graded
$R$\+modules, into the exact category $\H$ of $R$\+flat graded right
$C$\+comodules induces an isomorphism of the groups $\Ext$.
\end{lem2}

\begin{proof}
 Clearly, it suffices to consider the categories $\G_{[0,m]}$ and
$\H_{[0,m]}$ of graded $C$\+comodules concentrated in the gradings
$[0,m]$.
 We will show that the exact functor $\G_{[0,m]}\rarrow\H_{[0,m]}$
satisfies the assumption of Lemmas~1--2 from~\ref{two-lemmas}.
 For this purpose we will use the big ring version of the 
Govorov--Lazard theorem that any flat module is a filtered inductive
limit of finitely generated free modules.
 More precisely, we will need the fact that any morphism from
a finitely presented $R$\+module (i.~e., the cokernel of a morphism
of finitely generated free $R$\+modules) to a flat $R$\+module
factorizes through a finitely generated free $R$\+module.
 One proves this for modules over big rings in exactly the same way
as in the case of conventional rings and modules;
see~\cite[No.~1.5--6]{Bour}.

 Let $T\rarrow X$ be an admissible epimorphism in $\H_{[0,m]}$
onto an object $X\in\G_{[0,m]}$; we would like to construct
an admissible epimorphism $Z\rarrow X$ in $\G_{[0,m]}$ factorizable
through the morphism $T\rarrow X$.
 Proceed by induction on~$m$, assuming that the morphisms
$Z_{\le m-1}\rarrow T_{\le m-1}\rarrow X_{\le m-1}$ have been
constructed already.
 Let $Z'''_m\rarrow T_m$ be a morphism into the grading component
$T_m$ from a right finitely presented $R$\+module $Z'''_m$ such
that the composition $Z'''_m\rarrow T_m\rarrow X_m$ is surjective
and the compositions $Z_{m-i}\rarrow T_{m-i}\rarrow T_m\ot_R C_{-i}$
of the grading components of the morphism $Z_{\le m-1}\rarrow
T_{\le m-1}$ with the comultiplication maps $T_{m-i}\rarrow
T_m\ot_R C_{m-i}$ factorize through the morphism
$Z'''_m\ot_RC_{-i}\rarrow T_m\ot_RC_{-i}$ for all $1\le i\le m$.
 One can even easily choose $Z'''_m$ to be a finitely generated
free $R$\+module.

 Suppose the above factorizations of morphisms to be fixed.
 Let $Z'''_m\rarrow Z''_m$ be a morphism of finitely
presented $R$\+modules through which the morphism
$Z'''_m\rarrow T_m$ factorizes such that the right $R$\+module
$Z_{\le m-1}\oplus Z''_m$ is a $C$\+comodule, i.~e.,
the coassociativity equations for the above maps
$Z_{m-i}\rarrow Z'''_m\ot_RC_{-i}$ together with the coaction maps
of $Z_{\le m-1}$ hold after taking the composition with
the epimorphisms $Z'''_m\ot_RC_{-i}\rarrow Z''\ot_RC_{-i}$.
 It is clearly possible to find such finitely presented $R$\+module
$Z''_m$, since there is only a finite number of equations on
the tensor products of its elements that have to be satisfied.
 Now the morphism $Z''_m\rarrow T_m$ between a finitely presented
and a flat $R$\+module factorizes through a finitely generated
free $R$\+module~$Z'_m$.
 This provides a right $C$\+comodule $Z'$, free and finitely
generated as a right $R$\+module, together with an epimorphism
$Z'\rarrow X$ factorizable through~$T$.
 It remains to add to the comodule $Z'$ a direct summand, free
and finitely generated over $R$ and concentrated in degree~$m$,
to obtain a morphism $Z\rarrow X$ whose kernel is also a free and
finitely generated $R$\+module.
\end{proof}

\begin{lem3}
 Let $C$ be a graded coring over a big ring $R$ such that
the components $C_i$ are flat right $R$\+modules.
 Then for any $R$\+projective graded right $C$\+comodule $X$ and
$R$\+flat graded right $C$\+comodule $Y$ the groups\/ $\Ext^n_\G(X,Y)$
in the exact category $\H$ of $R$\+flat graded right $C$\+comodules
are computed by the cobar-complex
$$
 \Hom_R(X,Y)\lrarrow\Hom_R(X\;Y\ot_RC_+)\lrarrow
 \Hom_R(X\;Y\ot_R C_+\ot_R C_+)\lrarrow\dsb,
$$
where $C_+=\Ker(C\to R)$, while the differentials are constructed
in terms of the comultiplication in $C$ and the right coactions of\/
$C$ in $X$ and\/ $Y$.
\end{lem3}

\begin{proof}
 Notice that our assertion does not depend on any positivity assumptions
on the grading.
 The complex 
$$
 Y\ot_R C\lrarrow Y\ot_R C_+\ot_R C\lrarrow Y\ot_R C_+\ot_R C_+\ot_RC
 \lrarrow\dsb
$$
is a right resolution of the graded right $C$\+comodule $Y$ in
the exact category~$\H$.
 Actually, this resolution is even split over~$R$.
 Its terms are $C$\+comodules $V\ot_RC$ coinduced from flat right
$R$\+modules $V$.
 One only has to check that such coinduced comodules are adjusted to
the functor $\Hom(X,{-})$ on the exact category~$\H$.

 Indeed, the exact functor $V\maps V\ot_RC$ from the category of flat
right $R$\+modules to the category of $R$\+flat right $C$\+comodules
satisfies the dual version of the assumption of Lemmas~1--2
from~\ref{two-lemmas}.
 To check this, consider an admissible monomorphism $V\ot_RC\rarrow N$
in the category~$\H$.
 Consider the morphism of $R$\+modules $V\ot_RC\rarrow V$ induced by
the counit of~$C$.
 Let $K$ be the fibered coproduct of the $R$\+modules $V$ and $N$
over $V\ot_RC$; then $K$ is a flat right $R$\+module and the morphism
$V\rarrow K$ is an admissible monomorphism of flat right
$R$\+modules.
 The morphism of right $R$\+modules $N\rarrow K$ induces a morphism
of right $C$\+comodules $N\rarrow K\ot_RC$, which forms a commutative
triangle with the morphism $V\ot_RC\rarrow N$ and the morphism
$V\ot_RC\rarrow K\ot_RC$ coinduced from the admissible monomorphism
$K\rarrow V$.

 Now since the exact triples of right $C$\+comodules coinduced from
exact triples of right $R$\+modules remain exact after applying
the functor $\Hom_\H(X,{-})$, one has $\Ext^n_\H(X\;V\ot_RC)=0$ for
$n>0$.
\end{proof}

\begin{lem4}
 Let $C'\rarrow C$ be a morphism of nonpositively graded corings
over a big ring $R$ such that the map $C'_{-i}\rarrow C_{-i}$
is an isomorphism for $i<m$.
 Let $X$ and $Y$ be finitely generated free right $R$\+modules
placed in the gradings\/ $0$ and\/~$m$, respectively.
 Then the maps\/ $\Ext^n_{\G'}(X,Y)\rarrow\Ext^n_\G(X,Y)$ between
the Ext groups in the categories of right $C'$- and $C$\+comodules,
free and finitely generated as graded $R$\+modules, are
isomorphisms for $n\ge3$.
\end{lem4}

\begin{proof}
 It is claimed that the groups $\Ext^n_\G(X,Y)$ for $n\ge3$ only
depend on the exact subcategories $\G_{[0,m-1]}$ and
$\G_{[1,m]}\sub\G$. 
 Indeed, any class in $\Ext^n_\G(X,Y)$ can be decomposed into
the product of classes in $\Ext^1_\G(X,Z)$ and $\Ext^{n-1}_\G(Z,Y)$,
where $Z\in\G_{[1,m-n+1]}$.
 The product of such two classes vanishes if and only if
the class in $\Ext^1_\G(X,Z)$ is the composition of a class
in $\Ext^1(X,Z')$ and a morphism $Z'\rarrow Z$, while
the composition of the same morphism with the class in
$\Ext^{n-1}(Z,Y)$ vanishes.
 Moreover, one can choose $Z'\in\G_{[1,m-n+2]}$ (see the proof
in~\ref{general-koszul}, ``Only if'' part).
\end{proof}

 Let us finish the proof of Theorem.
 ``If'': the diagonal cohomology of
the cobar-complex~\eqref{coring-cobar-complex} can be easily
identified with the components~$A_i$.
 Knowing that $A_i$ are flat right $R$\+modules and the cobar-complex
has no cohomology outside of the diagonal, one shows by induction
that the components $C_{-i}$ are flat $R$\+modules, too.
 Then it remains to apply Lemmas~2\+-3.

 ``Only if'': proceed by induction on the internal grading~$m$.
 If the cohomology of the cobar-complex in the internal grading
strictly above~$-m$ are concentrated on the diagonal, then
$C_{-i}$ are flat right $R$\+modules for $i<m$.
 Consider the graded coring $C'$ over $R$ with $C'_{-i}=C_{-i}$
for $i<m$ and $C'_{-i}=0$ for $i\ge m$.
 It follows from Lemmas~2\+-4 that $\Ext^n_\G(X,Y)$ for
$X\in\E_0$ and $Y\in\E_m$ for $n\ge 3$ can be computed in
terms of the cobar-complex of the coring~$C$.
 Since we assume that these groups are zero for $n\ne m$, 
it follows that the cobar-complex has no cohomology outside
of the diagonal in the internal grading~$-m$.
\end{proof}

\begin{cor}
 Let $k$ be a commutative ring and $A$ be a big graded ring with
a $k$\+algebra structure, i.~e.,  the components $A_{\sigma\tau;\.n}$
are $k$\+modules and the multiplications are $k$\+linear.
 Let $S$ be a (conventional associative, not necessarily commutative)
$k$\+algebra.
 Assume that either $S$ is $k$\+flat, or the components of $A$ are
flat $k$\+modules.
 Then the big graded ring $S\ot_k A$ is Koszul whenever the big
graded ring $A$ is Koszul.
 Assuming additionally that for any $k$\+module $M$ the vanishing of
$S\ot_kM$ implies the vanishing of $M$, the converse implication
also holds: $A$ is Koszul if $S\ot_k A$ is.
\end{cor}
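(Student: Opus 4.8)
The plan is to reduce the statement to the cobar\+complex characterization of Koszulity in Subsection~\ref{flat-koszul}, with a detour through the matrix criterion of Subsection~\ref{triangulated-koszul} in one of the two cases. Write $R=A_0$ and $R'=S\ot_kR=B_0$, where $B=S\ot_kA$. The first step is bookkeeping: under either hypothesis one has $\Tor^k_1(S,A_n)=0$ for all $n$, and together with the natural isomorphism $(S\ot_kM)\ot_{R'}(S\ot_kN)\cong S\ot_k(M\ot_RN)$, valid for arbitrary $R$\+bimodules $M$, $N$ with no flatness assumption, this lets one apply $S\ot_k{-}$ to the presentation exhibiting $A$ as quadratic and conclude that $B$ is quadratic, that its quadratic dual coring over $R'$ is $C_B\cong S\ot_kC_A$ with $C_{B,-n}\cong S\ot_kC_{A,-n}$ (the higher $\Tor$\+vanishing needed being supplied degree by degree together with the flatness established below), and hence that the reduced cobar\+complexes~\eqref{coring-cobar-complex} satisfy $\mathrm{cobar}(C_B)\cong S\ot_k\mathrm{cobar}(C_A)$ as bigraded complexes of $R'$\+bimodules. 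For the converse implication one runs this in reverse: starting from $B$ quadratic, the same computation together with the hypothesis that $S\ot_kM=0$ forces $M=0$ shows that $A$ is quadratic.

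Suppose first that all the $A_n$ (including $A_0$) are flat $k$\+modules. Since the Koszul property does not depend on the degree\+zero component (the remarks preceding the Theorem in~\ref{flat-koszul}, resting on Section~\ref{base-restriction-secn}), replace $A$ by $A|_k:=k_\Sigma\op A_1\op A_2\op\dsb$, where $k_\Sigma$ is the diagonal big ring over $\Sigma$ attached to $k$; correspondingly $B$ is replaced by $S_\Sigma\op B_1\op B_2\op\dsb$, and these are Koszul if and only if $A$, resp.\ $B$, is. After this reduction every $A_n$ is flat over $k_\Sigma=A_0$, and $B_n=S\ot_kA_n=S_\Sigma\ot_{k_\Sigma}A_n$ is its base change, hence flat over $S_\Sigma=B_0$; so the Theorem in~\ref{flat-koszul} applies to both rings. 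If $A$ is Koszul then, unwinding the flatness argument in its proof, not only the components $C_{A,-i}$ but all the cocycle and coboundary modules of $\mathrm{cobar}(C_A)$ are flat over $k_\Sigma$; therefore $S\ot_k{-}$ preserves all the short exact sequences that make up $\mathrm{cobar}(C_A)$, so $H^*(\mathrm{cobar}(C_B))=S\ot_kH^*(\mathrm{cobar}(C_A))$ is concentrated on the diagonal and $B$ is Koszul. For the converse one runs the same flatness propagation as a joint induction on the internal degree: at the $(-m)$\+th step all terms of $\mathrm{cobar}(C_A)$ in internal degree $-m$ other than $C_{A,-m}$ are already known flat, the degree\+$1$ and degree\+$2$ cohomology vanish off the diagonal by quadraticity of the coring, and the faithfulness hypothesis carries the off\+diagonal vanishing back from $\mathrm{cobar}(C_B)$ to $\mathrm{cobar}(C_A)$, after which the flatness conclusion of the Theorem in~\ref{flat-koszul} shows $C_{A,-m}$ flat and the induction proceeds.

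Suppose instead that $S$ is $k$\+flat. Then $S\ot_k{-}$ is exact and, granting faithfulness, conservative, so it commutes with and reflects the cohomology of $\mathrm{cobar}(C_A)$ outright. The difficulty is that $A$ itself need not have components flat over $A_0$, so the Theorem in~\ref{flat-koszul} cannot be quoted for it; instead I would route through the matrix characterization of Koszulity in Subsection~\ref{triangulated-koszul}, which applies to an arbitrary big graded ring and which phrases the Koszul property of $B=S\ot_kA$ as the solvability of the same matrix equations with every $A_n$, multiplication map, and tensor product replaced by $S\ot_k{-}$ of the corresponding datum of $A$. For the forward direction one checks that a solvable instance of the matrix condition over $A$ base\+changes to a solvable instance over $B$, and that an arbitrary instance over $B$ is, after enlarging the $\Sigma$\+colored index sets, assembled out of matrices with entries in $A_1$ and matrices with entries in $B_0$, reducing it to an instance induced from $A$, which is solvable since $A$ is Koszul; one then transports the solution back. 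For the converse one pushes an instance over $A$ forward along $A\rarrow S\ot_kA$, solves it over $B$ using that $B$ is Koszul, and descends the solution to $A$ along the faithfully flat extension.

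I expect the main obstacle to be precisely this last descent, together with the forward reduction of a general $B$\+instance to an $A$\+instance: the matrix relations $M'_{(i+1)}M'_{(i)}=0$ are quadratic in the unknowns, so the classical faithfully\+flat descent of solvability, which is for linear systems via cokernels, does not apply verbatim. The way around it is to re\+read the matrix condition, via its equivalence with the off\+diagonal vanishing of $\Ext$ in the exact category of $R$\+flat graded comodules over the relevant coring (the lemma in~\ref{flat-koszul} identifying this $\Ext$ with $\Ext$ over $R$\+free comodules, which needs no flatness of the coring), as a genuinely module\+theoretic statement; in that form the comparison of $\Ext$ groups for $A$ and for $S\ot_kA$ --- an isomorphism $\Ext_\G(S\ot_kX,S\ot_kY)\cong S\ot_k\Ext_\G(X,Y)$ when $S$ is $k$\+flat, and a $\Tor$\+spectral\+sequence refinement of it in the other case --- makes both the base change and the faithfully flat descent routine.
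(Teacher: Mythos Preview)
Your treatment of the case where the $A_n$ are $k$\+flat is essentially correct. The paper handles this case in one line by invoking condition~(c) of Theorem~2 in Subsection~\ref{generalities} (the bar\+complex criterion) rather than the cobar\+complex; the two routes are equivalent in difficulty, since either way one must propagate flatness through the complex (to the coring components $C_{A,-i}$, or dually) before concluding that $S\ot_k{-}$ commutes with the relevant cohomology. Your inductive bookkeeping is a bit heavier than strictly necessary, but nothing is wrong.

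The case where $S$ is $k$\+flat, however, has a genuine gap. You correctly diagnose that the matrix criterion of Subsection~\ref{triangulated-koszul} does not transfer cleanly: the solvability conditions are quadratic in the unknowns, so neither the reduction of an arbitrary $B$\+instance to an $A$\+instance nor faithfully\+flat descent of a solution is available by linear algebra. Your final paragraph then names the right target --- an isomorphism $\Ext^n_{\G_{S\ot_kC}}(S\ot_kX,S\ot_kY)\simeq S\ot_k\Ext^n_{\G_C}(X,Y)$ --- but that isomorphism \emph{is} the content of the proof, and you have not supplied it. The paper establishes it by a change\+of\+rings adjunction: one identifies graded right $S\ot_kC$\+comodules with graded right $C$\+comodules equipped with a commuting right $S$\+module structure, obtaining an exact forgetful functor $\H_{S\ot_kC}\rarrow\H_C$ with exact left adjoint $X\mapsto S\ot_kX$. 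The adjunction (via~\cite[Lemma~2.1]{Psheaves}) gives
\[
\Ext^n_{\H_{S\ot_kC}}(S\ot_kA_0,\,S\ot_kA_0(i))\;\simeq\;\Ext^n_{\H_C}(A_0,\,S\ot_kA_0(i)).
\]
For the right\+hand side one observes that $\Hom_{\H_C}(X,S\ot_kY)\simeq S\ot_k\Hom_{\H_C}(X,Y)$ whenever $X$ is free and finitely generated over $A_0$ (this is where $k$\+flatness of $S$ is used), so by Lemma~2 of~\ref{two-lemmas} the right $\Ext$\+module over the big ring $\H(\G_C)$ is induced from degree zero, yielding $S\ot_k\Ext^n_{\G_C}(A_0,A_0(i))$. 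Combined with Lemma~2 of~\ref{flat-koszul} on the left, this gives the off\+diagonal vanishing transfer in both directions at once. Without this adjunction\+plus\+induced\+module step your proposal in the $S$\+flat case remains a sketch.
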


\begin{proof}
 Let us first consider the case when $S$ is $k$\+flat.
 Then one can easily see that $S\ot_k A$ is quadratic whenever
$A$ is, and the converse also holds if $S$ is faithfully $k$\+flat.
 The nonpositively graded coring over $S\ot_k A_0$ quadratic dual
to $S\ot_k A$ is naturally isomorphic to $S\ot_k C$, where $C$
is the coring quadratic dual to~$A$.
 The category of graded right comodules over $S\ot_k C$ is naturally
identified with the category of graded right comodules over $C$
endowed with a right $S$\+modules structure which agrees with
the $k$\+module structure and commutes with the coaction of~$C$.

 Let $\H_C$ and $\H_{S\ot_k C}$ denote the exact categories of
$A_0$\+flat graded right $C$\+comodules and $S\ot_k A_0$\+flat
graded right $S\ot_k A_0$\+comodules.
 Then there is a natural exact functor $\H_{S\ot_kC}\rarrow\H_C$
of forgetting the action of $S$, which has an exact left adjoint
functor $\H_C\rarrow\H_{S\ot_kC}$ sending a comodule $X$ to
the comodule $S\ot_k X$.
 
 By~\cite[Lemma~2.1]{Psheaves}, there are natural isomorphisms
$$
 \Ext_{\H_{S\ot_kC}}^n(S\ot_kA_0\;S\ot_kA_0(i))\.\simeq\.
 \Ext_{\H_C}^n(A_0\;S\ot_kA_0(i))
$$
for all $n$ and~$i$.
 By Lemma~2 above, the left hand side of this isomorphism is isomorphic
to the similar group $\Ext$ in the category $\G_{S\ot_kC}$ of
of graded right $S\ot_kC$\+comodules, free and finitely generated
as $S\ot_k A_0$\+modules.

 By Lemma~2 from~\ref{two-lemmas}, which is applicable according to
the proof of Lemma~2 above, the right hand side is isomorphic to
$S\ot_k\Ext_{\G_C}^n(A_0\;A_0(i))$.
 Indeed, one readily checks that for any object $X$ of the exact
category $\G_C$ of right $C$\+comodules, free and finitely generated
as $A_0$\+modules, and any object $Y\in\H_C$ there is a natural
isomorphism $\Hom_{\H_C}(X\;S\ot_k Y)\simeq S\ot_k\Hom_{\H_C}(X,Y)$,
since there is a similar isomorphism for morphisms of graded
$A_0$\+modules and $S$ is $k$\+flat.
 This proves the desired assertion.

 The case when $A$ is $k$\+flat is easily dealt with using
the condition~(c) from Theorem~2 of Subsection~\ref{generalities}
below.
\end{proof}

 When $k$ is an Artinian local commutative ring and $S=f$ is its
residue field, a stronger assertion holds.
 Namely, if $A$ is quadratic and $A_n$ is $k$\+flat for $n=1$, $2$,
and~$3$, then Koszulity of $f\ot_k A$ implies Koszulity of $A$ and
$k$\+flatness of $A_n$ for all $n\ge1$.
 The proof is similar to that of~\cite[Theorem~7.3 of Chapter~6]{PP}.

\subsection{Generalities on flat Koszulity} \label{generalities}
 Recall the definition of a quadratic big ring from
Subsection~\ref{diagonal-is-quadratic} and the definitions
of a quadratic coring and the quadratic duality from the first proof
in Subsection~\ref{quadratic-realized}.

 Let $C=C_0\oplus C_{-1}\oplus C_{-2}\oplus\dsb$ be a graded
coring over a big ring $R$ such that $C_0=R$.
 Let $\G$ denote the exact category of graded right $C$\+comodules
that are free and finitely generated as graded right $R$\+modules,
and let $\H$ be the category of graded right $C$\+comodules that are
flat as graded right $R$\+modules.

 Let $\Ext^n_\G(R,R(m))$ and $\Ext^n_\H(R,R(m))$ denote
the $R$\+bimodules whose components $\Ext^n(R,R(m))_{\sigma\tau}$
are the groups $\Ext^n$ in the categories $\G$ and $\H$ between
the free right $R$\+modules with one generator placed in
the gradings $0$ and~$m$.

\begin{thm1}
 The following conditions are equivalent:
\begin{enumerate}
\renewcommand{\theenumi}{\alph{enumi}}
\item $\Ext^n_\G(R,R(m))=0$ for all\/ $n\ne m$ and\/
$\Ext^n_\G(R,R(n))$ is a flat right $R$\+module for all\/~$n$;
\item $\Ext^n_\H(R,R(m))=0$ for all\/ $n\ne m$ and\/
$\Ext^n_\H(R,R(n))$ is a flat right $R$\+module for all\/~$n$;
\item the cobar-complex~\eqref{coring-cobar-complex} has no
cohomology outside of the diagonal\/ $i+n=0$, and its cohomology
on this diagonal are flat right $R$\+modules;
\item the coring $C$ is quadratic, and for any $m\ge1$
the lattice of subbimodules of the $R$\+bimodule
$C_{-1}^{\ot_R\.m}$ generated by the subbimodules
$C_{-1}^{\ot j-1}\ot_R C_{-2}\ot_R C_{-1}^{\ot m-j-1}$, \
$1\le j\le m-1$, is distributive and the quotient bimodule
for any pair of embedded bimodules in this lattice is
a flat right $R$\+module.
\end{enumerate}
\end{thm1}

 Let $A=A_0\oplus A_1\oplus A_2\oplus\dsb$ be a big graded ring
with the zero-degree component $A_0=R$.
 Consider the reduced bar-complex
\begin{equation}  \label{big-ring-bar-complex}
 R\llarrow A_+\llarrow A_+\ot_R A_+\llarrow A_+\ot_R A_+\ot_R A_+
 \llarrow\dsb,
\end{equation}
where $A_+=A/R$; this complex is bigraded with the (cohomological)
grading~$n$ by the number of tensor factors and the (internal)
grading~$i$ induced by the grading of~$A$.

 For a left $A$\+module $M$ and a right $A$\+module $N$, let
$\Tor_n^A(N,M)$ denote the derived functor of tensor product
of $A$\+modules.
 The groups $\Tor_n^A(N,M)$ inherit the internal grading of
$M$, \ $N$, and $A$.

\begin{thm2}
 Assume that the components $A_i$ are flat right $R$\+modules.
 Then the following conditions are equivalent:
\begin{enumerate}
\renewcommand{\theenumi}{\alph{enumi}}
\item the graded abelian group $\Tor_n^A(N,M)$ is concentrated
in degree~$n$ for any finitely generated free (left and right)
$R$\+modules $M$ and $N$ considered as graded $A$\+modules
concentrated in degree\/~$0$;
\item the graded abelian group $\Tor_n^A(N,M)$ is concentrated
in degree~$n$ for any left $R$\+module $M$ and any flat right
$R$\+module $N$ considered as graded $A$\+modules concentrated
in degree\/~$0$;
\item the bar-complex~\eqref{big-ring-bar-complex} has no
cohomology outside of the diagonal\/ $i=n$;
\item the big ring $A$ is quadratic with the $R$\+bimodule of
quadratic relations $I\sub A_1\ot_R A_1$, and for any $m\ge1$
the lattice of subbimodules of the $R$\+bimodule $A_1^{\ot_R\.m}$
generated by the subbimodules $A_1^{\ot j-1}\ot_R I\ot_R
A_1^{\ot m-j-1}$, \ $1\le j\le m-1$, is distributive.
\end{enumerate}
\end{thm2}

 Suppose a quadratic big ring $A$ and a quadratic coring $C$ are
quadratic dual to each other.
 Consider the tensor products $A\ot_RC$ and $C\ot_RA$; they are
endowed with the differentials constructed as the composition
$A_i\ot_RC_{-j}\rarrow A_i\ot_R C_{-1}\ot_RC_{-j+1}\simeq
A_i\ot_R A_1\ot_R C_{-j+1}\rarrow A_{i+1}\ot_R C_{-j+1}$ of
the comultiplication and the multiplication maps, and analogously
for $C\ot_RA$.
 Define the internal grading on $A\ot_RC$ and $C\ot_RA$ by the rule
that the component $A_i\ot_R C_{-j}$ or $C_{-j}\ot_R A_i$ lives
in the grading $i+j$.

 The complexes $A\ot_RC$ and $C\ot_RA$ are called the \emph{Koszul
complexes} of the quadratic dual big ring $A$ and coring~$C$.

\begin{cor}
 The following conditions are equivalent:
\begin{enumerate}
\renewcommand{\theenumi}{\Alph{enumi}}
\item the big graded ring $A$ satisfies the equivalent 
conditions of Theorem~\textup{1};
\item the graded coring $C$ satisfies the equivalent conditions
of Theorem~\textup{2};
\item the components $A_i$ are flat right $R$\+modules and either
of the Koszul complexes $A\ot_RC$ or $C\ot_RA$ is exact in
the internal grading\/ $m\ge1$. 
\end{enumerate}
\end{cor}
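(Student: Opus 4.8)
The plan is to prove that each of \textup{(A)}, \textup{(B)}, and \textup{(C)} is equivalent to the big graded ring $A$ being Koszul in the sense of Section~\ref{koszul-rings-secn}, by combining Theorems~1 and~2 with the Theorem of Subsection~\ref{flat-koszul} and with the comparison of the Koszul complexes with the reduced bar-resolution of $R$ over~$A$. First I would observe that each of the three conditions forces all the components $A_i$ to be flat right $R$\+modules: this is explicit in \textup{(C)}; it is part of the standing hypothesis of Theorem~2, hence implied by \textup{(B)}; and it follows from \textup{(A)} because, by condition~(c) of Theorem~1, the cohomology of the cobar-complex~\eqref{coring-cobar-complex} on the diagonal $i+n=0$ consists of flat right $R$\+modules, while this cohomology is identified with the components~$A_i$ (see the first proof in Subsection~\ref{quadratic-realized}). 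One may therefore assume the $A_i$ to be flat right $R$\+modules throughout. With this in force, condition~(c) of Theorem~1 makes \textup{(A)} equivalent to the cobar-complex~\eqref{coring-cobar-complex} of $C$ having no cohomology off the diagonal $i+n=0$, and the Theorem of Subsection~\ref{flat-koszul} then makes the latter equivalent to the Koszulity of~$A$; likewise, condition~(c) of Theorem~2 makes \textup{(B)} equivalent to the bar-complex~\eqref{big-ring-bar-complex} of $A$ having no cohomology off the diagonal $i=n$, i.e., to $\Tor^A_n$ being concentrated in internal degree~$n$ on finitely generated free $R$\+modules placed in degree~$0$.

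It then remains to bring \textup{(C)} into the picture. I would work with the Koszul complex $C\ot_RA$, for which the right-flatness of the $A_i$ is the convenient hypothesis; the complex $A\ot_RC$ is handled by the opposite argument together with Lemma~1 from the proof in Subsection~\ref{flat-koszul} (that $A$ and $A^\opp$ are Koszul simultaneously), which at the same time yields that the two Koszul complexes are exact in positive internal gradings simultaneously. Assume $C\ot_RA$ is exact in every internal grading $m\ge1$. By induction on~$n$ I would show that $C_{-n}$ is a flat right $R$\+module: the base case $C_0=R$ is clear, and if $C_{-i}$ is flat for $i<n$, then in the acyclic complex $0\rarrow C_{-n}\rarrow C_{-n+1}\ot_RA_1\rarrow\dsb\rarrow A_n\rarrow0$ every term except the first is a flat right $R$\+module, so a descent through its syzygies---using that a submodule occurring in a short exact sequence whose two other terms are flat is itself flat---shows that $C_{-n}$ is flat. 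Consequently $C\ot_RA$ is a resolution of $R$ by flat right $A$\+modules, and computing $\Tor^A$ with it gives $\Tor^A_n(R,M)\simeq C_{-n}\ot_RM$ with vanishing differentials for every left $A$\+module~$M$; hence $\Tor^A$ is concentrated in internal degree~$n$, which is condition~(a) of Theorem~2, that is, \textup{(B)}. For the converse, assuming $A$ Koszul---equivalently, \textup{(B)}---I would invoke the standard comparison of the reduced bar-resolution of $R$ over $A$ with the Koszul complex, namely that a quadratic ring is Koszul exactly when its Koszul complex resolves the base ring, carried out over $R$ with the help of the flatness of the $A_i$ much as in Subsection~\ref{diagonal-is-quadratic} (see also~\cite{PV,PP}); this identifies that resolution with $C\ot_RA$ and so gives its exactness in positive internal gradings. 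Together with the previous paragraph this closes the circle of implications among \textup{(A)}, \textup{(B)}, and \textup{(C)}.

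The step I expect to be the main obstacle is not any conceptual novelty but the base-ring bookkeeping in the second paragraph: checking that exactness of the Koszul complex genuinely forces the flatness of the $C_{-n}$ on which the $\Tor$ computation relies, keeping left- versus right-flatness straight when passing between $A\ot_RC$ and $C\ot_RA$ (where Lemma~1 of Subsection~\ref{flat-koszul} is the relevant device), and verifying that the bar-resolution/Koszul-complex comparison, classical over a field, still works when the terms are merely flat over $R$ rather than free. This is precisely the flat-descent phenomenon already handled by Theorems~1 and~2 and by the Theorem of Subsection~\ref{flat-koszul}, so I expect it to go through with due care.
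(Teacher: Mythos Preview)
Your argument for (C)$\,\Rightarrow\,$(B) is correct and coincides with the paper's: once the $C_{-n}$ are shown inductively to be right $R$\+flat, $C\otimes_R A$ is a flat resolution of $R$ as a right $A$\+module and computes $\Tor^A$. But the circle of implications does not close. You establish (A)$\,\Leftrightarrow\,$Koszul via the Theorem of~\ref{flat-koszul}, and (C)$\,\Rightarrow\,$(B); you then sketch (B)$\,\Rightarrow\,$(C). Even granting that step, you are left with two disconnected equivalences, (A)$\,\Leftrightarrow\,$Koszul and (B)$\,\Leftrightarrow\,$(C). The parenthetical ``assuming $A$ Koszul---equivalently, (B)'' asserts precisely the missing bridge (B)$\,\Leftrightarrow\,$Koszul, which you never argue; your stated plan was to show each of (A), (B), (C) equivalent to Koszulity, but for (B) the first paragraph only restates it as a $\Tor$ condition. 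The paper closes this gap through the distributivity conditions~(d) of Theorems~1 and~2: since $C_{-1}=A_1$ and $C_{-2}=I$, the two lattices are literally the same, and the extra right-flatness of successive quotients demanded in~1(d) is obtained from distributivity together with right-flatness of the $A_i$ via the lemmas from~\cite[11.4.3]{Psemi}. Those same lemmas also give (A\+B)$\,\Rightarrow\,$(C). You do not invoke condition~(d) at all, and it is the actual mechanism for (A)$\,\Leftrightarrow\,$(B).

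A second problem: handling $A\otimes_R C$ by ``the opposite argument together with Lemma~1'' does not work as stated. Lemma~1 of~\ref{flat-koszul} concerns abstract Koszulity in the sense of Section~\ref{koszul-rings-secn}, not the flat conditions of this subsection, and passing to $A^\opp$ turns right $R$\+flatness of the $A_i$ into left $R$\+flatness, which is not assumed. With only one-sided flatness, the fact that exactness of $A\otimes_R C$ and of $C\otimes_R A$ are equivalent again falls out of the lattice picture (condition~(d) is left--right symmetric, and the flatness of quotients one extracts from it suffices for either complex), not from a symmetry argument.
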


 A big graded ring $A$ or a graded coring $C$ is called
\emph{right flat Koszul} if it satisfies the equivalent conditions
of Theorem~2 or Theorem~1, respectively.
 According to Corollary, quadratic dual quadratic big ring and
quadratic coring are Koszul simultaneously.
 Notice that our terminology is consistent: if $A$ is a big
graded ring whose components $A_i$ are flat right $A_0$\+modules,
then $A$ is right flat Koszul in the sense of the above definition
if and only if it is Koszul in the sense of the definition given
in the beginning of this section.
 To see this, it suffices to compare Theorem from~\ref{flat-koszul}
with the above Theorem~1(c) and use Corollary.

\begin{proof}[Proof of Theorems~\textup{1--2} and Corollary]
 It was explained in~\ref{flat-koszul} how to prove the equivalence
of (a), (b), and~(c) in Theorem~1.
 Proving the equivalences
(a)$\thickspace\Longleftrightarrow\thickspace$(c)
and (b)$\thickspace\Longleftrightarrow\thickspace$(c)
in Theorem~2 is easy, since one can compute the $\Tor$ in
terms of the bar-complex.
 To prove the equivalence of (c) and~(d) in both theorems, one can
use Lemma~1 and the big ring version of Lemma~2(a)
from~\cite[Subsection~11.4.3]{Psemi}.
 This also allows to prove the equivalence of (A) and~(B) in
Corollary and deduce~(C) from (A-B).
 Finally, to pass from~(C) of Corollary to Theorem~2(a) one simply
uses the Koszul complex as a resolution of free (left or right)
$R$\+modules considered as $A$\+modules concentrated in degree~$0$.
\end{proof}

\begin{rem1}
 When one is thinking of the conditions~(a-b) of Theorem~2, one
keeps in mind that for any nonnegatively graded ring $A$ whose
components are flat right $R$\+modules, the graded abelian groups
$\Tor^A_n(N,M)$ are concentrated in degrees~$\ge n$.
 Without the flatness condition, this is no longer true.
 For a counterexample, it suffices to consider a field~$k$, the ring
of polynomials in one variable $R=k[x]$, and the graded commutative
ring $A=k[x,y]/(xy)$ with $\deg x=0$ and $\deg y=1$.
 Another example is the graded ring $B$ over $R=\Z$ with $B_0=\Z$,
\ $B_1=\Q/\Z$, and $B_n=0$ for $n\ge 2$.
 The graded ring $B$ is Koszul (see Example in the beginning
of Section~\ref{koszul-rings-secn}).
 To compute $\Tor^B_*(\Z,\Z)$ one can replace $B$ with its flat
graded DG\+algebra resolution over $\Z$ and write down the bar-complex
of the resolution.
 Hence one finds that the graded group $\Tor^B_n(\Z,\Z)$ vanishes for
even $n\ge2$ and is isomorphic to the group $\Q/\Z$ placed
in the degree $(n+1)/2$ for odd $n\ge1$.
 Of course, one can still use the reduced
bar-complex~\eqref{big-ring-bar-complex} to compute the $\Tor$ over
$A$ \emph{relative to}~$R$ in the nonflat case, so the relative
$\Tor$ is concentrated in the usual degrees.
\end{rem1}

\begin{rem2}
 Let $A=R\op A_1\op A_2\op\dsb$ be a nonnegatively graded ring and
$S\rarrow R$ be a morphism of rings such that $A_i$ are flat right
$R$\+modules and flat right $S$\+modules for all $i\ge1$.
 Then it follows from our results that the graded rings $A$ and
$B=S\op A_1\op A_2\op\dsb$ satisfy the conditions of Theorem~2
simultaneously.
 This can be proved directly in several ways.
 In particular, one can use the spectral sequence
$E^2_{pq}=\Tor_p^A(\Tor_q^B(S,A),R)\implies\Tor_{p+q}^B(S,R)$
corresponding to the morphism of rings $B\rarrow A$.
 There is also a simple lattice-theoretical argument.
\end{rem2}

\Section{Nonfiltered Exact Categories}  \label{nonfiltered-secn}

 For any big graded ring $A=A_0\oplus A_1\oplus A_2\oplus\dsb$,
denote by $\qu A$ the ``quadratic part'' of $A$, i.~e.,
the quadratic ring for which there is a natural morphism of
big graded rings $\qu A\rarrow A$ that is an isomorphism in
degrees $n=0$ and~$1$, and a monomorphism in degree $n=2$.

 The following set of stronger assumptions on the morphism
$\qu A\rarrow A$ and the big graded ring $\qu A$ will play
the key role in this section:
\begin{align}
 &\begin{array}{l}
 \text{the natural morphism $\qu A\rarrow A$ is an isomorphism} \\
 \text{in degree $n=2$ and a monomorphism in degree~$n=3$;}
 \end{array} \label{two-three-degree} \\
 &\begin{array}{l}
 \text{the quadratic big ring $\qu A$ is Koszul.}
 \end{array}\label{koszul-qu-part}
\end{align}

 Let $\E$ be an exact category, $\E_0$ be a small additive category,
and $\Phi_0\:\E_0\rarrow\E$ be a fully faithful additive functor.
 Consider the category $\E_0$ as an exact category with the trivial
exact category structure and use the construction of Example
from Section~\ref{associated-graded-secn} to obtain an exact
category $\F$ with full subcategories $\E_i$ and a twist functor
$X\maps X(1)$.
 Let $\Phi\:\F\rarrow\E$ denote the forgetful functor.

 Let $\J\sub\E_0$ be a full subcategory such that every object of
$\E_0$ is a finite direct sum of objects from~$\J$.
 Consider the big graded ring $A=(\Ext^n_\E(\Phi(X),\Phi(Y)))_
{Y,X\in\J;\.n\ge0}$ over the set $\Ob\J$.

\begin{prop}
  Assume that the big graded ring $A$
satisfies~\textup{(\ref{two-three-degree}--\ref{koszul-qu-part})}.
 Then the natural maps\/ $\Ext^n_\F(X,Y(m))\rarrow\Ext^n_\F(X,Y(m+1))$
are isomorphisms for all $X$, $Y\in\E_0$ and\/ $n\le m$, and
the big graded ring\/ $(\Ext^n_\F(X,Y(n)))_{Y,X\in\J;\.n\ge0}$
is isomorphic to~$\qu A$.
\end{prop}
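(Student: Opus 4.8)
The plan is to run the associated graded construction of Section~\ref{associated-graded-secn} on the filtered category $\F$ and then to recognize $\G=\gr\.\F$ as the Koszul-dual exact category attached to $\qu A$ in Sections~\ref{diagonal-secn}--\ref{koszul-rings-secn}.

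\emph{Setting up the associated graded category.} I would first note that $\F$ here is precisely the category produced by the Example in Section~\ref{associated-graded-secn} (equivalently, by part~(2) of the Theorem in Section~\ref{filtered-exact-secn} with $\E_i=\E_0$ and $\Phi_i=\Phi_0$), with twist functor $X\maps X(1)$ and natural transformation $\sigma_X$ acting by the identity on the underlying object of $\E$ and by zero on the graded lift. Hence $\F$ satisfies~(\ref{exact-filtered-eqn}--\ref{nat-transf-factorize}), the Theorem of Section~\ref{associated-graded-secn} applies, and we obtain an exact category $\G$ with subcategories $\E_i$, a compatible twist functor, and the functorial long exact sequence~\eqref{filtered-graded-sequence}. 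Two further ingredients come for free. By part~(2) of the Theorem in Section~\ref{filtered-exact-secn} (``the second assertion of~\eqref{triang-bl-area} holds for all $i<j$ and $n=2$'') the maps $\Ext^n_\F(X,Y(m))\rarrow\Ext^n_\E(\Phi X,\Phi Y)$ induced by $\Phi$, under the identification $\Phi(Y(m))=\Phi(Y)$, are isomorphisms for $n=0$, $1$ and monomorphisms for $n=2$ whenever $m\ge1$; and since $\Phi(\sigma)$ is the identity, these comparison maps commute with the $\sigma$\+induced maps in $m$. Also, applying the Theorem of Section~\ref{diagonal-secn} to $\D^b(\F)$ with $\M=\F$ (the decomposition hypothesis being automatic for the bounded derived category of an exact category, by Corollary~\ref{exact-triangulated}.2) gives $\Ext^n_\F(X,Y(m))=0$ for $n>m$, and the same argument applied to $\G$ shows that the big graded ring of diagonal $\Ext$'s of $\G$ is quadratic.

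\emph{Identifying the diagonal ring.} Substituting $W=Y(n)$ into~\eqref{filtered-graded-sequence} and using $\Ext^n_\F(X,Y(n-1))=0=\Ext^{n+1}_\F(X,Y(n-1))$ yields $\Ext^n_\F(X,Y(n))\simeq\Ext^n_\G(\gr\.X,\gr\.Y(n))$, so the big graded ring $B=(\Ext^n_\F(X,Y(n)))_{Y,X\in\J;\.n\ge0}$ is the (quadratic) diagonal $\Ext$ ring of $\G$. The ring homomorphism $B\rarrow A$ induced by $\Phi$ is an isomorphism in degrees $0$ and $1$ and a monomorphism in degree~$2$ by the preceding paragraph; since $B$ is quadratic, this forces the $A_0$\+bimodule of quadratic relations of $B$ to equal $\Ker(A_1\ot_{A_0}A_1\to A_2)$, whence $B\simeq\qu A$. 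This proves the second assertion (it suffices to treat $X$, $Y\in\J$, the general case following by additivity).

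\emph{Recognizing $\G$ and finishing.} It remains to show that $\G$ is the exact category attached to $\qu A$ in Subsection~\ref{quadratic-realized}, for which I would verify~\eqref{ext-one-two-diagonal} by chasing~\eqref{filtered-graded-sequence}: the maps $\Ext^1_\F(X,Y(m))\rarrow\Ext^1_\F(X,Y(m+1))$ are isomorphisms for $m\ge1$ and $\Ext^2_\F(X,Y(m))\rarrow\Ext^2_\F(X,Y(m+1))$ are monomorphisms for $m\ge2$ (both because the groups embed compatibly into $A_1$, resp.\ $A_2$), and feeding this into the long exact sequence gives $\Ext^1_\G(X,Y(m))=0$ for $m\ge2$ and then, decomposing a degree\+$2$ class through $\E_1$\+objects, $\Ext^2_\G(X,Y(m))=0$ for $m\ge3$. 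Combined with $B\simeq\qu A$, with~\eqref{exact-graded-eqn} and~\eqref{exact-trivial} (which hold by the construction of $\G$), the uniqueness statement in the Theorem of Subsection~\ref{quadratic-realized} identifies $\G$ with the category built from $\qu A$; hypothesis~\eqref{koszul-qu-part} then says exactly that $\G$ satisfies~\eqref{ext-diagonal}, i.e.\ $\Ext^k_\G(X,Y(m+1))=0$ for $k\ne m+1$. Plugging $k=n-1$ and $k=n$ with $n\le m$ into~\eqref{filtered-graded-sequence} shows the map $\Ext^n_\F(X,Y(m))\rarrow\Ext^n_\F(X,Y(m+1))$ is an isomorphism, which is the first assertion. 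The step I expect to be the crux is the off-diagonal vanishing~\eqref{ext-one-two-diagonal} for $\G$: this is what distinguishes the \emph{canonical} Koszul-dual category from an arbitrary one with the correct diagonal ring, and it is where hypotheses~\eqref{two-three-degree}--\eqref{koszul-qu-part} together with the monomorphism-on-$\Ext^2$ input from Section~\ref{filtered-exact-secn} must be used rather than a mere count of generators and relations.
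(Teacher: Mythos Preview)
Your overall architecture matches the paper's: build $\G$ via Section~\ref{associated-graded-secn}, identify its diagonal ring with $\qu A$, verify~\eqref{ext-one-two-diagonal} so that the uniqueness in Subsection~\ref{quadratic-realized} identifies $\G$ with the category attached to $\qu A$, and then invoke~\eqref{koszul-qu-part} to get~\eqref{ext-diagonal} and read off the first assertion from~\eqref{filtered-graded-sequence}. The identification $B\simeq\qu A$ and the vanishing $\Ext^1_\G(X,Y(m))=0$ for $m\ge2$ are handled correctly.

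The gap is your argument for $\Ext^2_\G(X,Y(m))=0$ when $m\ge3$. The phrase ``decomposing a degree\+$2$ class through $\E_1$\+objects'' is not justified: a Yoneda factorization $\xi=\alpha\beta$ gives a middle object $Z\in\G_{[1,m-1]}$, and while $\Ext^1_\G(X,\E_j)=0$ for $j\ge2$ forces $\Ext^1_\G(X,Z)\hookrightarrow\Ext^1_\G(X,q_1(Z))$, it does not let you replace $Z$ by $q_1(Z)$ in the product. Your chase of~\eqref{filtered-graded-sequence} likewise only gives that $\Ext^2_\F(X,Y(m))\rarrow\Ext^2_\F(X,Y(m+1))$ is a \emph{mono}, which controls neither the cokernel nor the boundary into $\Ext^3_\F$. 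Notice that the argument you wrote uses neither the degree\+$3$ monomorphism in~\eqref{two-three-degree} nor the Koszulity~\eqref{koszul-qu-part}, even though you correctly flag in your final paragraph that this is exactly where both must enter.

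The paper breaks this circularity by induction on~$m$. The case $m=3$ uses that $A_2$ is generated by $A_1$ (so $\Ext^2_\F(X,Y(2))\rarrow\Ext^2_\F(X,Y(3))$ is onto) together with $\Ext^3_\F(X,Y(2))=0$. For $m\ge4$, assume $\Ext^2_\G(X,Y(j))=0$ for $3\le j\le m-1$; this already suffices to identify $\G_{[0,m-1]}$ with the corresponding piece of the Koszul category $\G'$ (via Lemma in~\ref{proof-part-one}), so Koszulity of $\qu A$ gives $\Ext^3_\G(X,Y(j))=0$ for $4\le j\le m-1$, whence $\Ext^3_\F(X,Y(3))\simeq\Ext^3_\F(X,Y(m-1))$. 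Now the degree\+$3$ monomorphism in~\eqref{two-three-degree} makes $\Ext^3_\F(X,Y(3))\hookrightarrow A_3$, so $\Ext^3_\F(X,Y(m-1))\rarrow\Ext^3_\F(X,Y(m))$ is injective; feeding this and the already-established $n=2$ isomorphism back into~\eqref{filtered-graded-sequence} yields $\Ext^2_\G(X,Y(m))=0$. This inductive bootstrap is the missing piece; once you have it, the rest of your outline goes through unchanged.
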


\begin{proof}
 The case $n=0$ is clear.
 According to part~(2) of Theorem from
Section~\ref{filtered-exact-secn}, the morphisms $\Ext^n_\F(X,Y(m))
\rarrow\Ext^n_\E(\Phi(X),\Phi(Y))$ are isomorphisms for $X$, $Y\in\E_0$
and $m\ge n=1$, and monomorphisms for $m\ge n=2$.
 The former observation proves both assertions of Proposition
for $n=1$.

 By the result of Subsection~\ref{diagonal-is-quadratic}, one has
$\Ext^n_\F(X,Y(m))=0$ for $n>m$ and the big graded ring
$((\Ext^n_\F(X,Y(n)))_{Y,X\in\J;\.n\ge0}$ is quadratic.
 This establishes the second assertion in all degrees.
 We have not yet used the
assumptions~\textup{(\ref{two-three-degree}--\ref{koszul-qu-part})}
so far.

 Since by the assumption~\eqref{two-three-degree} the component $A_2$
is multiplicatively generated by $A_1$, the first assertion of
Proposition for $n=2$ also follows.

 Now consider the exact category $\G$ constructed in
Section~\ref{associated-graded-secn}.
 From the long exact sequence~\eqref{filtered-graded-sequence}
we see that $(\Ext^n_\G(X,Y(n)))_{Y,X\in\J;\.n\ge0}\simeq \qu A$,
\ $\Ext^1_\G(X,Y(m))=0$ for $m\ge2$, and $\Ext^2_\G(X,Y(3))=0$.

 Let us prove by induction that $\Ext^2_\G(X,Y(m))=0$ for $m\ge4$.
 Assume that $\Ext^2_\G(X,Y(j))=0$ for $3\le j\le m-1$.
 Let $\G'$ denote the exact category from
Subsection~\ref{quadratic-realized} corresponding to the quadratic
ring $\qu A$.
 Then the exact subcategories $\G_{[0,m-1]}$ and $\G'_{[0,m-1]}$
are naturally equivalent.
 Since the ring $\qu A$ is Koszul, one has $\Ext^n_{\G'}(X,Y(i))=0$
for $n\ne i$.
 Hence $\Ext^3_\G(X,Y(j))=0$ for $4\le j\le m-1$.
 
 Consequently, the map $\Ext^3_\F(X,Y(3))\rarrow
\Ext^3_\F(X,Y(m-1))$ is an isomorphism.
 By the assumption~\eqref{two-three-degree}, the map
$\Ext^3_\F(X,Y(3))\rarrow\Ext^3_\E(\Phi(X),\Phi(Y))$
is a monomorphism.
 It follows that the map $\Ext^3_\F(X,Y(m-1))\rarrow
\Ext^3_\F(X,Y(m))$ is a monomorphism, too.
 Thus $\Ext^2_\G(X,Y(m))=0$.

 Now since $\qu A$ is Koszul, we have $\Ext^n_\G(X,Y(m))=0$ for
all $n\ne m$, hence the map $\Ext^n_\F(X,Y(m-1))\rarrow
\Ext^n_\F(X,Y(m))$ is an isomorphism for all $n<m$.
\end{proof}

\begin{lem}
 In the above setting, assume that every object of $\E$ can be
obtained from objects of $\Phi_0(\E_0)$ by iterated extensions.
 Then the natural maps\/ $\varinjlim_m\Ext^n_\F(X,Y(m))\allowbreak
\rarrow\Ext^n_\E(\Phi(X),\Phi(Y))$ are isomorphisms for all
$X$, $Y\in\F$ and\/ $n\ge0$.
\end{lem}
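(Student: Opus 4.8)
The plan is to prove the assertion by induction on the cohomological degree $n$, reducing the inductive step to the d\'evissage technique (Lemma~1 of Subsection~\ref{two-lemmas}) in exactly the way it was used for the functors $\gr$ and $\Psi$ in Sections~\ref{associated-graded-secn} and~\ref{base-restriction-secn}. The one substantive preliminary is to observe that the forgetful functor $\Phi\:\F\rarrow\E$ satisfies the hypothesis of that lemma; this is where the new assumption on $\E$ enters. Given an admissible epimorphism $T\rarrow\Phi(X)$ in $\E$ with kernel $K$, present $T$ and $K$ (both objects of $\E$) as iterated extensions of objects of $\Phi_0(\E_0)$, thereby choosing on each a finite filtration with successive quotients in $\E_0$. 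Install on $T$ the filtration obtained by placing (a shift of) the chosen filtration of $K$ in the filtration degrees above the pullback, along $T\rarrow\Phi(X)$, of the filtration of $X$. This makes $T$ into an object $Z\in\F$ with $\Phi(Z)=T$, and turns $T\rarrow\Phi(X)$ into a morphism $Z\rarrow X$ in $\F$ whose associated graded is the identity on $\gr X$ and the zero map on $\gr K$, hence a split epimorphism, so that $Z\rarrow X$ is an admissible epimorphism in~$\F$ lifting $T\rarrow\Phi(X)$. In particular $\Phi$ is essentially surjective on objects.

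For the base of the induction, the cases $n=0$ and $n=1$ are done by hand. For $n=0$: a morphism in $\F$ is determined by its underlying morphism in $\E$, so each map $\Hom_\F(X,Y(m))\rarrow\Hom_\E(\Phi X,\Phi Y)$ is injective, while for $m\gg0$ every morphism $\Phi X\rarrow\Phi Y$ is compatible with the $m$\+shifted filtration on $Y$, so the colimit map is surjective. For $n=1$: an extension $\Phi Y\rarrow T\rarrow\Phi X$ in $\E$ becomes, for $m\gg0$, an object of $\F$ representing a class in $\Ext^1_\F(X,Y(m))$ once $T$ is equipped with the filtration stacking the shifted filtration of $Y$ above the pullback of the filtration of $X$ (the two supports being disjoint for $m$ large, the associated graded sequence splits), which yields surjectivity; and if an extension $Y(m)\rarrow Z\rarrow X$ in $\F$ becomes split in $\E$, then the corresponding $\E$\+retraction onto $Y$ is, for $k\gg0$, filtration-compatible as a morphism $Z\rarrow Y(m+k)$ in $\F$, witnessing that the class dies in the colimit, which yields injectivity.

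For the inductive step $n\ge2$ I would argue separately for surjectivity and injectivity. Surjectivity: write the class of a Yoneda $n$\+extension in $\Ext^n_\E(\Phi X,\Phi Y)$ as a Yoneda product $\eta_2\eta_1$ with $\eta_1\in\Ext^1_\E(\Phi X,W)$ and $\eta_2\in\Ext^{n-1}_\E(W,\Phi Y)$ for some $W\in\E$ (any $n$\+extension may be broken at an inner term); choose $\widehat W\in\F$ with $\Phi(\widehat W)=W$, lift $\eta_1$ and $\eta_2$ by the inductive hypothesis in degrees $1$ and $n-1$ to classes in $\Ext^1_\F(X,\widehat W(a))$ and $\Ext^{n-1}_\F(\widehat W,Y(b))$ for $a$, $b$ large, and observe that, since $\Phi$ kills the twist, the Yoneda product of the appropriate twists of these classes maps to $\eta_2\eta_1=\eta$. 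Injectivity: given $\xi\in\Ext^n_\F(X,Y(m))$ with $\Phi(\xi)=0$, decompose $\xi=\xi_2\xi_1$ with $\xi_1\in\Ext^1_\F(X,Z)$ and $\xi_2\in\Ext^{n-1}_\F(Z,Y(m))$; since $\Phi(\xi_2)\Phi(\xi_1)=0$ and $n-1\ge1$, Lemma~1 of Subsection~\ref{two-lemmas} applied to $\Phi$ produces $f\:Z'\rarrow Z$ in $\F$ and $\xi_1'\in\Ext^1_\F(X,Z')$ with $\xi_1=f\xi_1'$ and $\Phi(\xi_2 f)=0$; by the inductive hypothesis in degree $n-1$ the class $\xi_2 f\in\Ext^{n-1}_\F(Z',Y(m))$ vanishes in $\Ext^{n-1}_\F(Z',Y(m+k))$ for $k$ large, whence $\sigma^k_*\xi=\bigl(\sigma^k_*(\xi_2 f)\bigr)\xi_1'=0$ in $\Ext^n_\F(X,Y(m+k))$. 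This completes the induction. (Neither the Proposition nor the hypotheses~\eqref{two-three-degree}--\eqref{koszul-qu-part} enter the argument; only the assumption that every object of $\E$ is an iterated extension of objects of $\Phi_0(\E_0)$ is used.)

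The parts I expect to demand the most care are the verification of the Lemma~1 hypothesis for $\Phi$ and the base case $n=1$: both come down to the somewhat delicate bookkeeping of building and comparing finite filtrations by pulling back, pushing forward, and stacking, and of checking that the relevant short sequences acquire split associated gradeds — hence become exact in $\F$ — once the twist is applied sufficiently many times. Once these are settled, the remainder is the same formal d\'evissage that already appears in the proofs of the main theorems of Sections~\ref{associated-graded-secn} and~\ref{base-restriction-secn}.
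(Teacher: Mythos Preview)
Your proof is correct and follows essentially the same strategy as the paper's own (very terse) proof: both hinge on observing that $\Phi$ satisfies the lifting hypothesis of Lemmas~1--2 of Subsection~\ref{two-lemmas}, handle $n=0,1$ directly, and then reduce higher~$n$ to these via the freeness of Yoneda Ext. The only difference is packaging: the paper invokes Lemma~2 of~\ref{two-lemmas} (or Corollary~\ref{exact-triangulated}.1) once, whereas you unwind that into an explicit induction using Lemma~1; the content is the same.
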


\begin{proof}
 The cases $n=0$ and~$1$ are easy and do not depend on
the assumption of Lemma.
 To check surjectivity of our maps for all~$n$, it suffices
to notice that the functor $\Phi$ is surjective on objects.
 One can use Corollary~\ref{exact-triangulated}.1 to prove both
surjectivity and injectivity.
 Alternatively, use Lemma~2 from~\ref{two-lemmas} for
the functor $\Phi$, which satisfies its assumption.
 This argument does not depend on the assumption that the exact
category structure on $\E_0$ is trivial.
 When it is, one can additionally notice that all the maps
$\Ext^n_\F(X,Y(m))\rarrow\Ext^n_\F(X,Y(m+1))$ constituting our
inductive systems are injective for $n\le 2$.
\end{proof}

\begin{ex}
 The following example was suggested to the author by A.~Beilinson.
 Consider the case when the functor $\Phi_0$ is an equivalence of
additive categories.
 Then the natural maps $\Ext^n_\F(X,Y(m))\rarrow
\Ext^n_\E(\Phi(X),\Phi(Y))$ are isomorphisms for all $X$, $Y\in\E_0$
and $n\le m$.
 Indeed, present any class in $\Ext^n_\E(\Phi(X),\Phi(Y))$ as
the composition of a chain of classes in $\Ext^1_\E(T_{i-1},T_i)$
with $T_i\in\E$, \ $Z_0=\Phi(X)$, and $T_n=\Phi(Y)$.
 For all $1\le i\le n-1$, choose objects $Z_i\in\E_i$ so that
$T_i=\Phi(Z_i)$; set $Z_0=X$ and $Z_n=Y(m)$.
 Clearly, there are unique classes in $\Ext^1_\F(Z_{i-1},Z_i)$
lifting the given classes in $\Ext^1_\E(T_{i-1},T_i)$.
 The product of these classes provides a class in
$\Ext^n_\F(X,Y(m))$.
 One checks that this construction defines a map $\Ext^n_\E(\Phi(X),
\Phi(Y))\rarrow\Ext^n_\F(X,Y(m))$ inverse to the natural map that
we are interested in.
 Using the construction of the exact category $\G$
from Section~\ref{associated-graded-secn} and the long exact
sequence~\eqref{filtered-graded-sequence} (cf.~\ref{main-theorem}),
one can conclude that the big graded ring
$A=(\Ext^n_{\E_0}(X,Y))_{Y,X\in\J\;n\ge0}$ is Koszul.
\end{ex}

 Let $\E$ be a small exact category and $\J\sub\E$ be a full 
subcategory such that every object of $\E$ can be obtained
from objects of $\J$ by iterated extensions.
 Consider the big graded ring
$A=(\Ext^n_\E(X,Y))_{Y,X\in\J;\.n\ge0}$ over the set~$\Ob\J$.

 The following theorem is a far-reaching generalization of
the main result of~\cite{PV}.

\begin{thm}
 Assume that the big graded ring $A$ satisfies the
assumptions~\textup{(\ref{two-three-degree}--\ref{koszul-qu-part})}.
 Then $A$ is quadratic (and consequently, Koszul).
\end{thm}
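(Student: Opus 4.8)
The plan is to deduce the theorem by combining the Proposition and the Lemma that immediately precede it. First I would place myself in the setting of those two statements. Let $\E_0$ be the full additive subcategory of $\E$ consisting of all finite direct sums of objects of $\J$ (it is small and additive since $\E$ is), and let $\Phi_0\:\E_0\rarrow\E$ be the inclusion, which is automatically fully faithful and for which $\Ob\J$, with $\J\sub\E_0$, has the required property that every object of $\E_0$ is a finite direct sum of objects from $\J$. Applying the construction of the Example in Section~\ref{associated-graded-secn} to $\Phi_0$ produces a filtered exact category $\F$ with full subcategories $\E_i$, a twist functor $X\maps X(1)$, a natural transformation $\sigma$, and a forgetful exact functor $\Phi\:\F\rarrow\E$ restricting to $\Phi_0$ on $\E_0$. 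Since $\Phi(X)=X$ for $X\in\J$, the big graded ring $(\Ext^n_\E(\Phi(X),\Phi(Y)))_{Y,X\in\J;\.n\ge0}$ attached to this data is exactly the ring $A$ of the theorem.

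Both hypotheses of the Proposition hold by assumption, so the natural maps $\Ext^n_\F(X,Y(m))\rarrow\Ext^n_\F(X,Y(m+1))$ are isomorphisms for all $X$, $Y\in\E_0$ and $n\le m$, and the diagonal ring $D=(\Ext^n_\F(X,Y(n)))_{Y,X\in\J;\.n\ge0}$ is isomorphic to $\qu A$. On the other hand, every object of $\E$ is an iterated extension of objects of $\J$, hence of objects of $\Phi_0(\E_0)$, so the hypothesis of the Lemma is met and the natural maps $\varinjlim_m\Ext^n_\F(X,Y(m))\rarrow\Ext^n_\E(\Phi(X),\Phi(Y))$ are isomorphisms for all $n\ge0$. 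For $X$, $Y\in\J$ the inductive system on the left is constant for $m\ge n$ by the Proposition, so its colimit is $\Ext^n_\F(X,Y(n))$; assembling these identifications over all $n$ and all objects of $\J$ gives an isomorphism of big graded rings $D\simeq A$.

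Combining the two isomorphisms yields $A\simeq D\simeq\qu A$. Since $\qu A$ is quadratic by construction and $\qu$ is a functor equipped with a natural augmentation $\qu(-)\rarrow(-)$ that is an isomorphism on quadratic rings, any big graded ring isomorphic to $\qu A$ is itself quadratic; hence $A$ is quadratic. Finally, hypothesis~\eqref{koszul-qu-part} says $\qu A$ is Koszul, and Koszulity (in the sense of Section~\ref{koszul-rings-secn}) is invariant under isomorphism of big graded rings, so $A$ is Koszul as well.

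The real content has already been absorbed into the Proposition and the Lemma; what is left here is the bookkeeping that glues them. The only point that genuinely requires attention is verifying that the stabilized colimits $\varinjlim_m\Ext^n_\F(X,Y(m))$ and the diagonal ring $D$ are identified as big graded \emph{rings}, not merely degreewise: one must check that the twist functor, the natural transformation $\sigma$, the Yoneda products, and the functor $\Phi$ are all mutually compatible so that the above degreewise isomorphisms respect multiplication. This is routine, and I do not expect any further obstacle.
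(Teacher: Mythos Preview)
Your proposal is correct and follows exactly the route the paper takes: its proof literally reads ``Consider the full additive subcategory $\E_0\sub\E$ consisting of the finite direct sums of objects from $\J$, and apply Proposition and Lemma,'' which is precisely the setup and combination you spell out. Your added remark about checking that the identification $D\simeq A$ is a ring isomorphism (via the multiplicativity of the $\Phi$-induced maps and the twist compatibility) is the only detail the paper leaves implicit, and it is indeed routine.
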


\begin{proof}
 Consider the full additive subcategory $\E_0\sub\E$ consisting of
the finite direct sums of objects from $\J$, and apply Proposition
and Lemma.
\end{proof}

 Let $\D$ be a triangulated category and $\E\sub\D$ be a full
exact subcategory closed under extensions
(see~\ref{exact-triangulated}).
 Let $\J\sub\E$ be a full subcategory such that every object
of $\E$ can be obtained from objects of $\J$ by iterated extensions.

 In addition to the above big graded ring $A$, consider
the big graded ring $B=(\Hom_\D(X,Y[n]))_{Y,X\in\J;\.n\ge0}$
over the same set $\Ob\J$.

 The following corollary generalizes the results
of~\cite[Section~5]{Pbogom}.

\begin{cor}
 Assume that the big graded ring $B$
satisfies~\textup{(\ref{two-three-degree}--\ref{koszul-qu-part})}.
 Then the natural morphism $A\rarrow B$ induces an isomorphism
of big graded rings $A\simeq \qu B$.
 In particular, if the big graded ring $B$ is Koszul, then
$A\simeq B$ and the natural morphisms $\Ext^n_\E(X,Y)\rarrow
\Hom_\D(X,Y[n])$ are isomorphisms for all $X$, $Y\in\E$ and
$n\ge 0$.
\end{cor}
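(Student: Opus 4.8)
The plan is to reduce the statement to the preceding Theorem of this section, applied to the exact category~$\E$ and the subcategory~$\J$, once we have checked that the big graded ring~$A$ itself satisfies the assumptions~\eqref{two-three-degree}--\eqref{koszul-qu-part}. The bridge between~$A$ and~$B$ is the natural morphism of big graded rings $A\rarrow B$ induced by the inclusion of the exact subcategory $\E$ into~$\D$ (see~\ref{exact-triangulated}); about this morphism we shall use only that it is an equality in degree~$0$ (because $\E$ is full in~$\D$), an isomorphism in degree~$1$, and a monomorphism in degree~$2$.

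First I would compare the quadratic data. Put $V=A_1\ot_{A_0}A_1=B_1\ot_{B_0}B_1$, and consider the commutative triangle formed by the multiplication maps $V\rarrow A_2$, \ $V\rarrow B_2$ and the monomorphism $A_2\rarrow B_2$. By~\eqref{two-three-degree} for~$B$, the map $V\rarrow B_2$ is surjective with kernel the bimodule of quadratic relations $I\sub V$ of~$B$; since it factors through $A_2\rarrow B_2$ and the latter is injective, the map $A_2\rarrow B_2$ is in fact an isomorphism, the map $V\rarrow A_2$ is surjective, and its kernel is again~$I$. Hence $A$ and $B$ share the same degree-zero component, the same degree-one bimodule, and the same bimodule of quadratic relations, so $\qu A\rarrow\qu B$ is an isomorphism; in particular $\qu A$ is Koszul and $A$ satisfies~\eqref{koszul-qu-part}. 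Furthermore $\qu A\rarrow A$ is an isomorphism in degree~$2$ (because $V\rarrow A_2$ is surjective with kernel~$I$), while in degree~$3$ the composition $\qu A_3\rarrow A_3\rarrow B_3$ coincides, under the identification $\qu A_3=\qu B_3$, with the canonical map $\qu B_3\rarrow B_3$, which is a monomorphism by~\eqref{two-three-degree} for~$B$; hence $\qu A_3\rarrow A_3$ is a monomorphism, and $A$ satisfies~\eqref{two-three-degree}.

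By the preceding Theorem, $A$ is then quadratic, so $A=\qu A\simeq\qu B$, which is the first assertion. If moreover $B$ is Koszul, then $B$ is quadratic, so $B=\qu B\simeq A$, i.e.\ the morphism $A\rarrow B$ is an isomorphism; in particular the natural maps $\Ext^n_\E(X,Y)\rarrow\Hom_\D(X,Y[n])$ are isomorphisms for all $X$, $Y\in\J$ and $n\ge0$. To pass to arbitrary $X$, $Y\in\E$ I would use d\'evissage: every object of $\E$ is an iterated extension of objects of~$\J$, and a short exact sequence in $\E$ yields a distinguished triangle in~$\D$, so the five-lemma applied to the long exact sequences of $\Ext_\E$ and of $\Hom_\D({-},{-}[*])$, together with an induction on the number of subquotients, reduces the general case to the case of objects of~$\J$ (cf.\ Corollary~\ref{exact-triangulated}.1 and the Lemma in Subsection~\ref{proof-part-one}).

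I expect the only substantive step to be the appeal to the preceding Theorem, which carries the real homological-algebra content; the transfer of the hypotheses~\eqref{two-three-degree}--\eqref{koszul-qu-part} from $B$ to $A$ and the concluding d\'evissage are routine diagram chases.
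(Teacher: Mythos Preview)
Your proof is correct and follows essentially the same approach as the paper: transfer the hypotheses~\eqref{two-three-degree}--\eqref{koszul-qu-part} from~$B$ to~$A$ using that $A\rarrow B$ is an isomorphism in degrees~$0$,~$1$ and a monomorphism in degree~$2$ (Corollary~\ref{exact-triangulated}.2), apply the preceding Theorem to get $A$ quadratic, and finish by induction on iterated extensions. You have simply spelled out the diagram chase for the transfer step in more detail than the paper does.
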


\begin{proof}
 By Corollary~\ref{exact-triangulated}.2, the morphism $A\rarrow B$
is an isomorphism in the degrees~$0$ and~$1$ and a monomorphism in
the degree~$2$.
 It follows that $\qu A\simeq \qu B$ and $A$
satisfies~\textup{(\ref{two-three-degree}--\ref{koszul-qu-part})}
whenever $B$ does.
 If this is the case, $A$ is quadratic by Theorem, hence
$A\simeq \qu B$.
 If $B$ is also quadratic, then $A\simeq B$ and induction by
the number of iterated extensions proves the last assertion
of Corollary.
\end{proof}

 One can easily extend Lemma, Theorem, and Corollary to
the situation when every object of $\E$ is a direct summand of
an object obtained from objects of $\J$ (resp.\ $\E_0$) by
iterated extensions.
 It suffices to use Corollary~\ref{exact-triangulated}.3.

\Section{Conclusions and Epilogue}

\subsection{Main theorem} \label{main-theorem}
 Let $\D$ be a triangulated category and $\E_i\sub\D$ be its full
subcategories, closed under extensions and such that
\begin{equation}
 \Hom_\D(X,Y[n])=0 \quad\text{for $X\in\E_i$, \ $Y\in\E_j$, \
$n\ge-1$, and $n>j-i$.}
\end{equation}
 Let $\E$ be an exact category and $\Phi\:\D\rarrow\D(\E)$ be
a triangulated functor mapping $\E_i$ into~$\E$.
 Assume that
\begin{equation} 
\begin{array}{l}
\text{the induced morphisms $\Hom_\D(X,Y[n])\rarrow
\Ext_\E^n(X,Y)$} \\
\text{are isomorphisms for all $X\in\E_i$, \ $Y\in\E_j$, \ $n\ge-1$,
and $n\le j-i$.}
\end{array}
\end{equation}

 Finally, assume that there exists a triangulated autoequivalence
$X\maps X(1)$ on $\D$ such that $\E_i(1)=\E_{i+1}$ and there is
a functorial isomorphism $\Phi(X(1))\simeq\Phi(X)$ for all $X\in\D$.

 Let $\M$ be the minimal full subcategory of $\D$, containing all
$\E_i$ and closed under extensions; by \cite{Dy}
or~\ref{exact-triangulated}, \ $\M$ has a natural structure of exact
category.

 Let $\J\sub\E_0$ be a full subcategory such that every object of
$\E_0$ is a finite direct sum of objects of~$\J$.
 Consider the big graded ring $A=(\Ext^n_\E(\Phi(X),\Phi(Y)))_
{Y,X\in\J;\.n\ge0}$.

\begin{thm}
 The natural morphisms $\Ext^n_\M(X,Y)\rarrow\Hom_\D(X,Y[n])$ are
isomorphisms for all $X$, $Y\in\M$ and $n\ge0$ if and only if
the big graded ring $A$ is Koszul.
\end{thm}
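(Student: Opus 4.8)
The plan is to assemble the theorem from the machinery of Sections~\ref{filtered-exact-secn}--\ref{koszul-rings-secn}, with the ``nonfiltered'' results of Section~\ref{nonfiltered-secn} serving as a template for the argument.

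First I would check that the present hypotheses are (strictly) stronger than those of Section~\ref{filtered-exact-secn}: the vanishing $\Hom_\D(X,Y[n])=0$ for $n\ge-1$, $n>j-i$ together with the isomorphisms $\Hom_\D(X,Y[n])\cong\Ext^n_\E(\Phi X,\Phi Y)$ for $n\le j-i$ yield conditions \eqref{triang-vanish-area}--\eqref{triang-bl-area}; in particular $\Hom_\D(X,Y[1])=0$ for $X,Y\in\E_i$, so each $\E_i\subset\D$ carries the trivial exact structure, and $\Hom_\D(X,Y[-1])=0$ always. Hence part~(1) of the Theorem of Section~\ref{filtered-exact-secn} identifies $\M$ with the exact category $\F$ of finitely filtered objects of $\E$ with subquotients lifted to the $\E_i$. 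Since $\M$ is a full exact subcategory of $\D$ with no $\Hom$ in negative degrees, Corollary~\ref{exact-triangulated}.2 (and Appendix~\ref{silly-filtrations-appx}) reduces the statement to the \emph{silly-filtration condition}: every morphism $X\to Y[n]$ in $\D$ with $X,Y\in\M$ and $n\ge2$ is a composition of $n$ morphisms of degree~$1$. Finally, because $\Phi(X(1))\simeq\Phi(X)$ and the realization is an isomorphism on the diagonal ($n=j-i$), the ring $A$ is canonically identified with the diagonal $\Hom$ ring $(\Hom_\D(X,Y(n)[n]))_{Y,X\in\J;\,n\ge0}$ of the triangulated category.

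Next I would feed $\F$ into the associated-graded construction of Section~\ref{associated-graded-secn}: conditions \eqref{exact-filtered-eqn}--\eqref{nat-transf-factorize} hold (after using the twist to present $\F$ as in the Example of that section), so one obtains the exact category $\G$ with subcategories $\E_i$, twist functor and $\J\subset\E_0$, satisfying \eqref{exact-graded-eqn} and \eqref{exact-trivial}, together with the functorial long exact sequence~\eqref{filtered-graded-sequence}. Using the vanishing $\Ext^n_\F(X,Y)=0$ for $n>j-i$ and the isomorphisms $\Ext^1_\F(X,Y)\cong\Ext^1_\E(\Phi X,\Phi Y)$ already supplied by Section~\ref{filtered-exact-secn} (plus the $\Ext^2$ comparison in the relevant range), the long exact sequence shows that $\G$ also satisfies \eqref{ext-one-two-diagonal} and that the diagonal $\Ext$ ring of $\G$ coincides with that of $\F$; by Subsection~\ref{diagonal-is-quadratic} the latter is related to $A$, so its quadratic part is $\qu A$. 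Once the silly-filtration condition is assumed, Subsection~\ref{diagonal-is-quadratic} makes $A$ itself quadratic, and then $\G$ with these data is the category $\G'$ uniquely determined by $A$ in Subsection~\ref{quadratic-realized}. Where the exact structures on the $\E_i$ or the base ring $A_0$ obstruct this identification, Section~\ref{base-restriction-secn} is invoked to replace the $\E_i$ by free additive categories over $\Sigma$ and to pass from $A_0$ to any chosen base, which by that section leaves Koszulity unaffected.

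The heart of the proof is then to show that the silly-filtration condition for $(\D,\M)$ is equivalent to $\Ext^n_\G(X,Y)=0$ for all $n\ne j-i$, i.e.\ to \eqref{ext-diagonal} for $\G$, which by Section~\ref{koszul-rings-secn} is by definition the Koszulity of $A$. I would prove both implications by induction on the internal degree $j-i$: assuming the decomposition condition holds for morphisms spanning internal degree $<N$, the long exact sequence~\eqref{filtered-graded-sequence} translates this into the off-diagonal vanishing of $\Ext^\bullet_\G$ in that range, which via the matrix reformulation of Koszulity in Subsection~\ref{triangulated-koszul} (whose ``if''/``only if'' manipulations are local in the internal degree) feeds back to extend the decomposition into internal degree $N$, and symmetrically in the reverse direction. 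Once the induction closes, Koszulity of $A$ is exactly the assertion that the canonical category $\G'=\G$ satisfies~\eqref{ext-diagonal}, and Lemmas~1--2 of Subsection~\ref{two-lemmas} applied to $\gr\colon\F\to\G$ and $\Id_\F$ propagate the resulting isomorphisms $\Ext^n_\F\cong\Hom_\D(-,-[n])$ from the $\E_i$ to all of $\M$. The main obstacle I anticipate is precisely this bookkeeping: keeping the comparisons $\Ext^n_\F(X,Y)\leftrightarrow\Hom_\D(X,Y[n])$ and $\Ext^n_\F\leftrightarrow\Ext^n_\G$ synchronized across internal degrees while confirming that the $\G$ produced in Section~\ref{associated-graded-secn} really is the Koszulity-witness category of Subsection~\ref{quadratic-realized}---in particular that its diagonal $\Ext$ ring becomes $A$ (not merely $\qu A$) once the silly-filtration condition is in force, and that the base-restriction reduction of Section~\ref{base-restriction-secn} does not disturb the very off-diagonal vanishing one is trying to establish.
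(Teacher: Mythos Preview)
Your proposal is broadly correct in spirit and would work, but it is considerably more roundabout than the paper's argument, and some of the machinery you plan to invoke is unnecessary.

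The paper's proof is very short. After identifying $\M\simeq\F$ via Section~\ref{filtered-exact-secn} (which you do correctly), the two directions are handled as follows. For ``only if'', there is no induction at all: if $\Ext^n_\M(X,Y)\simeq\Hom_\D(X,Y[n])$ then one knows all the groups $\Ext^n_\F(X,Y_0(m))$ explicitly (they vanish for $n>m$ and equal $\Ext^n_\E(\Phi X,\Phi Y_0)=A_n$ for $n\le m$), and a single pass through the long exact sequence~\eqref{filtered-graded-sequence} yields $\Ext^n_\G(X,Y_0(m))=0$ for $n\ne m$ and $\Ext^m_\G(X,Y_0(m))=A_m$. That is exactly condition~\eqref{ext-diagonal} with diagonal ring $A$, and no identification of $\G$ with the canonical category of Subsection~\ref{quadratic-realized} is needed: the \emph{existence} of such a $\G$ is already the definition of Koszulity. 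For ``if'', the paper simply quotes the Proposition of Section~\ref{nonfiltered-secn}: when $A$ is Koszul one has $\qu A=A$, so conditions~(\ref{two-three-degree}--\ref{koszul-qu-part}) hold trivially, and that Proposition (whose proof already contains the inductive identification $\G_{[0,m-1]}\simeq\G'_{[0,m-1]}$ you are worried about) shows the maps $\Ext^n_\F(X,Y(m))\to\Ext^n_\F(X,Y(m+1))$ are isomorphisms for $n\le m$, which matches these groups with $\Hom_\D$.

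Compared with this, your plan introduces three avoidable detours. First, you propose a joint induction on internal degree for both directions; the ``only if'' direction needs none, and the ``if'' direction is already packaged as the Proposition in Section~\ref{nonfiltered-secn}, which you should invoke rather than re-derive. Second, Section~\ref{base-restriction-secn} is not needed here: the exact structures on the $\E_i$ are already trivial (as you yourself observe from $\Hom_\D(X,Y[1])=0$ for $X,Y\in\E_i$), and the base ring $A_0$ is the correct one from the outset. Third, the matrix reformulation of Subsection~\ref{triangulated-koszul} plays no role; Koszulity enters purely through its definition in terms of $\Ext_\G$. Your concluding concern about showing that the $\G$ of Section~\ref{associated-graded-secn} agrees with the Koszulity-witness category is resolved asymmetrically: for ``only if'' it is irrelevant (any $\G$ works), and for ``if'' it is exactly what the proof of the Proposition in Section~\ref{nonfiltered-secn} accomplishes.
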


\begin{proof}
 By part~(1) of Theorem from Section~\ref{filtered-exact-secn},
the exact category $\M$ is equivalent to the exact category $\F$
constructed in Example from Section~\ref{associated-graded-secn}
for the trivial exact category structure on~$\E_0$.
 Now the ``if'' part follows from Proposition from
Section~\ref{nonfiltered-secn}.
 To prove ``only if'', it suffices to consider the exact category
$\G$ constructed in Section~\ref{associated-graded-secn}.
 It follows from the long exact
sequence~\eqref{filtered-graded-sequence} that
$\Ext^n_\G(X,Y)=0$ for all $X\in\E_i$, \ $Y\in\E_j$, and
$n\ne j-i$, and that the big graded ring $(\Ext^n_\G(X,Y(n)))_
{Y,X\in\J;\.n\ge0}$ can be identified with~$A$.
 Thus $A$ is Koszul.
\end{proof}

\subsection{Main conjecture}  \label{main-conjecture}
 Let $k=\Z$, $\.\Q$, $\.\Z[m^{-1}]$, or~$\Z/m$, $\.m\ge2$, be
a coefficient ring and $K$ be a field.
 In the remaining part of this paper we discuss conjectural
properties of certain subcategories of the triangulated category
$\D\M(K,k)$ of motives over $K$ with coefficients in~$k$.

 A few words about the definition of $\D\M(K,k)$ are due.
 We are not in a position to discuss various definitions of
the triangulated category of motives existing in the literature,
so we simply presume that the category we are considering is
the ``right'' triangulated category.
 Voevodsky's definition~\cite{Voev} is certainly a good
approximation, as it is very natural and has many properties that
are important for us.
 However, their proofs sometimes depend on various undesirable
assumptions, such as that the field $K$ is perfect, has finite
\'etale dimension over~$k$, or has resolution of singularities.
 Other known definitions have some other properties that
are also important for us~\cite{GL}.
 In the course of our discussion, we refer to various results as
to indicators of the properties that the ``right'' category is
supposed to have.

 Let $M/K$ be a Galois field extension.
 Consider the full triangulated subcategory $\D\sub\D\M(K,k)$
generated by the Artin--Tate motives $k[L](i)$, where $k[L]$ are
the motives of finite field extensions $L/K$ contained in $M$,
and $X\maps X(i)$, \ $i\in\Z$ denotes the Tate twist.
 Let $\M$ be the minimal full subcategory of $\D$, containing
the objects $k[L](i)$ and closed under extensions.

\begin{conj}
 Any morphism $X\rarrow Y[n]$ of degree~$n\ge2$ in $\D$ between
two objects $X$, $Y\in\M$ can be presented as the composition of
a chain of morphisms $Z_{i-1}\rarrow Z_i[1]$ with $Z_i\in\M$, \
$Z_0=X$, and $Z_n=Y$.
\end{conj}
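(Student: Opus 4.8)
The plan for attacking the Conjecture is to reduce it to the \emph{Koszulity} of the big graded ring~$A$ of~\eqref{AT-algebra}, thereby exhibiting it, in the situations~\ref{introd-list-of-situations}\,(i)--(iii), as a special case of the Koszulity conjecture discussed in the introduction. One sets $\E_i\sub\D$ to be the full subcategory whose objects are the finite direct sums of the Artin--Tate motives $k[L](i)$ with $K\sub L\sub M$; then the category $\M$ of the statement is the minimal full subcategory of $\D$ containing all $\E_i$ and closed under extensions, the Tate twist $X\maps X(1)$ restricts to an autoequivalence with $\E_i(1)=\E_{i+1}$, and, taking $\J\sub\E_0$ to consist of the objects $k[L]$ themselves, the big graded ring $(\Hom_\D(X,Y(n)[n]))_{Y,X\in\J;\.n\ge0}$ is identified with~$A$. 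The reduction then proceeds differently in the three cases~(i)--(iii).

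In cases~(i) and~(ii) one invokes the conjecture of Parshin and Beilinson, respectively its $p$\+primary analogue for $k=\Z/p^r$, to the effect that $\Hom_{\D\M}(k[L](i),k[L'](j)[n])=0$ unless $n=j-i$. Granting this, the hypotheses of the Theorem of Subsection~\ref{triangulated-koszul} are satisfied for $\D$ and the subcategories~$\E_i$, and part~(1) of that Theorem asserts \emph{verbatim} that $A$ is Koszul if and only if every morphism $X\rarrow Y[n]$ of degree~$n\ge2$ between objects $X$, $Y\in\M$ is a composition of a chain of morphisms of degree~$1$ --- which is the content of the Conjecture. In case~(iii), where $K$ contains a primitive $m$\+root of unity, the intermediate $\Hom$ groups do not vanish, so one argues instead via the Beilinson--Lichtenbaum conjecture and the \'etale realization functor $\D\M(K,\Z/m)\rarrow\D(G_K,\Z/m)$. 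That functor carries $k[L](i)$ to the discrete $G_K$\+module induced over the open subgroup $G_L$ from $\mu_m^{\ot i}\simeq\Z/m$, and by~\eqref{BL-formulas} it identifies $\Hom_\D(X,Y[n])$ with $\Ext^n_\E(\Phi(X),\Phi(Y))$ for $n\le j-i$ --- where $\E$ is the abelian category of discrete $G_K$\+modules over~$\Z/m$ --- while $\Hom_\D(X,Y[n])=0$ for $n>j-i$. This places us in the setting of the Theorem of Subsection~\ref{main-theorem}, which gives that the natural maps $\Ext^n_\M(X,Y)\rarrow\Hom_\D(X,Y[n])$ are isomorphisms for all $X$, $Y\in\M$ and $n\ge0$ if and only if $A$ is Koszul; and since $\M$ is an exact subcategory of $\D$ with no morphisms in negative degrees, Corollary~\ref{exact-triangulated}.2 identifies the latter isomorphism condition with the decomposability statement of the Conjecture (see also Appendix~\ref{silly-filtrations-appx}). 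In all three cases one checks directly that the diagonal algebra obtained is the algebra~$A$ of~\eqref{AT-algebra}, whose components are expressible, via Shapiro's lemma, in terms of the Galois cohomology --- equivalently, the Milnor K\+theory modulo~$m$ --- of the intermediate fields $K\sub\tilde L\sub M$.

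The outcome of this reduction is that, in the situations~(i)--(iii), the Conjecture is \emph{equivalent} to the assertion that the big graded ring $A$ is Koszul. Here the plan runs out: the \textbf{main obstacle}, and indeed the only one remaining, is to prove that $A$ is Koszul --- exactly the Koszulity conjecture, which remains wide open even for the Milnor algebra $\KM(K)\ot_\Z\Z/m$ of a single field. In the cases~(iv) and~(v) of~\ref{introd-list-of-situations} not even this reduction is available: we know of no way to express the silly filtrations condition, hence the Conjecture, in terms of the diagonal algebra~$A$.
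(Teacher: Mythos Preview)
The statement is a \emph{Conjecture}, and the paper does not prove it; it explicitly remains open (``Our Koszulity conjectures remain wide open''). What the paper does provide, in Subsection~\ref{koszul-cases}, is exactly the reduction you outline: in each of the cases~(i)--(iii) the Conjecture is shown to be equivalent to Koszulity of the big graded ring~$A$, using part~(1) of the Theorem of~\ref{triangulated-koszul} for cases~(i)--(ii) and the Theorem of~\ref{main-theorem} together with Corollary~\ref{exact-triangulated}.2 for case~(iii). Your proposal reproduces this reduction faithfully and correctly identifies the remaining obstacle.

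One small correction: in case~(ii) the vanishing $\Hom_\D(k[L'],k[L''](i)[n])=0$ for $i\ne n$ is not a conjecture but a theorem of Geisser--Levine~\cite{GL}, as the paper notes; only case~(i) relies on the (open) Beilinson--Parshin conjecture. Also, for the Artin--Tate version in case~(iii) the relevant Beilinson--Lichtenbaum formulas are~\eqref{AT-BL-formulas} rather than~\eqref{BL-formulas}.
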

 
 By Propositions~2\+-4 from Appendix~\ref{silly-filtrations-appx},
this conjecture can be equivalently restated as follows:
$$
 \D=\textstyle\bigcup_{a\le b\in\Z}\M[-b]*\dsb*\M[-a],
$$
where $*$ denotes the class of all extensions
of objects from two given classes in a triangulated
category~\cite[1.3.9-10]{BBD}.
 Iterated extensions of this form generalize silly filtrations on
complexes; thus the above conjecture can be called the \emph{silly
filtration conjecture for Artin--Tate motives}.

\subsection{Elementary construction for finite coefficients}
\label{elementary-description}
 Consider the case when $k=\Z/m$ and $\chr K$ is prime to~$m$.
 In this case, there is the \'etale realization functor $\Phi$
\cite[Subsection~3.3]{Voev} (see also~\cite{Iv}) from $\D\M(K,k)$
to the derived category $\D(\E)$ of the abelian category $\E$ of
discrete $G_K$\+modules over $\Z/m$, where $G_K$ is the absolute
Galois group of~$K$.
 The functor $\Phi$ takes the object $k[L](i)$ to the $G_K$\+module
$\mu_m^{\ot i}[G_K/G_L]$, where $\mu_m\sub\oK$ is the module of
$m$\+roots of unity.

 The Milnor--Bloch--Kato conjecture implies~\cite{VS,GL2}
the Beilinson--Lichtenbaum formulas
\begin{equation} \label{AT-BL-formulas}
 \begin{aligned}
    \Hom_\D(k[L'](i),\.k[L''](j)[n])&\simeq
                       \Ext^n_\E(\Phi(k[L'](i)),\.\Phi(k[L''](j))),
      \quad\! \text{for $\,n\le j-i$;} \\
    \Hom_\D(k[L'](i),\.k[L''](j)[n])&=0,
      \quad\! \text{otherwise.}
 \end{aligned}
\end{equation}

 Assuming~\eqref{AT-BL-formulas}, by part~(1) of Theorem from
Section~\ref{filtered-exact-secn} the exact subcategory $\M\sub\D$
is equivalent to the exact category $\F$ of filtered discrete
$G_K$\+modules $(N,F)$ over~$\Z/m$ with a finite decreasing
filtration $F$ such that the $G_K$\+module $F^iN/F^{i+1}N$ is
isomorphic to a direct sum of copies of ``cyclotomic-permutational''
$G_K$\+modules $\mu_m^{\ot i}[G_K/G_L]$, where $L/K$ are finite
extensions, $K\sub L\sub M$.
 By the results of Appendix~\ref{categories-of-morphisms-appx},
this equivalence can be extended to a triangulated functor
$\Theta\:\D^b(\F)\rarrow\D$.

 The main conjecture from~\ref{main-conjecture} holds for $k$ and
$K\sub M$ if and only if the functor $\Theta$ is an equivalence
of triangulated categories.
 This provides an elementary description of the triangulated
category $\D$ in terms of the Galois group $G_K$, solving
(for Artin--Tate motives over a field) the problem posed
in~\cite[Subsection~5.10.D(vi)]{Beil}.

\begin{rem}
 Notice that even when $M=\oK$ is algebraically/separably closed,
the category of ``cyclotomic-permutational'' $G_K$\+modules is
still substantially different from the category of all discrete
$G_K$\+modules over~$k$, as there are many discrete $G_K$\+modules
that are not direct summands of permutational ones.
 Ignoring this difference, one could consider the exact category
$\F'$ of all finitely filtered discrete $G_K$\+modules over~$k$, with
exact triples that become split after passing to associated graded
$G_K$\+modules.
 For the exact category $\F'$, the Beilinson--Lichtenbaum formulas
always hold (see Example from Section~\ref{nonfiltered-secn}) and no
additional hypotheses are needed.
\end{rem}

\subsection{Diagonal cohomology}  \label{supporting-evidence}
 The supporting evidence for Conjecture from~\ref{main-conjecture}
that we possess is based on the properties of the big graded ring
\begin{equation}  \label{AT-big-ring}
 A=(\Hom_\D(k[L'],k[L''](n)[n]))_{K\sub L''\;L'\sub M}
\end{equation}
over the set of all (isomorphism classes over~$K$ of) intermediate
fields $K\sub L\sub M$ finite over~$K$.
 When the extension $M/K$ is finite, one can equivalently consider
the conventional graded ring~\eqref{AT-algebra}.

 By~\cite[Theorem~3.4]{VS}, the $k$\+module $\Hom_\D(k[L'],k[L'']
(n)[n])$ is isomorphic to the direct sum of the degree\+$n$
components of Milnor K\+theory rings $\KM_n(N)$ over the field
direct summands $N$ of the tensor product $L'\ot_K L''$.
 The multiplication in $A$ is described in terms of
the multiplications in the Milnor rings and the maps of inclusion
and transfer between the Milnor rings induced by morphisms of
fields over~$K$.

 According to Theorem from Subsection~\ref{diagonal-is-quadratic},
it follows from Conjecture that the (big) graded ring $A$ is
quadratic.
 In the case of Tate motives, i.~e., when $K=M$, this is obviously
so, since the Milnor K\+theory ring is quadratic by definition.
 In the general case of Artin--Tate motives, it is not difficult
to see that the (big) graded ring $A$ is multiplicatively generated
by $A_1$ over $A_0$.
 In fact, it is only essential that the set of all fields $L/K$
entering into the definition of $A$ is closed under the operation
of taking the composite extensions.
 For further evidence, see \ref{koszul-cases}
and~\ref{cyclic-extension}, and also~\cite{PV2}.

\subsection{Koszul cases}  \label{koszul-cases}
 Let us consider the three ``Koszul'' cases~(i\+iii)
from Subsection~\ref{introd-list-of-situations} one by one.

 When $k=\Q$ and $\chr K=p\ne0$, the Beilinson--Parshin
conjecture~\cite{Geis} predicts that $\Hom_\D(k[L'],k[L''](i)[n])=0$
for $i\ne n$.
 By part~(1) of Theorem from Subsection~\ref{triangulated-koszul},
it follows that Conjecture from~\ref{main-conjecture} holds if and only
if the (big) graded ring $A$ is Koszul in the sense of the definition
given in the beginning of Section~\ref{koszul-rings-secn}.

 Similarly, when $k=\Z/p^r$ and $\chr K=p$, it is known~\cite{GL}
that $\Hom_\D(k[L'],k[L'']\abk(i)[n])=0$ for $i\ne n$.
 Again, it follows that Conjecture holds if and only if the (big)
graded ring $A$ is Koszul in the sense of the beginning of
Section~\ref{koszul-rings-secn}.

 Finally, when $k=\Z/m$ and $K$ contains a primitive $m$\+root of
unity, the \'etale realization functor $\Phi$ transforms
the twist functor $X\maps X(1)$ on $\D$ to the identity functor on
$\D(\E)$, so the conditions of~\ref{main-theorem} are satisfied.
 Assuming~\eqref{AT-BL-formulas}, by Theorem from~\ref{main-theorem}
(and the last assertion of Corollary~\ref{exact-triangulated}.2)
it follows that Conjecture holds if and only if the (big) graded
ring~$A$ is Koszul in the same sense.

 In any of the above three cases, in order to interpret the Koszulity
condition, one can replace the component $A_0$ of the graded
ring~\eqref{AT-algebra} with $A'_0=k$, or replace the component
$A_0$ of the big graded ring~\eqref{AT-big-ring} with
$A'_{L'',L';\,0}=k$ for $L'=L''$ and $0$~otherwise.
 The Koszul property of the (big) graded ring $A$ is equivalent
to the same property of the (big) graded ring $A'=A'_0\op A_1\op A_2
\op\dsb$, and the latter (big) graded ring is flat over its
zero-degree component $A'_0$, so the characterization of Koszulity
from Subsections~\ref{flat-koszul}\+-\ref{generalities} applies.
 When $k$ is a field, this further simplifies to the classical
Koszul property of $k$\+algebras~\cite{PP,PV,Pbogom}.

 By Corollary from~\ref{flat-koszul}, the Koszulity conditions for
coefficients $k=\Z/p^r$ or $\Z/l^r$ in the cases (ii) and~(iii) are
equivalent to the Koszulity conditions for the coefficients $\Z/p$
or~$\Z/l$, where $p=\chr K$ or $l\ne\chr K$ are prime numbers.

\subsection{Tate motives with finite coefficients}
\label{PV-interpretation}
 Consider the case when $k=\Z/l$ and the field $K=M$ contains
a primitive $l$\+root of unity.

 It was proven in~\cite{PV} that Koszulity of the Milnor algebra
$\KM(K)/l$ for all finite extensions $K$ of a given field $F$
implies the Milnor--Bloch--Kato conjecture for the field $F$,
assuming that the latter conjecture holds in small degrees
(the norm residue homomorphism in an isomorphism in the degree
$n=2$ and a monomorphism in the degree $n=3$).
 Furthermore, Koszulity of $\KM(K)/l$ together with
the Beilinson--Lichtenbaum conjecture implies Conjecture
from~\ref{main-conjecture} for Tate motives with coefficients
$\Z/l$ over~$K$, as explained in~\ref{koszul-cases}.

 Conversely, assuming the Milnor--Bloch--Kato conjecture and
Conjecture from~\ref{main-conjecture} for Tate motives with
coefficients in $\Z/l$, Koszulity of the algebra $\KM(K)/l$ follows
due to~\cite{VS,GL2} and~\ref{koszul-cases}.
 So Koszulity of the Milnor algebra $\KM(K)/l$ together with
the low-degree part of the Milnor--Bloch--Kato conjecture are
equivalent to the full Milnor--Bloch--Kato conjecture together
with Conjecture from~\ref{main-conjecture} for Tate motives
with finite coefficients~$\Z/l$.

 When all these conjectures hold, the triangulated category $\D$
of Tate motives with coefficients $\Z/l$ over $K$ is simply described
as the derived category $\D^b(\F)$ of the exact category $\F$ of
finitely filtered discrete $G_K$\+modules $(N,F)$ over~$\Z/l$ such that
$G_K$ acts trivially on the quotient modules $F^iN/F^{i+1}N$.

 Thus the silly filtration conjecture provides a motivic
interpretation~\cite{Pober} of the Koszulity conjecture from~\cite{PV}.

\subsection{Torsion Tate motives}
 Let $K$ be a field and $m\ge2$ be an integer prime to $\chr K$.
 Imagine that there is a $\Z/m$\+linear triangulated category $\D'$
generated by objects $\Z/d\.(i)$ for all $d$ dividing~$m$ and $i\in\Z$.
 There is a shift functor $X\maps X(1)$ on $\D'$ transforming
$\Z/d\.(i)$ to $\Z/d\.(i+1)$, and the triangulated subcategory in $\D'$
generated by $\Z/d\.(0)$ is identified with the bounded derived
category of $\Z/m$\+modules.
 The conventional triangulated category $\D$ of Tate motives with
coefficients $\Z/m$ over $K$ is embedded into $\D'$ as the triangulated
subcategory generated by $\Z/m(i)$.
 One has $\Hom_{\D'}(\Z/d'(i),\Z/d''(j)[n]) = 0$ for all $n<0$ and
$i$, $j\in\Z$.

 Finally, there is the \'etale realization functor $\Phi\:\D'\rarrow
\D(\E)$, where $\E$ denotes the abelian category of discrete
$G_K$\+modules over $\Z/m$.
 This functor agrees with the \'etale realization functor for
$\D\sub\D'$ (so the groups $\Hom_{\D'}(\Z/m(i),\Z/m(j))$ are
described by the Beilinson--Lichtenbaum formulas) and its
restriction to the subcategory generated by $\Z/d\.(0)$ coincides
with the obvious functor (of ``trivial $G_K$\+action'') from
the bounded derived category of $\Z/m$\+modules to $\D(\E)$.

 Then one can easily prove by induction on the cohomological
degree~$n$ that the subcategories $\E'_i\sub\D'$ consisting of
finite direct sums of the objects $\Z/d(i)$ satisfy the conditions
(\ref{triang-vanish-area}--\ref{triang-bl-area}).
 Thus the minimal full subcategory $\M'\sub\D'$ containing all
$\Z/d\.(i)$ and closed under extensions is equivalent to the exact
category $\F'$ of finitely filtered discrete $G_K$\+modules
$(N,F)$ over $\Z/m$ such that the quotient $G_K$\+modules
$F^iN/F^{i+1}N\ot_{\Z/m}\mu_m^{\ot -i}$ are finitely generated
over $\Z/m$ with a trivial action of~$G_K$.

 Let $\M$ denote the minimal full subcategory of $\D$ containing
the objects $\Z/m(i)$ and closed under extensions.
 It would be interesting to know under what assumptions all
the maps $\Ext^n_\M(X,Y)\rarrow\Ext^n_{\M'}(X,Y)$ are isomorphisms,
or equivalently, all the maps $\Ext^n_{\M'}(X,Y)\rarrow
\Hom_{\D'}(X,Y[n])$ are isomorphisms.

\subsection{Cyclic extension of prime degree}  \label{cyclic-extension}
 Let $k=\Z/l$ and $M/K$ be a cyclic extension of degree~$l$ of
fields containing a primitive $l$\+root of unity.
 Assume that the Milnor--Bloch--Kato conjecture holds for fields
$M$ and $K$ and coefficients~$k$.
 In this case, the graded ring~\eqref{AT-algebra} has the form
\begin{align*}
 A&=\textstyle\bigoplus_n\Ext^n_\E(k\op k[G_K/G_M]\;k\op k[G_K/G_M]) \\
  &\simeq H^*(G_K,\Z/l)\oplus H^*(G_M,\Z/l)\oplus H^*(G_M,\Z/l)\oplus
H^*(G_M,\Z/l)[G_K/G_M],
\end{align*}
where $\E$ is the abelian category of discrete $G_K$\+modules
over~$\Z/l$.

 There is a conjecture~\cite[Conjecture~16]{Pdivis} that
$H^{\ge1}(G_M,\Z/l)$ is a Koszul module over a Koszul algebra
$H^*(G_K,\Z/l)$.
 By~\cite[Corollary~6.2$\.$(b)]{Pbogom}, it follows from this
conjecture that the graded algebra $A'=k\op A_1\op A_2\op\dsb$
is Koszul.
 Consequently, the graded ring $A$ is also Koszul.
 Thus, according to~\ref{koszul-cases}, Conjecture
from~\ref{main-conjecture} follows from~\cite[Conjecture~16]{Pdivis}
for such a field extension $M/K$ and coefficients~$k$.

\subsection{Artin--Tate motives and extensions of the basic field}
\label{at-base-change}
 Let $k$ be a coefficient ring and $M/K'/K$ be a tower of fields
such that $M/K$ is a Galois extension.
 Then the silly filtration conjecture for Artin--Tate motives holds
for the field extension $M/K'$ provided that it holds for
the extension $M/K$ with the same coefficients~$k$.

 Indeed, passage to the inductive limit with respect to the functors
of extension of scalars reduces the quesion to the case when
the field extension $K'/K$ is finite.
 In this case, the functor of extension of scalars $\D\M(K,k)
\rarrow\D\M(K',k)$ is right adjoint to the functor of restriction
of scalars $\D\M(K',k)\rarrow\D\M(K,k)$.
 Both functors send the Artin--Tate motives $k[L/K](i)$ or 
$k[L/K'](i)$ over $K$ or $K'$ corresponding to fields $L\subset M$ to
(finite direct sums of) similar motives over $K'$ or $K$,
respectively.
 Moreover, any motive $k[L/K'](i)$ over $K'$ is a direct summand of
the image of the motive $k[L/K](i)$ under the extension of
scalars functor.
 The assertion now follows from (the dual version of)
Proposition~\ref{silly-filtrations-appx}.1.

\subsection{New approach to Milnor--Bloch--Kato conjecture}
 Let $l$~be a prime number and $K$ be a field of characteristic
different from~$l$ having no finite separable extensions of degree
prime to~$l$.
 Set $M=\oK$ to be the separable closure of $K$ and let $A$
be the corresponding big graded ring of diagonal $\Hom$ for
Artin--Tate motives~\eqref{AT-big-ring} with coefficients~$\Z/l$.
 Consider also the related big graded ring $A'$
(see~\ref{koszul-cases}).

 Assume that the Galois symbol (norm residue homomorphism) for
finite extensions of the field $K$ and coefficients $\Z/l$ is
an isomorphism in degree $n=2$ and a monomorphism in degree $n=3$.
 Then whenever the big graded ring $A$ (or, equivalently, $A'$) is
Koszul, this map is an isomorphism in all degrees for such fields
and such coefficients.
 This follows from Theorem from Section~\ref{nonfiltered-secn}.
 This is another version of the approach to the Milnor--Bloch--Kato
conjecture suggested in~\cite{PV}.
 Notice that by~\ref{at-base-change} the Koszulity condition that
appears here holds for all algebraic extensions of a field $K$ if
holds for~$K$.

\subsection{Koszulity of cohomology}
 As a general heuristic rule, the cohomology algebra $H^*(X)$ of
an object $X$ is Koszul (as a graded ring with the cohomological
grading) if and only if two conditions hold:
\begin{enumerate}
 \item $X$ is of a $\Kpi$ type; and
 \item the DG\+algebra computing $H^*(X)$ is ``quasi-formal'',
meaning that the higher Massey products (defined as
the differentials in the spectral sequence converging from
the cohomology of the bar-construction of $H^*(X)$ to
the cohomology of the bar-construction of the DG\+algebra
computing it) in $H^*(X)$ vanish.
\end{enumerate}
 In particular, any formal DG\+algebra (i.~e., DG\+algebra
which can be connected with its cohomology by a chain of
multiplicative quasi-isomorphisms) is quasi-formal.
 Concerning the $\Kpi$ condition, it has to be interpreted in a way
consistent with the cohomology theory under consideration.
 A discussion of the situation when $X$ is a rational homotopy
type and $H^*(X)=H^*(X,\Q)$ can be found in~\cite{PY}.

 In particular, given a profinite group $G$ and a prime number~$l$,
the cohomology algebra $H^*(G,\Z/l)$ is Koszul if and only if
\begin{enumerate}
 \item the natural map $H^*(G^{(l)},\Z/l)\rarrow H^*(G,\Z/l)$
induced by the map $G\rarrow G^{(l)}$ from the group $G$ to
its maximal quotient pro-$l$-group $G^{(l)}$ is an isomorphism
of graded algebras; and
 \item the higher Massey products in $H^*(G,\Z/l)$ vanish.
\end{enumerate}

 Assuming that $H^*(G,\Z/l)$ is Koszul, a proof of~(1) can be
found in~\cite[Section~5]{Pbogom} and (2)~tautologically holds
for the grading reasons.
 To deduce Koszulity from (1) and~(2), notice that the coalgebra
$\Z/l(G^{(l)})$ of locally constant $\Z/$\+valued functions on
$G^{(l)}$ is conilpotent, hence the bar-construction of its
cobar-construction, being quasi-isomorphic to
it~\cite[Theorem~6.10(b)]{Pkoszul}, has cohomology
concentrated in degree~$0$.

 A very general version of the implication Koszulity
$\Longrightarrow$ $\Kpi$ is provided by Corollary in
Section~\ref{nonfiltered-secn}.

 Since it suffices to prove the Milnor--Bloch--Kato conjecture for
the fields whose Galois groups are pro-$l$-groups, it follows that
this conjecture, taken together with the silly filtration conjecture
for Tate motives with coefficients $\Z/l$ over fields containing
a primitive $l$\+root of unity, are equivalent to the assertion that
the higher Massey products in $H^*(G_K,\Z/l)$ vanish for any such
field~$K$.

 Notice that the DG\+algebra computing the Galois cohomology with
coefficients in $\Z/l$ is \emph{not} in general formal, only
conjecturally quasi-formal.
 Indeed, the Koszulity means that the associated graded coalgebra
of $\Z/l(G_K^{(l)})$ with respect to the coaugmentation filtration
is Koszul and quadratic dual to $H^*(G_K,\Z/l)$ \cite{PV}.
 The formality would mean that $\Z/l(G_K^{(l)})$ is isomorphic to
this associated graded coalgebra, i.~e., admits a Koszul grading.

 Let $K$ be a finite extension of $\mathbb F_p((z))$ or $\Q_p$,
with $p\ne l$; assume that $K$ contains a primitive $l$\+root
of unity if $l$ is odd, or a square root of~$-1$ if $l=2$.
 Then the algebra $H^*(G_K,\Z/l)$ is an exterior algebra with
two generators~\cite{PV2}, so its quadratic dual coalgebra is
a symmetric coalgebra with two generators.
 If the group coalgebra $\Z/l(G^{(l)})$ of $G_K^{(l)}$ were
quasi-isomorphic to the latter coalgebra, it would mean that
$G_K^{(l)}$ is commutative.
 This is not the case; in fact, $G_K^{(l)}$ is a semidirect
product of two copies of $\Z_l$ with one of them acting
nontrivially in the other one.

\subsection{Tate motivic sheaves over a scheme}
 The following attempt to describe the triangulated category of
Tate motives/motivic sheaves with finite coefficients $\Z/m$ over
a smooth variety $S$ over a field $K$ of characteristic not
dividing~$m$ looks natural.
 Its development was influenced by the author's conversations with
V.~Vologodsky.

 Consider the exact category $\F$ whose objects are finitely filtered
\'etale sheaves of $\Z/m$\+modules $(\mathcal{N},F)$ over $S$ such
that the quotient sheaf $F^i\mathcal{N}/F^{i+1}\mathcal{N}$ is
the tensor product of a sheaf of $\Z/m$\+modules lifted from
the Zariski/Nisnevich topology of $S$ with the \'etale sheaf
$\mu_m^{\ot i}$.
 Let $\D$ be the full triangulated subcategory of $\D^b(\F)$
generated by the Tate objects $\Z/m(i)$, which are identified with
the sheaves $\mu_m^{\ot i}$ placed in the filtration component~$i$.

 Then $\D$ is the candidate triangulated category of Tate motivic
sheaves over~$S$.
 The main piece of evidence supporting this conjecture that is
presently known to the author is that both the motivic cohomology
of $S$ and the $\Z/m$\+modules $\Ext$ between the Tate objects in
the triangulated category $\D$ localize in the Zariski topology.
 So it suffices to consider the case when $S$ is the spectrum of
the local ring of a scheme point on a smooth variety.

 A different version of this construction (for Artin--Tate rather
than Tate motivic sheaves) is shown to work, under certain
assumptions, in the forthcoming paper~\cite{Psheaves}.

\subsection{Tate motives and discrete valuation rings}
 The following purports to answer a question posed to the author
by V.~Vologodsky.
 Let $V$ be a discrete valuation ring with the quotient field $K$
and the residue field~$k$.
 Let $m=l^r$ be a power of a prime different from $\chr k$.
 Consider the embeddings of schemes $\upsilon\:\Spec K\rarrow \Spec V$
and $\iota\:\Spec k\rarrow \Spec V$.
 The functor $\iota^*R\upsilon_*\:\D\M(K,\Z/m)\rarrow\D\M(k,\Z/m)$
is supposed to take the triangulated subcategory of Tate motives in
$\D\M(K,\Z/m)$ into the triangulated subcategory of Tate motives
in $\D\M(k,\Z/m)$.
 Below we suggest a conjectural description of this functor on
Tate motives in terms of complexes of filtered modules over
the absolute Galois groups $G_K$ and~$G_k$.

 There is a natural closed subgroup $G_L\subset G_K$ corresponding
to the Henselization $L$ of the discrete valuation field~$K$.
 The group $G_k$ is the quotient group of $G_L$ by the inertia
subgroup~$I$.
 Since $l\ne\chr k$, the maximal quotient pro-$l$-group $I_l=I/I'$
of the group $I$ is naturally isomorphic to the projective limit
$\Z_l(1)$ of the groups $\mu_{l^t}$ of $l^t$\+roots of unity
in~$\overline{k}$ (or equivalently, in~$\overline{L}$), while
the (profinite) order of the group~$I'$ is not divisible by~$l$.

 Let $(N,F)$ be a finitely filtered discrete $G_K$\+module over
$\Z/m$ whose quotient modules $\gr^i_FN$ are finite direct sums of
the modules $\mu_m^{\ot i}$.
 Restrict the action of $G_K$ in $N$ to $G_L$; since the subgroup
$I\subset G_L$ acts in $N$ by endomorphisms unipotent with respect
to the filtration $F$, its action factorizes through~$I_l$.
 Hence the quotient group $G_L/I'$ acts in~$N$. 

 Let $x$ be a topological generator of the group $I_l=\Z_l(1)$
and $\bar x$ be the corresponding generator of the group~$\mu_m$.
 Consider the map $R_x\:(x-1)\ot\bar x^{-1}\: N\rarrow N\ot_{\Z/m}
\mu_m^{\ot -1}$.
 Since $x$ acts trivially in $\gr_FN$, this map shifts the filtration,
i.~e., defines a morphism of filtered $\Z/m$\+modules
$N\rarrow N(-1)$.
 Unless $k$~contains all the $l^t$\+roots of unity, this map does
not respect the action of $G_L/I'$, however.

 To deal with this problem, consider the expression $\phi(x,n) =
1+x+\dsb+x^{n-1}$, defined for all nonnegative integers~$n$ and
taking values in the endomorphisms of~$N$.
 The equations $\phi(x\;a+b) = \phi(x,a) + x^a\phi(x,b)$ and
$\phi(x,ab) = \phi(x,a)\phi(x^a,b)$ hold.
 One easily checks that the function~$\phi$ can be extended by
continuity to a locally constant function of $n\in\Z_l$.
 For any $n\in\Z_l$, set $\phi_x(x^n)=\phi(x,n)$; then $\phi_x$
is a $1$\+cocycle of the group $I_l$ with coefficients in
the filtration-preserving endomorphisms of $N$, i.~e., the equation
$\phi_x(yz) = \phi_x(y) + y\phi_x(z)$ holds for $y$, $z\in I_l$.
 For any $n\in\Z_l^*$, set $\psi_x(n)=\phi(x,n)/n$; then the function
$\psi_x$ takes values in the automorphisms of $N$ unipotent with
respect to $F$ and satisfies the cocycle equation $\psi_x(ab) = 
\psi_x(a)\psi_{x^a}(b)$.

 Let $\rho\:G_L/I'\rarrow\Aut_{\Z/m}(N(-1))$ be the action
of $G_L/I'$ in $N(-1)$ induced by the actions of $G_L/I'$ in $N$
and~$\mu_m$.
 Define a new action $\rho_x\:G_L/I'\rarrow\Aut_{\Z/m}(N(-1))$
by the rule $\rho_x(g)=\psi_x(\chi(g))^{-1}\rho(g)$,
where $\chi\:G_L/I'\rarrow\Z_l^*$ is the cyclotomic character.
 This new action is associative due to the above cocycle equation
for~$\psi$, since $gxg^{-1}=x^{\chi(g)}$.
 Denote the filtered $\Z/m$\+module $N(-1)$ endowed with
this new action of the group $G_L/I'$ by $N(-1)_x$.
 Then $N(-1)_x$ is a finitely filtered $G_L/I'$\+module with
cyclotomic associated quotients, and one readily checks that
$R_x\:N\rarrow N(-1)_x$ is a morphism of filtered
$G_L/I'$\+modules.

 Replacing $x$ with $x^n$, where $n\in\Z_l^*$, we obtain another
similar morphism $R_{x^n}\:N\rarrow N(-1)_{x^n}$.
 The isomorphism of filtered $G_L/I'$\+modules $\psi_x(n)\:\allowbreak
N(-1)_x\rarrow N(-1)_{x^n}$ identifies the morphisms $R_x$ 
and $R_{x^n}$, which makes the two-term complex of filtered
$G_L/I'$\+modules $R_x\:N\rarrow N(-1)_x$, denoted
symbolically by $R\:N\rarrow N(-1)$, defined uniquely
up to a unique isomorphism.
 
 Now we can assign to any bounded complex $(N^\bu,F)$ of finitely
filtered discrete $G_K$\+modules over $\Z/m$ with cyclotomic
associated quotients the total complex of the bicomplex with two
rows $R\:N^\bu \rarrow N^\bu(-1)$.
 The complex so obtained is a bounded complex of finitely filtered
$G_L/I'$\+modules with cyclotomic associated quotients.
 We need to transform it into a similar complex of $G_k$\+modules.

 Pick a uniformizing element $\pi$ in the ring $V$ (or even in
the field~$L$).
 Then one can extend the field~$L$ by adjoining a compatible system
of $l^t$\+roots of~$\pi$.
 This (not necessarily normal, but separable) algebraic field
extension corresponds to a section $G_k\rarrow G_L/I'$ of
the surjective profinite group homomorphism $G_L/I'\rarrow G_k$.
 Composing the action of $G_L/I'$ with this section, we obtain
the desired complex of filtered $G_k$\+modules.

 A change of the compatible system of $l^t$\+roots of~$\pi$
corresponds to an element of the kernel $\Z_l(1)=I_l$ of
the group homomorphism $G_L/I'\rarrow G_k$, and the conjugation
with this element transforms one of the related two sections
$G_k\rarrow G_L/I'$ into the other.
 The action of this element defines the complex of $G_k$\+modules
that we have constructed as independent of the choice of
the compatible system of roots of~$\pi$ up to a unique isomorphism.

 Finally, assume that $\pi'$ and~$\pi''$ are two uniformizing
elements, and compatible systems of $l^t$\+roots have been chosen
for each of them.
 Then we have two sections $\sigma'$, $\sigma''\:G_k\rarrow G_L/I'$
differing by a cocycle $\xi\:G_k\rarrow I_l$; so
$\sigma'(g) = \sigma''(g)\xi(g)$.
 Consider the two two-term complex of $G_k$\+modules
$R_x\:N_{\sigma'}\rarrow N(-1)_{x,\sigma'}$ and
$R_x\:N_{\sigma''}\rarrow N(-1)_{x,\sigma''}$ obtained by composing
the action of $G_K$ in the two-term complex $R_x\:N\rarrow N(-1)_x$
with $\sigma'$ and~$\sigma''$.
 We will construct a two-term complex of finitely filtered
$G_k$\+modules with cyclotomic associated quotients $\widetilde R\:
\widetilde N\rarrow \widetilde M$ together with two quasi-isomorphisms
$p'$ and~$p''$ mapping this intermediate complex onto each of
the two complexes that we need to compare.

 The filtered $G_k$\+module $\widetilde N$ is simply the direct sum
$N_{\sigma'}\oplus N_{\sigma''}$; its morphisms $p'$ and~$p''$
to $N_{\sigma'}$ and $N_{\sigma''}$ are the projections to
the direct summands.
 The filtered $G_k$\+module $\widetilde M$ is an extension of
$N(-1)_{x,\sigma''}$ with the kernel $N_{\sigma'}$.
 We construct it as the filtered $\Z/m$\+module $N\oplus N(-1)$
endowed with the action of $G_k$ given by the formula
$g(u,v)=(\sigma'(g)(u)+\sigma''(g)\phi_x(\xi(g))
(v\ot\bar x)\;\allowbreak\rho_x(\sigma''(g))(v))$ for
$u\in N$, \ $v\in N(-1)$, and $g\in G_k$.
 This action is associative due to the cocycle equations for~$\phi$.
 The morphisms $p'\:\widetilde M\rarrow N(-1)_{x,\sigma'}$ and
$p''\:\widetilde M\rarrow N(-1)_{x,\sigma''}$ are given by
the rules $p'(u,v) = v$ and $p''(u,v) = v + R_x(u)$.
 One checks that the kernels of $p'$ and~$p''$ are contractible
two-term complexes.

 Notice that the above construction is not applicable to
Artin--Tate motives, since the action of $\Z_l(1)$ on the associated
quotient modules of the related filtered modules can be nontrivial,
so the compatibility with the filtration breaks down.

\subsection{Milnor ring with integral coefficients}
 Due to the results of this paper (see
Section~\ref{koszul-rings-secn}), we now know what it \emph{means}
for the Milnor ring (with integral coefficients) $\KM(K)$ of
a field $K$ to be Koszul.

 It is still a mystery whether one should expect it to \emph{be}
Koszul for an arbitrary field~$K$, or what the motivic interpretation
of it being Koszul might consist in.
 In fact, we do not know this even for the algebra $\KM(K)\ot_Z\Q$
(see~\ref{introd-list-of-situations}).

\subsection{Tate motivic DG\+algebra $A$}
 Fix a field~$K$.
 We are interested in a negatively internally graded DG\+algebra $A$
over $\Z$ (see Appendix~\ref{kpi1-appendix}) such that for any
coefficient ring $k$ as above the full triangulated subcategory of
the derived category of internally graded DG\+modules over $A\ot_\Z k$
generated by the free DG\+modules $A\ot_\Z k(i)$ is equivalent to
the full triangulated subcategory of $\D\M(K,k)$ generated by
the Tate objects $k(i)$.
 Moreover, the free DG\+modules $A\ot_\Z k(i)$ should correspond to
the Tate objects $k(i)$ under this equivalence of triangulated
categories, and the underlying bigraded $\Z$\+module of
the DG\+algebra $A$ should be flat (i.~e., torsion-free).
 In particular, the cohomology $H(A\ot_\Z k)$ of the DG\+algebra
$A\ot_\Z k$ should be isomorphic to the motivic cohomology of $K$
with coefficients in~$k$.

 There must be many candidate constructions of the DG\+algebra $A$
obtainable from the contemporary literature; we will suggest just
one such construction adopted to Voevodsky's definition~\cite{Voev}
of the triangulated category of geometric motives.
 It was inspired by the papers~\cite{BV,Bond0}.
 Consider the additive category $\mathit{SmCor}$ of smooth schemes
over~$k$ and finite correspondences between them, and pass to
the DG\+category of bounded complexes over $\mathit{SmCor}$.
 The latter is a tensor DG\+category with respect to the Cartesian
product of schemes over~$k$; in particular, there is
the DG\+endofunctor of Tate twist acting on it.
 Consider the full DG\+subcategory of the ``relations'' of homotopy
invariance and Mayer--Vietoris; close it under the Tate twists and
apply the construction of Drinfeld localization~\cite[Section~3]{Dr}.
 Set
 $$
  A_i=\varinjlim\nolimits_{j\to+\infty}\Hom(\Z(i+j),\Z(j))
 $$
for $i<0$, the $\Hom$ being taken in the DG\+category we have
constructed; $A_0=k$, and $A_i=0$ for $i>0$.
 It is clear that this construction is compatible with tensoring
with a ring of coefficients~$k$ in a reasonable sense.

 Another approach is to use one of the various constructions of
the complex of algebraic cycles (see e.~g.~\cite{Bond0}) in
the role of~$A$.

\subsection{Classical $\Kpi$\+conjecture and integral Tate motives}
\label{classical-kpi1-integral}
 Now let $C$ denote the reduced bar-construction of the DG\+algebra
$A$ over~$\Z$.
 By Theorem~\ref{kpi1-appendix}.1, the main conjecture
from~\ref{main-conjecture} for $M=K$ and $k=\Z$ is equivalent to
the DG\+coalgebra $C$ having no cohomology in the positive
cohomological degrees, $H^n(C)=0$ for $n>0$.

 By Proposition~\ref{kpi1-appendix}.2$\.$(1), the Beilinson--Soul\'e
vanishing conjecture with rational coefficients is equivalent
to $C\ot_Z\Q$ having no cohomology in the negative cohomological
degrees, $H^n(C\ot_\Z\Q)=0$ for $n<0$.
 For the same reason, one expects $H^n(C\ot_\Z\Z/p)=0$ for $n<0$
if $K$ has a prime characteristic~$p$ (see~\ref{koszul-cases}).
 Finally, by Theorem~\ref{kpi1-appendix}.2$\.$(1),
the Beilinson--Lichtenbaum conjecture implies $H^n(C\ot_\Z\Z/l)=0$
for $n<-1$ and any prime number $l\ne\chr K$.

 By the universal coefficients formula, it follows that one should
expect $H^n(C)=0$ for all $n\ne0$.
 However, $H^0(C)$ is not supposed to be a torsion-free $\Z$\+module;
indeed, it should have nontrivial $l$\+torsion for any $l\ne\chr K$.
 Conversely, if $H^n(C)=0$ for $n\ne0$, then $H^n(A/\Z)=0$ for $n\le0$,
$H^n(A\ot_\Z k)=0$ for $n<0$ and any coefficient ring~$k$, and
the silly filtration conjecture holds for $M=K$ and any~$k$.

 As to the cohomology coalgebra $H(C\ot_\Z\Z/l)$, one can expect it
to be described by the assertions of
Theorem~\ref{kpi1-appendix}.2$\.$(2-3).

\subsection{Tate motivic DG\+coalgebra $C$ and its cohomology coalgebra}
 In general, given a flat DG\+coalgebra $C$ over $\Z$ with nonflat
cohomology $H(C)$, there is no way to define a coalgebra structure
on $H(C)$, as the natural map $H(C)\ot_\Z H(C)\rarrow H(C\ot_\Z C)$
has no natural inverse.
 However, when $H^n(C)=0$ for $n\ne0$, there is a natural coassociative
coalgebra structure on $H^0(C)$.
 Moreover, assume that $C$ is negatively internally graded, and consider
the minimal full subcategory $\M$ of the derived category of internally
graded DG\+comodules over $C$, containing the trivial DG\+comodules
$\Z(i)$ and closed under extensions.
 Since the homological dimension of~$\Z$ is equal to~$1$, \ $\M$ is
an exact subcategory of the triangulated category $\D$ it generates.
 Now there is an exact cohomology functor from $\M$ to the exact
category $\G$ of comodules over $H^0(C)$, free and finitely generated
as $\Z$\+modules.

 But this functor is not an equivalence.
 It suffices to consider the example when $K=\overline{\mathbb{F}}_q$
is the algebraic closure of a finite field.
 In this case, the internally graded DG\+coalgebra $C$ should be
quasi-isomorphic to the DG\+coalgebra with the only nonzero components
$C_0=\Z$ and $C_{-1}=(\Z[q^{-1}]\rarrow\Q)$, the term $\Z[q^{-1}]$
being placed in the cohomological degree~$-1$ and the term $\Q$ in
the cohomological degree~$0$, with the identity embedding of
$\Z[q^{-1}]$ into $\Q$ as the differential.
 Using the reduced cobar-construction, one can compute that
$\Hom_\M(\Z(i),\Z(i))=\Z$ and $\Ext^1_\M(\Z(i),\Z(j))=\Q/\Z[q^{-1}]$
for $i<j$, while all the other $\Ext$ groups between the objects
$\Z(i)$ in the exact category $\M$ vanish
(cf.~\ref{finite-field-tate-motives}).
 In the exact category $\G$, the groups $\Ext^1_\G(\Z(i),\Z(j))=0$
for $i+2\le j$ differ from those in $\M$ (while all the other $\Ext$
groups between the objects $\Z(i)$ are the same).

 Nor such a description of the exact category $\M$ in terms of
the nonflat coalgebra $H^0(C)$, if it existed, would be of much use;
see Remark in Section~\ref{koszul-rings-secn}.
 Instead, it appears that one has to learn how to work with
the DG\+coalgebra $C$ up to a quasi-isomorphism in the class of
negatively internally graded DG\+coalgebras with torsion-free
underlying bigraded $\Z$\+modules.
 It is impossible in general to recover such a quasi-isomorphism class
of a DG\+coalgebra $C$ with $H^n(C)=0$ for $n\ne0$ from the graded
coalgebra structure on $H^0(C)$, as one can see in the case when
$C_{-1}$ and $C_{-2}$ are nonzero, while $C_{-i}=0$ for $i\ge3$.
 The point is, the map of complexes of $\Z$\+modules $C_{-2}\rarrow
C_{-1}\ot_\Z C_{-1}$ is not determined, as a morphism in the derived
category of $\Z$\+modules, by the induced map of the cohomology.

\subsection{Integral Tate motives over finite field}
\label{finite-field-tate-motives}
 One example when the derived category of Tate motives with integral
coefficients admits an elementary construction is that of motives
over a finite field.
 One expects that
\begin{equation}
 \begin{aligned}  \label{finite-field}
    \Hom_{\D\M(\mathbb{F}_q,\Z)}(\Z(0),\Z(i)[1])&\simeq
                       \mu_{q^i-1}^{\ot i},
      \quad \text{for \ $i\ge0$;} \\
    \Hom_{\D\M(\mathbb{F}_q,\Z)}(\Z(0),\Z(i)[n])&=0,
      \quad \text{for \ $i\ne0$, \ $n\ne1$.}
 \end{aligned}
\end{equation}
 Since $\Hom_{\D\M}(\Z(i),\Z(j)[n])=0$ for $n\ge2$, the silly
filtration condition is trivial in this case.
 One can construct an exact category $\F$ with the $\Ext$ groups
given by~\eqref{finite-field}, except that $\mu_{q^i-1}$ is replaced
with $\Z/(q^i-1)$, in the following way.

 First let us describe an exact category $\F$ resembling mixed Tate
motives over the algebraic closure of a finite field
$\overline{\mathbb{F}}_q$ with integral coefficients.
 Consider the category of finitely filtered abelian groups $(N,F)$
endowed with a (fixed) splitting of the filtration over $\Z_{(p)}$,
i.~e., $N\ot_{\Z}\Z_{(p)}\simeq \gr_FN\ot_\Z\Z_{(p)}$.
 Here $\Z_{(p)}$ denotes the localization of $\Z$ at the prime
ideal~$(p)$.
 It is also required that $\gr_FN$ be a finitely generated free
abelian group.
 Define the objects $\Z(i)\in\F$ as the groups $\Z$ placed
in the filtration component~$i$.
 Then one has $\Hom_\F(\Z(i),\Z(i))=\Z$ and
$\Ext_\F^1(\Z(i),\Z(j))\simeq \Q/\Z[q^{-1}]$ for $i<j$, while
all the remaining $\Ext$ groups between the objects $\Z(i)$ vanish.
 All objects of $\F$ are iterated extensions of $\Z(i)$.

 Now consider the case of mixed Tate motives over $\mathbb{F}_q$
with coefficients in $\Z[q^{-1}]$.
 Consider the exact category $\F$ of finitely filtered
$\Z[q^{-1}]$\+modules $(N,F)$ endowed with an endomorphism
$\phi\:N\rarrow N$ that is compatible with the filtration $F$ and
acts by multiplication with~$q^i$ on the successive quotient
$F^iN/F^{i+1}N$.
 It is required that $\gr_FN$ be a finitely generated free module
over $\Z[q^{-1}]$.
 Define the objects $\Z[q^{-1}](i)\in\F$ as the modules
$\Z[q^{-1}]$ placed in the filtration component~$i$, with
$\phi$ acting on them by~$q^i$.
 Then one has $\Hom_\F(\Z[q^{-1}](i),\Z[q^{-1}](i))=\Z[q^{-1}]$ and
$\Ext_\F^1(\Z[q^{-1}](i),\Z[q^{-1}](j))\simeq \Z/(q^{j-i}-1)$ for
$i\le j$, while all the remaining $\Ext$ groups between the objects
$\Z[q^{-1}](i)$ vanish.

 Finally, to construct an exact category $\F$ resembling mixed Tate
motives over $\mathbb{F}_q$ with integral coefficients, consider
finitely filtered abelian groups $(N,F)$ endowed with a family of
endomorphisms $\phi^{(i)}\: F^iN\rarrow F^iN$ such that
$\phi^{(i)}|_{F^jN}=q^{j-i}\phi^{(j)}$ for $i\le j$ and the action of
$\phi^{(i)}$ on $F^iN/F^{i+1}N$ is the identity endomorphism.
 Define the objects $\Z(i)\in\F$ as the groups $\Z$ placed
in the filtration component~$i$, with $\phi^{(j)}$ acting on them
by~$q^{i-j}$.
 Then one has $\Hom_\F(\Z(i),\Z(i))=\Z$ and
$\Ext_\F^1(\Z(i),\Z(j))\simeq\Z/(q^{j-i}-1)$ for $i\le j$, while
all the remaining $\Ext$ groups between the objects $\Z(i)$ vanish.

\subsection{Epilogue}
 One of the most important problems of the theory around
the Milnor--Bloch--Kato conjecture is to describe as precisely
as possible the special properties of absolute Galois groups,
their cyclotomic characters, their subgroups, their cohomology, etc.
 One particular aspects of this problem is to describe the behavior
of Galois cohomology with constant/cyclotomic coefficients with
respect to extensions of the field~$K$.

 This project was originally conceived as an approach to the latter
problem.
 The hope was that the main conjecture from~\ref{main-conjecture}
would impose strong restrictions on the behavior of Galois
cohomology in field extensions.
 This hope did not quite materialize, as it now appears that
the main conjecture is not so strong.

 Consider the case of a cyclic extension of prime degree $M/K$ and
arbitrary coefficients~$k$.
 Then the graded algebra $A$ from~\eqref{AT-algebra} is isomorphic to
$$
 k\ot_\Z(\KM(K)\op \KM(M)\op \KM(M)\op \KM(M)[G_K/G_M]).
$$
 The multiplication in this algebra is defined in terms of the map
$\KM(K)\rarrow\KM(M)$ induced by the embedding of fields $K\rarrow M$,
the transfer map $\KM(M)\rarrow \KM(K)$, and the action of
$G_K/G_M$ on $\KM(M)$.

 We have mentioned already in~\ref{supporting-evidence} that
the algebra $A$ is generated by $A_1$ over $A_0$.
 Furthermore, \emph{either} a weak form of Hilbert theorem~90 for
the Milnor K\+theory \emph{or} the statement of the Bass--Tate lemma
are both sufficient to conclude that $A$ is quadratic.
 And in the situation of~\ref{cyclic-extension}, Koszulity of $A$
follows from the conjecture from~\cite{Pdivis}, but there is
no apparent way to obtain the converse implication.

 Thus the main our achievements in this paper are the connection of
the silly filtration conjecture with Koszulity conjectures established
in~\ref{koszul-cases} and the elementary construction of the category
of mixed Artin--Tate motives with finite coefficients obtained
in~\ref{elementary-description}.
 In addition, there is the result of~\ref{classical-kpi1-integral} on
the $\Kpi$\+conjecture with integral coefficients.

\appendix
\Section{Exact Categories}

\subsection{Preadditive categories}  \label{big-graded-rings}
 A \emph{preadditive category} $\A$ is a category in which the set
of morphisms $\Hom_\A(X,Y)$ between any two given objects is
endowed with the structure of an abelian group in such a way that
the composition maps are biadditive.
 A functor $F:\A\rarrow\B$ between preadditive categories is
called \emph{additive} if it takes the sums of morphisms in $\A$
to the sums of morphisms in $\B$.

 In particular, a preadditive category with a single object is
the same that a (noncommutative) ring.
 For this reason, we will use the term \emph{big ring over} a set
$\Sigma$ as another name for a small preadditive category
with the set of objects $\Sigma=\Ob\A$.
 In other words, a big ring over $\Sigma$ is a collection of
abelian groups $(A_{\sigma\tau})_{\sigma,\tau\in\Sigma}$, \
$A_{\sigma\tau}=\Hom_\A(\tau,\sigma)$, together with multiplication
maps $A_{\sigma\tau}\times A_{\tau\rho}\rarrow A_{\sigma\rho}$
and unit elements $e_\sigma\in A_{\sigma\sigma}$ satisfying
the conventional axioms.

 Let $\Ab$ denote the category of abelian groups.
 A \emph{left module} over a big ring $A$ over $\Sigma$ is another
name for a covariant additive functor $\A\rarrow\Ab$, and
a \emph{right module} over $A$ is a contravariant additive functor
between the same categories.
 In other words, a left $A$\+module $M$ is a collection of abelian
groups $(M_\tau)_{\tau\in\Sigma}$ together with action maps
$A_{\sigma\tau}\times M_\tau\rarrow M_\sigma$, and similarly for
right modules.
 Given a big ring $A$ over $\Sigma$ and a big ring $B$ over $\Pi$,
a \emph{bimodule} $K$ over $A$ and $B$ is a collection of abelian
groups $(K_{\tau\rho})_{\tau\in\Sigma;\.\rho\in\Pi}$ together with
biaction maps $A_{\sigma\tau}\times K_{\tau\rho}\times B_{\rho\pi}
\rarrow K_{\sigma\pi}$ satisfying the obvious triadditivity,
associativity, and unit axioms.

 It follows from the latter definition that a bimodule over big
graded rings can not be literally viewed as a left or a right module
over one of the big rings; rather, it is a \emph{collection}
of left $A$\+modules and a collection of right $B$\+modules.
 Nevertheless, we will speak of bimodules as if they have underlying
left and right module structures, omitting the references to
collections of modules for brevity.
 Any operations with collections of modules are performed with
every module in the collection, providing the corresponding
collections of outputs; any properties of collections of modules
are meant to hold for \emph{every} module in the collection.

 Let $M$ be a left $A$\+module and $N$ be a right $A$\+module; then
the \emph{tensor product} $N\ot_AM$ is the abelian group generated
by the formal symbols $n\ot m$, where $n\in N_\sigma$ and
$m\in M_\sigma$, subject to the conventional relations $na\ot m=
n\ot am$ for any $n\in\M_\sigma$, \ $a\in A_{\sigma\tau}$, and
$m\in M_\tau$, and biadditivity.
 As explained above, this simultaneously defines the operation of
tensor product of bimodules; in particular, the tensor product of
two $A$\+bimodules is again an $A$\+bimodule.

 The category of $A$\+(bi)modules is an abelian category satisfying
Ab5 and Ab4*, with a set of generators, and enough projectives and
injectives~\cite{Grot}.
 For any set $I$ mapping into $\Sigma$ one defines the \emph{free}
left $A$\+module $M$ generated by $I$ by the rule
$M_\tau=\bigoplus_{i\in I} A_{\tau,\sigma(i)}$, and similarly for
free right modules.
 These are just direct sums of representable functors on~$\A$.
 An $A$\+module is \emph{projective} if and only if it is a direct
summand of a free module.

 The functor of tensor product over $A$ is right exact.
 A right $A$\+module $N$ is \emph{flat} if the functor $M\maps
N\ot_A M$ is exact on the abelian category of left $A$\+modules.
 Any projective $A$\+module is flat. 
 One defines the left derived functors $\Tor^A_n(N,M)$ for $n\ge0$,
a right $A$\+module $N$ and a left $A$\+module $M$, by
the conventional derived functor procedure, using projective or
flat resolutions.
 The derived functors have the standard properties of the functor
$\Tor$ over a ring; in particular, resolving either argument leads
to the same result.

 A big ring $A$ over a set $\Sigma$ is said to be graded if every
abelian group $A_{\sigma\tau}$ is graded.
 In this paper, we only consider big rings graded by nonnegative
integers.
 Thus a \emph{big graded ring} is a collection of abelian groups
$(A_{\sigma\tau;\.n})_{\sigma,\tau\in\Sigma;\.n\ge0}$ endowed with
multiplication maps $A_{\sigma\tau;\.n}\times A_{\tau\rho;\.m}
\rarrow A_{\sigma\rho;\.n+m}$ and unit elements $e_\sigma\in
A_{\sigma\sigma;\.0}$ satisfying the conventional biadditivity,
associativity, and unit axioms.
 One can consider graded modules over big graded rings in
the obvious sense.

 For a further discussion of rings with several objects,
see~\cite{Mit}.

\subsection{Additive categories}  \label{additive-categories}
 One can show that if an object $X$ of a preadditive category $\A$
is the product of a finite set of objects $X_i\in\A$, then it is
also the coproduct of this set of objects, and vice versa.
 In this case, $X$ is called the \emph{direct sum} of $X_i$.
 An \emph{additive category} is a preadditive category in which
the (co)product of any finite set of objects exists.
 The preadditive category structure on an additive category can be
recovered from its abstract category structure.

 Following the terminology of A.~Neeman's paper~\cite{Neem}, we call
an additive category $\A$ \emph{semi-saturated} (\emph{weakly
idempotent complete}) if any pair of its morphisms $p\:X\rarrow Y$
and $i\:Y\rarrow X$ such that $p\circ i=\id_Y$ comes from
an isomorphism $X\simeq Y\oplus Z$ for some object $Z\in\A$.
 An additive category is called \emph{saturated} (\emph{Karoubian},
\emph{pseudo-abelian}, \emph{idempotent complete}) if any its morphism
$e\:X\rarrow X$ such that $e\circ e=e$ is the projection on
a direct summand of a decomposition $X\simeq Y\oplus Z$.

 For any additive category $\A$ there exists a unique, up to
a uniquely defined equivalence, semi-saturated additive category
$\A^\ss$ (called the \emph{semi-saturation} of $\A$) together with
an additive functor $\ss\:\A\rarrow\A^\ss$ such that any additive
functor from $\A$ to a semi-saturated additive category $\B$
can be factorized through the functor~$\ss$ in a unique, up to
an isomorphism, way.
 There also exists a unique, up to a uniquely defined equivalence,
saturated additive category $\A^\sat$ (called the \emph{saturation}
of $\A$) together with an additive functor $\sat\:\A\rarrow\A^\sat$
satisfying the same condition with respect to functors from
$\A$ to saturated additive categories~$\B$.

 Let $\A$ be a full additive subcategory of an additive category~$\B$.
 Then the \emph{semi-saturation closure} $\A_\B^\ss$ of the
subcategory~$\A$ in $\B$ is the full subcategory of~$\B$ whose
objects are all the objects $Z\in\B$ for which there exists an
object $Y\in\A$ such that the object $X\op Y$ is also isomorphic
to an object from~$\A$.
 The \emph{saturation closure} $\A_\B^\sat$ of a subcategory~$\A$
is defined in the same way except that one allows $Y\in\B$.

 For example, any additive category admitting either the kernels or
the cokernels of all morphisms (see below) is saturated.
 Any triangulated category is semi-saturated.
 A triangulated subcategory of a triangulated category is thick
if and only if it coincides with its saturation closure~\cite{Neem}.
 The saturation of a triangulated category is naturally
a triangulated category again~\cite{BS}.

\begin{ex}
 To see just how bad a nonsaturated additive category can be,
consider the category $\A$ of all vector spaces $V$ over a field~$k$
such that $\dim V\ne 1$, $2$, or~$5$.
 This is an additive subcategory of the category of vector spaces,
since it is closed under direct sums.
 The category $\A$ is not even semi-saturated.
 On the other hand, the category $\B$ of all finite-dimensional
vector spaces of dimension divisible by~$3$ over~$k$ is an example
of a semi-saturated, but not saturated additive category.
 The category of finitely generated free modules over a ring~$R$
is not even semi-saturated in general. 
\end{ex}

 A morphism $k\:K\rarrow X$ in a preadditive category~$\A$
is called the \emph{kernel} of a morphism $f\:X\rarrow Y$
if for any object $Z\in\A$ the sequence
 $$
   0\lrarrow \Hom_\A(Z,K)\lrarrow \Hom_\A(Z,X)\lrarrow \Hom_\A(Z,Y)
 $$
is exact; in this case we write $K=\Ker f$ and $k=\ker f$.
 Analogously, a morphism $c\:Y\rarrow C$ is called
the \emph{cokernel} of a morphism $f\:X\rarrow Y$
if for any object $Z\in\A$ there is an exact sequence
 $$
   0\lrarrow \Hom_\A(C,Z)\lrarrow \Hom_\A(Y,Z)\lrarrow \Hom_\A(X,Z);
 $$
the notation: $C=\Coker f$ and $c=\coker f$.
 A morphism $f$ is called \emph{injective} if one has $\Ker f=0$
and \emph{surjective} if $\Coker f=0$.

 Throughout this appendix, we will sometimes use the following notation
for computations in additive categories: the composition of several
(previously defined) morphisms, say $X\rarrow Y$, \ $Y\rarrow Z$,
and $Z\rarrow T$, will be denoted by $[X\rar Y\rar Z\rar T]$.
 Then an equation like $[X\rar Y\rar Z]=[X\rar T\rar Z]$
means that the morphisms form a commutative square, etc.

\subsection{Axioms}  \label{axioms}
 There is a very detailed recent exposition~\cite{Bueh} of the theory
of exact categories, covering most of the material that one needs
to know in order to feel at ease while working with these things.
 The purpose of the following subsections is to complement that
exposition with several observations which will tend to add some
clarity as far as our goals are concerned, or present
an independent interest.

 Let $\E$ be an additive category endowed with a class of
\emph{admissible} (or \emph{exact}) \emph{triples} of objects
and morphisms
$$
 \T_\E=\{\.X'\rarrow X\rarrow X''\.\},
 \qquad X',\ X,\ X''\in\E.
$$
 A morphism $X\rarrow Y$ in the category $\E$ will be called
an \emph{admissible monomorphism} if it can be embedded into
an admissible triple $X\rarrow Y\rarrow Z$ and an
\emph{admissible epimorphism} if there exists an admissible
triple $T\rarrow X\rarrow Y$.
 An additive category $\E$ together with a class of admissible
triples $\T_\E$ is called an \emph{exact category}
if it satisfies the following axioms~Ex0\+-Ex3:
 \begin{itemize}
   \item[Ex0:\,]
      The zero triple $0\rarrow0\rarrow0$ is admissible.
      Any triple isomorphic to an admissible triple
      is admissible.
   \smallskip
   \item[Ex1:\,]
      For any admissible triple $X'\rarrow X\rarrow X''$
      and any object $Z\in\E$ there are exact sequences
        \begin{gather*}
           0\lrarrow\Hom_\E(Z,X')\lrarrow\Hom_\E(Z,X)\lrarrow
           \Hom_\E(Z,X'') \\
           0\lrarrow\Hom_\E(X'',\.Z)\lrarrow\Hom_\E(X,Z)\lrarrow
           \Hom_\E(X',\.Z)
        \end{gather*}
      In other words, the morphisms $X'\rarrow X$ and
      $X\rarrow X''$ are each other's kernel and cokernel.
   \smallskip
   \item[Ex2:\,]
      Let $X'\rarrow X\rarrow X''$ be an admissible triple.
      Then (a) for any morphism $X'\rarrow Y'$
      there exists a commutative diagram
        \begin{equation} \label{push-diagram}
          \thrfrdiag
          \begin{diagram}
            \node{X'} \arrow{e} \arrow{s}
            \node{X}  \arrow{e} \arrow{s}
            \node{X''}
              \\
            \node{Y'} \arrow{e}
            \node{Y}  \arrow{ne}
          \end{diagram}      
        \end{equation}     
      with an admissible triple $Y'\rarrow Y\rarrow X''$;
      and analogously, (b) for any morphism $Y''\rarrow X''$ 
      there exists a commutative diagram 
        \begin{equation} \label{pull-diagram}
          \thrfrdiag
          \begin{diagram}
            \node{X'}  \arrow{e} \arrow{se}
            \node{X}   \arrow{e}
            \node{X''} 
              \\          
            \node[2]{Y} \arrow{e} \arrow{n} 
            \node{Y''}  \arrow{n}
          \end{diagram}      
        \end{equation}     
      with an admissible triple $X'\rarrow Y\rarrow Y''$.
   \smallskip
   \item[Ex3:\,]
      (a) The composition of any two admissible monomorphisms
      is an admissible monomorphism.
      (b) The composition of any two admissible epimorphisms
      is an admissible epimorphism.
 \end{itemize}

 This list of axioms roughly corresponds to the axioms for exact
categories commonly used in modern expositions~\cite{Kel,Bueh},
with two exceptions.
 Firstly, the numbering is different, and secondly, our axiom~Ex2
is usually stated in a different form; see axiom~Ex$2'$ below.
 We prefer our axiom~Ex2.

 We will show that for any class of admissible triples $\T_\E$
satisfying~Ex0\+-Ex2 the diagrams \eqref{push-diagram}
and~\eqref{pull-diagram} in~Ex2 are defined uniquely up to a unique
isomorphism by the original triple $X'\rarrow X\rarrow X''$ and
the morphism $X'\rarrow Y'$ or $Y''\rarrow X''$, respectively.
 The object $Y$ in the diagram~\eqref{push-diagram} is necessarily
the fibered coproduct of the morphisms $X'\rarrow Y'$ and $X'\rarrow X$
and the object~$Y$ in the diagram~\eqref{pull-diagram} is the fibered
product of the morphisms $Y''\rarrow X''$ and $X\rarrow X''$.
 In other words, the axiom~Ex2 is equivalent modulo Ex0--Ex1
to the following axiom~Ex$2'$.
 \begin{itemize}
   \item[Ex$2'$:\,]
      (a) For any admissible monomorphism $X'\rarrow X$ and
      any morphism $X'\rarrow Y'$ there exists a fibered
      coproduct $Y=Y'\cop_{X'}X$ in the category $\E$ and
      the natural morphism $Y'\rarrow Y$ is an admissible
      monomorphism. \\
      (b) For any admissible epimorphism $X\rarrow X''$ and
      any morphism $Y''\rarrow X''$ there exists a fibered
      product $Y=Y''\pro_{X''}X$ in the category $\E$ and
      the natural morphism $Y\rarrow Y''$ is an admissible
      epimorphism.
 \end{itemize}

 Furthermore, the axiom~Ex2 together with the condition that
the additive category $\E$ be semi-saturated is equivalent modulo
Ex0\+-Ex1 to the following axiom~Ex$2''$.
 \begin{itemize}
   \item[Ex$2''$:\,]
      (a) A right divisor $g$ of an admissible monomorphism $fg$
      is an admissible monomorphism. \\
      (b) A left divisor $f$ of an admissible epimorphism $fg$
      is an admissible epimorphism. \\
      (c) If in the following commutative diagram
        \begin{equation*}
          \thrfrdiag
          \begin{diagram}
           \node{X'} \arrow{e} \arrow{se}
           \node{X_1}  \arrow{e} \arrow{s}
           \node{X''}
             \\
           \node[2]{X_2} \arrow{ne}
          \end{diagram}
        \end{equation*}
       both triples $X'\rarrow X_1\rarrow X''$
       and $X'\rarrow X_2\rarrow X''$ are admissible,
       then the morphism $X_1\rarrow X_2$ is an isomorphism.
 \end{itemize}

 The conditions Ex$2''$(a-b) are a stronger version of the last
``obscure'' axiom in Quillen's list of axioms for exact
categories~\cite{Quil}.
 The obscure axiom claims that a right divisor of an admissible
monomorphism is an admissible monomorphism provided that it has
a cokernel, and a left divisor of an admissible epimorphism is
an admissible epimorphism provided that it has a kernel.
 It is known~\cite{Kel,Bueh} that the obscure axiom follows from
the other ones.
 The above stronger form of the obscure axiom holds if and only if
$\E$ is semi-saturated~\cite{Kel2,Bueh}.
 The condition Ex$2''$(c) follows from Ex0\+-Ex1 and any of
the conditions Ex2$\.$(a) or Ex2$\.$(b).

 Finally, it is known that the conditions Ex3$\.$(a) and Ex3$\.$(b)
are equivalent modulo Ex0--Ex2 (see~\cite{Kel}).
 Sketches of proofs of the above equivalence assertions are given
in the next subsection.

\subsection{Proofs}  \label{exact-proofs}
 In what follows we assume that an additive category $\E$ is endowed
with a class of admissible triples satisfying the axioms Ex0\+-Ex1.

\begin{prop1}
 It follows from the condition~\textup{Ex2$\.$(a)} that if in
the diagram~\eqref{push-diagram} the triple $Y'\rarrow Y\rarrow X''$
is admissible and the morphism $X\rarrow X''$ is the cokernel
of the morphism $X'\rarrow X$, then the object $Y$ is
the fibered coproduct $Y=Y'\cop_{X'}X$ in the category~$\E$.
 Analogously, it follows from~\textup{Ex2$\.$(b)} that if in
the diagram~\eqref{pull-diagram} the triple $X'\rarrow Y\rarrow Y''$
is admissible and the morphism $X'\rarrow X$ is the kernel
of the morphism $X\rarrow X''$, then the object $Y$ is
the fibered product $Y=Y''\pro_{X''}X$.
\end{prop1}

\begin{proof}
 We have to show that for any object $Z\in\E$ there is a bijective
correspondence between the set of all morphisms $Y\ovrarrow{f} Z$
and the set of all pairs of morphisms $Y'\ovrarrow{g} Z$ and
$X\ovrarrow{h} Z$ which form a commutative diagram with
$X'\rarrow Y'$ and $X'\rarrow X$.
 Suppose a morphism $Y\ovrarrow{f} Z$ is annihilated by the
compositions with $Y'\rarrow Y$ and $X\rarrow Y$.
 By~Ex1, $f$ factorizes through the morphism $Y\rarrow X''$.
 Since the morphism $X\rarrow X''$ is assumed to be a cokernel,
it follows that $f=0$.

 Now let us assume that we are given a compatible pair of morphisms
$Y'\ovrarrow{g} Z$ and $X\ovrarrow{h} Z$.
 Applying~Ex2 to the admissible triple $Y'\rarrow Y\rarrow X''$
and the morphism $Y'\ovrarrow{g} Z$, we obtain an admissible
triple $Z\rarrow T\rarrow X''$ together with the commutative diagram
      \begin{equation*}
        \thrfrdiag
        \begin{diagram}
          \node{X'} \arrow{e} \arrow{s}
          \node{X}  \arrow{e} \arrow{s}
          \node{X''}
            \\
          \node{Y'} \arrow{e} \arrow{s,l}{g}
          \node{Y}  \arrow{ne} \arrow{s}
            \\
          \node{Z}  \arrow{e}
          \node{T}  \arrow{nne} 
        \end{diagram}      
      \end{equation*}

 Consider the morphism $\chi\:X\rarrow T$ that is equal
to the difference of two compositions
$\chi = [X\rar Y\rar T] - [X\ovrar{h}Z\rar T]$.
 We have $[X'\rar X\ovrar{\chi}T]=0$, since
$[X'\rar X\rar Y\rar T] = [X'\rar Y'\ovrar{g}Z\rar T]$
and $[X'\rar Y'\ovrar{g}Z] = [X'\rar X\ovrar{h}Z]$.
 Therefore, the morphism $\chi$ factorizes into the composition
$\chi=[X\rar X''\ovrar{\phi}T]$ for some morphism
$X''\ovrarrow{\phi}T$.
 The composition $X''\ovrarrow{\phi} T\rarrow X''$ is the identity,
since $[X\rar X''\ovrar{\phi}T\rar X''] =
[X\ovrar{\chi}T\rar X''] = [X\rar Y\rar T\rar X''] -
[X\ovrar{h}Z\rar T\rar X''] = [X\rar X'']$
and the morphism $X\rarrow X''$ is surjective.
 It follows that the morphism $\id_T - [T\rar X''\ovrar{\phi}T]$
annihilates $T\rarrow X''$ and, therefore, factorizes into
the composition $[T\ovrar{\psi}Z\rar T]$ for some morphism
$T\ovrarrow{\psi}Z$.

 Let us check that the morphism $f = [Y\rar T\ovrar{\psi}Z]$
is the desired one.
 We have $[Y\ovrar{f}Z\rar T] = [Y\rar T\ovrar{\psi}Z\rar T] =
[Y\rar T] - [Y\rar T\rar X''\ovrar{\phi}T]=
[Y\rar T] - [Y\rar X''\ovrar{\phi}T]$.
 Now $[X\rar Y\ovrar{f}Z\rar T] = [X\rar Y\rar T] - 
[X\rar Y\rar X''\ovrar{\phi} T] = [X\rar Y\rar T] - 
[X\rar X''\ovrar{\phi}T] = [X\rar Y\rar T] - [X\ovrar{\chi} T] =
[X\ovrar{h}Z\rar T]$, which implies
$[X\rar Y\ovrar{f} Z] = [X\ovrar{h}Z]$.
 Similarly, $[Y'\rar Y\ovrar{f} Z\rar T] =
[Y'\rar Y\rar T] - [Y'\rar Y\rar X''\ovrar{\phi} T] =
[Y'\rar Y\rar T] = [Y'\ovrar{g} Z\rar T]$, hence
$[Y'\rar Y\ovrar{f}Z]=[Y'\ovrar{g} Z]$.
\end{proof}

\begin{cor}
 The conditions \textup{Ex2$\.$(a)} and~\textup{Ex$2'$(a)} are
equivalent modulo \textup{Ex0\+-Ex1}.
 Analogously, the conditions \textup{Ex2$\.$(b)}
and~\textup{Ex$2'$(b)} are equivalent modulo \textup{Ex0\+-Ex1}. \qed
\end{cor}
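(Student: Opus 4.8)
The plan is to prove the two implications (together with their duals) using Proposition~1 above, which has just been established from~Ex2, and which already does most of the work. Throughout I assume the axioms Ex0\+-Ex1.

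\textbf{From Ex2$\.$(a) to Ex$2'$(a).} This direction is immediate from Proposition~1. Given an admissible monomorphism $X'\rarrow X$, embed it into an admissible triple $X'\rarrow X\rarrow X''$; by Ex1 the morphism $X\rarrow X''$ is the cokernel of $X'\rarrow X$. For any morphism $X'\rarrow Y'$, apply Ex2$\.$(a) to obtain a diagram of the shape~\eqref{push-diagram} with an admissible triple $Y'\rarrow Y\rarrow X''$. By Proposition~1 the object $Y$ is then the fibered coproduct $Y'\cop_{X'}X$ in~$\E$, and $Y'\rarrow Y$ is an admissible monomorphism because it sits in an admissible triple. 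Hence Ex$2'$(a) holds.

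\textbf{From Ex$2'$(a) to Ex2$\.$(a).} Conversely, start from an admissible triple $X'\rarrow X\rarrow X''$ and a morphism $X'\rarrow Y'$. Since $X'\rarrow X$ is an admissible monomorphism, Ex$2'$(a) produces a fibered coproduct $Y=Y'\cop_{X'}X$ together with an admissible monomorphism $Y'\rarrow Y$. I would then construct the arrow $Y\rarrow X''$ from the universal property of the coproduct applied to the compatible pair consisting of $X\rarrow X''$ and the zero morphism $Y'\rarrow X''$ (compatibility holds because $X'\rarrow X\rarrow X''$ vanishes by Ex1, as does $X'\rarrow Y'\rarrow X''$). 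By construction the fibered-coproduct square together with the triangle $X\rarrow Y\rarrow X''$ equal to $X\rarrow X''$ form exactly a commutative diagram~\eqref{push-diagram}. The only remaining point is that the triple $Y'\rarrow Y\rarrow X''$ is admissible.

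\textbf{The cokernel computation (the one non-formal step).} This is the crux: one must check that $Y\rarrow X''$ is a cokernel of $Y'\rarrow Y$. First, $Y\rarrow X''$ is an epimorphism, since precomposing it with $X\rarrow Y$ gives the epimorphism $X\rarrow X''$ (a cokernel). Next, $Y'\rarrow Y\rarrow X''=0$ by construction; and given any $h\:Y\rarrow W$ with $h$ killing $Y'\rarrow Y$, the composite $X\rarrow Y\rarrow W$ kills $X'\rarrow X$ (because $X'\rarrow X\rarrow Y$ equals $X'\rarrow Y'\rarrow Y$), hence factors through $X\rarrow X''=\coker(X'\rarrow X)$, yielding $\bar h\:X''\rarrow W$. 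The equality $\bar h\circ(Y\rarrow X'')=h$ then reduces, by the universal property of the coproduct, to comparing both sides after restriction along $X\rarrow Y$ and along $Y'\rarrow Y$, which is immediate; uniqueness of $\bar h$ follows from $Y\rarrow X''$ being epic. Thus $Y\rarrow X''$ is a cokernel of $Y'\rarrow Y$. Since $Y'\rarrow Y$ lies in some admissible triple $Y'\rarrow Y\rarrow Z$, Ex1 identifies $Y\rarrow Z$ with the same cokernel, so $Y'\rarrow Y\rarrow X''$ is isomorphic to $Y'\rarrow Y\rarrow Z$ and is admissible by Ex0. I expect everything except this cokernel verification to be pure bookkeeping with universal properties.

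\textbf{Part (b).} The equivalence of Ex2$\.$(b) and Ex$2'$(b) is the formal dual: pass to $\E^{\opp}$, replacing fibered coproducts by fibered products, admissible monomorphisms by admissible epimorphisms, and cokernels by kernels, and use the second assertion of Proposition~1 in place of the first. In the direction Ex$2'$(b)$\implies$Ex2$\.$(b) the arrow $Y\rarrow Y''$ in~\eqref{pull-diagram} is already provided by Ex$2'$(b), and one checks dually (using Ex1 to know admissible epimorphisms are genuine cokernels) that $X'\rarrow Y$ is the kernel of $Y\rarrow Y''$. No new ideas are required.
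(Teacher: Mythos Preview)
Your proof is correct and follows the same approach as the paper, which simply marks the corollary with \qed as an immediate consequence of Proposition~1. You spell out in detail the direction Ex$2'$(a)$\implies$Ex2$\.$(a)---in particular the verification that $Y\rarrow X''$ is the cokernel of $Y'\rarrow Y$---which the paper leaves implicit; this is a standard pushout-composition argument and your elementary treatment of it is fine.
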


\begin{prop2}
 If the axioms \textup{Ex0\+-Ex1} are satisfied, then
the axiom~\textup{Ex$2''$} holds if and only if the additive
category $\E$ is semi-saturated and the axiom~\textup{Ex2} holds.
\end{prop2}

\begin{proof}
 It follows from Proposition~1 that any of the conditions
Ex2$\.$(a) or Ex2$\.$(b) implies~Ex$2''$(c).
 We already know that Ex2 is equivalent to Ex$2'$, and it is proven
in~\cite{Kel,Kel2} that Ex$2'$ implies Ex$2''$(a-b) if $\E$ is
semi-saturated.
 Conversely, is easy to see~\cite{Kel2} that $\E$ is semi-saturated
whenever any of the conditions Ex$2''$(a) or Ex$2''$(b) holds.
 We will deduce Ex2 from Ex$2''$ below.

 Let $X'\rarrow X\rarrow X''$ be an admissible triple and
$X'\rarrow Y'$ be a morphism.
 By the condition Ex$2''$(a), the morphism $X'\ovrarrow{(-1,1)}
Y'\op X$ is an admissible monomorhism.
 Let $Y'\oplus X\ovrarrow{(1,1)}Y$ be its cokernel.
 Clearly, there exists a unique morphism $Y\rarrow X''$ such that
the diagram~\eqref{push-diagram} is commutative and the composition
$Y'\rarrow Y\rarrow X''$ is zero.
 By the condition Ex$2''$(b), the morphism $Y\rarrow X''$ is
an admissible epimorphism.
 It remains to show that the morphism $Y'\rarrow Y$ is its kernel.
 It is clear that the morphism $Y\rarrow X''$ is the cokernel of
the morphism $Y'\rarrow Y$.

 Let $K\rarrow Y$ be the kernel of the morphism $Y\rarrow X''$;
then there is a natural morphism $Y'\rarrow K$.
 First let us show that the latter morphism is surjective.
 Suppose that the composition $Y'\rarrow K\rarrow Z$ vanishes for
a certain morphism $K\rarrow Z$.
 Repeating the above construction starting with the admissible
triple $K\rarrow Y\rarrow X''$ and the morphism $K\rarrow Z$,
we obtain a commutative diagram
       \begin{equation*}
        \thrfrdiag
        \begin{diagram}
          \node{Y'} \arrow{e}
          \node{K} \arrow{e} \arrow{s}
          \node{Y}  \arrow{e} \arrow{s}
          \node{X''}
            \\
          \node[2]{Z} \arrow{e} 
          \node{T}   
        \end{diagram}      
      \end{equation*}    
where the triple $K\ovrarrow{(-1,1)}Z\op Y\ovrarrow{(1,1)} T$
is admissible.
 Since the morphism $K\rarrow Y$ is injective, the morphism
$Z\rarrow T$ is injective, too.
 Since the composition $[Y'\rar K\rar Y\rar T] = 
[Y'\rar K\rar Z\rar T]$ vanishes and the morphism $Y\rarrow X''$
is the cokernel of the morphism $Y'\rarrow Y$, the morphism
$Y\rarrow T$ factorizes into the composition $Y\rarrow X''\rarrow T$
for some morphism $X''\rarrow T$.
 Hence $[K\rar Z\rar T] = [K\rar Y\rar X''\rar T] = 0$, and
it follows that $[K\rar Z] = 0$.

 Now consider the commutative diagram 
\begin{equation*}
        \thrfrdiag
        \begin{diagram}
          \node{X'} \arrow{e} \arrow{se}
          \node{Y'\op X} \arrow{e} \arrow{s}
          \node{Y}  \\
          \node[2]{K\op X} \arrow{ne}
        \end{diagram}
      \end{equation*}
 The upper triple is admissible, and the morphism $X'\rarrow K\op X$
is an admissible monomorphism by~Ex$2''$(a).
 Using the fact that the vertical morphism is surjective, one can
check that the morphism $K\op X\rarrow Y$ is the cokernel of
the morphism $X'\rarrow K\op X$.
 It remains to apply the condition~Ex$2''$(c). 
\end{proof}

\begin{ex1}
 The following counterexample shows that the condition Ex$2''$(c) is
indeed necessary.
 Let $\A$ be the additive category whose objects are morphisms of
vector spaces $f\:V''\rarrow V'$ endowed with a subspace $V\sub\Im f$
and let $\T_\A$ be the class of all triples for which Ex1~holds.
 One can check that this class of admissible triples
satisfies~Ex0\+-Ex1, Ex$2''$(a-b), and~Ex3, but not Ex$2''$(c). 
\end{ex1}

 Note that if two classes of triples $\T'_\E$ and $\T''_\E$
satisfy the axioms~Ex0\+-Ex2, then their union $\T'_\E\cup\T''_\E$
also does.
 In particular, one has to use the axiom~Ex3 in order to prove
that the direct sum of two admissible triples is an admissible triple.
 The next counterexample shows that even if one assumes the
latter property, the conditions Ex0\+-Ex2 still wouldn't imply~Ex3.

\begin{ex2}
 Let $\B$ be the abelian category of 3-term sequences of vector spaces
and morphisms $V^{(1)}\rarrow V^{(2)}\rarrow V^{(3)}$
(the composition can be nonzero).
 There are six indecomposable objects in this category; we will
denote them by $E_1$, $E_2$, $E_3$, $E_{12}$, $E_{23}$, $E_{123}$,
where $\dim E^{(i)}_J=1$ for $i\in J$ and~$0$ otherwise.
 Let $\T_\B$ be the class of all exact triples $X'\rarrow X\rarrow X''$
in the abelian category $\B$ such that for any morphism $X'\rarrow E_3$
the triple $E_{23}\rarrow Y\rarrow X''$ induced from the original
triple using the composition of morphisms $X'\rarrow E_3\rarrow E_{23}$
is split.
 Then it is not difficult to check that the class $\T_\B$ satisfies
Ex0\+-Ex2 and is closed under direct sums, but the composition of
two admissible monomorphisms $E_3\rarrow E_{23}$ and
$E_{23}\rarrow E_{123}$ is not an admissible monomorphism
with respect to~$\T_B$.
\end{ex2}

 An additive functor between exact categories is said to be
\emph{exact} if it sends admissible triples to admissible triples.

\subsection{Examples}  \label{exact-cat-examples}
 Some examples of exact categories are listed below.
\smallskip

 (1) The trivial exact category structure: for any additive
category~$\A$, the class of all split triples $X'\rarrow X'\op X''
\rarrow X''$ satisfies the axioms of an exact category.
 
 (2) The canonical exact category structure on an abelian category:
the class of all short exact sequences in any abelian category
satisfies the axioms Ex0\+-Ex3.

 (3) Let $(\A,\.\T_\A)$ be an exact category and $\E\sub\A$ be a full
additive subcategory.
 Then $\E$ is called a \emph{full exact subcategory} of $\A$ if
the class $\T_\E$ of all triples in the category $\E$ which belong
to $\T_\A$ is an exact category structure on $\E$.
 Any of the following two conditions is sufficient for $\E$ to be
a full exact subcategory in~$\A$:
(a)~the subcategory $\E$ contains the middle term $X$ of any
admissible triple $X'\rarrow X\rarrow X''$ in the category~$\A$
whenever it contains both terms $X'$ and~$X''$; or
(b)~the subcategory $\E$ contains the remaining term of an admissible
triple in the category~$\A$ whenever it contains the middle term
and one of the other terms.

 (4) Let $\A$ and $\B$ be exact categories, $\C$ be an additive
category, and $\alpha\:\A\rarrow\C$, \ $\beta\:\B\rarrow\C$ be
additive functors sending admissible triples to triples
satisfying~Ex1.
 Consider the category $\E$ whose objects are pairs of objects
$A\in\A$ and $B\in\B$ together with an isomorphism $\alpha(A)\simeq
\beta(\B)$.
 Set a triple in $\E$ to be admissible if its images in $\A$ and
$\B$ are admissible.
 Then $\E$ is an exact category.

 (5) A \emph{filtered object} $X$ in an exact category~$\E$ is
a collection of objects $\gr^{a,b}X\in\E$ for all $a\le b\in\Z$ and
morphisms $\gr^{c,d}X\rarrow\gr^{a,b}X$ for all $a\le c$ and
$b\le d$ compatible with the compositions and such that all
the triples $\gr^{b+1,\.c}X\rarrow \gr^{a,c}X\rarrow \gr^{a,b}X$
are admissible.
 A triple of filtered objects $X'\rarrow X\rarrow X''$ is 
admissible if all the triples $\gr^{a,b}X'\rarrow \gr^{a,b}X
\rarrow \gr^{a,b}X''$ are admissible in the category~$\E$, or
equivalently, all the triples  $\gr^aX'\rarrow \gr^aX\rarrow \gr^aX''$
are admissible, where one denotes $\gr^aX=\gr^{a,a}X$.
 A \emph{finitely filtered object} is a filtered object~$X$
such that one has $\gr^aX=0$ for all but a finite number
of indices~$a$.
 The category of (finitely) filtered objects in an exact category
is an exact category.

 (6) Let $\E$ be an exact category.
 Define a triple in $\E^\ss$ or $\E^\sat$ to be admissible if it is
a direct summand of an admissible triple in $\E$.
 Then $\E^\ss$ and $\E^\sat$ become exact categories, and $\E$ is
closed under extensions (cf.~(3a)) in each of them.
 Conversely, if $\A$ is an exact category and $\E\sub\A$ is a full
exact subcategory such that every object of $\A$ is a direct summand
of an object of $\E$, then $\E$ is closed under extensions in $\A$
if and only if every triple from $\T_\A$ is a direct summand of
a triple from $\T_\E$.

 (7) Let $\A$ be an additive category in which all morphisms admit
kernels and cokernels.
 For any morphism $f\:X\rarrow Y$ in $\A$ consider the natural
morphism $\Coim f=\Coker(\ker f)\rarrow\Im f=\Ker(\coker f)$.
 Then the morphism $\Coim f\rarrow \Im f$ is surjective for any
morphism $f$ in $\A$ if and only if the composition of any two
kernels is a kernel (of some morphism) in $\A$, if and only if
a right divisor of any kernel is a kernel, and if and only if any
two morphisms $X'\rarrow X$ and $X'\rarrow Y'$, the former of which
is a kernel, can be embedded into a commutative square $[X'\rar
X\rar Y] = [X'\rar Y'\rar Y]$, where the morphism $Y'\rarrow Y$
is injective.
 The dual assertions relate injectivity of the morphisms $\Coim f
\rarrow \Im f$ with the properties of cokernels.

 The class of all triples satisfying axiom~Ex1 defines an exact
category structure on $\A$ if and only if it satisfies axiom
Ex$2''$(c) and all the above properties of kernels and cokernels
hold in~$\A$.
 In this case, the additive category $\A$ is said to be
\emph{quasi-abelian}.  (Cf.~\cite{Rum}.)

 Example~\ref{exact-proofs}.1 shows that the requirement of axiom
Ex$2''$(c) cannot be dropped.
 For an example of an additive category where kernels do not have
the above properties, consider the abelian category $\B$ from
Example~\ref{exact-proofs}.2 and its full subcategory $\A$ whose
objects are the direct sums of all the indecomposables except $E_{12}$.
 Then the morphism $f\:E_3\rarrow E_{123}$ is a composition of two
kernels but not a kernel, and the morphism $\Coim f\rarrow \Im f$
is not surjective (moreover, it is a kernel).

 (8) One can show that any semi-saturated additive category~$\A$
admits a maximal exact category structure, that is a class of triples
satisfying~Ex0\+-Ex3 and containing any other such class of triples.

 More precisely, it is clear that in any additive category $\A$
there exists a maximal class of triples satisfying Ex0\+-Ex2.
 In fact, this class consists of all triples $X'\rarrow X\rarrow X''$
satisfying Ex1 such that for any morphisms $X'\rarrow Y'$
and $Z''\rarrow X''$ the fibered coproduct $Y=Y'\cop_{X'}X$ exists
and the morphism $Y'\rarrow Y$ is a kernel, the fibered product
$Z=Z''\pro_{X''}X$ exists and the morphism $Z\rarrow Z''$ is
a cokernel; and the fibered product $Z''\pro_{X''}Y$ exists.
 In this case, the fibered coproduct $Y'\cop_{X'}Z$ also exists and
is isomorphic to $Z''\pro_{X''}Y$ (equivalently, one could require
existence of the former and deduce existence of the latter together with
their isomorphism).
 
 We claim that when $\A$ is semi-saturated, this class of triples
$X'\rarrow X\rarrow X''$ also satisfies~Ex3.

\medskip

 All the above assertions are quite straightforward to prove, except
the ones from Examples~(7-8).
 The proofs of the latter are given below.

\begin{proof}[Proof of~\textup{(7)}]
 Let us prove the assertions from the first paragraph; the assertion
from the second paragraph will then follow from
Proposition~\ref{exact-proofs}.2.

 Since the morphism $X\rarrow\Coim f$ is a cokernel, the
morphism $\Coim f\rarrow\Im f$ is surjective if and only if
the morphism $X\rarrow\Im f$ is surjective.
 Let us show that this is the case if the composition of
two kernels is a kernel.
 Let $c\:Y\rarrow C$ be the cokernel of the morphism~$f$;
by the definition, the morphism $\im f\:\Im f\rarrow Y$ is
the kernel of~$c$.
 Let $d\:\Im f\rarrow D$ be the cokernel of the morphism
$X\rarrow\Im f$ and $l\:L\rarrow \Im f$ be the kernel of the
morphism~$d$.
 By the assumption, the composition $(\im f)\circ l\:L\rarrow Y$
should be a kernel.
 Since the morphism $X\rarrow \Im f$ factors through~$l$
and the composition $L\rarrow Y\rarrow C$ is zero, it is easy
to deduce that~$c$ is the cokernel of the morphism $L\rarrow Y$.
 Now it turns out that both the morphisms $\Im f\rarrow Y$
and $L\rarrow Y$ are the kernels of the morphism~$Y\rarrow C$,
hence the morphism $L\rarrow\Im f$ is an isomorphism and $d=0$.

 Conversely, let us show that the composition~$f$ of two kernels
$K\rarrow L$ and $L\rarrow M$ is a kernel whenever the morphism
$\Coim f\rarrow\Im f$ is surjective.
 The morphism~$f$ is equal to the composition
$K\rarrow\Im f\rarrow M$; by the assumption,
the morphism $K\rarrow\Im f$ is surjective.
 Since the morphism $\Im f\rarrow M$ is a kernel, it suffices
to check that the morphism $K\rarrow\Im f$ is an isomorphism.
 The latter is a particular case of the following general 
statement, which is not difficult to prove: a surjective right
divisor of a composition of several kernels is an isomorphism.

 Now let us show that a right divisor of a kernel is a kernel
provided that the composition of two kernels is a kernel.
 Let $f\:K\rarrow Y$ and $Y\rarrow Z$ be a pair of morphisms whose
composition $K\rarrow Z$ is the kernel of a morphism $Z\rarrow D$.
 Let $c\:Y\rarrow C$ be the cokernel of the morphism $K\rarrow Y$
and $\im f\:\Im f\rarrow Y$ be kernel of~$c$; then the morphism
$K\rarrow Y$ factors through~$\im f$.
 Since we have $[K\rar Y\rar Z\rar D]=0$, it is clear that
the composition $Y\rarrow Z\rarrow D$ factors through~$c$
and, consequently, $[\Im f\rar Y\rar Z\rar D]=0$.
 Since the morphism $K\rarrow Z$ is the kernel of the morphism
$Z\rarrow D$, there exists a morphism $\Im f\rarrow K$
such that $[\Im f\rar Y\rar Z]=[\Im f\rar K\rar Y\rar Z]$.
 We have constructed morphisms between~$K$ and~$\Im f$ in both
directions; since the morphism $K\rarrow Z$ is injective and
$[K\rar\Im f\rar K\rar Y\rar Z]=[K\rar\Im f\rar Y\rar Z]
=[K\rar Y\rar Z]$, it follows that $[K\rar\Im f\rar K]=\id_K$.
 Therefore, the morphism $K\rarrow\Im f$ is the embedding of
a direct summand, hence it is a kernel and the morphism
$f\:K\rarrow Y$ is the composition of two kernels $K\rarrow\Im f$
and $\Im f\rarrow Y$.

 Furthermore, assume that a right divisor of a kernel is a kernel.
 Let a morphism $K\rarrow L$ be the kernel of a morphism
$c\:L\rarrow C$ and a morphism $l\:L\rarrow M$ be the kernel
of a morphism $d\:M\rarrow D$.
 Consider the anti-diagonal morphism $(c,\.-l)\:L\rarrow C\op M$;
since its composition with the projection $C\op M\rarrow M$
is a kernel, the morphism $(c,\.-l)$ should be the kernel
of a certain morphism $(i,e)\:C\op M\rarrow E$.
 Since the morphism $l\:L\rarrow M$ is injective, it is easy
to see that the morphism $i\:C\rarrow E$ is injective, too.
 Now one can check that the composition $K\rarrow L\rarrow M$
is the kernel of the diagonal morphism $(d,e)\:M\rarrow D\op E$.
 We have also shown that any pair of morphisms $L\rarrow M$ and
$L\rarrow C$, where $L\rarrow M$ is a kernel, can be embedded into
a commutative square $[L\rar M\rar E]=[L\rar C\rar E]$ with
an injective morphism $C\rarrow E$.

 Finally, assume that the latter condition holds.
 Let $U\rarrow T$ and $T\rarrow V$ be a pair of morphisms whose
composition $U\rarrow V$ is the kernel of a morphism $V\rarrow W$.
 Consider the pair of morphisms $U\rarrow V$ and $U\rarrow T$, and
suppose that there is a commutative square $[U\rar V\rar Q]=
[U\rar T\rar Q]$ such that the morphism $T\rarrow Q$ is injective.
 Then one can check that the morphism $U\rarrow T$ is the kernel
of the morphism $T\rarrow Q\op W$ with the components $[T\rar Q]
- [T\rar V\rar Q]$ and $[T\rar V\rar W]$. 
\end{proof}

\begin{proof}[Proof of~\textup{(8)}] 
 To prove the assertion from the second paragraph, notice first
of all that the triples $Y'\rarrow Y\rarrow X''$ and $X'\rarrow Z
\rarrow Z''$ satisfy~Ex1.
 For $T=Z''\pro_{X''}Y$, the functoriality of fibered (co)products
provides two morphisms of triples with one common object
$(X'\to Z\to Z'') \rarrow (Y'\to T\to Z'')\rarrow (Y'\to Y\to X'')$.
 By the definition of $T$ and since $Y'=\Ker(Y\to X'')$, it follows
that $Y'=\Ker(T\to Z'')$.
 It remains to show that $T=Y'\cop_{X'}Z$; then it will follow that
the triple $Y'\rarrow T\rarrow Z''$ satisfies Ex1, and the class of
all triples of this form, where the original triple $X'\rarrow X
\rarrow X''$ is fixed and the morphisms $X'\rarrow Y'$ and
$Z''\rarrow X''$ vary, satisfies Ex2.
 
 For any morphism $Y'\rarrow Y'_2$, applying our construction to
the composition $Y\rarrow Y'\rarrow Y'_2$ in place of the morphism
$Y\rarrow Y'$, we obtain an object $T_2=Z''\pro_{X''}
(Y'_2\cop_{X'}X)$ together with a morphism of triples with one
common object $(Y'\to T\to Z'')\rarrow (Y'_2\to T_2\to Z'')$.
 Since $Y_2'=\Ker(T_2\to Z'')$ and $Z''=\Coker(X'\to Z)$, the argument
from the proof of Proposition~\ref{exact-proofs}.1 shows that
$T=Y'\cop_{X'}Z$.

 Let us turn to the assertion from the third paragraph.
 Whenever one is dealing with the axiom Ex3 in the assumption of
Ex0\+-Ex2, the following commutative
diagram~\eqref{composition-diagram} plays the key role.
\begin{figure}[htb]
\begin{equation} \label{composition-diagram}
\newcommand{\clap}[1]{\hbox to 0pt{\hss $#1$\hss}} 
         \begin{gathered}
          \begin{picture}(160,69)
            \put(0,0)   {\clap{U}}    \put(11,5.5){\vector(4,1){56}}
            \put(160,0) {\clap{V}}    \put(93,19.5){\vector(4,-1){56}}
            \put(40,30) {\clap{P}}    \put(93,26){\vector(4,1){17}}
            \put(120,30){\clap{Q}}    \put(50,30.25){\vector(4,-1){17}}
            \put(80,19) {\clap{H}}    \put(9,12.75){\vector(4,3){21}}
            \put(80,60) {\clap{W}}    \put(130,28.5){\vector(4,-3){21}}
                                  \put(49,42.75){\vector(4,3){21}}
                                  \put(90,58.5){\vector(4,-3){21}}
          \end{picture}
         \end{gathered}
\end{equation}
\end{figure}

 Given admissible monomorphisms $U\rarrow P$ and $P\rarrow H$, one
considers admissible triples $U\rarrow P\rarrow W$ and $P\rarrow H
\rarrow V$.
 Then one applies the property Ex2$\.$(a) or Ex$2'$(a) to the latter
admissible triple and the morphism $P\rarrow W$, obtaining
an admissible triple $W\rarrow Q\rarrow V$.
 Hence the above commutative diagram.
 It is easy to check that the triple $U\rarrow H\rarrow Q$
satisfies~Ex1. 
 The morphism $U\rarrow H$ is the composition of two admissible
monomorphisms, and the morphism $H\rarrow Q$ is a direct summand
of the composition of two admissible epimorphisms $P\oplus H
\rarrow W\oplus H\rarrow Q$ (see~\cite{Kel}).

 In the situation of Example~(8), one can readily check that
the triple $U\rarrow H\rarrow Q$ allows the application of
the push-forward procedure of axiom Ex$2'$(a) and then
the pull-back procedure of axiom Ex$2'$(b), and continues
to satisfy Ex1 after that.
\end{proof}

\subsection{Embedding theorem}  \label{quillen-embedding}
 A small additive category $\E$ endowed with a class of admissible
triples $\T_\E$ is an exact category if and only if there exists
an abelian category $\A$ and a fully faithful functor
$\rho\:\E\rarrow\A$ such that the subcategory $\rho(\E)\sub\A$
is closed under extensions and a triple $X'\rarrow X\rarrow X''$
in the category $\E$ belongs to $\T_\E$ if and only if its image
$\rho(X')\rarrow \rho(X)\rarrow \rho(X'')$ is a short exact
sequence in~$\A$.
 In other words, the exact category structure on~$\E$ is induced
from the canonical exact category structure on the abelian
category $\A$ in the sense of the above
example~\ref{exact-cat-examples}$\.$(3a).
 This is the statement of an embedding theorem first formulated
by Quillen in~\cite{Quil} and proven in detail in several papers,
including~\cite{TT}, \cite{Kel}, and~\cite{Bueh}.

 In fact, any small exact category has \emph{two} natural embeddings
to abelian categories satisfying all the properties mentioned above;
the two embeddings differ by the categorical duality.
 They are constructed in the following way.
 Consider the abelian category $\Fun_\ad(\E^\opp,\Ab)$ of contravariant
additive functors from $\E$ to the category of abelian groups~$\Ab$.
 A functor $F\:\E^\opp\rarrow\Ab$ is called \emph{left exact}
if for any admissible triple $X'\rarrow X\rarrow X''$ in
the category $\E$ the sequence of abelian groups
 $$
  0\lrarrow F(X'')\lrarrow F(X)\lrarrow F(X')
 $$
is exact.
 Let us denote by $\A'(\E)\sub\Fun_\ad(\E^\opp,\Ab)$ the full
additive subcategory of left exact functors and by
$\rho'_\E\:\E\rarrow\A'(\E)$ the additive functor sending
an object $X\in\E$ to the representable contravariant
functor $\Hom_\E({-},X)$.

 The category $\A'(\E)$ is abelian.
 This result is an additive version of the sheafification theory:
arbitrary additive functors play the role of presheaves and
the left exact functors are the sheaves.
 Note that the category $\A'(\E)$ is not an abelian subcategory
of the category $\Fun_\ad(\E^\opp,\Ab)$: the embedding functor
$\A'(\E)\rarrow\Fun_\ad(\E^\opp,\Ab)$ is only left exact.
 The functor $\Fun_\ad(\E^\opp,\Ab)\rarrow\A'(\E)$ left adjoint
to the embedding is exact, however; this is the additive
sheafification functor.
 Its construction is the main part of the proof.
 One can also view the category $\A'(\E)$ as the quotient category
of $\Fun_\ad(\E^\opp,\Ab)$ by the kernel of the sheafification.

 The second embedding $\rho''_\E\:\E\rarrow\A''(\E)$ is defined
in terms of the covariant representable functors; the category
$\A''(\E)$ is the full subcategory of $\Fun_\ad(\E,\Ab)^\opp$ whose
objects are all the left exact covariant functors.
 For example, if $\E$ is the category of finitely generated free
(or projective) left modules over a ring $R$ endowed with the trivial
exact category structure, then the functor $\rho'_\E$ is the obvious
embedding into the abelian category of all left $R$\+modules
and the functor $\rho''_\E$ is the embedding into the abelian category
opposite to the category of all right $R$\+modules given by the rule
$\rho''(M)=\Hom_R(M,R)^\opp$.
 The following result demonstrates the difference between the two
canonical embeddings $\rho'_\E$ and~$\rho''_\E$.

\begin{prop1}
 The functor~$\rho'_\E$ preserves kernels, and more generally, any
limits.
 In particular, for any morphism~$f$ in an exact category~$\E$,
the morphism $\rho'_\E(f)$ is injective if and only if a morphism~$f$
is injective.
 For any morphism~$f$ in an exact category~$\E$, the morphism
$\rho'_\E(f)$ is surjective if and only if $f$~is a left divisor
of some admissible epimorphism $fg$ in~$\E$.
 In particular, if the category~$\E$ is semi-saturated, then
the morphism $\rho'_\E(f)$ is surjective if and only if
a morphism~$f$ is an admissible epimorphism.
 The dual statements hold for the embedding~$\rho''_\E$.
\end{prop1}

\begin{proof}
 The first assertion holds since all limits exist in a Grothendieck
category $\A'(\E)$ and the embedding $\A'(\E)\rarrow
\Fun_\ad(\E^\opp,\Ab)$ preserves limits.
 The third one follows from exactness of the functor~$\rho'_\E$ with
respect to admissible triples in $\E$ and the next Proposition~2,
and the fourth one from axiom Ex$2''$(b).
\end{proof}

\begin{prop2}
 For any objects $A\in\A'(\E)$ and $X\in\E$ and any surjective
morphism $A\rarrow \rho'_\E(X)$ in $\A'(\E)$ there exists
an admissible epimorphism $Y\rarrow X$ in $\E$ and
a morphism $\rho'_\E(Y)\rarrow A$ in $\A'(\E)$ such that the triangle
$\rho'_\E(Y)\rarrow A\rarrow\rho'_\E(X)$ is commutative in $\A'(\E)$.
\end{prop2}

\begin{proof}
 See~\cite[Lemma~A.7.15]{TT} or~\cite[Lemma~A.22]{Bueh}.
\end{proof}

\subsection{Derived categories}  \label{exact-derived}
 Whenever one is working with exact categories, one is faced with
an unpleasant choice between making the (semi-)saturatedness
assumptions or doing without them.
 The former approach restricts generality, while the second one
complicates matters; both the restrictions and the complications
are felt as irrelevant and unnecessary.
 One ends up oscillating between the two ways by passing from
a category to its (semi-)saturation and back, thus experiencing
the worst aspects of both approaches.
 No good solution to this problem is known to the author.

 In the definition of the derived category of an exact category,
the (semi-)saturation problem presents itself in a particularly
complicated form.
 We attempt to clarify the issues involved in the exposition below.

 Given an additive category $\A$, we denote by $\K(\A)$
the homotopy category of (unbounded) complexes over~$\A$.
 The homotopy category of complexes bounded from above, below,
and both sides are denoted by $\K^-(\A)$, \ $\K^+(\A)$, and
$\K^b(\A)\sub\K(A)$.
 For $*=\empty$, $-$, $+$, or~$b$, we say that a complex $X^\bu$
over $\A$ is $*$\+bounded if $X$ is respectively any complex, or
a complex bounded from above, etc.

 Let $\E$ be an exact category.
 A complex $X^\bu$ over $\E$ is said to be \emph{exact} if there
exist objects $Z^i\in\E$ and admissible triples $Z^i\rarrow X^i
\rarrow Z^{i+1}$ such that the differentials $X^i\rarrow X^{i+1}$
in $X^\bu$ are equal to the compositions $X^i\rarrow Z^{i+1}
\rarrow X^{i+1}$.
 It is known~\cite{Neem,Bueh} that the cone of a closed morphism
of exact complexes is exact.
 A complex $X^\bu$ is called \emph{acyclic} if it is homotopy
equivalent to an exact complex.
 The full subcategory of $*$\+bounded acyclic complexes is denoted
by $\Ac^*(\E)\sub\K^*(\E)$.

\begin{lem} \
\begin{enumerate}
\renewcommand{\theenumi}{\arabic{enumi}}
 \item For any $*=\empty$, $-$, $+$, or~$b$, a $*$\+bounded complex
       over $\E$ is acyclic if and only if it is homotopy equivalent
       to a $*$\+bounded exact complex and if and only if it is
       a direct summand of a $*$\+bounded exact complex.
 \item For any exact category $\E$, the full subcategory
       $\Ac^*(\E)\sub\K^*(\E)$ is thick.
 \item An exact category $\E$ is saturated if and only if
       any acyclic complex over $\E$ is exact.
       A complex over $\E$ is acyclic if and only if it is exact
       as a complex over $\E^\sat$.
       For $*=-$, $+$, or~$b$, an exact category $\E$ is
       semi-saturated if and if any $*$\+bounded acyclic complex
       over~$\E$ is exact.
       A $*$\+bounded complex over $\E$ is acyclic if and only if
       it is exact as a complex over $\E^\ss$.
 \item For $*=\empty$, $-$, or~$+$, the natural functor $\K^*(\E)
       \rarrow\K^*(\E^\sat)$ is an equivalence of triangulated
       categories identifying $\Ac^*(\E)$ with $\Ac^*(\E^\sat)$.
       For any~$*$, the natural functor $\K^*(\E)\rarrow\K^*(\E^\ss)$
       is an equivalence of triangulated categories identifying
       $\Ac^*(\E)$ with $\Ac^*(\E^\ss)$.
\end{enumerate}
\end{lem}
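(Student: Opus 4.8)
The plan is to organize everything around two inputs. The first is the fact, already recalled (\cite{Neem,Bueh}), that the cone of a closed morphism between exact complexes over $\E$ is exact; since a contractible complex is exact (it is a direct sum of complexes $Z\rarrow Z$ with identity differential, and each such is exact via split triples) and the cone of a homotopy equivalence is contractible, it follows that the cone of \emph{any} homotopy equivalence of complexes over $\E$ is exact. The second is the elementary mapping-cylinder computation: for a homotopy equivalence $f\:X^\bu\rarrow Y^\bu$ the cylinder $\operatorname{Cyl}(f)$ is isomorphic both to $Y^\bu\op\Cone(\id_{X^\bu})$ and to $X^\bu\op P^\bu$ with $P^\bu$ contractible (a chain-level retraction of the split monomorphism $X^\bu\rarrow\operatorname{Cyl}(f)$ being assembled from a homotopy $gf\sim\id_{X^\bu}$), and all complexes occurring here are $*$\+bounded as soon as $X^\bu$ and $Y^\bu$ are. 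I would also use that over a (semi-)saturated exact category a direct summand of an admissible triple is again admissible (Example~\ref{exact-cat-examples}(6) together with axiom~Ex$2''$), so that a direct summand of an exact complex is exact.

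I would prove part~(3) first, since everything else rests on it. For the saturated case: if $\E$ is saturated and $X^\bu\simeq E^\bu$ with $E^\bu$ exact, the cylinder fact gives $X^\bu\op P^\bu\cong E^\bu\op\Cone(\id_{X^\bu})$ with $P^\bu$ contractible; over a saturated category contractible complexes are exact, so $X^\bu\op P^\bu$ is a finite direct sum of exact complexes, hence exact, hence so is its direct summand $X^\bu$. Conversely, if every acyclic complex over $\E$ is exact, then for an idempotent $e\:Z\rarrow Z$ the two-periodic unbounded complex $\dsb\rarrow Z\rarrow Z\rarrow Z\rarrow\dsb$ with differentials alternately $e$ and $1-e$ is contractible, hence acyclic, hence exact; reading off the factorization $e=(\text{admissible epi})\circ(\text{admissible mono})$ from exactness and using $e^2=e$ forces this factorization to split $e$. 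The semi-saturated case is the same, but detecting a non-complemented split monomorphism by a short \emph{bounded} acyclic complex rather than an idempotent by an unbounded one — which is why the bounded versions ($*=-$, $+$, or~$b$) detect semi-saturation while the unbounded version detects saturation. The statements comparing $\E$ with $\E^\sat$ and $\E^\ss$ then follow: a complex over $\E$ acyclic over $\E$ is a fortiori acyclic over $\E^\sat$ (resp.\ $\E^\ss$), hence exact there by the case just proved; conversely, given $X^\bu$ over $\E$ exact over $\E^\ss$ (in the bounded cases), I would pick $\E$\+objects $V^i$ with $W^i\op V^i\in\E$ for the syzygies $W^i$ — possible precisely because for $\E^\ss$ the complement may be taken in $\E$ — and replace $X^\bu$ by $X^\bu\op\bop_i\Cone(\id_{V^i})[-i]$; its terms $X^i\op V^i\op V^{i+1}$ and its syzygies $W^i\op V^i$ now lie in $\E$, and the relevant triples are direct sums of $\E^\ss$-admissible triples with all terms in $\E$, hence $\E$-admissible, so this homotopy-equivalent complex is exact over $\E$. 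In the unbounded case a more careful (telescoping) completion gives only $V^i\in\E^\sat$, i.e.\ exactness after passage to $\E^\sat$.

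Parts~(1), (2) and (4) are then formal. For~(1): a $*$\+bounded acyclic $X^\bu$ is exact over $\E^\ss$ by~(3), hence by the completion above it is homotopy equivalent to a $*$\+bounded exact complex over $\E$ of which it is a direct summand; the remaining implications are trivial once one notes, again via~(3), that a direct summand over $\E$ of an exact complex over $\E$ is exact over $\E^\ss$ and therefore acyclic. For~(2): $\Ac^*(\E)$ is closed under shifts, and closed under cones because a morphism between acyclic complexes becomes, after replacing source and target by homotopy-equivalent exact complexes, a closed morphism of exact complexes, whose cone is exact; closure under direct summands is~(3). For~(4): $\K^*(\E)\rarrow\K^*(\E^\sat)$ and $\K^*(\E)\rarrow\K^*(\E^\ss)$ are fully faithful because $\E\rarrow\E^\sat$ and $\E\rarrow\E^\ss$ are, and essential surjectivity is exactly the completion argument, which preserves all four boundedness types for $\E^\ss$ but, in the bounded case, only lands in $\E^\sat$ — which is why $*=b$ is excluded for $\E^\sat$; the identification of the acyclic subcategories is then read off from~(3).

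The main obstacle is concentrated in part~(3), specifically in the reverse completion argument — manufacturing, from an $\E^\ss$- or $\E^\sat$-exact complex over $\E$, a homotopy-equivalent complex that is exact \emph{over $\E$ itself} — together with the bookkeeping tracking whether the objects one adjoins lie in $\E$, in $\E^\ss$, or only in $\E^\sat$. It is precisely this distinction that produces the asymmetry in part~(4) and underlies the closure of $\Ac^*(\E)$ under direct summands.
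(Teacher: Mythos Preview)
Your approach is essentially the same as the paper's: both rest on the mapping-cylinder trick realizing an acyclic complex as a direct summand of an exact one, the fact that direct summands of exact complexes are again exact over a (semi-)saturated category, and a completion argument that adjoins contractible summands $\Cone(\id_{V^i})$ to push syzygies back into~$\E$. Your explicit complexes detecting saturation (the two-periodic $e$/$1-e$ complex) and semi-saturation are a nice touch; the paper simply cites~\cite{Neem} for these ``if'' directions.

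There is one small omission. The paper opens by noting that \emph{canonical truncations are defined for exact complexes}, and uses this to ensure that a $*$-bounded complex homotopy equivalent to an exact complex is homotopy equivalent to a $*$-bounded exact complex. You observe that the cylinder construction preserves $*$-boundedness ``as soon as $X^\bu$ and $Y^\bu$ are'', but you never arrange for the exact complex $Y^\bu$ to be $*$-bounded. This matters precisely for the semi-saturated forward direction of~(3): your claim that ``over a (semi-)saturated exact category a direct summand of an exact complex is exact'' is true over a saturated category without restriction, but over a merely semi-saturated category it requires the ambient exact complex to be bounded on at least one side, because the induction splitting the syzygies via Ex$2''$(a--b) has to start somewhere. (Indeed, your own two-periodic idempotent complex is an unbounded exact complex whose direct summands need not be exact when $\E$ is only semi-saturated.) The paper handles this by first truncating the exact complex; once you insert that step, your argument goes through exactly as written.

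A second minor point: your completion argument for $\E^\ss$ assumes that for $W^i\in\E^\ss$ one can choose the complement $V^i$ in $\E$. The paper instead takes $V^i\in\E^\sat$ and then uses that $\E$ is closed under extensions in $\E^\sat$ to see that the new \emph{terms} (not just the new syzygies) land in~$\E$. Your version works too, once the characterization of $\E^\ss$ you invoke is pinned down, but the paper's route via closure under extensions is perhaps cleaner and avoids that verification.
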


\begin{proof}
 This lemma is just a somewhat more precise rephrasing of the results
of~\cite{Neem}.
 Notice that the canonical truncations are defined for exact complexes
over an exact category.
 In particular, any $*$\+bounded complex homotopy equivalent to
an exact complex is homotopy equivalent to a $*$\+bounded
exact complex.

 If $X^\bu$ is a $*$\+bounded complex over~$\E$ homotopy equivalent
to a $*$\+bounded exact complex $Y^\bu$ over~$\E$, then $X^\bu$ is
a direct summand of the $*$\+bounded exact complex
$Y^\bu\oplus\Cone(\id_{X^\bu})$ over~$\E$ \cite{Neem}.
 Clearly, any direct summand of a $*$\+bounded exact complex over
$\E^\sat$ is itself a $*$\+bounded exact complex over $\E^\sat$.
 If $X^\bu$ is a $*$\+bounded exact complex over $\E^\sat$, then
there is a $*$\+bounded contractible complex $Y^\bu$ over $\E^\sat$
such that $X^\bu\op Y^\bu$ is an exact complex over~$\E$.
 It is easy to show this using the fact that $\E$ is closed under
extensions in $\E^\sat$.
 Consequently, $X^\bu$ is homotopy equivalent to $X^\bu\op Y^\bu$.
 This suffices to prove parts~(1-2), the second assertion of~(3),
the ``if'' part of the fourth assertion of~(3), and the second
parts of both assertions of~(4).

 Similarly one proves the first parts of the assertions of~(4).
 The ``if'' parts of the first and third assertions of~(3) are
easy~\cite{Neem}, and the ``only if'' parts are equivalent to
the ``only if'' parts of the second and fourth assertions. 
 Finally, the ``only if'' part of the third assertion of~(3) follows
by induction from axiom Ex$2''$(a-b) or the last two assertions
of Proposition~\ref{quillen-embedding}.1.
\end{proof}

\begin{ex}
 Let $\E$ and $\E_i$, \ $i\in\Z$ be exact categories, $\Phi_i\:
\E_i\rarrow\E$ be exact functors, and $\F$ be the exact category
of finitely filtered objects in $\E$ with successive quotients
lifted to $\E_i$ as defined in Section~\ref{filtered-exact-secn}.
 Then a complex $X^\bu$ over $\F$ is exact (acyclic) if and only
if the complex of successive quotients $q_i(X^\bu)$ is exact
(acyclic) over $\E_i$ for every $i\in\Z$.
 In particular, if the exact category structures on $\E_i$ are
trivial, then the complex $X^\bu$ is acyclic if and only if
all the complexes $q_i(X^\bu)$ are contractible.
\end{ex}

 A morphism of complexes $X^\bu\rarrow Y^\bu$ in $\K(\E)$ is called
a \emph{quasi-isomorphism} if its cone is acyclic. 
 The (bounded or unbounded) \emph{derived categories} of an exact
category $\E$ are defined as the triangulated quotient categories
$\D(\E)=\K(\E)/\Ac(\E)$ or $\D^*(\E)=\K^*(\E)/\Ac^*(\E)$ for
$*=-$, $+$, or~$b$.
 We are not discussing the set-theoretical issue of existence of
localizations here; they certainly do exist when the category $\E$
is essentially small.

\begin{cor1}
 For any exact category $\E$ and any $*=-$, $+$, or~$b$, the natural
functor $\D^*(\E)\rarrow\D(\E)$ is fully faithful.
\end{cor1}

\begin{proof}
 This follows from the existence of canonical truncations of exact
complexes.
\end{proof}

\begin{cor2}
 For any small exact category $\E$, the functors $\D^-(\E)\rarrow
\D^-(\A'(\E))$ and\/ $\D^+(\E)\rarrow\D^+(\A''(\E))$ induced by
the canonical embeddings $\rho'\:\E\rarrow\A'(\E)$ and
$\rho''\:\E\rarrow\A''(\E)$ are fully faithful.
\end{cor2}

\begin{proof}
 This follows from Proposition~\ref{quillen-embedding}.2 and its
dual version.
\end{proof}

 Let $\E$ be a small exact category.
 By the definition, we put $\Ext^n_\E(X,Y)=\Hom_{\D^b(\E)}(X,Y[n])$.
 The composition of morphisms in the derived category defines
multiplication maps $\Ext^n_\E(Y,Z)\tm\Ext^m_\E(X,Y)\rarrow
\Ext^{n+m}_\E(X,Z)$.

\begin{prop}
 In any exact category~$\E$, the Ext groups with negative numbers
are zero, $\Ext^n_\E(X,Y)=0$ for $n<0$.
 The natural morphism $\Hom_\E(X,Y)\rarrow\Ext^0_\E(X,Y)$ is
an isomorphism; in other words, the functor $\E\rarrow\D^b(\E)$
is fully faithful.

 The Ext groups with positive numbers $n>0$ are computed by
the following Yoneda construction.
 Consider the set $\Yon^n_\E(X,Y)$ of all exact complexes $A^\bu$
over $\E$ such that $A^i=0$ for all\/ $i<0$ and\/ $i>n+1$, \ $A^0=Y$,
and $A^{n+1}=X$.
 Two elements $A$ and $B\in\Yon^n_\E(X,Y)$ are said to be equivalent
if there exists a third element $C\in\Yon^n_\E(X,Y)$ and two morphisms
of complexes $C\rarrow A$ and $C\rarrow B$, both inducing identity
morphisms on the terms $Y$ in degree~$0$ and $X$ in degree~$n+\nbk1$.
 This is indeed an equivalence relation and the quotient set
of the set of Yoneda extensions $\Yon^n_\E(X,Y)$ modulo this
equivalence relation is canonically bijective to $\Ext^n_\E(X,Y)$.
 Alternatively, one can use morphisms of exact complexes going in
the opposite direction, $A\rarrow C$ and $B\rarrow C$; this defines
the same equivalence relation.

 The addition in the Ext groups is given by the Baer sum
construction.
 The multiplication on the Ext groups corresponds to the obvious
composition operation on the Yoneda extensions.
\end{prop}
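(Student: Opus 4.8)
\emph{Setup and overview.} The statement has four independent parts: (i)~the vanishing $\Ext^n_\E(X,Y)=0$ for $n<0$ and the isomorphism $\Hom_\E(X,Y)\simeq\Ext^0_\E(X,Y)$; (ii)~the Yoneda description of $\Ext^n_\E(X,Y)$ for $n>0$; (iii)~the identification of the group law with the Baer sum; (iv)~the identification of the Yoneda product with composition in $\D^b(\E)$. The plan is to combine two tools: the two canonical abelian embeddings $\rho'_\E\:\E\rarrow\A'(\E)$ and $\rho''_\E\:\E\rarrow\A''(\E)$ of~\ref{quillen-embedding} together with the full faithfulness statements of~\ref{exact-derived}, which reduce most of the bookkeeping to the classical theory in abelian categories; and direct manipulation of roofs in $\D^b(\E)=\K^b(\E)/\Ac^b(\E)$, using that exact complexes over $\E$ admit canonical truncations (as in the proof of Corollary~\ref{exact-derived}.1) and that the cone of a closed morphism of exact complexes is exact.

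\emph{Part (i).} Here I would argue as follows. By Corollary~\ref{exact-derived}.1 the functor $\D^b(\E)\rarrow\D^-(\E)$ is fully faithful, and by Corollary~\ref{exact-derived}.2 the functor $\D^-(\E)\rarrow\D^-(\A'(\E))$ induced by $\rho'_\E$ is fully faithful; the image of an object of $\E$ placed in degree~$0$ is $\rho'_\E$ of it, again in degree~$0$. Hence for all $n$ one gets $\Ext^n_\E(X,Y)=\Hom_{\D^b(\E)}(X,Y[n])\simeq\Hom_{\D^-(\A'(\E))}(\rho'_\E X,\.\rho'_\E Y[n])$, and the right-hand side is the ordinary Yoneda $\Ext^n$ in the abelian category $\A'(\E)$. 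This group vanishes for $n<0$, and for $n=0$ it equals $\Hom_{\A'(\E)}(\rho'_\E X,\rho'_\E Y)=\Hom_\E(X,Y)$ because $\rho'_\E$ is fully faithful. (Alternatively, and without the embedding: a morphism $X\rarrow Y[n]$ in $\D^b(\E)$ is a roof $X\larrow Z^\bu\rarrow Y[n]$ whose backward arrow has acyclic cone; replacing the acyclic cone by an exact complex and applying canonical truncations reduces $Z^\bu$ to a complex in degrees $[-n,0]$, which forces the roof to vanish when $n<0$ and to be an honest morphism of $\E$ when $n=0$.)

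\emph{Part (ii).} I would first define a map $\Theta\:\Yon^n_\E(X,Y)\rarrow\Ext^n_\E(X,Y)$. Given $A^\bu\in\Yon^n_\E(X,Y)$, delete the outer term $A^{n+1}=X$ and place the surviving terms $A^0=Y,A^1,\dsc,A^n$ in cohomological degrees $-n,-n+1,\dsc,0$; call this complex $Z^\bu$. The differential $A^n\rarrow A^{n+1}$ defines a morphism $Z^\bu\rarrow X$ ($X$ in degree~$0$) whose cone is the exact complex $A^\bu$ (suitably reindexed), so it is a quasi-isomorphism; the term $A^0=Y$ in the bottom degree defines a morphism $Z^\bu\rarrow Y[n]$. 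Set $\Theta(A^\bu)=[X\larrow Z^\bu\rarrow Y[n]]\in\Hom_{\D^b(\E)}(X,Y[n])$. A morphism of Yoneda extensions (identity on $X$ and $Y$) induces a morphism of these roofs over $X$ and over $Y[n]$, so $\Theta$ is constant on the generators of the equivalence relation, in either direction, and descends to $\overline\Theta$ on the quotient. The heart of the matter is that $\overline\Theta$ is a bijection. For this I would use the commutative square whose top row is $\overline\Theta$, whose right column is the isomorphism $\Ext^n_\E(X,Y)\simeq\Ext^n_{\A'(\E)}(\rho'_\E X,\rho'_\E Y)$ of Part~(i), whose left column is ``apply $\rho'_\E$'' (legitimate since $\rho'_\E$ sends admissible triples to short exact sequences, hence exact complexes to exact complexes), and whose bottom row is the classical Yoneda isomorphism in $\A'(\E)$; commutativity is checked by tracing the roof. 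Then bijectivity of $\overline\Theta$ follows from bijectivity of the natural map $\Yon^n_\E(X,Y)/{\sim}\;\rarrow\;\Yon^n_{\A'(\E)}(\rho'_\E X,\rho'_\E Y)/{\sim}$. Surjectivity of the latter is proved by induction on~$n$: for $n=1$ the middle term lies in $\rho'_\E(\E)$ since $\rho'_\E(\E)$ is closed under extensions; for $n>1$ one covers the outermost term by $\rho'_\E$ of an object of $\E$ using Proposition~\ref{quillen-embedding}.2, forms the corresponding pullback of the $n$-extension (this is where axiom~Ex$2'$ from~\ref{axioms} and the covering lemmas do real work), and feeds the resulting shorter extension back into the induction. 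Injectivity amounts to showing two $\E$-Yoneda extensions with the same class in $\A'(\E)$ admit a common $\E$-refinement, again by the same covering technique; along the way one verifies that the ``span'' relation on Yoneda extensions is genuinely an equivalence relation and coincides with its opposite-direction variant, using fibered (co)products of Yoneda extensions, which are exact complexes by axiom~Ex$2'$.

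\emph{Parts (iii)--(iv).} Once $\overline\Theta$ is known to be a bijection, these are diagram chases parallel to the abelian case. Pulling back a Yoneda extension along a morphism $X'\rarrow X$, resp.\ pushing it forward along $Y\rarrow Y'$, corresponds under $\Theta$ to pre-, resp.\ post-composing the roof; the direct sum of two Yoneda extensions corresponds to the direct sum of the roofs; hence the Baer sum (pull back along the diagonal $X\rarrow X\op X$ and push forward along the codiagonal $Y\op Y\rarrow Y$) matches the sum of morphisms in the additive category $\D^b(\E)$, and the splicing of a $p$-extension with an $n$-extension matches composition of the corresponding morphisms $X\rarrow Z[p]$ and $Z\rarrow Y[n]$. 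I expect the main obstacle throughout to be precisely the interface between the ``formal'' morphisms of $\D^b(\E)$ (roofs of complexes, with quasi-isomorphisms inverted) and the honest exact sequences of $\E$: reducing an arbitrary roof to a Yoneda roof, and proving the span-equivalence is well behaved, both require the exact-category axioms beyond the abelian ones and are sensitive to the (semi-)saturation issues flagged in the preamble to~\ref{exact-derived}; Propositions~\ref{quillen-embedding}.1--2 are the decisive technical inputs.
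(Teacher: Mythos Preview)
Your proposal is correct. The paper's own proof is a single sentence: ``This follows from the construction of the triangulated quotient category~\cite{Ver} and the existence of canonical truncations of exact complexes.'' In other words, the paper treats the whole proposition as essentially standard and gestures at the direct argument (manipulate roofs in the Verdier localization, truncate the acyclic cones). You mention this direct route as your alternative for Part~(i), but your main strategy is different: you invoke Corollaries~\ref{exact-derived}.1--2 and Proposition~\ref{quillen-embedding}.2 to transport the question into the abelian category $\A'(\E)$, where the Yoneda description of $\Ext$ is classical, and then argue that the Yoneda-class map $\Yon^n_\E/{\sim}\to\Yon^n_{\A'(\E)}/{\sim}$ is a bijection via the covering lemma. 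This is a legitimate and logically noncircular route (those corollaries precede the Proposition in the paper), and it has the virtue of making explicit why the span relation is an equivalence relation and agrees with its opposite. The cost is dependence on the embedding machinery; the paper's one-liner is lighter but leaves the reader to reconstruct exactly the roof-and-truncation manipulations you sketch in your parenthetical alternative.
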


\begin{proof}
 This follows from the construction of the triangulated quotient
category~\cite{Ver} and the existence of canonical truncations of
exact complexes.
\end{proof}

 Another important explicit construction of the higher $\Ext$ groups
in an exact category is provided by
Corollary~\ref{exact-triangulated}.1 below.

\begin{cor3}
 The product of the elements in\/ $\Ext^1_\E(X,Z)$ and\/
$\Ext^1_\E(Z,Y)$ corresponding to admissible triples
$Z\rarrow V\rarrow X$ and $Y\rarrow U\rarrow Z$ in $\E$ vanishes in
$\Ext^2_\E(X,Y)$ if and only if the composition of morphisms
$U\rarrow Z\rarrow V$ can be factorized as $U\rarrow T\rarrow V$ in
such a way that the triples $Y\rarrow T\rarrow V$ and
$U\rarrow T\rarrow X$ are admissible in~$\E$.
\end{cor3}

\begin{proof}
 This is most simply deduced from the long exact sequence of $\Ext$
groups corresponding to an admissible triple, which in turn follows
from the definition of $\Ext_\E$ in terms of morphisms in $\D^b(\E)$.
\end{proof}

 An object of an exact category is called \emph{projective}
(\emph{injective}) if the functor of morphisms from (into) this
objects sends admissible triples to short exact sequences.
 A projective (injective) \emph{resolution} of an object, or a complex
of objects, in an exact category is a bounded above (below) complex of
projective (injective) objects endowed with a quasi-isomorphism into
(from) the given object or complex.
 Since any morphism into (from) an acyclic complex from (into)
a bounded above (below) complex of projective (injective) objects
is homotopic to zero, one can use projective (injective) resolutions
to compute the groups $\Ext$ in an exact category.

\subsection{Exact subcategories of triangulated categories}
\label{exact-triangulated}
 A full subcategory $\E$ in a triangulated category $\D$ is called
an \emph{exact subcategory} if $\Hom_\D(X,Z[-1])=0$ for all
$X$, $Z\in\D$ and the class of all triples $X\rarrow Y\rarrow Z$
in $\E$ that can be embedded into a distinguished triangle
$X\rarrow Y\rarrow Z\rarrow X[1]$ in $\D$ defines an exact
category structure on~$\E$.
 Notice that the former condition guarantees that the distinguished
triangle in the latter condition is unique if it exists.
 Axioms Ex0 and~Ex1 are satisfied automatically by the class of
admissible triples defined in the above way; it is only axioms
Ex2 and~Ex3 that have to be checked.

 A full subcategory $\E$ in a triangulated category~$\D$
is said to be \emph{closed under extensions} if it contains the
middle term~$Y$ of any distinguished triangle
$X\rarrow Y\rarrow Z\rarrow X[1]$ with $X$,~$Z\in\E$.
 Any full subcategory $\E\sub\D$ that is closed under extensions
and satisfies the condition that $\Hom_\D(X,Z[-1])=0$ for all
$X$, $Z\in\E$ is an exact subcategory of~$\D$~\cite{Dy}.
 One applies the octahedron axiom in $\D$ in order to show that
axioms Ex2 and~Ex3 hold in~$\E$.

 In particular, it follows from the description of $\Ext^n_\E$ in
Proposition~\ref{exact-derived} for $n=-1$, $0$,~$1$ that any
exact category $\E$ is an exact subcategory, closed under extensions,
in its derived category $\D^b(\E)$.
 For any exact subcategory $\E$ in a triangulated category $\D$,
there is a natural injective map of abelian groups $\Ext^1_\E(X,Z)
\rarrow\Hom_\D(X,Z[1])$ for all $X$, $Z\in\E$ assigning to
an admissible triple $X\rarrow Y\rarrow Z$ in $\E$ the rightmost map
of the distinguished triangle $X\rarrow Y\rarrow Z\rarrow Y[1]$.
 To check additivity, one can consider the direct sum of two
copies of either $X$ or $Z$ and use the functoriality, which is
verified using the condition of vanishing of $\Hom_\D(X,Z[-1])$.
 The maps $\Ext^1_\E(X,Z)\rarrow\Hom_\D(X,Z[1])$ are isomorphisms
for all $X$, $Z\in\E$ if and only if the subcategory $\E$ is closed
under extensions in~$\D$.

 Let $\E\sub\D$ be a small exact subcategory in a triangulated
category.
 Consider the big graded ring of higher $\Hom$ groups
$$
 \H(\E,\D)=\bop_{n=0}^\infty\H^n(\E,\D); \qquad
 \H^n(\E,\D)=(\Hom_\D(X,Y[n]))_{Y,X\in\E}.
$$
 Let $\H(\E)=\H(\E,\D^b(\E))$ denote the big graded ring of
$\Ext$ groups in an exact category~$\E$.
 There is a natural isomorphism of big rings $\H^0(\E)\simeq
\H^0(\E,\D)$ and a natural injective morphism of bimodules
$\H^1(\E)\rarrow\H^1(\E,\D)$.

\begin{prop}
 For any small exact subcategory $\E$ in a triangulated
category~$\D$, the multiplication maps $\H^1(\E)\ot_{\H^0(\E)}
\H^n(\E,\D)\rarrow\H^{n+1}(\E,\D)$ and\/ $\H^n(\E,\D)\ot_{\H^0(\E)}
\H^1(\E)\rarrow\H^{n+1}(\E,\D)$ are injective for all\/ $n\ge0$.
\end{prop}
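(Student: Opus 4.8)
The plan is to argue componentwise and reduce everything to the vanishing of a single elementary tensor, after which only the long exact sequences attached to a distinguished triangle are needed. Fix objects $X$, $Z\in\E$. In the component indexed by $(Z,X)$, the source of the first multiplication map consists of finite sums $\sum_i\xi_i\ot\eta_i$ with $\xi_i\in\Ext^1_\E(Y_i,Z)\subseteq\Hom_\D(Y_i,Z[1])$ and $\eta_i\in\Hom_\D(X,Y_i[n])$ for various $Y_i\in\E$, and the map takes this to $\sum_i\xi_i[n]\circ\eta_i\in\Hom_\D(X,Z[n+1])$. Setting $Y=\bigoplus_iY_i$, taking $\xi\in\Ext^1_\E(Y,Z)$ whose restriction to $Y_i$ is $\xi_i$ and $\eta\in\Hom_\D(X,Y[n])$ whose components are the $\eta_i$, and transferring the (degree-zero) projections and inclusions of the direct sum across $\ot_{\H^0(\E)}$, one gets $\sum_i\xi_i\ot\eta_i=\xi\ot\eta$. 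Thus it suffices to show: if $\xi\in\H^1(\E)$ and $\eta\in\H^n(\E,\D)$ satisfy $\xi[n]\circ\eta=0$ in $\Hom_\D(X,Z[n+1])$, then $\xi\ot\eta=0$ in $\H^1(\E)\ot_{\H^0(\E)}\H^n(\E,\D)$; the second multiplication map reduces to the analogous single-tensor statement in the same way.

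For the first map I would represent $\xi\in\Ext^1_\E(Y,Z)$ by an admissible triple $Z\rarrow W\rarrow Y$, i.e.\ by a distinguished triangle $Z\rarrow W\ovrarrow{q}Y\ovrarrow{\xi}Z[1]$ with $W\in\E$. Applying $\Hom_\D(X,-)$ to the shift $Z[n]\rarrow W[n]\ovrarrow{q[n]}Y[n]\ovrarrow{\xi[n]}Z[n+1]$ gives an exact sequence, and the hypothesis $\xi[n]\circ\eta=0$ produces a factorization $\eta=q[n]\circ\tilde\eta$ for some $\tilde\eta\:X\rarrow W[n]$, that is, $\eta=q\cdot\tilde\eta$ with $q\in\H^0(\E)$ and $\tilde\eta\in\H^n(\E,\D)$. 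The defining relation of the tensor product over $\H^0(\E)$ then gives
\[
 \xi\ot\eta=\xi\ot(q\cdot\tilde\eta)=(\xi\cdot q)\ot\tilde\eta ,
\]
and the class $\xi\cdot q\in\H^1(\E)$ maps, under the injective bimodule morphism $\H^1(\E)\rarrow\H^1(\E,\D)$ recalled just before the Proposition, to the composite $\xi\circ q\in\Hom_\D(W,Z[1])$, which vanishes because $q$ and $\xi$ are consecutive arrows of a distinguished triangle. Hence $\xi\cdot q=0$ already in $\H^1(\E)$, so $\xi\ot\eta=0$, which is injectivity of the first map.

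The second map $\H^n(\E,\D)\ot_{\H^0(\E)}\H^1(\E)\rarrow\H^{n+1}(\E,\D)$ is handled by the dual argument: represent $\xi\in\Ext^1_\E(X,Y)$ by a distinguished triangle $Y\ovrarrow{j}W\rarrow X\ovrarrow{\xi}Y[1]$, apply $\Hom_\D(-,Z[n])$ to a suitable rotation to factor $\eta=\tilde\eta\circ j=\tilde\eta\cdot j$ through the admissible monomorphism $j$, and then rewrite $\eta\ot\xi=(\tilde\eta\cdot j)\ot\xi=\tilde\eta\ot(j\cdot\xi)$, where $j\cdot\xi\in\H^1(\E)$ maps to $j[1]\circ\xi\in\Hom_\D(X,W[1])$, which is zero because $\xi$ and $j[1]$ are consecutive arrows of the rotated triangle $W\rarrow X\ovrarrow{\xi}Y[1]\rarrow W[1]$. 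So $j\cdot\xi=0$ in $\H^1(\E)$ and $\eta\ot\xi=0$. The whole argument uses only the axioms of a triangulated category together with the identifications $\H^0(\E)\simeq\H^0(\E,\D)$ and the injectivity of $\H^1(\E)\rarrow\H^1(\E,\D)$. There is no deep obstacle; the single point genuinely requiring care is keeping the big-ring index bookkeeping straight, so that the products $\xi\cdot q$, $q\cdot\tilde\eta$ (and dually $\tilde\eta\cdot j$, $j\cdot\xi$) are exactly the ones identified by the tensor-product relation --- and it is helpful here that signs are immaterial, only vanishing being asserted.
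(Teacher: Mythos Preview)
Your proof is correct and follows essentially the same approach as the paper's: reduce to a single elementary tensor using finite direct sums in $\E$, represent the $\Ext^1$ class by an admissible triple and its distinguished triangle, factor $\eta$ through the admissible epimorphism via the long exact sequence, and then move the degree-zero morphism across the tensor product over $\H^0(\E)$ to kill the tensor. Your justification that $\xi\cdot q=0$ in $\H^1(\E)$ via the injective comparison map $\H^1(\E)\hookrightarrow\H^1(\E,\D)$ is a valid variant of the paper's implicit reasoning (which relies on the pullback extension splitting), and your explicit treatment of the second map by the dual argument is fine.
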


\begin{proof}
 It is easy to check using the existence of finite direct sums in
the category $\E$ that any element of $\H^1(\E)\ot_{\H^0(\E)}
\H^n(\E,\D)$ has the form $\xi\ot\eta$ for some
$\xi\in\Ext^1_\E(Y,Z)$ and $\eta\in\Hom_\D(X,Y[n])$.
 Consider the distinguished triangle $Z\rarrow T \ovrarrow f Y
\rarrow Z[1]$ in $\D$ corresponding to the admissible triple
$Z\rarrow T\rarrow Y$ in $\E$ representing~$\xi$; we will denote
the morphism $Y\rarrow Z[1]$ also by~$\xi$.
 Now if the composition $\xi\circ\eta\:X\rarrow Z[n+1]$
is zero, there exists a morphism $\eta'\:X\rarrow T[n]$
such that $\eta=f\circ\eta'$.
 Then we have $\xi\ot\eta=\xi\ot f\eta'=\xi f\ot\eta'=0$
in the group $\H^1(\E)\ot_{\H^0(\E)}\H^n(\E,\D)$. 
\end{proof}

\begin{cor1}
 For any small exact category~$\E$, the big graded ring of
Ext groups $\H=\H(\E)$ is a free big graded ring generated by
the $\H^0$\+bimodule $\H^1$, that is the multiplication morphisms
$\H^1\ot_{\H^0}\ds\ot_{\H^0}\H^1\rarrow\H^n$ are isomorphisms.
\end{cor1}

\begin{proof}
 The morphisms $\H^1\ot_{\H^0}\H^n\rarrow\H^{n+1}$ are injective
by the above proposition and surjective by
Proposition~\ref{exact-derived}.
\end{proof}

\begin{cor2}
 For any exact subcategory $\E$ in a triangulated category $\D$,
there exists a unique sequence of morphisms of functors of two
arguments $\E^\opp\tm\E\rarrow\Ab$
 $$
  \theta^n=\theta^n_{\E,\D}\:\Ext^n_\E(X,Y)\lrarrow\Hom_\D(X,Y[n]),
  \qquad n\ge0,
 $$
with the following properties: the maps~$\theta^n$ are compatible
with the composition, the map~$\theta^0$ is the identity, and
the map~$\theta^1$ sends the Yoneda class of an admissible triple
$Y\rarrow T\rarrow X$ in the exact category~$\E$ to the third
arrow $X\rarrow Y[1]$ of the corresponding distinguished triangle
in the triangulated category~$\D$.

 Moreover, if the morphism of functors~$\theta^n_{\E,\D}$ is
an isomorphism for a certain $n\ge0$, then all
the maps~$\theta^{n+1}_{\E,\D}$ are injective.
 In particular, if $\E$ is closed under extensions in $\D$, then
$\theta^1_{\E,\D}$ is an isomorphism and $\theta^2_{\E,\D}$ is
injective; if all the maps~$\theta^n$ are surjective for $n\le m$,
then all of them are isomorphisms.
\end{cor2}

\begin{proof}
 Existence and uniqueness follow from Corollary~1; alternatively,
one can use the universal property of effaceable homological
functors (see~\cite{Grot}).
 Injectivity follows from the proposition.
\end{proof}

\begin{cor3}
 Let $\A$ be an exact category and $\E$ be its full exact subcategory
closed under extensions and such that any object of $\A$ is a direct
summand of an object of~$\E$
\textup{(}see~\textup{\ref{exact-cat-examples}$\.$(6)}\textup{)}.
 Then the natural maps\/ $\Ext^n_\E(X,Y)\rarrow\Ext^n_\A(X,Y)$ are
isomorphisms for all $X$, $Y\in\E$ and $n\ge0$. \qed
\end{cor3}

 A full subcategory $\A$ in a triangulated category $\D$ is the heart
of a bounded t\+structure on $\D$ if and only if $\A$ is closed under
extensions, one has $\Hom_\D(X,Y[n])=0$ for all $X$, $Y\in\A$ and 
$n<0$, the exact category structure on $\A$ is the canonical exact
category structure of an abelian category, and the triangulated
category $\D$ is generated by $\A$ \cite[Proposition 1.3.13]{BBD}.
 Thus all the above results are applicable in the case when $\E=\A$
is the heart of a t\+structure on a triangulated category~$\D$.

\Section{Silly Filtrations}  \label{silly-filtrations-appx}

 The results of this appendix elaborate on the condition that the
morphisms $\theta^n_{\E,\D}$ of Corollary~\ref{exact-triangulated}.2
are surjective (or equivalently, isomorphisms) for an exact
subcategory $\E\sub\D$ closed under extensions.
 It turns out that one does not have to assume that the groups
$\Hom(X,Y[-1])$ are zero in order to discuss this condition.

 We will freely use the $*$\+operation notation for classes of objects
in triangulated categories~\cite[1.3.9-10]{BBD}.
 Besides, will use the notation of~\ref{additive-categories} related
to semi-saturated completions and saturated closures.

\begin{prop1}
 Let $\D$ be a triangulated category, $\A\sub\D$ be a full subcategory,
and $\M=\bigcup_m\A^{*m}\sub\D$ be the minimal full subcategory
containing $\A$ and closed under extensions.
 Then for any $n\ge2$ the following three conditions are equivalent:
 \begin{enumerate}
 \renewcommand{\theenumi}{\alph{enumi}}
    \item Any morphism $X\rarrow Y[k]$ with $2\le k\le n$
          between two objects $X$, $Y\in\M$ 
          can be presented as the composition of
          a chain of morphisms $Z_{i-1}\rarrow Z_i[1]$
          with $Z_i\in\M$, \ $Z_0=X$, and $Z_k=Y$.
    \item Any morphism $A\rarrow Y[k]$ with $2\le k\le n$ between
          two objects $A\in\A$ and $Y\in\M$
          can be presented as the composition of 
          a morphism $A\rarrow Z[1]$ and a morphism $Z[1]\rarrow Y[k]$
          with $Z\in\M$.
    \item Any morphism $A\rarrow Y[k]$ with $2\le k\le n$ 
          between two objects $A\in\A$ and $Y\in\M$
          can be presented as the composition of 
          a morphism $A\rarrow Z[k-1]$ and a morphism
          $Z[k-1]\rarrow Y[k]$ with $Z\in\M$.
  \end{enumerate}
 Furthermore, any of the next two conditions implies the previous
three:
 \begin{enumerate}
 \renewcommand{\theenumi}{\alph{enumi}}
 \setcounter{enumi}{3}
    \item Any morphism $A\rarrow B[k]$ with $2\le k\le n$ between
          two objects $A$, $B\in\A$ can be presented as
          the composition of a morphism $A\rarrow Z[1]$ and
          a morphism $Z[1]\rarrow B[k]$ such that $Z\in\M$ and
          a cone of the morphism $A\rarrow Z[1]$ belongs to $\A[1]$.
    \item Any morphism $A\rarrow Y[k]$ with $2\le k\le n$ 
          between two objects $A$, $B\in\A$
          can be presented as the composition of 
          a morphism $A\rarrow Z[k-1]$ and a morphism
          $Z[k-1]\rarrow B[k]$ such that $Z\in\M$ and a cone of
          the morphism $Z[k-1]\rarrow B[k]$ belongs to $\A[k]$.
  \end{enumerate}
\end{prop1}

\begin{proof}
 A simple induction on~$n$ proves (c)$\implies$(b); including
the equivalence of (a-c) for $k\le n-1$ in the induction
assumption, we obtain (b)$\implies$(c).
 It is obvious that (a) implies (b) and~(c).

 To prove (c)$\implies$(a), reformulate~(c) as the assertion that
for any morphism $A\rarrow Y[k]$ with $A\in\A$ and $Y\in\M$ there
exists a distinguished triangle
$Y[k]\rarrow W[k]\rarrow Z[k]\rarrow Y[k+1]$ with $Z\in\M$
such that the composition $A\rarrow Y[k]\rarrow W[k]$ vanishes.
 The condition~(a) can be restated as the similar assertion with
$A\in\A$ replaced by $X\in\M$.
 Let $A_1\rarrow A\rarrow A_2\rarrow A_1[1]$ be a distinguished
triangle in $\D$.
 Assuming that both objects $A_1$ and $A_2$ have the above property
for any morphism $A_i\rarrow Y[k]$ with $Y\in\M$, we will show that
the object $A$ has the same property.
 Let $A\rarrow Y[k]$ be any morphism; consider the composition
$A_1\rarrow A\rarrow Y[k]$.
 Then there exists a distinguished triangle $Y[k]\rarrow W_1[k]
\rarrow Z_1[k]$ with $Z_1\in\M$ such that the composition
$A_1\rarrow A\rarrow Y[k]\rarrow W_1[k]$ vanishes.
 Hence the composition $A\rarrow Y[k]\rarrow W_1[k]$ factorizes
through the morphism $A\rarrow A_2$.
 Consider the morphism $A_1\rarrow W_1[k]$ that we have obtained.
 There exists a distinguished triangle $W_1[k]\rarrow W[k]\rarrow
Z_2[k]\rarrow W_1[k+1]$ with $Z_2\in\M$ such that the composition
$A_2\rarrow W_1[k]\rarrow W[k]$ vanishes.
 Then the composition $A\rarrow Y[k]\rarrow W_1[k]\rarrow W[k]$
also vanishes.
 By the octahedron axiom, the cone of the composition $Y[k]\rarrow
W_1[k]\rarrow W[k]$ is an extension of $Z_2[k]$ and $Z_1[k]$, so
it belongs to $\M[k]$.

 To prove (d)$\implies$(b), reformulate (d) as the assertion that
for any morphism $A\rarrow B[k]$ with $A$, $B\in\A$ there exists
a distinguished triangle $Z\rarrow C\rarrow A\rarrow W[1]$ with
$Z\in\M$ and $C\in\A$ such that the composition $C\rarrow A
\rarrow B[k]$ vanishes.
 Then argue as above.
 Now we know that (d)$\implies$(a), and the implication
(e)$\implies$(a) follows by duality.
\end{proof}

\begin{prop2}
 Let $\D$ be a triangulated category and $\M\sub\D$ be a full
subcategory closed under extensions.
 Then any morphism $X\rarrow Y[n]$ with $n\ge 2$ between two objects
$X$, $Y\in\D$ can be presented as the composition of a chain of
morphisms $Z_{j-1}\rarrow Z_j[1]$ with $Z_j\in\M$, \ $Z_0=X$, and
$Z_n=Y$ if and only if the following two conditions hold:
 \begin{enumerate}
 \renewcommand{\theenumi}{\roman{enumi}}
    \item One has $\M[n]*\M\sub\M*\M[1]*\ds*\M[n]$ for any $n\ge0$.
    \item Put $\M^{[a,b]}=\M[-b]*\ds*\M[-a]$ for any $a\le b$, \ 
          $\M^{\le b}=\bigcup_a\M^{[a,b]}$, and
          $\M^{\ge a}=\bigcup_b\M^{[a,b]}$.
          Then one should have
          $\M^{\ge a}\cap\M^{\le b}\sub (\M^{[a,b]})^\sat_\D$.
 \end{enumerate}
 Besides, the triangulated subcategory generated by $\M$ in $\D$
coincides with $\bigcup_{a,b}\M^{[a,b]}$ in this case.
\end{prop2}

 The key ideas of the proof are summarized in the following Lemma.

\begin{lem}
 Let $\D$ be a triangulated category. Then
 \begin{enumerate} 
 \renewcommand{\theenumi}{\arabic{enumi}}
    \item Suppose that a morphism $f\:A\rarrow B$ in $\D$ factorizes
          through an object $E$.
          Then $\{B\}*\{A[1]\}\ni\Cone(f)\in
          \{E\}*\{A[1]\}*\{B\}*\{E[1]\}$.
    \item Suppose that the cones of two morphisms $A\rarrow B$ and
          $C\rarrow D$ are isomorphic in $\D$.
          Then both morphisms factorize through an object $E$
          belonging to the intersection $\{A\}*\{D\}\cap\{C\}*\{B\}$.
    \item Suppose that an object $W$ is a direct summand of
          an extension of objects $A$ and $D$ and also a direct
          summand of an extension of objects $C$ and $B$.
          Then, in particular, there are morphisms $A\rarrow W$
          and $W\rarrow B$; suppose moreover, that their composition
          factorizes through an object $E$.
          Then $W$ is a direct summand of an object in
          $\{C\}*\{E\}*\{E\}*\{D\}$.
 \end{enumerate}
\end{lem}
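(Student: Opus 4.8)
The plan is to prove all three parts by elementary diagram manipulation in $\D$, using nothing beyond the octahedron axiom and the associativity of the $*$-operation (\cite[1.3.10]{BBD}).

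Part~(1) is routine. The inclusion $\Cone(f)\in\{B\}*\{A[1]\}$ is merely the defining triangle $A\to B\to\Cone(f)\to A[1]$. For the sharper statement I would factor $f$ as a composite $A\to E\to B$ and apply the octahedron to it: this yields a distinguished triangle $\Cone(A\to E)\to\Cone(f)\to\Cone(E\to B)\to\Cone(A\to E)[1]$, so that $\Cone(f)\in\{\Cone(A\to E)\}*\{\Cone(E\to B)\}$. Since $\Cone(A\to E)\in\{E\}*\{A[1]\}$ and $\Cone(E\to B)\in\{B\}*\{E[1]\}$, associativity of $*$ gives $\Cone(f)\in\{E\}*\{A[1]\}*\{B\}*\{E[1]\}$.

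Part~(2): let $Z$ denote the common cone and let $B\to Z$ and $D\to Z$ be the third arrows of the two given triangles. I would take $E$ to be the homotopy pullback of these two maps, i.e.\ the object sitting in a distinguished triangle $E\to B\oplus D\to Z\to E[1]$ with the evident middle map. The base-change (patching) property of homotopy cartesian squares then supplies distinguished triangles $A\to E\to D\to A[1]$ and $C\to E\to B\to C[1]$ whose composites $A\to E\to B$ and $C\to E\to D$ reproduce, up to sign, the original morphisms; hence $E\in\{A\}*\{D\}\cap\{C\}*\{B\}$ and both morphisms factor through $E$. The only subtle point is the identification of those composites with the original maps, which is a standard part of the patching statement and, if one prefers, can be re-derived by one more octahedron.

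Part~(3) is the real combination step, and I expect it to be the \emph{main obstacle}. Here $G\in\{A\}*\{D\}$ with $W$ a direct summand, so $G\cong W\oplus W_1$; and $H\in\{C\}*\{B\}$ with $W$ a direct summand, so $H\cong W\oplus W_2$; the maps $A\to W$ and $W\to B$ are the components of $A\to G$ and $H\to B$ along $W$. The composite $A\to W\to B$ then equals $h\theta a$, where $a\colon A\to G$ and $h\colon H\to B$ come from the two triangles and $\theta\colon G\to H$ is the projection--inclusion through $W$, with $\Cone(\theta)\cong W_2\oplus W_1[1]$. Applying part~(1) to the factorization of $A\to W\to B$ through $E$, and two octahedra to $A\to G\to H\to B$ (grouped first as $A\to G$ followed by $G\to B$, then as $G\to H$ followed by $H\to B$), one obtains two descriptions of $Z:=\Cone(A\to W\to B)$, one lying in $\{E\}*\{A[1]\}*\{B\}*\{E[1]\}$ and one in $\{D\}*\{W_2\}*\{W_1[1]\}*\{C[1]\}$. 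The remaining, and delicate, task is to reconcile these two descriptions — absorbing the spurious pieces $W_1$ and $W_2$ — so as to exhibit $W$ itself as a direct summand of an object filtered, from sub to quotient, by $C$, $E$, $E$, $D$. This is done by further octahedron arguments in the spirit of parts~(1) and~(2), and carefully keeping track of which morphisms and which summand complements occur at each stage is where essentially all the work lies.
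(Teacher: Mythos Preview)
Your Parts~(1) and~(2) are correct and coincide with the paper's argument. For~(1) the paper names $\Cone(A\to E)=D$ and $\Cone(E\to B)[-1]=C$ and writes the same octahedron; for~(2) the paper applies one octahedron to the composition $D\to K\to A[1]$ (where $K$ is the common cone), which produces exactly your homotopy pullback~$E$ together with the two triangles $A\to E\to D$ and $C\to E\to B$.

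Part~(3) has a genuine gap. Your plan is to study $Z=\Cone(A\to W\to B)$ and exhibit it simultaneously in $\{E\}*\{A[1]\}*\{B\}*\{E[1]\}$ and in $\{D\}*\{W_2\}*\{W_1[1]\}*\{C[1]\}$; but knowing two $*$-descriptions of $Z$ does not, by itself, say anything about $W$. The object $W$ is not a summand of $Z$, and there is no evident triangle relating $W$ to $Z$ that would let you transport a filtration from one to the other. ``Further octahedron arguments'' is not a proof here: you still have to produce an object that \emph{contains} $W$ as a summand and at the same time admits the desired $C,E,E,D$ filtration, and nothing in your setup singles out such an object.

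The paper's key move, which you are missing, is to build that object by hand. In your notation (so $S=W_1$, $T=W_2$), one forms $S\oplus T\oplus W\oplus E$ and writes down explicit morphisms $A\to S\oplus T\oplus W\oplus E\to B$ with zero composition, using the given maps $A\to S\oplus W$, $T\oplus W\to B$, and the factorization $A\to E\to B$. One then computes $\Cone(A\to S\oplus T\oplus W\oplus E)\cong T\oplus K$ with $K\in\{E\}*\{D\}$, and the fiber of $S\oplus T\oplus W\oplus E\to B$ as $S\oplus L$ with $L\in\{C\}*\{E\}$. A final octahedron comparing these two yields a triangle $L\to W\oplus E\oplus F\to K$ for some auxiliary $F$, exhibiting $W$ as a summand of an object in $\{L\}*\{K\}\subset\{C\}*\{E\}*\{E\}*\{D\}$. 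The point is that $W$ (and $E$) are put into the picture \emph{from the start} as direct summands of the central object, rather than extracted a posteriori from a cone.
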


\begin{proof}
 Part~(1): Consider the distinguished triangles
\begin{gather*}
 A\lrarrow E\lrarrow D\lrarrow A[1] \\
 B[-1]\lrarrow C\lrarrow E\lrarrow B.
\end{gather*}
 By the octahedron axiom, there are also distinguished triangles
\begin{gather*}
 A\lrarrow B\lrarrow K\lrarrow A[1] \\
 C\lrarrow D\lrarrow K\lrarrow C[1].
\end{gather*}
 So $D\in \{E\}*\{A[1]\}$ and $C[1]\in\{B\}*\{E[1]\}$, hence
$$
 \{B\}*\{A[1]\}\ni K\in\{D\}*\{C[1]\}\sub
 \{E\}*\{A[1]\}*\{B\}*\{E[1]\}.
$$

 Part~(2): By the octahedron axiom, there exist four distinguished
triangles as above and the morphisms $A\rarrow B$ and $C\rarrow D$
factorize through~$E$.
 Clearly, $E\in\{A\}*\{D\}\cap\{C\}*\{B\}$.

 Part~(3): By our assumption, there exist objects $S$, $T\in\D$
and distinguished triangles
\begin{gather*}
 A\lrarrow S\oplus W\lrarrow D\lrarrow A[1] \\
 B[-1]\lrarrow C\lrarrow T\oplus W\lrarrow B.
\end{gather*}
 Combining the given morphisms with appropriate signs, we get
a pair of morphisms $A\rarrow S\oplus T\oplus W\oplus E\rarrow B$
with zero composition.
 There are distinguished triangles
\begin{gather*}
 A\lrarrow S\oplus T\oplus W\oplus E\lrarrow T\oplus K\lrarrow A[1]
 \\
 E\lrarrow K\lrarrow D\lrarrow E[1] 
\end{gather*}
and
\begin{gather*}
 S\oplus L\lrarrow S\oplus T\oplus W\oplus E\lrarrow B\lrarrow
 S[1]\oplus L[1] \\
 C\lrarrow L\lrarrow E\lrarrow C[1]
\end{gather*}
and a pair of morphisms $A\rarrow S\oplus L$, \ $T\oplus K\rarrow B$
forming a morphism of distinguished triangles with a common vertex
$S\oplus T\oplus W\oplus E$.
 Applying the octahedron axiom, we obtain distinguished triangles
\begin{gather*}
 A\lrarrow S\oplus L\lrarrow F\lrarrow A[1] \\
 F\lrarrow T\oplus K\lrarrow B\lrarrow F[1]
\end{gather*}
 Finally, there is a morphism $T\oplus K\rarrow S[1]\oplus L[1]$
that can be obtained as the composition $T\oplus K\rarrow A[1]
\rarrow S[1]\oplus L[1]$ or $T\oplus K\rarrow B\rarrow
S[1]\oplus L[1]$.
 This morphism can be included in the distinguished triangle
$$
 T[-1]\oplus K[-1]\lrarrow S\oplus L\lrarrow S\oplus T\oplus W
 \oplus E\oplus F\lrarrow T\oplus K,
$$
where the morphisms $S\rarrow S\oplus T\oplus W\oplus E\rarrow T$
are just the obvious embedding and projection, while the components
$L\rarrow S$ and $T\rarrow K$ vanish.
 Hence we obtain the distinguished triangle
$$
 K[-1]\lrarrow L\lrarrow W\oplus E\oplus F\lrarrow K.
$$
 Now $L\in\{C\}*\{E\}$ and $K\in\{E\}*\{D\}$, hence
$W\oplus E\oplus F\in\{L\}*\{K\}\sub\{C\}*\{E\}*\{E\}*\{D\}$.
\end{proof}

\begin{proof}[Proof of Proposition~\textup{2}]
 First let us check the implication ``if''. 
 Let $X$, $Y\in\M$, and $f\:X\rarrow Y[n+1]$ be a morphism
in $\D$, where $n\ge 1$.
 Consider the object $Z=\Cone(f[-1])$; then there is a distinguished
triangle
$$
 Y[n]\lrarrow Z\lrarrow X\rarrow Y[n+1],
$$
so $Z\in\M[n]*\M$.
 By~(i), there exists a distinguished triangle
$$
 U\lrarrow Z\lrarrow V\lrarrow U[1],
$$
where $U\in\M$ and $V\in\M[1]*\ds*\M[n]$.
 So $Z$ is also a cone of the morphism $V[-1]\rarrow U$.
 According to part~(2) of the above Lemma, the morphism $f[-1]$
factorizes through an object $E\in\{X[-1]\}*\{U\}\cap\{V[-1]\}*\{Y[n]\}
\sub (\M[-1]*\M)\cap(\M*\ds*\M[n])$.
 By~(ii), $E\in \M^\sat_\D$.
 Now the morphism $f$ factorizes through $E[1]$, thus it also
factorizes through an object of $\M[1]$.

 It is not difficult to prove by induction that the minimal
subcategory of $\D$ containing $\M$, \ds, $\M[n]$ and closed under
extensions coincides with $\M*\ds*\M[n]$ for any $n\ge 0$ provided
that (i)~holds.
 Specifically, one proceeds by induction on~$n$, and then in 
the number of factors $\M[i]$ with the largest possible value
$i=n$ in an iterated extension.
 This proves the last assertion of the Proposition.

 Now let us prove ``only if''.
 To verify~(i), consider an object $Z\in\M[n]*\M$.
 As above, we have $Z=\Cone(f[-1])$ for some morphism
$f\:X\rarrow Y[n+1]$, where $X$, $Y\in\M$.
 Decompose the morphism $f$ as $X\rarrow E[1]\rarrow Y[n+1]$,
where $E\in\M$.
 By part~(1) of Lemma, $Z\in \{E\}*\{X\}*\{Y[n]\}*\{E[1]\}
\sub\M*\M*\M[n]*\M[1]$.
 A simple induction on~$n$ finishes the proof of~(i).

 To prove~(ii), let us first check that
$$
 (\M[-1]*\ds*\M[n-1])^\sat_\D\cap(\M*\ds*\M[n])^\sat_\D
 \sub(\M*\ds*\M[n-1])^\sat_\D.
$$
 Let $W$ be an object in the intersection; then $W$ is a direct
summand of an extension of $A=X[-1]\in\M[-1]$ and
$D\in\M*\ds*\M[n-1]$ and also a direct summand of an extension
of $C\in\M*\ds*\M[n-1]$ and $B=Y[n]\in\M[n]$.
 The morphism $A\rarrow B$ factorizes through an object $E\in\M$.
 According to part~(3) of Lemma, $W$ is a direct summand of
an object from $\{C\}*\{E\}*\{E\}*\{D\}\sub\M*\ds*\M[n-1]*\M
*\M*\M*\ds*\M[n-1]$.
 Thus $W\in(\M*\ds*\M[n-1])^\sat_\D$.
 A simple induction allows to deduce~(ii).
\end{proof}

 In some cases the condition~(ii) is satisfied automatically.
 First of all, it always holds when $\M$ is the heart of a
t\+structure on~$\D$ (see below).
 Secondly, it holds in the filtered case described in the next
proposition.

\begin{prop3}
 Let $\D$ be a triangulated category endowed with a sequence
of triangulated subcategories $\D_i\sub\D$, \ $i\in\Z$ such that\/
$\Hom_\D(X,Y)=0$ for any $X\in\D_i$ and $Y\in\D_j$ with $i>j$.
 Let $\M_i\sub\D_i$ be full subcategories closed under extensions.
 Then the minimal full subcategory containing all $\M_i$
and closed under extensions $\M=\bigcup_{i\le j}\M_j*\ds*\M_i\sub\D$
satisfies the condition~\textup{(ii)} of Proposition~\textup{2} for
$a=b$ if and only if all the subcategories $\M_i\sub\D_i$ do. 
 Consequently, if $\M_i\sub\D_i$ satisfy~\textup{(ii)} for $a=b$
and $\M\sub\D$ satisfies~\textup{(i)}, then $\M$ also
satisfies~\textup{(ii)}.

 Furthermore, if all $\M_i\sub\D_i$ satisfy the following stronger
version of condition~\textup{(ii)}, then so does $\M\sub\D$:
\begin{itemize}
 \item[(ii${}'$)] For any $a\le b$ and $c\le d$, the intersection
                  of $\M^{[a,b]}$ with the minimal full subcategory
                  of\/ $\D$, containing $\M[-c]$, \ds, $\M[-d]$ and
                  closed under extensions is equal to
                  $\M^{[a,b]\cap[c,d]}$, where $[a,b]\cap[c,d]$
                  is the intersection of the segments $[a,b]$
                  and $[c,d]$ in~$\Z$.
\end{itemize}
\end{prop3}

\begin{proof}
 To verify the first assertion, if suffices to consider the associated
graded object functors $\D\rarrow\D_i$.
 The second one follows from Proposition~2 and its proof in
the ``if'' direction, which only uses (ii) for $a=b$.
 To prove the last assertion, let us show that the intersection of
$\M[-1]*\ds*\M[n]$ with the minimal full subcategory of $\D$,
containing $\M[k]$ for $k\ge0$ and closed under extensions coincides
with $\M*\ds*\M[n]$.
 Let $W\in\M[-1]*\ds*\M[n]$; then there exists a distinguished
triangle
$$
 V[-1]\lrarrow U\lrarrow W\lrarrow V
$$
with $U\in\M[-1]*\M$ and $V\in\M[1]*\ds*\M[n]$.
 Assume that $W$ belongs to the minimal full subcategory of $\D$,
containing $\M[k]$ for $k\ge0$ and closed under extensions; then,
since $U\in \{V[-1]\}*\{W\}$, so does $U$.
 By our assumption, the images $U_i$ of the object $U$ under
the functors of associated graded objects $\D\rarrow \D_i$ belong
to $\M_i$.
 Thus $U\in\M$ and $W\in\M*\ds*\M[n]$.
 The dual argument shows that the intersection of $\M[-n]*\ds*\M[1]$
with the minimal full subcategory of $\D$, containing $\M[k]$ for
$k\le0$ and closed under extensions coincides with $\M[-n]*\ds*\M$.
\end{proof}

\begin{ex}
 The following counterexample shows, however, that the condition~(ii)
does not always hold for an exact subcategory $\M\sub\D$,
and moreover, (i) does not imply (ii) in this case.
 Let $\B$ be the abelian category of $3$\+term complexes of vector
spaces $V^{(1)}\rarrow V^{(2)}\rarrow V^{(3)}$ (the composition of
the two arrows must be zero).
 There are $5$ indecomposable objects in this category, denoted
$E_1$, $E_2$, $E_3$, $E_{12}$, $E_{23}$ (see
Example~\ref{exact-proofs}.2).
 Let $\M\sub\B$ be the full additive subcategory whose objects
are the direct sums of all the indecomposables except~$E_2$.
 Set $\D=\D^b(\B)$.
 Then $\M$ is closed under extensions in $\B$ and $\D$ and inherits
a trivial exact category structure, i.~e., all the exact triples in
$\M$ are split.
 One can check that $\M$ generates $\D$ and satisfies~(i), but
not~(ii).
\end{ex}

 Furthermore, under the assumptions similar to those mentioned above
the condition~(i) allows a very simple reformulation.

\begin{prop4}
 Let $\D$ be a triangulated category and $\M\sub\D$ be a full
subcategory closed under extensions.
 Assume that we are in one of the following two situations:
 \begin{enumerate}
 \renewcommand{\theenumi}{\alph{enumi}}
 \renewcommand{\labelenumi}{(\theenumi)}
    \item $\M$ is the heart of a bounded t\+structure on $\D$.
    \item $\D$ is endowed with a sequence of triangulated
          subcategories $\D_i\sub\D$, \ $i\in\Z$ such that\/
          $\Hom_\D(X,Y)=0$ for any $X\in\D_i$ and $Y\in\D_j$
          with $i>j$.
          Each $\D_i$ is equivalent to the bounded derived category
          $\D^b(\M_i)$ of an exact category $\M_i$.
          The subcategory $\M\sub\D$ is the minimal full
          subcategory containing all~$\M_i$ and closed under
          extensions, $\M=\bigcup_{i\le j}\M_j*\ds*\M_i$.
 \end{enumerate}
 Then the condition~\textup{(ii)} of Proposition~\textup{2} always
holds and \textup{(i)}~is equivalent to its following weaker form:
 \begin{itemize}
    \item[(i${}'$)] $\D = \bigcup_{a,b}\M^{[a,b]}$, where
                    $\M^{[a,b]}=\M[-b]*\ds*\M[-a]$ for any $a\le b$.    
 \end{itemize}
\end{prop4}

\begin{proof}
 In the case~(b), according to Lemma~\ref{exact-derived}$\.$(4),
$\D_i\simeq\D^b(\M_i^\ss)$.
 Clearly, the condition~(ii) does not become weaker when one
replaces $\M_i$ with $\M_i^\ss$ and $\M$ with $\M^\ss$.
 According to Proposition~2, the sum total of the conditions
(i) and~(ii) does not change, either.
 So we can assume that $\M_i$ are semi-saturated.
 With this assumption, in both cases (a) and~(b) we will prove
the condition~(ii${}'$) of Proposition~3.
 Clearly, it implies both (ii) and (i${}'$)$\implies$(i).

 According to Proposition~3, in the case~(b) it suffices to consider
the case when $\D=\D_i=\D^b(\M_i)$ and $\M=\M_i$.

 As demonstrated in the proof of Proposition~3, it suffices to show
that the intersection of $\M[-1]*\M$ with the minimal full subcategory
of $\D$, containing $\M[k]$ for $k\ge0$ and closed under extensions
coincides with $\M$.
 Let $U\in\M[-1]*\M$; then there is a distinguished triangle
$$
 X[-1]\lrarrow Y[-1]\lrarrow U\lrarrow X
$$
with $X$, $Y\in\M$.
 Assume that $U$ belongs to the minimal full subcategory of $\D$,
containing $\M[k]$ for $k\ge0$ and closed under extensions.
 In the case~(a), we have $U\in\D^{\le0}$, and it follows immediately
from the long exact sequence of
t\+cohomology~\cite[Th\'eor\`eme~1.3.6]{BBD} that the morphism
$X\rarrow Y$ is surjective.
 In the case~(b), using, e.~g., Corollary~\ref{exact-derived}.2
and Proposition~\ref{quillen-embedding}.1, we can conclude that
the morphism $X\rarrow Y$ is an admissible epimorphism.
 Another way is to use the existence of canonical truncations of
exact complexes over $\M$ and axiom~Ex$2''$(b).
 Alternatively, one can use Proposition~2 in order to conclude that
$U\in\M^\sat_\D$, and then deduce the assertion that $X\rarrow Y$ is
an admissible epimorphism in $\M$ from the facts that it is such
in $\M^\sat_\D$ and the category $\M$ is semi-saturated.

 In both cases it follows that $U\in\M$. 
\end{proof}

\Section{Classical $\Kpi$ Conjecture}  \label{kpi1-appendix}

 Let $k$ be a commutative ring and $A$ be DG\+algebra over~$k$
endowed with two $\Z$\+valued grading, called the \emph{internal}
and the \emph{cohomological} gradings~\cite[Appendix~A]{Pkoszul}.
 The differential in $A$ raises the cohomological grading by~$1$
and preserves the internal grading; it also satisfies
the super-Leibniz rule with respect to the cohomological grading.
 The cohomological grading is denoted by the upper indices and
the internal grading by the lower ones.
 Let $X\maps X[1]$ denote the shift of cohomological grading of
internally graded complexes by~$1$ down (as usually) and
$X\maps X(1)$ denote the shift of their internal grading by~$1$ up.
 For an internally graded complex $M$ of $k$\+modules with
the differential~$d$, let $H^n(M)$ denote its internally graded
$k$\+module of cohomology of $M$ with respect to~$d$ in
the cohomological degree~$n$.

 Assume further that $A$ (with the differential forgotten) is
a flat bigraded $k$\+module, that $A_i=0$ for $i>0$, and
the complex $A_0$ is concentrated in the cohomological degree~$0$
and freely generated as a $k$\+module by the unit element of $A$.
 We will also consider internally graded DG\+coalgebras $C$ over~$k$
satisfying the same list of conditions, except that the counit map
$C_0\rarrow k$ is an isomorphism of complexes.
 The reduced bar- and cobar-constructions assign to an algebra $A$
of the above kind a coalgebra $C$ of the above kind and vice versa;
these constructions preserve quasi-isomorphisms of algebras and
coalgebras and are mutually inverse up to natural
quasi-isomorphisms~\cite[Theorem~A.1.1 and Remark~A.1]{Pkoszul}.

 Let $\D(A{-}\mathit{mod})$ denote the derived category of
internally graded left DG\+modules over~$A$.
 We are interested in the triangulated subcategory $\D\sub
\D(A{-}\mathit{mod})$ generated by the free DG\+modules $A(i)$, \
$i\in\Z$.
 Let $C$ be the reduced bar-construction of $A$ and
$\D(C{-}\mathit{comod})$ be the derived category of internally
graded left DG\+comodules over~$C$.
 No $k$\+flatness conditions are imposed on the terms of
DG\+(co)modules.
 The categories $\D(A{-}\mathit{mod})$ and $\D(C{-}\mathit{comod})$
are not equivalent in general; however, their full subcategories
formed by DG\+modules and DG\+comodules with cohomology bounded from
above in the internal grading are equivalent~\cite[Theorem~A.1.2 and
Remark~A.1]{Pkoszul}.
 It follows that the category $\D$ is equivalent to the full
triangulated subcategory of $\D(C{-}\mathit{comod})$ generated by
the trivial DG\+comodules $k(i)$; the equivalence sends
$A(i)$ to $k(i)$.
 Let $\M$ denote the minimal full subcategory of $\D$ containing
the DG\+modules $A(i)$ (or the DG\+comodules $k(i)$) and closed
under extensions. 

\subsection{Positive cohomology}  \label{positive-cohomology}
 Assume that the ring $k$ is Noetherian and has a finite homological
dimension.
 Then the triangulated subcategory $\D\sub\D(C{-}\mathit{comod})$
can be equivalently defined as the subcategory of all DG\+comodules
whose bigraded $k$\+modules of cohomology are finitely generated.

\begin{thm}
 One has $H^n(C)=0$ for all\/ $n>0$ if and only if any morphism
$X\rarrow Y[n]$ of degree $n\ge2$ in $\D$ between two objects
$X$, $Y\in\M$ can be presented as the composition of a chain of
morphisms $Z_{i-1}\rarrow Z_i[1]$ with $Z_i\in\M$, \ $Z_0=X$,
and $Z_n=Y$.
\end{thm}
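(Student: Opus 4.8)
The plan is to translate both sides of the equivalence into the language of the reduced bar--cobar machinery relating $A$ and $C$. Recall that $\D$ is equivalent to the triangulated subcategory of $\D(C{-}\mathit{comod})$ generated by the trivial DG-comodules $k(i)$, that $\mathrm{Cobar}(C)\simeq A$, and hence that $\Hom_\D(k(i),k(j)[n])\simeq H^n(A_{i-j})$ is computed by the reduced cobar resolution $\widetilde B^\bu$ of the trivial comodule $k$ by cofree DG-comodules. Since the differential of $C$ preserves the internal grading, $H^n(C)=\bop_i H^n(C_{-i})$, so the hypothesis $H^{>0}(C)=0$ is equivalent to $C$ being quasi-isomorphic, as a coaugmented conilpotent DG-coalgebra, to the coconnective subcoalgebra $C'=\tau_{\le0}C$. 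On the other side, by Proposition~\ref{silly-filtrations-appx}.1 the decomposition condition amounts to the statement that every morphism $k(i)\rarrow Y[n]$ with $Y\in\M$ and $n\ge2$ factors as $k(i)\rarrow Z[1]\rarrow Y[n]$ with $Z\in\M$; by Corollary~\ref{exact-triangulated}.2 together with the realization functor of Appendix~\ref{categories-of-morphisms-appx}, this is in turn equivalent to the natural functor $\D^b(\M)\rarrow\D$ being an equivalence of triangulated categories.

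For the ``if'' direction, I would replace $C$ by the coconnective coalgebra $C'=\tau_{\le0}C$ and carry over, to the DG setting, the argument of the Theorem of Section~\ref{coalgebra-secn}. Coconnectivity of $C'$ is exactly what makes the reduced cobar resolution $\widetilde B^\bu$ finite in each internal degree (a tensor factor of $\overline{C'}$ has internal degree $\le-1$, so in internal degree $-i$ only at most $i$ factors occur), so its totalization over the two differentials causes no trouble and its associated graded for the internal filtration is exact. Each $Y\in\M$ carries a canonical finite decreasing filtration with successive quotients finite sums of $k(j)$ (cf.\ Subsection~\ref{proof-part-one}); resolving the quotients by suitable twists of $\widetilde B^\bu$ and splicing produces an injective resolution of $Y$ in the exact category $\M$ that simultaneously computes $\Hom_\D(k(i),Y[\bu])$, so that the natural maps $\Ext^\bu_\M(k(i),Y)\rarrow\Hom_\D(k(i),Y[\bu])$ are isomorphisms. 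The decomposition condition then follows by Corollary~\ref{exact-triangulated}.2.

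For the ``only if'' direction, assume the decomposition condition (so $\D^b(\M)\rarrow\D$ is an equivalence) and suppose for contradiction that $H^n(C)\ne0$ for some $n>0$. Filtering $C=\mathrm{Bar}(A)$ by bar length gives a spectral sequence converging to $H^\bu(C)$ whose $E_1$-term is the tensor algebra on $H^\bu(A_+)[1]$ and whose higher differentials are the matric Massey products of classes of $H^1(A_+)$, i.e.\ of the groups $\Ext^1_\M(k(i),k(j))$; a nonzero class of $H^{>0}(C)$ is detected by such a Massey product surviving in positive cohomological degree. Via the description of Yoneda products of $\Ext^1$-classes along iterated extensions in $\M$, such a surviving product yields a morphism of degree $\ge2$ between two objects of $\M$ that cannot be written as a composition of degree-$1$ morphisms through $\M$, contradicting the decomposition condition; the hypothesis that $k$ be Noetherian of finite homological dimension is used here to keep $\D$, $\M$ and the relevant cohomology modules under finiteness control. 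The step I expect to be the main obstacle is making precise, in this DG setting, the dictionary between (a) compositions of chains $Z_{l-1}\rarrow Z_l[1]$ with $Z_l\in\M$, (b) iterated Yoneda products in $\H(\M)$, and (c) the matric Massey products computing $H^\bu(C)$ through the bar-length spectral sequence, so that the indeterminacies and the internal-grading bookkeeping match up and ``every positive-degree class of $H^\bu(C)$ obstructs the decomposition condition, and conversely.'' Unlike the classical coalgebra situation of Section~\ref{coalgebra-secn}, objects of $\M$ admit no models concentrated in a single cohomological degree, so one must work throughout with honest DG-comodule resolutions, organizing the induction on cohomological degree via Lemmas~1--2 of Subsection~\ref{two-lemmas} applied to the identity functor and to $\gr$.
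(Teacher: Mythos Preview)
Your direction labels are swapped relative to the paper (``if'' should mean ``decomposition $\Rightarrow H^{>0}(C)=0$''), but that is cosmetic. The substantive issues are as follows.

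\textbf{Your ``if'' (the paper's ``only if'': $H^{>0}(C)=0\Rightarrow$ decomposition).} The cobar-resolution strategy does not go through as written. First, $\tau_{\le0}C$ as a \emph{subcomplex} is not a sub-DG-coalgebra: the comultiplication of a cohomological-degree-zero element can involve tensors with one factor in positive degree, so there is no coconnective DG-coalgebra model to pass to in the way you describe. Second, even granting such a model, the terms $C\otimes_k C_+^{\otimes n}$ of the cobar resolution are not objects of $\M$ (they are neither finitely generated nor iterated extensions of the $k(i)$), so you cannot ``splice'' them into an injective resolution in the exact category $\M$. Third, your reformulation via $\D^b(\M)\rarrow\D$ and Corollary~\ref{exact-triangulated}.2 presupposes that $\M$ is an exact subcategory of $\D$, i.e., that $\Hom_\D(k(i),k(j)[-1])=H^{-1}(A_{i-j})$ vanishes; this is not among the hypotheses of the Theorem. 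The paper avoids all of this by proving condition (i${}'$) of Proposition~\ref{silly-filtrations-appx}.4 directly: for $X\in\D_{[-m,0]}$ one maps $X$ into a \emph{truncated} cofree comodule $C_{\ge -m}\otimes_k Z_0$ (using only that $H^{>0}(C)=0$ to know this target has no positive cohomology), cuts down to a finitely generated DG-subcomodule $W$ with injective $H(W)\rarrow H(C_{\ge -m}\otimes_k Z_0)$, and inducts on $m$. The finite homological dimension of $k$ is used here to conclude (i${}'$) from these one-step truncations.

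\textbf{Your ``only if'' (the paper's ``if'': decomposition $\Rightarrow H^{>0}(C)=0$).} The Massey-product/spectral-sequence route you sketch is unnecessary and, as you acknowledge, the dictionary you would need is exactly the hard part. The paper's argument is short and bypasses it entirely: view $C$ as a DG-comodule over itself and use that $\Hom_{\D(C{-}\mathit{comod})}(W,C)\simeq\Hom_k(H^0(W_0),k)$. Any finitely generated DG-subcomodule $X\subset C$ lies in $\D$; by the decomposition hypothesis (via Proposition~\ref{silly-filtrations-appx}.2) there is a triangle $Y\to X\to Z\to Y[1]$ with $H^{\le0}(Y)=0$ and $H^{\ge1}(Z)=0$. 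Then $Y\to X\to C$ vanishes (since $H^0(Y_0)=0$), while $H^n(Y)\to H^n(X)$ is onto for $n\ge1$, so $H^n(X)\to H^n(C)$ is zero for $n\ge1$; taking the union over $X$ gives $H^{>0}(C)=0$. No Massey products, no bar-length spectral sequence.
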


\begin{proof}[Proof \textup{(cf.~\cite[Theorem~1.9]{Pkoszul})}]
 ``If'': it is easy to see that any DG\+comodule over $C$ is
a union of its DG\+subcomodules that are finitely generated
bigraded $k$\+modules.
 Let $X$ be such a DG\+subcomodule in the DG\+comodule $C$ over~$C$.
 By the last assertion of Proposition~\ref{silly-filtrations-appx}.2,
there exists a distinguished triangle $Y\rarrow X\rarrow Z\rarrow
Y[1]$ in $\D$ such that $H^n(Y)=0$ for $n\le0$ and $H^n(Z)=0$
for $n\ge 1$.
 Then the composition $Y\rarrow X\rarrow C$ vanishes as a morphism
in $\D$, since $\Hom_{\D(C{-}\mathit{comod})}(W,C)\simeq
\Hom_k(H^0(W_0),k)$ for any DG\+comodule $W$ over~$C$.
 On the other hand, the morphism $H^n(Y)\rarrow H^n(X)$ is surjective
for $n\ge1$.
 Hence the morphism $H^n(X)\rarrow H^n(C)$ vanishes for $n\ge1$.
 Since this holds for all $X$, it follows that $H^n(C)=0$.
 In fact, this argument does not depend on the assumption that
the internal grading of $C$ is negative.

 ``Only if'': by Proposition~\ref{silly-filtrations-appx}.4$\.$(b),
it suffices to prove that the condition (i$'$) is satisfied.
 Let $\D_{[-m,0]}$ denote the full triangulated subcategory of $\D$
generated by the DG\+comodules $k(i)$ with $-m\le i\le 0$ and
$\M_{[-m,0]}\sub\D_{[-m,0]}$ be the minimal full subcategory,
containing $k(i)$ and closed under extensions.
 We will show by induction on~$m$ that $\D_{[-m,0]}=\bigcup_{a,b}
\M_{[-m,0]}^{[a,b]}$.
 Since the homological dimension of $k$ is finite, it suffices to
check that for any object $X\in\D_{[-m,0]}$ there exists
a distinguished triangle
$$
 Y\lrarrow X\lrarrow Z\lrarrow Y[1]
$$
such that $Y$ belongs to the minimal full subcategory of
$\D_{[-m,0]}$ generated by $\M_{[-m,0]}[-n]$ with $n>0$ and closed
under extensions, while $H^n(Z)=0$ for $n>0$.

 Using the assumption that the internal grading of $C$ is negative,
one can show by induction on the internal grading that any
DG\+comodule $M$ over $C$ is the filtered inductive limit of its
DG\+subcomodules $M'$ such that the map $H(M')\rarrow H(M)$ is
injective and the underlying $k$\+module of $M'$ is finitely
generated.
 In particular, we can assume that our DG\+comodule $X$ is finitely
generated as a bigraded $k$\+module.
 Consider the complex of $k$\+modules $X_0$; clearly, there exists
a distingushed triangle $Y_0\rarrow X_0\rarrow Z_0\rarrow Y_0[1]$
in the derived category of $k$\+modules such that $Y_0$ is
a complex of finitely generated free $k$\+modules concentrated
in the cohomological degrees $n>0$, while $H^n(Z_0)=0$ for $n>0$.
 Represent the object $Z_0$ by a finite complex of finitely generated
$k$\+modules such that the morphism $X_0\rarrow Z_0$ in the derived
category comes from a morphism of complexes of $k$\+modules.

 Let $C_{\ge -m}$ be the DG\+subcomodule of the DG\+comodule $C$
consisting of all components of the internal degree $i\ge -m$.
 The morphism of complexes of $k$\+modules $X\rarrow Z_0$
induces a morphism of DG\+comodules $X\rarrow C_{\ge -m}\ot_k Z_0$.
 Let $W$ be a DG\+subcomodule of $C_{\ge -m}\ot_k Z_0$ such that
the map $H(W)\rarrow H(C_{\ge -m}\ot_k Z_0)$ is injective,
the underlying $k$\+module of $W$ is finitely generated, 
$W$ contains $C_0\ot_k Z_0$, and the morphism $X\rarrow C_{\ge -m}
\ot_k Z_0$ factorizes through~$W$.
 Then, in particular, one has $H^n(W)=0$ for $n>0$.
 Let $T[1]$ be the cone of the morphism $X\rarrow W$; then
the component $T_0$ is isomorphic to the complex $Y_0$ in
the derived category of $k$\+modules.
 There is a distinguished triangle $T_0\rarrow T\rarrow T_{\le -1}
\rarrow T_0[1]$ in the triangulated category~$\D$.
  
 By the assumption of induction on~$m$, there exists a distinguished
triangle $U\rarrow T_{\le-1}\rarrow V\rarrow U[1]$ in
$\D_{[-m,-1]}=\D_{[-m+1,0]}(-1)$ such that $U$ belongs to the minimal
full subcategory of $\D_{[-m,-1]}$ generated by $\M_{[-m,-1]}[-n]$
with $n>0$ and closed under extensions, while $H^n(V)=0$ for $n>0$.
 It remains to use the $*$\+associativity lemma in order to obtain
the desired distinguished triangle $Y\rarrow X\rarrow Z\rarrow Y[1]$.
\end{proof}

\subsection{Negative cohomology}
 Assume that $k$ is a field.

\begin{prop} \
\begin{enumerate}
\renewcommand{\theenumi}{\arabic{enumi}}
\item $\M$ is the heart of a t\+structure on $\D$ if and only if
$H^n(C)=0$ for $n<0$.
\item One has $\Hom_\D(X,Y[n])=0$ for all $X$, $Y\in\M$ and\/
$n<0$ provided that $H^n(C)=0$ for $n<-1$.
\end{enumerate}
\end{prop}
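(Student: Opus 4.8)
The plan is to reduce both statements to a computation of the graded $\Hom$\+spaces $\Hom_\D(k(i),k(j)[n])$ together with a comparison of the cohomology of $C$ with that of $A$. Using the equivalence between the full subcategories of cohomologically bounded\+above objects in $\D(C{-}\mathit{comod})$ and $\D(A{-}\mathit{mod})$ (\cite[Theorem~A.1.2 and Remark~A.1]{Pkoszul}), under which $k(i)$ corresponds to the free DG\+module $A(i)$, one gets $\Hom_\D(k(i),k(j)[n])\simeq H^n(A)_{i-j}$, where the subscript denotes the internal\+degree\+$(i-j)$ component of $H^n(A)$. Since $A$ is internally graded in degrees $\le0$ with $A_0=k$ concentrated in cohomological degree~$0$, this space vanishes for $i>j$ and every~$n$, equals $k$ for $i=j$, $n=0$, and vanishes for $i=j$, $n\ne0$; for $i<j$ it is the ``motivic\+cohomology\+type'' group in internal degree $i-j<0$. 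In particular the objects $E_i=k(i)$ satisfy the hypotheses~\eqref{exceptional-weak} of the Theorem of Subsection~\ref{t-existence} and carry a twist functor with $E_i(1)=E_{i+1}$.

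For part~(1) I would invoke that Theorem directly: $\M$ is the heart of a (necessarily bounded) t\+structure on $\D$ if and only if $\Hom_\D(E_i,E_j[n])=0$ for all $i$, $j$ and $n<0$ and $\Hom_\D(E_i,E_j)=0$ for $i\ne j$. Via the formula above these two conditions say, respectively, that $H^n(A)=0$ for $n<0$ and that $H^0(A)_d=0$ for $d<0$; since $H^0(A)_0=k$ and $H^0(A)_d=0$ for $d>0$ in any case, together they amount to the single condition $(\ast)$ that $H^{\le0}(A)$ is concentrated in cohomological and internal degree~$0$, where it equals~$k$. It then remains to show $(\ast)\Leftrightarrow H^n(C)=0$ for $n<0$. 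For this I use the bar and cobar filtrations. Splitting off the unit, $A=k\oplus\bar A$ and $C=k\oplus C_+$ as complexes, one has $H^n(\bar A)=H^n(A)$ and $H^n(\bar C)=H^n(C_+)=H^n(C)$ for $n\ne0$. If $(\ast)$ holds then $H^{\le0}(\bar A)=0$; the $m$\+th associated graded piece of the bar filtration on $C=\mathrm{Bar}(A)$ is $\bar A[1]^{\ot m}$, with cohomology $(H(\bar A)[1])^{\ot m}$ concentrated in cohomological degrees~$\ge0$, and in each fixed internal degree only finitely many~$m$ contribute, so the spectral sequence converges and gives $H^n(C)=0$ for $n<0$. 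Conversely, if $H^n(C)=0$ for $n<0$ then $H^{\le-1}(\bar C)=0$, the $m$\+th associated graded piece of the cobar filtration on $A\simeq\Omega(C)$ is $\bar C[-1]^{\ot m}$ with cohomology concentrated in degrees~$\ge m$, so the only degree\+$0$ contribution to the $E_1$\+page is the $m=0$ summand $k$ in internal degree~$0$; hence $H^{<0}(A)=0$ and $H^0(A)=k$, which is $(\ast)$.

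For part~(2) a dévissage along the distinguished triangles exhibiting objects of $\M$ as iterated extensions of the $k(i)$ reduces the claim to $\Hom_\D(k(i),k(j)[n])=0$ for $n<0$, i.e.\ to $H^n(A)=0$ for $n<0$. Under the weaker hypothesis $H^n(C)=0$ for $n<-1$ one only obtains $H^{\le-2}(\bar C)=0$, so $H(\bar C)$ is concentrated in cohomological degrees~$\ge-1$; but then each $\bar C[-1]^{\ot m}$ still has cohomology in degrees~$\ge0$, so the cobar spectral sequence for $A\simeq\Omega(C)$ forces $H^n(A)=0$ for $n<0$ exactly as before. (One no longer controls $H^0(A)$, which is precisely why this hypothesis does not suffice for part~(1).)

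The heart of the matter, and the only place where real care is needed, is the grading bookkeeping: the normalizations of the reduced bar and cobar constructions and their filtrations, the identification $\Hom_\D(k(i),k(j)[n])\simeq H^n(A)_{i-j}$ with the correct direction of the internal shift, and the verification that in each fixed internal degree the bar and cobar filtrations are finite, so that the spectral sequences converge. Once this is settled, part~(1) is a translation of the Theorem of Subsection~\ref{t-existence} and part~(2) is a one\+line consequence of the cobar spectral sequence; I do not expect any genuine obstacle beyond the combinatorics of degrees.
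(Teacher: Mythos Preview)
Your proof is correct and follows essentially the same route as the paper's first argument for part~(1): invoke the Theorem of Subsection~\ref{t-existence} for $E_i=A(i)\simeq k(i)$, translate the t\+structure criterion into the vanishing $H^n(A/A_0)=0$ for $n\le0$, and then pass between $A$ and $C$ via the bar/cobar duality; for part~(2) the paper simply says it ``follows immediately from the form of the cobar-complex of $C$,'' which is exactly your cobar spectral sequence computation spelled out. The only addition in the paper is a second, grading-independent proof of~(1): when $H^{<0}(C)=0$ one replaces $C$ by its canonical truncation $\tau^{\ge0}C$, after which canonical truncations of complexes of $k$\+vector spaces are already DG\+comodule truncations, yielding the t\+structure directly; the converse is obtained by the same trick as in the ``if'' direction of Theorem~\ref{positive-cohomology}.
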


\begin{proof}
 One way to prove~(1) is to use Theorem~\ref{t-existence} for
$E_i=A(i)\in\D\sub\D(A{-}\mathit{mod})$.
 By that theorem, $\M$ is the heart of a t\+structure if and only if
$H^n(A/A_0)=0$ for $n\le 0$.
 It follows immediately from the forms of the reduced bar- and
cobar-constructions and their mutual inverseness up to
quasi-isomorphism that $H^n(A/A_0)=0$ for $n\le 0$ if and only if
$H^n(C/C_0)=0$ for $n<0$.

{\hbadness = 1500
 This approach depends on the assumption of negative internal grading;
the next one doesn't.
 To prove ``if'', replace $C$ with its canonical truncation
$\tau^{\ge 0}C = C/(d(C^{-1}) + \sum_{n<0}C^n)$, which is
quasi-isomorphic to~$C$; then the canonical truncations of
DG\+comodules over $\tau^{\ge0}C$ considered as complexes of
$k$\+vector spaces become also their canonical truncations as
DG\+comodules (cf.~\cite[Theorem~1.8$\.$(b)]{Pkoszul}).
 To check ``only if'', argue as in the proof of
Theorem~\ref{positive-cohomology}, the ``if'' part, using
the existence of a distinguished triangle $Y\rarrow X\rarrow Z
\rarrow Y[1]$ in $\D$ such that $H^n(Y)=0$ for $n\ge 0$ and
$H^n(Z)=0$ for $n\le -1$.
\par}

 Part~(2) follows immediately from the form of the cobar-complex
of~$C$.
\end{proof}

\begin{thm} \
\begin{enumerate}
\renewcommand{\theenumi}{\arabic{enumi}}
\item Assume that the bigraded $k$\+algebra $H(A)\simeq
\Hom^*_{\D(C{-}\mathit{comod})}(k,k(*))$ contains a central
nonzero-dividing element~$t$ of internal degree~$-e$ and
cohomological degree~$0$ such that the quotient algebra $H(A)/(t)$
modulo its zero internal degree component $H(A_0)=k$ is concentrated
in positive cohomological degrees.
 Then $H^n(C)=0$ for $n<-1$.
\item In the situation of~\textup{(1)}, assume that $e=1$ and
the quotient algebra $H(A)/(t)$ is a Koszul algebra over~$k$
concentrated on the diagonal where sum of the cohomological and
internal gradings is equal to zero.
 Let $D$ denote the bar-construction of $H(A)/(t)$.
 Then the bigraded coalgebra $H(C)$ is the tensor product of
the internally graded coalgebra $H(D)$, which is concentrated in
cohomological degree~$0$, and the exterior coalgebra $\Lambda(kt)$
with one cogenerator~$t$ in the internal degree~$-1$ and
the cohomological degree~$-1$.
\item Assume that the DG\+algebra $A$ contains an element~$t$
of internal degree~$-e$ and cohomological degree~$0$, annihilated
by the differential, central in $A$, and nonzero-dividing in both
$A$ and $H(A)$.
 Assume further that $H^n(A_i)=0$ for all\/ $n\le 0$ and\/ $0>i>-e$.
 Let $D$ denote the bar-construction of the DG\+algebra
$B=A/(t)$.
 Then one has $H^n(D)=\nbk0$ for all\/ $n>0$ if and only if
$H^n(C)=\nbk0$ for all\/ $n>0$, and $H^n(D)=\nbk0$ for all\/ $n<0$
if and only if $H^n(C)=\nbk0$ for all\/ $n<\nbk-1$.
 If both of these conditions hold, the bigraded coalgebra $H(C)$
is the tensor product of the internally graded coalgebra $H(D)$
and the exterior coalgebra $\Lambda(kt)$ with one cogenerator~$t$
in the internal degree~$-e$ and the cohomological degree~$-1$.
\end{enumerate}
\end{thm}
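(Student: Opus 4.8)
The plan is to deduce all three parts from spectral-sequence comparisons that become tractable because $k$ is a field. Two standard tools will be used. The first is the spectral sequence of the bar-construction of a DG\+algebra: filtering $C=\mathrm{Bar}(A)$ by the internal differential degree and applying the K\"unneth theorem over $k$ to the first page, one gets a multiplicative spectral sequence $\Tor^{H(A)}_*(k,k)\Rightarrow H^*(\mathrm{Bar}(A))=H^*(C)$, compatible with the bigrading, and likewise for $B$. The second tool is the classical Koszul-duality computation $\mathrm{Bar}(k[t])\simeq\Lambda(kt)$: the bar-construction of the polynomial ring on one central generator $t$ of internal degree $-e$ is formal, with cohomology the exterior coalgebra on a single cogenerator in $t$\+weight~$1$, cohomological degree~$-1$, internal degree~$-e$; combined with the Eilenberg--Zilber (shuffle) quasi-isomorphism $\mathrm{Bar}(B\ot_k k[t])\simeq\mathrm{Bar}(B)\ot_k\mathrm{Bar}(k[t])$ and K\"unneth over $k$, this computes the bar-construction of any ``polynomial extension''. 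The key feature exploited throughout is that $\Lambda(kt)$ is concentrated in $t$\+weights $0$ and~$1$, so in the relevant spectral sequences almost all differentials are forced to vanish.

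For part~(3): since $t\in A$ is annihilated by the differential, central, and nonzero-dividing, $B=A/(t)$ is a DG\+algebra and the $t$\+adic filtration $A\supset tA\supset t^2A\supset\dsb$ has associated graded DG\+algebra $\gr_tA\cong B\ot_k k[t]$ (regularity identifies $t^wA/t^{w+1}A$ with $B$). The induced filtration of $\mathrm{Bar}(A)$ produces a multiplicative spectral sequence with first page $H^*(\mathrm{Bar}(B\ot_k k[t]))\cong H^*(D)\ot_k\Lambda(kt)$, abutting to $H^*(C)$; as this page is concentrated in $t$\+weights $0$ and~$1$, every differential $d_r$ with $r\ge2$ vanishes, leaving only a single weight-raising differential $d_1$, determined by the extension class of the filtration. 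I would then obtain the two equivalences by degree bookkeeping: vanishing of $H^*(C)$ in the asserted range forces $d_1$ to be injective (resp.\ surjective) in the corresponding range through the edge maps of the spectral sequence, and the ``gap'' hypothesis $H^n(A_i)=0$ for $n\le0$, $-e<i<0$ --- equivalently $H^n(B_i)=0$ in that range --- together with the elementary support bounds for $\mathrm{Bar}(B)$ of the nonpositively internally graded DG\+algebra $B$ then pins down the vanishing of $H^*(D)$; the reverse implications are immediate from the two-row shape of the page. Finally, once both vanishing conditions hold, $H^*(D)$ is concentrated in cohomological degree~$0$, so $d_1$ vanishes for degree reasons, the spectral sequence degenerates at the first page, and, there being no extension problem over the field~$k$, $H^*(C)\cong H^*(D)\op H^*(D)(-e)[1]=H^*(D)\ot_k\Lambda(kt)$ with $t$ in internal degree~$-e$ and cohomological degree~$-1$, as claimed.

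For parts~(1) and~(2) the element $t$ lives only in $H(A)$, so $B$ is unavailable as a DG\+algebra; instead I would compose the bar spectral sequence $\Tor^{H(A)}_*(k,k)\Rightarrow H^*(C)$ with the $t$\+adic spectral sequence of the \emph{graded ring} $H(A)$, namely $\Tor^{H(A)/(t)}_*(k,k)\ot_k\Lambda(kt)\Rightarrow\Tor^{H(A)}_*(k,k)$ (which uses that $t$ is nonzero-dividing in $H(A)$). In case~(1), the hypothesis that the augmentation ideal of $H(A)/(t)$ lies in strictly positive cohomological degrees forces $\Tor^{H(A)/(t)}_*(k,k)$ into cohomological degrees $\ge0$; tensoring with $\Lambda(kt)$, whose nonunit class has cohomological degree~$-1$, keeps everything in degrees $\ge-1$, and since each of the two spectral sequences can only shrink the cohomological support of its abutment --- so no degeneration statement is needed --- we get $H^n(C)=0$ for $n<-1$. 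In case~(2), with $e=1$ and $\bar R:=H(A)/(t)$ a Koszul algebra concentrated on the antidiagonal where the cohomological and internal gradings sum to zero, its bar-construction $D=\mathrm{Bar}(\bar R)$ is formal and $H^*(D)=\Tor^{\bar R}_*(k,k)$ is concentrated on a single diagonal which that antidiagonal grading collapses to cohomological degree~$0$; arguing as in part~(3), the surviving differentials vanish for degree reasons and $H^*(C)\cong H^*(D)\ot_k\Lambda(kt)$ with $t$ in internal degree~$-1$ and cohomological degree~$-1$.

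The main obstacle is exactly the degeneration of these spectral sequences --- the vanishing of the weight-raising differential $d_1$ in the $t$\+adic spectral sequence, and of any higher differentials surviving from the bar spectral sequence. This never follows formally; it must be forced by combining the explicit vanishing hypotheses on $H(A)$ (or on $H(D)$) with careful bookkeeping in the tridegree given by $t$\+weight, cohomological degree, and internal degree. Getting this bookkeeping exactly right --- in particular, the precise role of the gap hypothesis $H^n(A_i)=0$ for $n\le0$, $-e<i<0$ in part~(3), and the way the Koszulity-on-the-antidiagonal hypothesis collapses $H^*(D)$ to a single cohomological degree in part~(2) --- is where the real work of the proof lies.
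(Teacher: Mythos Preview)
Your approach via the $t$-adic spectral sequence is largely correct and in fact coincides with the paper's own argument for parts~(1) and~(2), for the reverse implications in~(3), and for the final coalgebra statement in~(3). Even the \emph{positive} forward implication in~(3) ($H^n(C)=0$ for $n>0 \Rightarrow H^n(D)=0$ for $n>0$) goes through as you suggest: injectivity of $d_1\colon H^q(D)_i \to H^{q+2}(D)_{i+e}$ for $q\ge1$ yields a chain of injections whose target eventually lands in internal degree $>0$, where $D$ vanishes.

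The genuine gap is the \emph{negative} forward implication: $H^n(C)=0$ for $n<-1 \Rightarrow H^n(D)=0$ for $n<0$. Here your spectral sequence gives $d_1\colon H^q(D)_i \to H^{q+2}(D)_{i+e}$ bijective for $q\le-3$ and injective for $q=-2$, so $H^{-1}(D)_i \cong H^{-3}(D)_{i-e} \cong H^{-5}(D)_{i-2e} \cong\cdots$; both the cohomological and internal indices run to $-\infty$ together, and there is no support bound forcing termination. The gap hypothesis only constrains $H^*(D)_i$ for $-e<i<0$, which this chain never enters. The paper's route here is genuinely different: it builds a distinguished triangle $D(-e)[1]\to C\to D\xrightarrow{\partial} D(-e)[2]$ in $\D(D\text{-}\mathit{comod})$ by applying derived extension of scalars along $A\to B$ to the trivial module $k$ and then the bar-construction functor, and proves that $\partial=0$ as a morphism in the derived category. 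The key is that $D$ is cofree, so $\Hom_{\D(D\text{-}\mathit{comod})}(D,D(-e)[2])\cong H^{-2}(D_{-e})^*$; the element representing $\partial$ is the functional $H^{-2}(D_{-e})\to H^0(D_0)=k$ appearing in the long exact sequence, and it vanishes precisely because the next map $k\to H^{-1}(C_{-e})$ sends $1$ to the class $[t]$, which the gap hypothesis forces to be nonzero. In your language this says exactly that $d_1=0$ --- and once that is known, both equivalences are immediate from the resulting short exact sequence $0\to H(D)(-e)[1]\to H(C)\to H(D)\to 0$. Your proposed ``degree bookkeeping'' does not supply this vanishing, and without it the argument does not close.
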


\begin{proof}
 The silly filtration of the reduced bar-complex $\dsb\rarrow
A/A_0\ot_k A/A_0\rarrow A/A_0\rarrow k$ provides a spectral sequence
starting from the cohomology of the bar-construction of $H(A)$ and
converging to the cohomology of the bar-construction of~$A$.
 This reduces part~(1) to part~(3), if one takes also into account
(the first proof of) part~(1) of the preceding proposition.
 To prove part~(2), notice that in its assumptions the cohomology
coalgebras of the bar-constructions of $A$ and $H(A)$ are isomorphic,
since the grading on the cohomology of the bar-construction of $H(A)$
coming from the filtration coincides with the internal grading.
 So it remains to prove~(3).

 Consider the morphism of DG\+algebras $f\:A\rarrow B$. 
 Applying the left derived functor of extension of scalars
$X\maps\mathbb L E_f(X)=B\ot^{\mathbb L}_A X$
\cite[Subsection~1.7]{Pkoszul} to the trivial DG\+module $k$ over
the DG\+algebra $A$, we obtain an object
$\mathbb L E_f(k)\in\D(B{-}\mathit{mod})$.
 Since the DG\+module $B$ over $A$ is quasi-isomorphic to the cone
of the morphism $A(-e)\ovrarrow{t} A$, the bigraded $k$\+vector space
$H(\mathbb L E_F(k))$ is two-dimensional, with one cohomology class
in the internal grading~$0$ and cohomological grading~$0$, and
another one in the internal grading~$-e$ and cohomological
grading~$-1$.
 For the reasons of the natural filtration associated with
the internal grading of DG\+modules over $B$, there is a distinguished
triangle
$$
 k(-e)[1]\lrarrow \mathbb L E_f(k)\lrarrow k\lrarrow k(-e)[2]
$$
in $\D(B{-}\mathit{mod})$.
 Applying the triangulated functor of DG\+module bar-construction to
this distinguished triangle, we obtain a distinguished triangle
$$
 D(-e)[1]\lrarrow C\lrarrow D\lrarrow D(-e)[2]
$$
in $\D(D{-}\mathit{comod})$ (for the computation of the (pre)image
of the extension of scalars DG\+module $\mathbb L E_f(k)$ under
the bar-construction, see~\cite[Proposition~6.9]{Pkoszul}).

 The vector space of morphisms $D\rarrow D(-e)[2]$ in
$\D(D{-}\mathit{comod})$ is isomorphic to $H^{-2}(D_{-e})^*$;
the linear function corresponding to a morphism is the induced map
$H^{-2}(D_{-e})\rarrow H^0(D_0)=k$.
 For the morphism in the above distinguished triangle, this map is
zero, since the map $H^0(D_0)\rarrow H^{-1}(C_{-e})$ sends the unit
element to the class of the element $t\in A/A_0[1]\sub C$.
 It suffices to check this in the case $A=k[t]$ and $B=k$, and
then use the functoriality of our construction of distinguished
triangle.
 It follows from the condition of vanishing of $H^n(A_i)$ for $n\le 0$
and $0>i>-e$ that this class in $H^{-1}(C_{-e})$ is nontrivial.

 Thus there is a short exact sequence of left $H(D)$\+comodules
$$
 0\lrarrow H(D(-e)[1])\lrarrow H(C)\lrarrow H(D)\lrarrow 0.
$$
 The ``if and only if'' assertions of part~(3) follow immediately.

 Now assume that $H^n(D)=0$ for all $n\ne0$.
 Define a decreasing filtration $F$ on the DG\+algebra $A$ by
the rule $F^iA=t^iA$.
 This filtration is finite on every internal degree component, so
there is a spectral sequence starting from the cohomology of
the bar-construction of $\gr_FA$ and converging to the cohomology
of the bar-construction of~$A$.
 The associated graded DG\+algebra $\gr_FA$ is isomorphic to
$B\ot_k k[t]$, hence the cohomology of the bar-construction of
$\gr_FA$ is isomorphic to $H(D)\ot_k\Lambda(kt)$.
 To check this, it suffices to construct the ``shuffle product''
morphism from the tensor product of bar-constructions to
the bar-construction of the tensor product~\cite[Proposition~1.1
of Chapter~3]{PP}, and then reduce the question to the case of
DG\+algebras with trivial multiplication and/or differential by
passing to associated graded objects with respect to appropriate
additional filtrations.
 The grading on the cohomology of the bar-construction of $\gr_FA$
induced by the filtration $F$ coincides with minus the cohomological
grading, hence the cohomology coalgebras of the bar-constructions
of $A$ and $\gr_FA$ are isomorphic.
\end{proof}

\begin{rem}
 In the situation of part (2) or~(3) of Theorem, one does \emph{not}
expect the DG\+coalgebra $C$ to be quasi-isomorphic to its
cohomology coalgebra $H(C)$, even though all the Massey
comultiplications on $H(C)$ clearly vanish.
\end{rem}

\Section{Triangulated Categories of Morphisms}
\label{categories-of-morphisms-appx}

 The results of~\ref{exact-triangulated} concerning the existence
of (canonically defined and compatible with the compositition)
morphisms of functors $\theta^n_{\E,\D}$ for an exact subcategory
$\E$ in a triangulated category $\D$ strongly suggest that there
should exist a triangulated functor $\Theta\:\D^b(\E)\rarrow\D$
compatible with the embeddings of $\E$ into $\D^b(\E)$ and~$\D$.
 Unfortunately, it seems to be impossible to construct such
a functor in general, because of the problems with nonfunctoriality
of the cone in a triangulated category.

 Here we will describe a refinement of the triangulated category
structure that allows for construction of a functor~$\Theta$.
 Two different versions of such a refinement were suggested in
the papers~\cite{Beil2,Neem2}; here we follow some kind of a middle
path between the two.
 Namely, we use a modified form of the argument of~\cite{Neem2} to
show that the entire filtered triangulated category structure as
defined in~\cite{Beil2} is not needed for the above-mentioned
purpose; it suffices to have the triangulated category of two-step
filtrations, or, which is the same, the triangulated category of
morphisms, or of distinguished triangles.
 Our goal is to prove that a functor $\Theta$ exists whenever
the triangulated category $\D$ can be obtained from the homotopy
category of (unbounded) complexes over some additive category by
passing to triangulated subcategories and quotient categories any
number of times.
 
\subsection{Filtered triangulated categories}
\label{filtered-triangulated}
 Let $[a,b]\sub\Z$ be a finite or infinite segment of the integers,
$-\infty\le a < b\le +\infty$.
 An \emph{$[a,b]$\+filtered triangulated category} $(\D,\D\F)$ is
the following set of data.
 A triangulated category $\D\F$ is endowed with a family of
triangulated subcategories $\D\F_i$, \ $i\in[a,b]$ such that
$\D\F$ is generated by $\D\F_i$ and
\begin{equation}  \label{ft-semiorthogonal}
 \Hom_{\D\F}(X,Y)=0\quad\text{for all $X\in\D\F_i$, \ $Y\in\D\F_j$,
 and $i>j$.}
\end{equation}

 For a segment $[c,d]\sub[a,b]$, denote by $\D\F_{[c,d]}$ the full
triangulated subcategory of $\D\F$ generated by $\D\F_i$ with
$i\in[c,d]$.
 A twist functor $X\maps X(1)$ acting from $\D\F_{[a,b-1]}$ to
$\D\F_{[a+1,b]}$ is given such that $\D\F_{i+1}=\D\F_i(1)$ for
all $a\le i\le b-1$.
 There is a natural transformation $\sigma_X\:X\rarrow X(1)$ for
all $X\in\D\F_{[a,b-1]}$ such that $\sigma_{X(1)}=\sigma_X(1)$
for all $X\in\D\F_{[a,b-2]}$.

 Finally, there is a functor $w\:\D\F\rarrow\D$ such that
$w(\sigma_X)$ is an isomorphism for all $X\in\D\F_{[a,b-1]}$.
 The restriction of the functor~$w$ to $\D\F_i$ is an equivalence
of categories $\D\F_i\simeq\D$, and the map
\begin{equation} \label{ft-hom-isomorph}
 \begin{array}{l}
 w\:\Hom_{\D\F}(X,Y)\lrarrow\Hom_\D(w(X),w(Y)) \\
 \text{is an isomorphism for all $X\in\D\F_{[a,i]}$ and
 $Y\in\D\F_{[i,b]}$}.
 \end{array}
\end{equation}

 It follows from~\eqref{ft-semiorthogonal} that there is
a triangulated functor of ``successive quotients''
$q=(q_i)_{i\in[a,b]}\:\D\F\rarrow\prod\D\F_i$
(see~\cite[Sections~1 and~4]{BoKa} or~\cite[Lemma~1.3.2]{BGSch}).

\begin{ex}
 Fix a segment $[a,b]$ as above.
 Let $\E$ be an exact category and $\F$ be the exact category of
finitely filtered objects $Z$ in $\E$ as defined
in~\ref{exact-cat-examples}$\.$(5) for which $\gr^iZ=0$ for
$i\notin[a,b]$.
 Set $\D=\D(\E)$ and $\D\F=\D(\F)$.
 Let $\D\F_i\sub\D\F$ be the image of the fully faithful triangulated
functor $\D\rarrow\D\F$ induced by the embedding of exact categories
$\E\rarrow\F$ assigning to an object $X\in\E$ the filtered object
$Z\in\F$ with $\gr^iZ=X$ and $\gr^jZ=0$ for $j\ne i$.
 The twist functor $Z\maps Z(1)$ is induced by the shift of
the filtration, and the natural transformation~$\sigma$ is defined
in the obvious way.
 Finally, let $w\:\D\F\rarrow\D$ be the triangulated functor induced
by the forgetful functor $\F\rarrow\E$ assigning to a finitely
filtered object $Z\in\F$ the stabilizing limit of $\gr^{i,j}X$ as
$i\to-\infty$ and $j\to\infty$.
 The condition~\eqref{ft-semiorthogonal} is easy to check.
 As to the condition~\eqref{ft-hom-isomorph}, it is obvious when
the exact category structure on $\E$ is trivial, so $\D=\K(\E)$
and $\D\F=\K(\F)$.
 The general case follows from the next proposition.
\end{ex}

\begin{prop}
 Let $(\D,\D\F)$ be an $[a,b]$\+filtered triangulated category and
$\C\sub\D$ be a triangulated subcategory.
 Denote by $\C\F\sub\D\F$ the triangulated subcategory consisting
of all the objects $X\in\D\F$ such that $w(q_i(X))\in\C$ for all
$i\in[a,b]$.
 Then
\begin{enumerate}
\renewcommand{\theenumi}{\arabic{enumi}}
\item The pair $(\C,\C\F)$ with the additional data induced from
that for the pair $(\D,\D\F)$ is an $[a,b]$\+filtered triangulated
category.
\item If $\C\sub\D$ is a thick subcategory, then so is
$\C\F\sub\D\F$, and the pair $(\D/\C\;\D\F/\C\F)$ with
the additional  data induced from that for the pair $(\D,\D\F)$
is an $[a,b]$\+filtered triangulated category.
\end{enumerate}
\end{prop}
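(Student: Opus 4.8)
The plan is to verify, in each of the two cases, that the induced data satisfy one by one the clauses in the definition of an $[a,b]$\+filtered triangulated category from~\ref{filtered-triangulated}. The single observation that organizes everything is that
$$
 \C\F=\bigcap_{i\in[a,b]}(w\circ q_i)^{-1}(\C),
$$
where each $w\circ q_i\:\D\F\rarrow\D$ is a triangulated functor, being the composition of the triangulated functor $q_i\:\D\F\rarrow\D\F_i$ with the equivalence $w|_{\D\F_i}\:\D\F_i\simeq\D$. Hence $\C\F$ is automatically a triangulated subcategory of $\D\F$, and a thick one as soon as $\C$ is thick, since the preimage of a thick subcategory under a triangulated functor is thick. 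The second tool is the finite Postnikov tower of an object $X\in\D\F$ coming from the semiorthogonality~\eqref{ft-semiorthogonal} and the generation of $\D\F$ by the $\D\F_i$: its graded pieces $Y_i$ lie in $\D\F_i$ and satisfy $w(Y_i)\simeq w(q_i(X))$. Consequently $X$ lies in $\C\F$ if and only if every $Y_i$ lies in $\C\F_i:=\C\F\cap\D\F_i$; in particular $\C\F$ is generated by the $\C\F_i$, and more generally $\C\F\cap\D\F_{[c,d]}$ is the triangulated subcategory of $\C\F$ generated by the $\C\F_r$ with $r\in[c,d]$.

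For part~(1): the semiorthogonality~\eqref{ft-semiorthogonal} for $(\C,\C\F)$ is inherited verbatim since $\Hom_{\C\F}=\Hom_{\D\F}$ on full subcategories, and the generation of $\C\F$ by the $\C\F_i$ was noted above. Under $w$, an object $X\in\D\F_i$ lies in $\C\F$ iff $w(X)\in\C$ (because $q_j(X)=0$ for $j\ne i$ and $q_i(X)\simeq X$), so $w$ restricts to an equivalence $\C\F_i\simeq\C$. The twist $X\maps X(1)$ maps $\C\F\cap\D\F_{[a,b-1]}$ into $\C\F$, since $q_j(X(1))\simeq q_{j-1}(X)(1)$ and $w(\sigma)$ is an isomorphism force $w(q_j(X(1)))\simeq w(q_{j-1}(X))\in\C$; together with $\D\F_{i+1}=\D\F_i(1)$ this gives $\C\F_{i+1}=\C\F_i(1)$. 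The natural transformation $\sigma$ restricts, with $\sigma_{X(1)}=\sigma_X(1)$ inherited. Applying $w$ to the Postnikov tower shows $w(X)$ is an iterated extension of the $w(q_i(X))\in\C$, so $w$ maps $\C\F$ into $\C$; that $w(\sigma_X)$ is an isomorphism is inherited, and~\eqref{ft-hom-isomorph} for $(\C,\C\F)$ is simply the restriction of~\eqref{ft-hom-isomorph} for $(\D,\D\F)$, once one uses $\C\F_{[a,i]}=\C\F\cap\D\F_{[a,i]}$ and $\C\F_{[i,b]}=\C\F\cap\D\F_{[i,b]}$ from the previous paragraph.

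For part~(2) the thickness of $\C\F$ is the preimage observation above, so both Verdier quotients $\D/\C$ and $\D\F/\C\F$ exist; I would then take $(\D/\C)\F:=\D\F/\C\F$, let $(\D/\C)\F_i$ be the essential image of $\D\F_i$ in $\D\F/\C\F$, and equip everything with the induced twist, $\sigma$, and $w$. Since $w$ and $q_i$ carry $\C\F$ into $\C$ and into $\C\F_i\subseteq\C\F$ respectively, they descend to $\overline w\:\D\F/\C\F\rarrow\D/\C$ and $\overline q_i\:\D\F/\C\F\rarrow\D\F_i/\C\F_i$; likewise the truncation functors $\tau^{\le i},\tau^{>i}$ attached to the semiorthogonal decomposition $(\D\F_{[a,i]},\D\F_{[i+1,b]})$ preserve $\C\F$ (one has $w\,q_r\,\tau^{\le i}\simeq w\,q_r$ for $r\le i$ and $w\,q_r\,\tau^{\le i}=0$ for $r>i$), hence descend and produce the functorial triangles exhibiting $\D\F/\C\F$ as generated by the $(\D/\C)\F_i$ with the required $\Hom$\+vanishing, and yielding the successive quotients functor $q$ downstairs. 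The vanishing~\eqref{ft-semiorthogonal} in $\D\F/\C\F$ follows because a morphism from an object of $(\D/\C)\F_i$ to one of $(\D/\C)\F_j$ with $i>j$ is represented by a roof which, after applying the truncations, has apex in $\D\F_i$ and is therefore zero already upstairs; and $\overline w|_{(\D/\C)\F_i}$ is an equivalence onto $\D/\C$ because $w$ induces compatible equivalences $\D\F_i\simeq\D$ and $\C\F_i\simeq\C$.

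The step I expect to be the main obstacle is the descent of the $\Hom$\+isomorphism~\eqref{ft-hom-isomorph}, equivalently the full faithfulness of the induced functor $\D\F_i/\C\F_i\rarrow\D\F/\C\F$. One must show that for $X\in\D\F_{[a,i]}$ and $Y\in\D\F_{[i,b]}$ every morphism $\overline X\rarrow\overline Y$ in $\D\F/\C\F$ is represented by a roof $X\xleftarrow{s}X'\xrightarrow{f}Y$ with $X'\in\D\F_{[a,i]}$ and $\Cone(s)\in\C\F\cap\D\F_{[a,i]}$, and that two such roofs representing the same morphism are dominated by a third of the same shape. The tool is again the pair of truncation functors: applying $\tau^{\le i}$ to an arbitrary roof, and using that $\tau^{\le i}$ preserves both $\D\F_{[a,i]}$ and $\C\F$ while the comparison maps become isomorphisms after truncation (by semiorthogonality), reduces the calculus of fractions in $\D\F/\C\F$ to that in $\D\F_{[a,i]}/(\C\F\cap\D\F_{[a,i]})$, where one may invoke~\eqref{ft-hom-isomorph} upstairs together with the equivalence $w\:\D\F_i\simeq\D$. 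Once this is in place, the remaining clauses --- the functoriality and compatibilities of the induced twist and of $\sigma$, and $\overline w(\overline\sigma)$ being an isomorphism --- are formal consequences of the corresponding statements upstairs.
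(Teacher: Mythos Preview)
Your treatment of part~(1) is correct and more explicit than the paper, which simply declares it obvious. For part~(2), your identification of $\C\F$ as an intersection of preimages under the triangulated functors $w\circ q_i$ is the right way to get thickness, and your use of the descended truncation functors to produce the semiorthogonal decomposition and the equivalences $(\D/\C)\F_i\simeq\D/\C$ is sound --- this is essentially the content of the reference the paper cites.

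The gap is in your treatment of~\eqref{ft-hom-isomorph}. First, the parenthetical ``equivalently the full faithfulness of $\D\F_i/\C\F_i\rarrow\D\F/\C\F$'' is not right: full faithfulness gives only the special case $X,Y\in\D\F_i$, whereas~\eqref{ft-hom-isomorph} concerns $X\in\D\F_{[a,i]}$ and $Y\in\D\F_{[i,b]}$ with $r\le s$ for their graded pieces, and there is no reduction by d\'evissage because the relevant $\Hom$ groups do not vanish in the direction ``low to high''. Second, the plan of ``applying $\tau^{\le i}$ to an arbitrary roof'' does not go through. Given a left roof $X\xleftarrow{s}T\xrightarrow{f}Y$ with $\Cone(s)\in\C\F$, the map $s$ does factor through $T\to T_{\le i}$ (since $X\in\D\F_{[a,i]}$), but $f$ need not, because $Y\in\D\F_{[i,b]}$ and $\Hom(T_{\ge i+1},Y)$ is not forced to vanish. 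Replacing $T$ by $T_{\ge i}$ has the dual problem: the map to $Y$ survives, but $\Cone(T_{\ge i}\to X)$ sits in a triangle with $\Cone(s)$ and $T_{\le i-1}$, and the latter has no reason to lie in $\C\F$. So naive truncation of the apex destroys either the right leg of the roof or the cone condition on the left leg.

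The paper's device is different and is exactly what your argument is missing. Rather than truncate $T$, one works with the cone $C=\Cone(T\to X)\in\C\F$. In the triangle $C_{\ge i+1}\to C\to C_{\le i}$, one replaces $C_{\ge i+1}$ by the object $\kappa_i\bigl(w(C_{\ge i+1})\bigr)\in\C\F_i$, using~\eqref{ft-hom-isomorph} upstairs to factor the connecting map $C_{\le i}[-1]\to C_{\ge i+1}$ through $\kappa_i\bigl(w(C_{\ge i+1})\bigr)$. The new cone $C'\in\C\F_{[a,i]}$ differs from $C$ by an object $E$ with $w(E)=0$, hence $\Hom_{\D\F}(X,E[n])=0$ for all $n$; this is what lets one factor $X\to C$ through $C'\to C$ and obtain a new apex $T'=\Cone(X\to C')[-1]\in\D\F_{[a,i]}$. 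Once the apex is in $\D\F_{[a,i]}$, both surjectivity and injectivity follow directly from~\eqref{ft-hom-isomorph} upstairs by lifting through $\kappa_i$. The essential trick --- pushing the high-degree part of $C$ down to degree~$i$ via $\kappa_i\circ w$ rather than discarding it --- is not available from the truncation functors alone.
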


\begin{proof}
 Part~(1) is obvious; let us prove part~(2).
 Condition~\eqref{ft-semiorthogonal} and the equivalence
$w\:(\D\F/\C\F)_i\simeq\D/\C$ follow
from~\cite[Subsection~1.3.3]{BGSch}, so it remains
to check~\eqref{ft-hom-isomorph}.

 Let us prove surjectivity.
 A morphism in the category $\D/\C$ is represented by a fraction
$w(X)\larrow Q\rarrow w(Y)$, where $A=\Cone(Q\to w(X))\in\C$.
 Let $\kappa_i\:\D\simeq\D\F_i\rarrow\D\F$ be the embedding inverse
to the equivalence $w\:\D\F_i\rarrow\D$.
 By the condition~\eqref{ft-hom-isomorph} for the pair $(\D,\D\F)$,
there is a morphism $X\rarrow \kappa_i(A)$ in $\D\F$ corresponding
to the morphism $w(X)\rarrow A$ in~$\D$.
 Let $T=\Cone(X\to \kappa_i(A))[-1]$; since $w\kappa_i(A)\simeq A$,
we have $w(T)\simeq Q$.
 Furthermore, $T\in\D\F_{\le i}$, hence there is a morphism
$T\rarrow Y$ corresponding to the morphism $Q\rarrow X$.
 The fraction $X\larrow T\rarrow Y$ provides the desired morphism
$X\rarrow Y$ in $\D\F/\C\F$.

 To check injectivity, consider a fraction $X\larrow T\rarrow Y$
in $\D\F$ representing a morphism $X\rarrow Y$ in $\D\F/\C\F$.
 First of all let us show that one can choose $T\in\D\F_{[a,i]}$.
 Let $C=\Cone(T\to X)\in\C\F$; then there exists a distinguished
triangle 
$$
 C_{\ge i+1}\rarrow C\rarrow C_{\le i}\rarrow C_{\ge i+1}[1],
 \quad C_{\ge i+1}\in\D\F_{[i+1,b]}, \ \ C_{\le i}\in\D\F_{[a,i]}.
$$
 In fact, it will be only important for us that $C_{\ge i+1}\in
\D\F_{[i,b]}$.
 It follows from~\eqref{ft-hom-isomorph} that the morphism
$C_{\le i}[-1]\rarrow C_{\ge i+1}$ decomposes as $C_{\le i}[-1]
\rarrow\kappa_i w(C_{\ge i+1}) \rarrow C_{\ge i+1}$.
 Set $C'=\Cone(C_{\le i}[-1]\to \kappa_i w(C_{\ge i+1}))$
and $E=\Cone(\kappa_i w(C_{\ge i+1})\to C_{\ge i+1})$.
 By~\eqref{ft-hom-isomorph}, one has $\Hom_{\D\F}(X,E[n])=0$
for all~$n$.
 By the octahedron axiom, there is a distinguished triangle
$$
 C'\lrarrow C\lrarrow E\lrarrow C'[1],
$$
hence the morphism $X\rarrow C$ factorizes through the morphism
$C'\rarrow C$.
 Now let $T'=\Cone(X\to C')[-1]$; then the morphism $T'\rarrow X$
factorizes through the morphism $T\rarrow X$.
 By the construction, $C'\in\C\F_{[a,i]}$ and $T'\in\D\F_{[a,i]}$.

 Finally, suppose that the morphism $X\rarrow Y$ in $\D\F/\C\F$
represented by a fraction $X\larrow T\rarrow Y$ with
$T\in\D\F_{[a,i]}$ maps to a zero morphism in $\D/\C$.
 This means that the morphism $w(T)\rarrow w(Y)$ factorizes
through an object $A\in\C$.
 Then, by~\eqref{ft-hom-isomorph}, the morphism $T\rarrow Y$
factorizes through the object $\kappa_i(A)\in\C\F$, hence
our morphism $X\rarrow Y$ in $\D\F/\C\F$ vanishes.
\end{proof}

\begin{rem}
 It is also easy to see that if $(\D,\D\F)$ is an $[a,b]$\+filtered
triangulated category, then so is $(\D^\sat,\D\F^\sat)$, where
the triangulated category structure on the saturation of a triangulated
category~\cite{BS} is presumed for $\D^\sat$ and $\D\F^\sat$.
\end{rem}

\subsection{Category of morphisms and realization functor}
 From now on we will consider $[0,1]$\+filtered triangulated
categories, or triangulated categories of two-step filtrations.
 In this case, the twist functor $Z\maps Z(1)$ and the natural
transformation~$\sigma$ on $\D\F$ can be omitted in
the definition of a filtered triangulated category, as they
are determined by the remaining data.

 Recall the notation $\kappa_i$, \ $i=0$,~$1$ for the functors
$\D\simeq\D\F_i\rarrow\D\F$ introduced in the proof of
Proposition~\ref{filtered-triangulated}.
 The condition~\eqref{ft-hom-isomorph} simply means that
the functor~$w$ is right adjoint to~$\kappa_0$ and left
adjoint to~$\kappa_1$.
 It follows from~\eqref{ft-semiorthogonal} that the functor
$w\circ q_0$ is left adjoint to~$\kappa_0$ and the functor
$w\circ q_1$ is right adjoint to~$\kappa_1$.

 Finally, there is a functor $\lambda\:\D\rarrow\D\F$ left
adjoint to the functor $w\circ q_0$.
 It is given by the rule $\lambda(X)=\Cone(\kappa_0(X)\to
\kappa_1(X))[-1]$, where the morphism $\kappa_0(X)\rarrow\kappa_1(X)$
corresponds to the identity endomorphism of~$X$ under
the isomorphism~\eqref{ft-hom-isomorph}.
 The functor $\lambda[1]$ is right adjoint to the functor $w\circ q_1$.

 The situation has a triangular symmetry.
 Namely, setting $\D\F'_0=\lambda(\D)$, \ $\D\F'_1=\D\F_0$, and
$w'=w\circ q_0$ defines a new $[0,1]$\+filtered triangulated category
structure on the pair $(\D,\D\F)$, and the third such structure is
given by $\D\F''_0=\D\F_1$, \ $\D\F''_1=\lambda(\D)$, and
$w''=w\circ q_1$.
 Ideally, one would like these three sets of data to be permuted by
a triangulated autoequivalence of $\D\F$ whose third power would be
isomorphic to the shift functor $Z\maps Z[1]$, but it is not clear
how to obtain such an autoequivalence from our
conditions~(\ref{ft-semiorthogonal}--\ref{ft-hom-isomorph}).

\begin{thm}
 Let $(\D,\D\F)$ be a $[0,1]$\+filtered triangulated category and
$\E$ be an exact subcategory in $\D$ in the sense
of~\textup{\ref{exact-triangulated}} such that\/ $\Hom_\D(X,Y[n])=0$
for all $X$, $Y\in\E$ and\/ $n<0$.
 Then there exists a natural triangulated functor $\Theta:\D^b(\E)
\rarrow\D$ whose restriction to $\E$ is the identity.
\end{thm}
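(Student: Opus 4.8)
The goal is to construct a triangulated functor $\Theta\:\D^b(\E)\rarrow\D$ extending the identity on $\E$, given the $[0,1]$\+filtered triangulated category $(\D,\D\F)$. The basic idea, following \cite{Neem2}, is that a two-step filtered structure already suffices to produce, for every \emph{bounded complex} $X^\bu$ over~$\E$, a canonical object $\Theta(X^\bu)\in\D$ realizing it as an iterated extension of its shifted terms, together with enough functoriality to pass to the homotopy category and then localize. First I would set up the filtered machinery: using the functors $\kappa_0$, $\kappa_1$, $w\circ q_0$, $w\circ q_1$, and the ``triangular symmetry'' of the three filtered structures on $(\D,\D\F)$ (with $\lambda$ producing the third), one can iterate $\D\F$ to obtain, for each $n\ge0$, an $[0,n]$\+filtered triangulated category $(\D,\D\F^{(n)})$; this is where Proposition~\ref{filtered-triangulated} is applied repeatedly, since $\D$ arises from a homotopy category of complexes by taking triangulated subcategories and quotients, and each such operation is inherited by the filtered enhancement. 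The key point of the symmetry is that the ``associated graded'' functors $q_i$ and the forgetful functor $w$ interact so that one can glue a complex's terms.

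\textbf{Construction of $\Theta$ on complexes and on the homotopy category.} Given a bounded complex $X^\bu$ with $X^i\in\E$, I would produce an object $F(X^\bu)\in\D\F^{(n)}$ (for $[0,n]$ containing the support of the complex) whose $i$\+th successive quotient $q_i$ is $X^i[-i]$ and whose ``differentials,'' recorded in the extension data of the filtration, are the $d^i$ of the complex; the existence of such an object and its uniqueness up to (not necessarily unique) isomorphism follow from the vanishing $\Hom_\D(X,Y[n])=0$ for $n<0$ together with $\eqref{ft-hom-isomorph}$, which forces the relevant obstruction and automorphism groups to vanish in the right ranges. Then set $\Theta(X^\bu)=w(F(X^\bu))\in\D$. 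The heart of the argument is to check that $\Theta$ descends to the homotopy category $\K^b(\E)$: a homotopy between chain maps must be shown to induce a morphism $F(X^\bu)\rarrow F(Y^\bu)$ independent of the choice, and a null\+homotopic map must go to zero; this is exactly the place where the two-step filtered enhancement is doing real work, replacing the nonfunctorial cone by the functorial ``gluing'' object $\lambda$. Here I would follow Neeman's bookkeeping, but streamlined: rather than invoking a full filtered triangulated category in Beilinson's sense, one only needs the iterated two-step structures $\D\F^{(n)}$ and the adjunctions among $\kappa_i$, $w$, $\lambda$.

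\textbf{Passing to the derived category.} Once $\Theta\:\K^b(\E)\rarrow\D$ is a triangulated functor, it remains to show it kills acyclic complexes, so that it factors through $\D^b(\E)=\K^b(\E)/\Ac^b(\E)$. For an \emph{exact} complex in the sense of~\ref{exact-derived} this is immediate: the admissible triples $Z^i\rarrow X^i\rarrow Z^{i+1}$ assemble, via the filtered structure, into a filtration of $F(X^\bu)$ with vanishing successive quotients in $\D$ (the two-term pieces becoming isomorphisms under $w$), so $w(F(X^\bu))=0$; for a general acyclic complex one uses Lemma~\ref{exact-derived}(1) that it is a direct summand of an exact one. Finally, the natural maps $\theta^n_{\E,\D}\:\Ext^n_\E(X,Y)\rarrow\Hom_\D(X,Y[n])$ of Corollary~\ref{exact-triangulated}.2 are recovered as $\Theta$ applied to morphisms in $\D^b(\E)$, so compatibility of $\Theta$ with the embeddings of $\E$ is automatic and the restriction of $\Theta$ to $\E$ is the identity. \textbf{The main obstacle} I anticipate is the descent to $\K^b(\E)$: verifying that the assignment $X^\bu\maps F(X^\bu)$ can be made functorial up to coherent homotopy — i.e.\ that chain homotopies are witnessed inside $\D\F^{(n)}$ in a way that is additive and compatible with composition — requires a careful induction on the length of the complex and on $n$, and is the technical core that the filtered enhancement exists precisely to handle. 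The rest (iteration of Proposition~\ref{filtered-triangulated}, vanishing of obstructions from the $\Hom$\+degree hypotheses, killing acyclics) is routine given that groundwork.
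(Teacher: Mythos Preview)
Your proposal contains a genuine gap: you plan to iterate the $[0,1]$\+filtered structure to produce $[0,n]$\+filtered triangulated categories $\D\F^{(n)}$, and then build $F(X^\bu)\in\D\F^{(n)}$ with all graded pieces at once. But nothing in the hypotheses lets you do this. Proposition~\ref{filtered-triangulated} concerns passing a filtered structure to subcategories and quotients of $\D$; it does not produce longer filtrations from a two\+step one. Indeed, the paper remarks at the very end of the proof that even comparing the two dual constructions would require a $[0,2]$\+filtered structure $(\D,\D\widetilde{\F})$, which is \emph{not} assumed. Your parenthetical ``since $\D$ arises from a homotopy category of complexes by taking triangulated subcategories and quotients'' is also not part of the theorem's hypotheses; that is the motivating class of examples, but the theorem takes an abstract $[0,1]$\+filtered triangulated category as given.

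The paper's approach avoids longer filtrations entirely. It builds $T_n\:\Com^{[-n,0]}(\E)\rarrow\D$ by induction, using the $[0,1]$\+structure only once at each step: write $C^\bu=\Cone(A^\bu\to C^0)$ with $A^\bu\in\Com^{[-n+1,0]}(\E)$, form the morphism $f_{C^\bu}\:\kappa_0(T_{n-1}(A^\bu))\rarrow\kappa_1(C^0)$ in $\D\F$, and set $T_n(C^\bu)=w(\Cone f_{C^\bu})$. The crucial point, which you have misplaced, is that \emph{functoriality already holds at the level of $\Com^b(\E)$}: the vanishing $\Hom_{\D\F}(\kappa_1(C^0),\kappa_0(T_{n-1}(A^\bu)))=0$ (semiorthogonality) and $\Hom_{\D\F}(\kappa_0(T_{n-1}(A^\bu)),\kappa_1(C^0)[-1])=0$ (from $\Im T_{n-1}\sub\E[n-1]*\dsb*\E$ and the hypothesis $\Hom_\D(X,Y[m])=0$ for $m<0$) make the cone of $f_{C^\bu}$ a genuine functor of $C^\bu$. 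Descent to $\K^b(\E)$ is then trivial---$\Cone(\id_{A^\bu})$ splits as a sum of two\+term contractible complexes, each of which visibly dies under $T$---and killing acyclics is checked on the generators $0\to X'\to X\to X''\to 0$. So the ``coherent homotopy'' bookkeeping you anticipate as the main obstacle does not arise; the real idea you are missing is that a single two\+step filtration, reused inductively, suffices to make one cone functorial at a time.
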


\begin{proof}
 We start with constructing a functor from the category of bounded
complexes $\Com^b(\E)$ over $\E$ (and closed morphisms between them)
into~$\D$.
 Let $\Com^{[c,d]}(\E)$ denote the category of complexes concentrated
in the interval $[c,d]$ of cohomological degrees.
 We proceed by induction in~$n$ constructing a compatible system of
functors $T_n\:\Com^{[-n,0]}(\E)\rarrow\D$ such that
$\Im T_n\sub\E[n]\dsb*\E\sub\D$ in the notation
of~\cite[1.3.9-10]{BBD}.
 The embedding $E\rarrow\D$ provides the functor~$T_0$.

 Given the functor $T_{n-1}$, we construct the functor $T_n$
as follows.
 Let $C^\bu$ be a complex over $\E$ concentrated in the degrees
$[-n,0]$; then $C$ is the cone of a closed morphism of complexes
$A^\bu\rarrow C^0$, where the complex $A^\bu$ is concentrated in
the degrees $[-n+1,0]$ and the object $C^0\in\E$ is considered as
a complex concentrated in degree~$0$.
 Consider the morphism $f_{C^\bu}\:\kappa_0(T_{n-1}(A^\bu))\rarrow
\kappa_1(C^0)$ in $\D\F$ corresponding to the morphism
$T_{n-1}(A^\bu)\rarrow C^0$ in~$\D$.
 Since $\Hom_{\D\F}(\kappa_1(C^0),\kappa_0(T_{n-1}(A^\bu))) = 0
= \Hom_{\D\F}(\kappa_0(T_{n-1}(A^\bu)),\kappa_1(C^0)[-1])$, one
can see that the cone of the morphism~$f_{C^\bu}$ is functorial
in $C^\bu$, i.~e., defines a functor $\Com^{[-n,0]}(\E)\rarrow\D\F$.
 Composing this functor with the functor~$w$, we obtain
the desired functor~$T_n$.

 It is clear that the functors $T_n$ agree with each other and are
compatible with the cohomological shift, thus they extend to
a functor $T\:\Com^b(\E)\rarrow\D$.
 The functor $T$ is additive, since it preserves finite direct sums.
 A morphism $A^\bu\rarrow B^\bu$ in $\Com^b(E)$ is homotopic to zero
if and only if it factorizes through the complex $\Cone(\id_{A^\bu})$
(or $\Cone(\id_{B^\bu})[-1]$), and this complex is isomorphic to
the direct sum of the complexes $\Cone(\id_{A^i})$.
 The latter complexes are clearly annihilated by $T$, hence
the functor $T$ factorizes through the functor $\Com^b(\E)\rarrow
\K^b(\E)$.
 The thick subcategory $\Ac^b(\E)\sub\K^b(\E)$ is generated by
complexes of the form $0\rarrow X'\rarrow X\rarrow X''\rarrow 0$,
where $X'\rarrow X\rarrow X''$ is an admissible triple in~$\E$.
 One easily checks that such complexes are also annihilated by~$T$,
so $T$ factorizes through $\D^b(\E)$.

 We have constructed the functor~$\Theta$; it remains to check that
it sends distinguished triangles to distinguished triangles.
 Any distinguished triangle in $\D^b(\E)$ comes from a term-wise
split short exact sequence of bounded complexes $C^\bu\rarrow D^\bu
\rarrow E^\bu$ over~$\E$.
 One can assume that all the three complexes belong to
$\Com^{[-n,0]}(\E)$.
 Present the complex $C^\bu$ as the cone of a closed morphism
$A^\bu\rarrow C^0$ and the complex $D^\bu$ as the cone of a closed
morphism $B^\bu\rarrow D^0$, where $A^\bu$, $B^\bu\in
\Com^{[-n+1,0]}(\E)$.
 To check that the image of our distinguished triangle in $\D^b(\E)$
is a distinguished triangle in $\D$, it suffices to apply
the $3{\times}3$\+lemma \cite[Proposition~1.1.11]{BBD} to
the commutative square formed by the morphisms
$\kappa_0(T_{n-1}(A^\bu))\rarrow\kappa_1(C^0)\rarrow\kappa_1(D^0)$
and $\kappa_0(T_{n-1}(A^\bu))\rarrow\kappa_0(T_{n-1}(B^\bu))
\rarrow\kappa_1(D^0)$ in $\D\F$, together with the assumption of
induction on~$n$.
\end{proof}

 Notice that one could also proceed in the dual way, cutting
the leftmost term out of a complex $C^\bu$, rather than the rightmost
term, as we have done.
 This would provide another construction of a functor~$\Theta$.
 To prove that the two constructions produce isomorphic functors,
one would probably need to assume the existence of a $[0,2]$\+filtered
triangulated category $(\D,\D\widetilde{\F})$ containing $(\D,\D\F)$.

\bigskip

\end{document}